\def\colim{\qopname\relax m{colim}}
\let\oldqedbox\qedsymbol
\newcommand{\twoqedbox}{\oldqedbox\oldqedbox}
\newtheorem*{thma}{Theorem~A}
\newtheorem*{thmb}{Theorem~B}
\newtheorem*{thmc}{Theorem~C}
\newtheorem*{thmd}{Theorem~D}
\newtheorem{theorem}{Theorem}[section]
\newtheorem{lemma}[theorem]{Lemma}
\newtheorem{proposition}[theorem]{Proposition}
\newtheorem{corollary}[theorem]{Corollary}
\newtheorem{claim}[theorem]{Claim}
\newtheorem{question}[theorem]{Question}
\newtheorem{example}[theorem]{Example}
\theoremstyle{definition}
\newtheorem{definition}[theorem]{Definition}
\newtheorem{remark}[theorem]{Remark}
\newtheorem{notation}[theorem]{Notation}
\newcommand{\bb}{\mathbb}
\newcommand{\mb}{\mathbf}
\newcommand{\ra}{\rightarrow}
\newcommand{\otp}{\mathrm{otp}}
\newcommand{\sh}{\mathrm{sh}}
\newcommand{\supp}{\mathrm{supp}}
\newcommand{\cf}{\mathrm{cf}}
\newcommand{\ssup}{\mathrm{ssup}}
\newcommand{\CondAb}{\mathsf{Cond(Ab)}}
\author[Bergfalk]{Jeffrey Bergfalk}
\address{Departament de Matem\`{a}tiques i Inform\`{a}tica \\
Universitat de Barcelona \\
Gran Via de les Corts Catalanes 585 \\ 08007 Barcelona, Catalonia}
\email{bergfalk@ub.edu}
\urladdr{https://www.jeffreybergfalk.com/}
\author[Lambie-Hanson]{Chris Lambie-Hanson}
\address{Institute of Mathematics of the Czech Academy of Sciences \\ 
	\v{Z}itn\'{a} 25, 110 00 Praha 1, Czechia}
\email{lambiehanson@math.cas.cz}
\urladdr{https://users.math.cas.cz/~lambiehanson/}
\title[Infinitary combinatorics in condensed math and strong homology]{Infinitary combinatorics \\ in condensed mathematics and strong homology}
\subjclass[2020]{18F10, 18G80, 03E05, 03E35, 03E75, 13D05}
\keywords{condensed mathematics, anima, compact projectives, derived limits, $n$-coherence, strong homology, Banach-Smith duality}
\thanks{The first author was supported by Marie Sk\l odowska Curie (project 101110452) and Ram\'{o}n y Cajal fellowships; the second author was supported by GA\v{C}R project 23-04683S and the Academy of Sciences of the Czech Republic (RVO 67985840).}
\begin{document}

\begin{abstract}
Recent advances in our understanding of higher derived limits carry multiple implications in the fields of condensed and pyknotic mathematics, as well as for the study of strong homology. These implications are thematically diverse, pertaining, for example, to the sheaf theory of extremally disconnected spaces, to Banach--Smith duality, to the productivity of compact projective condensed anima, and to the structure of the derived category of condensed abelian groups. Underlying each of these implications are the combinatorics of multidimensionally coherent families of functions of small infinite cardinal height, and it is for this reason that we convene accounts of them together herein.
\end{abstract}

\maketitle

\section{Introduction}
The aim of this article is to describe the role of a rich but rather specific area of infinitary combinatorics, namely that of higher-dimensional coherence phenomena, 
in the answers to an array of superficially unrelated questions.
A main background to this work was Dustin Clausen and Peter Scholze's 2019 recognition that generalizations of the derived limit computations of \cite{Bergfalk_simultaneously_21}
would carry structural implications within their emergent framework of \emph{condensed mathematics}.
Not long thereafter, the present paper's authors realized that these generalizations afforded a more cohesive approach than hitherto to the well-studied questions of the additivity both of strong homology and of the derived limit functors themselves.
These generalizations center on a family of groups $\mathrm{lim}^n\,\mathbf{A}_{\kappa,\lambda}$ parametrizing nontrivial $n$-dimensional coherence phenomena on index systems of cardinal width $\kappa$ and height $\lambda$, and the interest of these phenomena derives in no small part from their relations to the cardinals $\aleph_n$.
Scholze in 2021 converted a further question in condensed mathematics to one, in essence, about such phenomena, and the answer to this question, as we will see, leverages precisely this relation.
More recently, Clausen and Scholze observed that a strong variant of the main result of \cite{Bergfalk_simultaneously_21} would carry pleasing implications for the Banach--Smith duality so prominent within condensed functional analysis, implications we describe herein as well.

More precisely, this paper records proofs of the theorems A through D listed below, showing in the 
process that they all, combinatorially speaking, drink from the same well.
We should underscore without delay that we neither presume nor require expertise in any of the several mathematical subfields which these theorems involve; that these results amount ultimately to little more than ordinal combinatorics is, after all, much of our point.
How these combinatorics arise is, on the other hand, much of the story as well, and we have accordingly included such basic accounts and definitions as should render these conversions legible.
That said, fuller comprehension of the often inescapably technical machinery of these conversions \emph{will} depend to some degree on readers' backgrounds; to better accommodate their range, we have shaped our text to permit multiple reading paths through its material, as we describe in greater detail below.

Our first theorem addresses the question, communicated by Clausen and 
Scholze, of whether the natural functor from the pro-category of derived abelian groups 
to the derived category of condensed abelian groups is consistently fully faithful.
For a fuller discussion and motivation of this question, see Section \ref{subsect:pro-to-D}; in Section \ref{subsect:systemsAkl} we show the following.

\begin{thma}
  The natural functor sending $\mathsf{Pro}(\mathsf{D}\mathsf{(Ab)})$ to 
  $\mathsf{D}\mathsf{(Cond(Ab))}$ is not full. In particular, the left hand side of the equation
  \[
    \mathrm{RHom}_{\mathsf{Pro}(\mathsf{D}^{\geq 0}(\mathsf{Ab}))}\left(\text{``}\prod_{\omega}\text{''}\,\bigoplus_{\omega_1} \bb{Z},\bb{Z}\right)\not\cong\mathrm{RHom}_{\mathsf{D}^{\geq 0}\mathsf{(Cond(Ab))}}\left(\prod_{\omega} \bigoplus_{\omega_1} \bb{Z}, \bb{Z}\right)
  \]
is concentrated in degree zero, but the right hand side is not.
\end{thma}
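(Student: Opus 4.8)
The plan is to prove the displayed statement, which in turn yields the non-fullness of the embedding: were $\mathsf{Pro}(\mathsf{D}(\mathsf{Ab}))\hookrightarrow\mathsf{D}(\CondAb)$ full, then — probing with the objects $\bb{Z}[n]$, which are available precisely because the source is the \emph{unbounded} derived category — it would induce surjections on the cohomology groups $H^{n}$ of the two $\mathrm{RHom}$-complexes for every $n$, which is impossible once the right-hand complex is nonzero in a degree where the left-hand one, concentrated in degree $0$, vanishes. Conceptually, the gap is that between mapping out of the pro-object — where $\mathrm{RHom}$ out of ``$\lim$'' tautologically becomes a colimit — and mapping out of its image $\prod_{\omega}\bigoplus_{\omega_1}\bb{Z}\in\CondAb$, a genuine limit whose $\mathrm{RHom}$ into $\bb{Z}$ carries a nontrivial higher derived limit. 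So it remains to compute both sides.

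The left-hand side is routine. By the defining (co-Yoneda) property of maps out of a pro-object,
\[
  \mathrm{RHom}_{\mathsf{Pro}(\mathsf{D}^{\geq 0}(\mathsf{Ab}))}\!\left(\text{``}\prod_{\omega}\text{''}\,\bigoplus_{\omega_1}\bb{Z},\,\bb{Z}\right)\;\simeq\;\colim_{n<\omega}\mathrm{RHom}_{\mathsf{D}(\mathsf{Ab})}\!\left(\bigoplus_{n\times\omega_1}\bb{Z},\,\bb{Z}\right)\;\simeq\;\colim_{n<\omega}\prod_{n\times\omega_1}\bb{Z},
\]
the second equivalence because each $\bigoplus_{n\times\omega_1}\bb{Z}$ is a free, hence projective, abelian group. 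A filtered colimit of complexes concentrated in degree $0$ is itself concentrated in degree $0$; here it computes $\bigoplus_{\omega}\prod_{\omega_1}\bb{Z}$.

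The right-hand side is the substance of the matter. I would first present the condensed abelian group $\prod_{\omega}\bigoplus_{\omega_1}\bb{Z}$ as a filtered colimit: from $\bigoplus_{\omega_1}\bb{Z}=\colim_{F\in[\omega_1]^{<\omega}}\bb{Z}^{F}$ and the fact that a countable product commutes with this particular filtered colimit,
\[
  \prod_{\omega}\bigoplus_{\omega_1}\bb{Z}\;\simeq\;\colim_{\vec{F}\in\,{}^{\omega}([\omega_1]^{<\omega})}\ \prod_{j\in J_{\vec{F}}}\bb{Z},\qquad J_{\vec{F}}:=\bigsqcup_{n<\omega}F_{n}\ \ (\text{a countable set}),
\]
a filtered colimit — along the coordinate inclusions $\prod_{J_{\vec{F}}}\bb{Z}\hookrightarrow\prod_{J_{\vec{G}}}\bb{Z}$ — of countable products of copies of $\bb{Z}$. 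Applying $\mathrm{RHom}_{\CondAb}(-,\bb{Z})$ converts the colimit into a homotopy limit of the complexes $\mathrm{RHom}_{\CondAb}(\prod_{J_{\vec{F}}}\bb{Z},\bb{Z})$, and here I would invoke the basic condensed fact (recalled in the body) that, $J$ being countable, $\mathrm{RHom}_{\CondAb}(\prod_{J}\bb{Z},\bb{Z})$ is concentrated in degree $0$, where it equals $\bigoplus_{J}\bb{Z}$ — the $\mathrm{Hom}$-part being the classical slenderness of $\bb{Z}$, and the vanishing of higher $\mathrm{Ext}$ a manifestation of the good behaviour of countable products in $\CondAb$. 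Under this identification the transition maps become the coordinate projections $\bigoplus_{J_{\vec{G}}}\bb{Z}\twoheadrightarrow\bigoplus_{J_{\vec{F}}}\bb{Z}$; restricting to the cofinal family of $\vec{F}$ whose values are initial segments of $\omega_1$, i.e.\ reindexing by the directed set ${}^{\omega}\omega_1$, one recognizes the resulting inverse system as precisely $\mathbf{A}_{\omega_1,\omega}$, the system of cardinal width $\omega_1$ and height $\omega$ of Section~\ref{subsect:systemsAkl}. Hence the right-hand side has cohomology groups $\mathrm{lim}^{n}\mathbf{A}_{\omega_1,\omega}$; in degree $0$ this reproduces $\bigoplus_{\omega}\prod_{\omega_1}\bb{Z}$, matching the left-hand side, so any discrepancy lives in positive degrees.

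It then remains to invoke the combinatorial heart of the matter: that $\mathrm{lim}^{n}\mathbf{A}_{\omega_1,\omega}\neq 0$ for some $n\geq 1$, provable in ZFC, with — as anticipated in Section~\ref{subsect:pro-to-D} — the cardinal $\aleph_1$ playing an essential role. I expect this nonvanishing, together with its faithful transcription into the $\mathrm{RHom}$ computation, to be the main obstacle: the derived-limit estimate is delicate ordinal combinatorics, while on the condensed side the commutation of products with the relevant filtered colimits and the exact tracking of the transition maps must be carried out carefully for the identification with $\mathbf{A}_{\omega_1,\omega}$ to be exact.
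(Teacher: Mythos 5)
Your reduction of the right-hand side to a derived limit is essentially the paper's: write $\prod_{\omega}\bigoplus_{\omega_1}\bb{Z}$ as the filtered colimit, over $f\in{^\omega}([\omega_1]^{<\omega})$, of the countable products $\prod_{i<\omega}\bb{Z}^{f(i)}$; use projectivity of these in $\mathsf{Solid}$ together with slenderness of $\bb{Z}$ to see that each inner $\mathrm{RHom}$ concentrates in degree zero with value $\bigoplus_{X(f)}\bb{Z}$; and conclude that the right-hand side is the $\mathrm{Rlim}$ of the resulting inverse system, while the left-hand side is concentrated in degree zero. But there is a bookkeeping error that would be fatal if taken literally. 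The system you obtain is indexed by ${^\omega}([\omega_1]^{<\omega})$ with terms $\bigoplus_{X(f)}\bb{Z}$, i.e., it is $\mathbf{A}_{\aleph_0,\aleph_1}$ in the notation of Section \ref{subsect:systemsAkl} (width $\aleph_0$, height $\aleph_1$), not $\mathbf{A}_{\omega_1,\omega}$. Your intermediate claim that the $f$ whose values are initial segments of $\omega_1$ form a cofinal family is false: the values of $f$ are finite sets, and the finite initial segments of $\omega_1$ are not cofinal in $[\omega_1]^{<\omega}$, so there is no reindexing by ${^\omega}\omega_1$ analogous to the classical case $\mathbf{A}=\mathbf{A}_{\omega,\omega}$ of Remark \ref{rmk:AklandA}. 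The transposition is not merely notational: for systems of height $\aleph_0$, such as $\mathbf{A}_{\aleph_1,\aleph_0}$, it is consistent with $\mathsf{ZFC}$ that all higher derived limits vanish (see the discussion of \cite{Bannister_additivity_23} in Section \ref{subsect:additivity}), so the nonvanishing you need is a $\mathsf{ZFC}$ theorem only for the correctly identified system $\mathbf{A}_{\aleph_0,\aleph_1}$.

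More substantially, you defer exactly the step that carries the theorem's content: the $\mathsf{ZFC}$ proof that $\mathrm{lim}^1\,\mathbf{A}_{\aleph_0,\aleph_1}\neq 0$ (item (5) of Theorem \ref{thm:limsofAkappalambda}). The paper obtains this by fixing a classical nontrivially coherent family of finite-to-one functions $e_\alpha:\alpha\to\omega$ $(\alpha<\omega_1)$ and setting $\varphi_f(i,\xi)=1$ exactly when $e_{\mathrm{sp}(f)}(\xi)=i$, where $\mathrm{sp}(f)=\sup\bigcup_i f(i)$; coherence of $\Phi=\langle\varphi_f\rangle$ is inherited from that of $E$, and nontriviality is a pigeonhole argument exploiting that each $e_\alpha$ is finite-to-one, so that a putative trivializing $\varphi:\omega\times\omega_1\to\bb{Z}$ would yield a single $e:\omega_1\to\omega$ trivializing $E$. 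Without this construction (or a substitute), your proposal establishes only the easy half of the theorem, namely that non-fullness would follow from such a nonvanishing; your framing of the deduction of non-fullness from the displayed inequality (probing with shifts of $\bb{Z}$) is fine, as is your computation of the left-hand side.
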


If, however, one restricts to \emph{countable} abelian groups and operations upon them, then the main pathologies (so to speak) of Theorem A are avoidable.
We show the following in Section \ref{subsect:alternative}; the $\mb{A}[H]$ appearing in its premise is the natural generalization of the inverse system $\mb{A}=\mb{A}[\bb{Z}]$ tracing back, via a substantial set theoretic literature, to \cite{Mardesic_additive_88}.

\begin{thmb}
  The following statements are all consistent; in fact each holds under the assumption 
  that $\lim^n \mb{A}[H] = 0$ for all $n > 0$ and all abelian groups $H$.
  \begin{enumerate}
    \item $\mathrm{RHom}_{\mathsf{D}^{\geq 0}\mathsf{(Cond(Ab))}}\left(\prod_{\omega} \bigoplus_{\omega} \bb{Z}, \bigoplus_\mu \bb{Z}\right)$ is concentrated in degree zero 
    for all cardinals $\mu$.
    \item Whenever $H$ is an abelian group and $\mb{M} = (M_i, \pi_{i,j}, \omega)$ is an inverse sequence of countable abelian groups 
    whose transition maps $\pi_{i,j} : M_{j} \to M_i$ are all surjective, then
$$\mathrm{Ext}_{\mathsf{Cond(Ab)}}^n\left(\lim_{i < \omega}\,M_i,H\right)=\colim_{i < \omega}\,\mathrm{Ext}_{\mathsf{Cond(Ab)}}^n\left(M_i,H\right)$$
for all $n \geq 0$.
    \item Whenever $p$ is a prime and $X$ is a separable solid $\bb{Q}_p$-Banach space, 
    \[
      \underline{\mathrm{Ext}}^i_{\mathsf{Solid}_{\bb{Q}_p}}(X, \bb{Q}_p) = 0
    \] 
    for all $i > 0$.
  \end{enumerate}
\end{thmb}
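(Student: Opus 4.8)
The plan is to route all three statements through the translation between condensed $\mathrm{Ext}$-groups of countable products and the derived limits $\lim^{n}\mb{A}[H]$ of the systems of \cite{Mardesic_additive_88} (as set up, for Theorem~A, in Section~\ref{subsect:systemsAkl}). The mechanism is the following. Viewed in $\CondAb$, a countable product of countable free abelian groups is a filtered colimit, indexed by ${}^{\omega}\omega$, of the ``boxes'' $\prod_{n<\omega}\bb{Z}^{f(n)}$ (finite, or countable, powers of $\bb{Z}$); by the slenderness of free abelian groups in $\CondAb$ together with the vanishing there of $\mathrm{Ext}^{\ge 1}_{\CondAb}(\prod_{\omega}\bb{Z},G)$ for $G$ free --- the condensed form of Specker's theorem and its derived refinement --- the complex $\mathrm{RHom}_{\CondAb}(\prod_{n<\omega}\bb{Z}^{f(n)},G)$ is concentrated in degree zero and equal to $\bigoplus_{n<\omega}\mathrm{Hom}(\bb{Z}^{f(n)},G)$ for every free $G$. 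Fed into the colimit presentation this yields, for free $G$, a spectral sequence $E_2^{p,q}=\lim^{p}_{f}\mathrm{Ext}^{q}_{\CondAb}(\prod_{n<\omega}\bb{Z}^{f(n)},G)$ converging to $\mathrm{Ext}^{p+q}_{\CondAb}(\prod_{\omega}\bigoplus_{\omega}\bb{Z},G)$ which collapses onto its bottom row, and that row is (a reindexing of) the system $\mb{A}[G]$; hence $\mathrm{Ext}^{n}_{\CondAb}(\prod_{\omega}\bigoplus_{\omega}\bb{Z},G)\cong\lim^{n}\mb{A}[G]$ for all $n\ge 1$. Part~(1) is then immediate: taking $G=\bigoplus_{\mu}\bb{Z}$, the hypothesis gives $\lim^{n}\mb{A}[\bigoplus_{\mu}\bb{Z}]=0$ for all $n\ge 1$ and all $\mu$, so $\mathrm{RHom}_{\mathsf{D}^{\ge 0}(\CondAb)}(\prod_{\omega}\bigoplus_{\omega}\bb{Z},\bigoplus_{\mu}\bb{Z})$ is concentrated in degree zero.

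For part~(2), fix $\mb{M}=(M_i,\pi_{i,j},\omega)$ with all $\pi_{i,j}$ surjective and each $M_i$ countable, and an abelian group $H$. The Mittag--Leffler presentation $0\to\lim_{i}M_i\to\prod_{i}M_i\xrightarrow{\,1-s\,}\prod_{i}M_i\to 0$, with $s$ the transition-twisted shift, is exact in $\CondAb$: surjectivity of $1-s$ is checked on an extremally disconnected $S$, where it becomes $1-\mathrm{shift}$ for the inverse sequence $(\mathrm{LC}(S,M_i))_{i}$, whose transition maps are surjective since a locally constant map out of a compact space has finite range and so lifts along each $M_{i+1}\twoheadrightarrow M_i$. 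As countable products are exact in $\CondAb$, choosing level-wise two-term free resolutions $F_i^{\bullet}$ of the $M_i$ (automatically of countable rank) and a two-term free resolution $G^{\bullet}$ of $H$ exhibits $\mathrm{RHom}_{\CondAb}(\prod_{i}M_i,H)$ as the totalisation of a square of complexes $\mathrm{RHom}_{\CondAb}(\prod_{i}F_i^{j},G^{k})$; by the first paragraph --- and the hypothesis, which annihilates the positive-degree contributions $\lim^{\ge 1}\mb{A}[G^{k}]$ --- each such complex is concentrated in degree zero and equal to $\bigoplus_{i}\mathrm{Hom}(F_i^{j},G^{k})$, whence $\mathrm{RHom}_{\CondAb}(\prod_{i}M_i,H)\simeq\bigoplus_{i}\mathrm{RHom}_{\CondAb}(M_i,H)$. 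Applying $\mathrm{RHom}_{\CondAb}(-,H)$ to the Mittag--Leffler triangle now gives
\[
\mathrm{RHom}_{\CondAb}\Big(\lim_{i}M_i,\,H\Big)\;\simeq\;\mathrm{cofib}\Big(\,{\textstyle\bigoplus_{i}}\,\mathrm{RHom}_{\CondAb}(M_i,H)\;\xrightarrow{\;1-s^{*}\;}\;{\textstyle\bigoplus_{i}}\,\mathrm{RHom}_{\CondAb}(M_i,H)\,\Big),
\]
and, $s^{*}$ being precisely the shift assembled from the pullbacks $\pi_{i,i+1}^{*}$, the right-hand side is the ordinary sequential colimit $\colim_{i}\mathrm{RHom}_{\CondAb}(M_i,H)$. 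Passing to cohomology and using exactness of filtered colimits gives $\mathrm{Ext}^{n}_{\CondAb}(\lim_{i}M_i,H)=\colim_{i}\mathrm{Ext}^{n}_{\CondAb}(M_i,H)$ for all $n\ge 0$ --- both sides vanishing for $n\ge 2$, as each $M_i$ has projective dimension at most one in $\CondAb$.

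Part~(3) is meant to follow by running the solid, $\bb{Q}_p$-linear counterpart of the foregoing --- the content of the Banach--Smith observation noted in the introduction, a strong form of the main result of \cite{Bergfalk_simultaneously_21}. Every infinite-dimensional separable solid $\bb{Q}_p$-Banach space is isomorphic to $c_0(\bb{Q}_p)=c_0(\bb{Z}_p)[1/p]$ with $c_0(\bb{Z}_p)=(\bigoplus_{\omega}\bb{Z}_p)^{\wedge}_p$; one presents this object over a poset cofinal with ${}^{\omega}\omega$ in terms of mutually dual solid building blocks --- the role of the Baer--Specker group being played by $\prod_{\omega}\bb{Z}_p$, which is compact projective in $\mathsf{Solid}_{\bb{Z}_p}$ --- runs the collapsing-spectral-sequence argument of the first paragraph to identify $\underline{\mathrm{Ext}}^{\ge 1}_{\mathsf{Solid}_{\bb{Z}_p}}(c_0(\bb{Z}_p),\bb{Z}_p)$ with $\lim^{\ge 1}$ of a $\bb{Z}_p$-linear incarnation of $\mb{A}[H]$, invokes the hypothesis to make these vanish, and inverts $p$ to conclude that $\underline{\mathrm{Ext}}^{i}_{\mathsf{Solid}_{\bb{Q}_p}}(X,\bb{Q}_p)=0$ for all $i>0$. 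I expect this third part to be the genuine obstacle: one must choose the correct solid building blocks and establish the appropriate solid analogue of slenderness for the target $\bb{Z}_p$ --- which, unlike a free abelian group, is \emph{not} slender in the naive sense, so the abelian argument cannot simply be transcribed --- must check that the resulting $\bb{Z}_p$- or $\bb{Z}/p^{k}$-linear derived limits are dominated by $\lim^{\bullet}\mb{A}[H]$, so that the hypothesis as stated is enough, and must handle the $p$-inversion step. Everything else, across all three parts, amounts to bookkeeping with the spectral sequences, the foundational Specker-type inputs (slenderness of free objects, degree-zero concentration of $\mathrm{RHom}$ and $\underline{\mathrm{RHom}}$ out of Baer--Specker-type objects, exactness of countable products), and checking that the comparison maps produced are the canonical ones.
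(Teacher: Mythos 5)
Your parts (1) and (2) are correct and follow essentially the paper's own route: the identification $\mathrm{Ext}^n_{\CondAb}(\prod_\omega\bigoplus_\omega\bb{Z},\bigoplus_\mu\bb{Z})\cong\lim^n\mb{A}[\bigoplus_\mu\bb{Z}]$ via the filtered-colimit presentation and degree-zero concentration of the interior $\mathrm{RHom}$ (projectivity of $\prod_i\bb{Z}^{f(i)}$ in $\mathsf{Solid}$ plus slenderness), and then the passage to inverse sequences via level-wise free resolutions, exactness of countable products, and the Mittag--Leffler shift sequence. Your packaging of the last step as a single cofiber sequence $\mathrm{RHom}(\lim_iM_i,H)\simeq\mathrm{cofib}(1-s^*)$ on $\bigoplus_i\mathrm{RHom}(M_i,H)$ is a clean substitute for the paper's Five-Lemma chase through the finite truncations of the shift sequence; both are fine.

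Part (3) is where there is a genuine gap, and you have correctly sensed it: the route you sketch (a solidified, $\bb{Z}_p$-linear rerun of the $\mb{A}[H]$ computation with $\prod_\omega\bb{Z}_p$ as Baer--Specker surrogate) runs into exactly the obstacles you name --- $\bb{Z}_p$ is not slender, and it is unclear why the resulting $\bb{Z}_p$-linear derived limits would be controlled by the stated hypothesis --- and you do not resolve them. The missing idea is that no solid analogue of the computation is needed: clause (3) reduces to clause (2). Write the separable Banach space as $(\bigoplus_\omega\bb{Z}_p)^\wedge_p[\frac1p]\cong\lim_{n<\omega}\bigl(\bigoplus_\omega\bb{Q}_p/(p^n\bb{Z}_p)\bigr)$, an inverse limit of \emph{countable discrete} abelian groups with surjective transition maps, and write the target as $\bb{Q}_p\cong\lim_{m<\omega}\bb{Q}_p/(p^m\bb{Z}_p)$ with each quotient discrete. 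Evaluating the internal $\mathrm{R\underline{Hom}}$ on an extremally disconnected $S$ converts it into
\[
\underset{m<\omega}{\mathrm{Rlim}}\ \mathrm{RHom}\Bigl(\lim_{n<\omega}\bigl(\textstyle\bigoplus_\omega\bb{Q}_p/(p^n\bb{Z}_p)\bigr),\ \mathrm{Cont}(S,\bb{Q}_p/(p^m\bb{Z}_p))\Bigr),
\]
where $\mathrm{Cont}(S,\bb{Q}_p/(p^m\bb{Z}_p))$ is a discrete abelian group. Clause (2) now commutes the inner $\mathrm{Ext}$'s past the limit, producing $\colim_n\prod_\omega\mathrm{Ext}^i(\bb{Q}_p/(p^n\bb{Z}_p),\mathrm{Cont}(S,\bb{Q}_p/(p^m\bb{Z}_p)))$, which vanishes for $i>0$ because the target is divisible, hence injective as a discrete abelian group; and the outer $\mathrm{Rlim}$ over $m$ is of a surjective inverse sequence of abelian groups, hence also concentrated in degree zero. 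This bypasses entirely the question of slenderness for $\bb{Z}_p$ and the $p$-inversion difficulties your sketch would have to confront.
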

The first and the second of the theorem's conclusions are of additional interest (as we explain just after its proof) for each implying that the continuum is of cardinality at least $\aleph_{\omega+1}$.
Its clause (3), on the other hand, implies that the classical stereotype duality between 
Banach spaces and Smith spaces consistently extends to the derived category of 
solid $\bb{Q}_p$-vector spaces when restricted to separable 
solid $\bb{Q}_p$-Banach spaces.

Let us underscore before proceeding any further that the forms of Theorems A, B, and D, as well as their conversions to questions about derived limits, are all essentially due to Clausen and Scholze; in each case, our own contributions to the theorems consist principally in the derived limit analyses (exemplified by our Theorems \ref{thm:limsofAkappalambda} and \ref{thm:nonzero_cohomology_on_opens}) which complete their proofs.


The background to Theorem C, in contrast, is our own joint work on the additivity problem for the strong homology functor $\overline{\mathrm{H}}_\bullet$; see Section \ref{sec:additivity} for a brief discussion of this problem's history.
The main immediate point is that the computations underlying Theorem A readily furnish the $\mathsf{ZFC}$ counterexample to the additivity of $\overline{\mathrm{H}}_\bullet$ cited below, along with closely related $\mathsf{ZFC}$ counterexamples to the additivity of the functors $\lim^n : \mathsf{Pro(Ab)}\to\mathsf{Ab}$ for $n = 1$ and $n=2$.
None of these is the first such counterexample to have been recorded; that distinction belongs to those appearing in \cite{Prasolov_non_05}.
In both their unity and comparative simplicity, however, those recorded herein carry substantial advantages over those of \cite{Prasolov_non_05} (the generalized Hawaiian earring $Y^n$ below, for example, is compact, in contrast to its counterpart in \cite{Prasolov_non_05}), and together render the additivity problem's combinatorial content significantly plainer.


\begin{thmc}
  Fix a natural number $n > 0$, let $B^n$ denote the $n$-dimensional open ball, and 
  let $Y^n$ denote the one-point compactification of $\coprod_{\omega_1} B^n$. Then
  \[
    \overline{\mathrm{H}}_{n-1}\left(\coprod_{\omega} Y^n \right) \not\cong 
    \bigoplus_{\omega} \overline{\mathrm{H}}_{n-1}\left(Y^n\right).
  \]
\end{thmc}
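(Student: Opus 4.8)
The plan is to translate both sides of the asserted noncongruence into computations of higher derived limits, via the standard machinery relating strong homology to the \v{C}ech system of a space (as exploited in \cite{Mardesic_additive_88}), and then to feed in the $\mathsf{ZFC}$ nonvanishing result underlying Theorem~A. I would begin with $Y^n$ itself. Writing each open ball $B^n$ as an increasing union of closed balls, and a cofinal family of open covers of the one-point compactification $Y^n$ in terms of the finite sets $F\in[\omega_1]^{<\omega}$ of lobes to be ``resolved'', one sees that $Y^n=\lim_F\big(\bigvee_{\alpha\in F}S^n\big)$, the bonding maps collapsing the superfluous spheres. Hence the pro-homology group $\mb{H}_t(Y^n)$ is the inverse system $\big(\bigoplus_{\alpha\in F}\bb{Z},\ \mathrm{proj}\big)_{F\in[\omega_1]^{<\omega}}$ for $t=n$, the constant system $\bb{Z}$ for $t=0$, and $0$ otherwise. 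The first of these computes $\mathrm{RHom}(\bigoplus_{\omega_1}\bb{Z},\bb{Z})$ and hence has vanishing higher derived limits, while a constant system over a directed poset has vanishing higher derived limits because the nerve of such a poset is contractible; so the strong homology spectral sequence $E_2^{s,t}=\lim^s\mb{H}_t\Rightarrow\overline{\mathrm{H}}_{t-s}(Y^n)$ collapses to yield $\overline{\mathrm{H}}_{n-1}(Y^n)=0$ for $n\geq 2$ and $\overline{\mathrm{H}}_0(Y^1)=\bb{Z}$. Thus the right-hand side of the displayed noncongruence is $0$ for $n\geq 2$, and $\bigoplus_\omega\bb{Z}$ for $n=1$.

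Next I would treat $\coprod_\omega Y^n$. The crucial observation is that its $\omega$ summands are clopen, so that a cofinal family of open covers of $\coprod_\omega Y^n$ is given by $\prod_{j<\omega}\mathrm{Cov}(Y^n_j)$ acting coordinatewise, with nerve the disjoint union of the nerves of the coordinates. Consequently $\mb{H}_n(\coprod_\omega Y^n)$ is the inverse system over $\prod_{j<\omega}[\omega_1]^{<\omega}$ carrying $(F_j)_{j<\omega}$ to $\bigoplus_{j<\omega}\bigoplus_{\alpha\in F_j}\bb{Z}$ --- which, up to cofinal reindexing, is precisely the system $\mb{A}_{\omega_1,\omega}$ figuring in Theorem~A and in Theorem~\ref{thm:limsofAkappalambda} --- while $\mb{H}_0(\coprod_\omega Y^n)$ is the constant system $\bigoplus_\omega\bb{Z}$ and $\mb{H}_t=0$ for $t\notin\{0,n\}$. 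Reading off the diagonal $t-s=n-1$ of the spectral sequence, and using once more that constant systems over $\prod_{j<\omega}[\omega_1]^{<\omega}$ carry no higher derived limits, the only term that can contribute to $\overline{\mathrm{H}}_{n-1}(\coprod_\omega Y^n)$ is $E_2^{1,n}=\lim^1\mb{A}_{\omega_1,\omega}$, into or out of which no differential can run. Hence $\overline{\mathrm{H}}_{n-1}(\coprod_\omega Y^n)\cong\lim^1\mb{A}_{\omega_1,\omega}$ when $n\geq 2$, whereas when $n=1$ one obtains a split extension of $\bigoplus_\omega\bb{Z}$ by $\lim^1\mb{A}_{\omega_1,\omega}$ (split since $\bigoplus_\omega\bb{Z}$ is free).

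The theorem then follows from the combinatorial input furnished by the analysis behind Theorem~A, namely that $\lim^1\mb{A}_{\omega_1,\omega}\neq 0$ in $\mathsf{ZFC}$ (Theorem~\ref{thm:limsofAkappalambda}): for $n\geq 2$ this gives $\overline{\mathrm{H}}_{n-1}(\coprod_\omega Y^n)\neq 0=\bigoplus_\omega\overline{\mathrm{H}}_{n-1}(Y^n)$, and for $n=1$ it gives $\overline{\mathrm{H}}_0(\coprod_\omega Y^1)\cong\bigoplus_\omega\bb{Z}\oplus\lim^1\mb{A}_{\omega_1,\omega}\not\cong\bigoplus_\omega\bb{Z}=\bigoplus_\omega\overline{\mathrm{H}}_0(Y^1)$. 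No single step of this argument is deep --- the real weight lies in the $\mathsf{ZFC}$ derived-limit analysis that Theorem~A has already done --- but two points need care. First, the noncompactness of $\coprod_\omega Y^n$: it is paracompact, which is enough for strong homology to be computed by the higher derived limits of the above \v{C}ech system, and it is the clopen-summand observation that channels the computation onto the product poset $\prod_\omega[\omega_1]^{<\omega}$ and thus onto $\mb{A}_{\omega_1,\omega}$. Second, one must check that the spectral sequence really does degenerate in total degree $n-1$ onto the single group $\lim^1\mb{A}_{\omega_1,\omega}$, which is exactly where the vanishing of the higher derived limits of the constant systems $\mb{H}_0$ and of $\mb{H}_n(Y^n)$ does its work.
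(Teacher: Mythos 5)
Your argument is essentially the paper's own: the paper realizes $Y^n$ as the limit of the system $\mathbf{Y}^{n,\omega_1}$ of finite wedges of spheres indexed by $[\omega_1]^{<\omega}$ (your \v{C}ech system), runs the Marde\v{s}i\'{c}-style derived-limit computation (Theorem \ref{thm:homology_groups}) to obtain $\overline{\mathrm{H}}_{n-1}(\coprod_\omega Y^n)\cong\lim^1\mathbf{A}_{\aleph_0,\aleph_1}$ (plus a $\bigoplus_\omega\mathbb{Z}$ summand when $n=1$), and concludes from the $\mathsf{ZFC}$ nonvanishing of that $\lim^1$, exactly as you do. One caution on notation: the system you correctly describe --- indexed by functions $\omega\to[\omega_1]^{<\omega}$ with terms $\bigoplus_{j<\omega}\bigoplus_{\alpha\in F_j}\mathbb{Z}$ --- is $\mathbf{A}_{\aleph_0,\aleph_1}$ in the paper's convention (width $=$ number of summands $=\aleph_0$, height $=\aleph_1$), not $\mathbf{A}_{\omega_1,\omega}$; the order matters, since $\lim^1\mathbf{A}_{\aleph_1,\aleph_0}$ is consistently zero, and the nonvanishing you need is precisely item (5) of Theorem \ref{thm:limsofAkappalambda} with $\kappa=\aleph_0$, $\lambda=\aleph_1$.
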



For this paper's final results, we return to the setting of condensed mathematics.
From both conceptual and computational points of view, one of condensed categories' most fundamental virtues  is their possession of generating classes of
compact projective objects.
Both annoying, accordingly, and insufficiently well understood are the failures of these classes to be closed under binary products.
For example, the tensor product $X \otimes Y$ of two compact projective objects in the category $\mathsf{Cond}(\mathsf{Ab})$ of condensed abelian groups need not be projective; see the introduction to Section \ref{sect:products} for further discussion. 
More subtle is the still-open question of whether or when such $X \otimes Y$ have finite projective dimension in $\mathsf{Cond}(\mathsf{Ab})$.
Here we provide a negative answer to a stronger conjecture by proving that 
products of compact projective condensed anima are not, in general, compact, and we do so 
via an auxiliary result about classical sheaf categories that we feel is of interest 
in its own right.
In what follows, $\mathsf{ED}$ denotes the category of extremally 
disconnected compact Hausdorff spaces.

\begin{thmd}
For every finite field $K$ and $S \in \mathsf{ED}$, the constant sheaf $\mathcal{K}$ on $S$ is injective. If $S,T \in \mathsf{ED}$ are each \v{C}ech-Stone compactifications of discrete sets of cardinality at least $\aleph_\omega$, then for any field $K$ the injective dimension of the constant sheaf $\mathcal{K}$ on $S \times T$ is infinite.

In consequence, there exist compact projective condensed anima $S$ and $T$ whose product is not compact.
\end{thmd}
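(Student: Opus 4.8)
\emph{Overall strategy.} The plan is to extract all three parts from one analysis of $\mathsf{Sh}(S;K)$, the category of sheaves of $K$-vector spaces on an $S\in\mathsf{ED}$, by way of a dictionary with the Boolean algebra $\mathbb{B}=\mathrm{Clopen}(S)$ --- which is \emph{complete} precisely because $S$ is extremally disconnected. First I would record that dictionary: since $S$ is compact and zero-dimensional, $\mathsf{Sh}(S;K)$ is the category of $K$-module-valued sheaves on the site whose poset is $\mathbb{B}$ and whose covers are the finite ones; the ``extension by zero of the constant sheaf'' objects $L_{U}$ ($U\in\mathbb{B}$) satisfy $\mathrm{Hom}(L_{U},\mathcal{G})=\mathcal{G}(U)$, and since $U$ is itself zero-dimensional compact Hausdorff the functor $\mathcal{G}\mapsto\mathcal{G}(U)$ is exact and commutes with filtered colimits, so the $L_{U}$ are compact projective generators with $\mathcal{K}=L_{S}$. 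Hence $\mathrm{Ext}^{n}_{\mathsf{Sh}(S;K)}(\mathcal{G},\mathcal{K})$ is the cohomology of the complex of $\mathcal{K}$-sections attached to a resolution $L_{U_{\bullet}}\twoheadrightarrow\mathcal{G}$, and for $S=\beta X$ such a complex computes a derived limit of the form $\lim^{n}\mathbf{A}_{\kappa,\lambda}$ of index systems over $\mathcal{P}(X)$.

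\emph{Part (1).} I would write down the two-term injective copresentation $0\to\mathcal{K}\to\mathcal{C}\to\mathcal{C}/\mathcal{K}\to 0$ with $\mathcal{C}=\prod_{s\in S}(i_{s})_{*}K$ the sheaf of all $K$-valued functions, which is injective because $K$ is a field and $(i_{s})_{*}$ preserves injectives (its left adjoint $i_{s}^{*}$ being exact). It remains to show $\mathcal{C}/\mathcal{K}$ injective, equivalently $\mathrm{Ext}^{2}(\mathcal{G},\mathcal{K})=0$ for every subsheaf $\mathcal{G}$ of a sum of generators $L_{U}$; via the dictionary this is the vanishing of the pertinent $\lim^{\geq 2}$, which follows from Theorem~\ref{thm:limsofAkappalambda} once one checks that completeness of $\mathbb{B}$ --- i.e.\ extremal disconnectedness of $S$ --- keeps the index systems in play simple enough for a second derived limit to vanish. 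The first derived limit need not vanish, which is why the bound obtained is $1$ and not $0$.

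\emph{Part (2).} I would take $S=\beta X$, $T=\beta Y$ with $|X|,|Y|\geq\aleph_{\omega}$, so that $\mathrm{Clopen}(S\times T)$ contains the product grid $\mathcal{P}(X)\times\mathcal{P}(Y)$, and the same dictionary produces, for each $n$, a subsheaf $\mathcal{G}_{n}$ of a sum of generators with $\mathrm{Ext}^{n}_{\mathsf{Sh}(S\times T;K)}(\mathcal{G}_{n},\mathcal{K})$ a direct sum of copies of $\lim^{n}\mathbf{A}_{\kappa,\lambda}$ for $\kappa,\lambda\geq\aleph_{\omega}$. The \emph{main obstacle} of the whole theorem lies here: one must show these groups nonzero for every $n$, which is exactly the higher-dimensional coherence combinatorics of Theorems~\ref{thm:limsofAkappalambda} and~\ref{thm:nonzero_cohomology_on_opens}, and which --- in contrast to the case of a single extremally disconnected space --- is a theorem of $\mathsf{ZFC}$ as soon as the parameters reach $\aleph_{\omega}$. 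Granting that, the injective dimension of $\mathcal{K}$ on $S\times T$ is at least $n$ for every $n$, hence infinite.

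\emph{Part (3).} I would first note $\underline{S}\times\underline{T}\simeq\underline{S\times T}$, since $\underline{(-)}$ preserves finite products of compact Hausdorff spaces, and that each $\underline{E}$ with $E\in\mathsf{ED}$ is a compact projective object of $\mathsf{Cond(Ani)}$ (evaluation at $E$ preserves all colimits, sheafification being an isomorphism on sections over $\mathsf{ED}$ objects). An extremally disconnected hypercover $P_{\bullet}\to\underline{S\times T}$ exhibits $\underline{S\times T}$ as $\colim_{\Delta^{\mathrm{op}}}\underline{P_{\bullet}}$, so that $\mathrm{Map}_{\mathsf{Cond(Ani)}}(\underline{S\times T},-)$ is the totalization $\mathrm{Tot}_{\Delta}\,(-)(P_{\bullet})$; were $\underline{S\times T}$ compact, this totalization would commute with all filtered colimits, equivalently the tower $\{\mathrm{Tot}^{\leq m}\}_{m}$ of partial totalizations would be pro-constant for every coefficient object. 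Testing against Eilenberg--MacLane objects on suitable condensed abelian groups (and against a filtered colimit of these), the obstruction to pro-constancy is exactly the occurrence of nonzero $\mathrm{Ext}^{m}$ in arbitrarily high degree supplied by part (2), so $\underline{S}\times\underline{T}$ is not compact. The one delicate point is matching the failure of the totalization with the sheaf-theoretic unboundedness of (2) --- that is, passing cleanly between the big site $\mathsf{ED}_{/S\times T}$ and the computation over $S\times T$ --- but it requires no combinatorics beyond part (2).
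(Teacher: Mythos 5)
Your proposal has the right general shape---convert injective dimension to cohomology of opens, convert that to derived limits, and use a hypercover-plus-compactness argument for clause (3)---but clause (2), which you rightly flag as the crux, contains a genuine gap. You reduce the problem to showing $\lim^n \mathbf{A}_{\kappa,\lambda} \neq 0$ for all $n$ once $\kappa,\lambda \geq \aleph_\omega$, and you assert this is a $\mathsf{ZFC}$ theorem. It is not: for $n>1$ this is an open question (it is listed as such in Section \ref{sect:conclusion}), and Theorem \ref{thm:limnAkl_further} obtains such nonvanishing only under additional hypotheses (approachability, $\diamondsuit$). What \emph{is} a $\mathsf{ZFC}$ theorem is Theorem \ref{thm:nonzero_cohomology_on_opens}, whose derived limits are of a different form: $\mathrm{R}^n\mathrm{lim}_{I\in\mathcal{I}}\bigoplus_\kappa\prod_I K$ for $\mathcal{I}$ the \emph{bounded ideal on $\omega_n$}, where the order structure of $\omega_n$ (Goblot plus Fodor) is exactly what makes the recursion go through in $\mathsf{ZFC}$; the box-type index set ${}^\kappa([\lambda]^{<\omega})$ of $\mathbf{A}_{\kappa,\lambda}$ does not support that argument. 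To reach these systems from the product $S\times T$ you also need a step you omit entirely: the pushforward along the projection $\pi: S\times T\to S$. Lemma \ref{lemma:fininjdim_pushforward} shows $\pi_*$ is exact, preserves injectives, and carries $\mathcal{K}_{S\times T}$ to the constant sheaf $\mathcal{W}_S$ with $W=\bigoplus_{w(T)}K$; this is what converts the two-factor problem into a one-space problem with \emph{large direct-sum coefficients}, which is where the $\aleph_\omega$ threshold on the second factor actually enters. Working directly on $S\times T$ with the grid of clopens, as you propose, lands you on the $\mathbf{A}_{\kappa,\lambda}$-type systems whose higher nonvanishing is precisely what one cannot prove.

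Two smaller points. In clause (1), the vanishing of $\mathrm{H}^q(U;\mathcal{K})$ for $q>0$ is not a consequence of Theorem \ref{thm:limsofAkappalambda} (which concerns the $\bigoplus$-systems $\mathbf{A}_{\kappa,\lambda}$ and gives nonvanishing, not vanishing, for infinite parameters); the correct mechanism is that the relevant $\prod$-type systems indexed by the ideal $\mathcal{I}_U$ are flasque, so \emph{all} their higher derived limits vanish by Jensen's theorem---in particular your remark that ``the first derived limit need not vanish'' misdiagnoses why the bound is one rather than zero (the bound of one is an artifact of resolving $\mathcal{K}$ one step before testing injectivity of the cokernel; for finite $K$ the sheaf $\mathcal{K}$ is itself flasque, hence injective). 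In clause (3), your totalization/pro-constancy framing should be replaced by the cleaner standard move: write $S\times T$ as the sequential colimit of the partial colimits $X_n$ of the hypercover, use compactness to factor the identity through some $X_n$, and then use hyperdescent to express $g_*g^*\mathcal{K}$ as a \emph{finite} limit of ${f_i}_*f_i^*\mathcal{K}$, each of injective dimension at most one by clause (1); this exhibits $\mathrm{RHom}(-,\mathcal{K})$ as a retract of a uniformly bounded complex, contradicting clause (2).
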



Let us turn now in greater detail to the paper's organization and the varying demands that it places upon its readers. 
Section \ref{sec:pro} divides into five subsections; of these, only \ref{subsect:condensedbackground}, which provides a brief review of derived and condensed settings,
and \ref{subsect:systemsAkl}, recording the fundamentals of the groups $\mathrm{lim}^n\,\mathbf{A}_{\kappa,\lambda}$, play a significant role in subsequent sections. Subsections \ref{subsect:pro-to-D} and the more down-to-earth \ref{subsect:alternative}, containing 
main portions of the arguments of Theorems A and B, respectively, provide motivation and context for the computations of \ref{subsect:systemsAkl}, while \ref{subsect:morenonvanishing} records a more set theoretically involved refinement of the latter.
Put differently, in Subsections \ref{subsect:condensedbackground}, \ref{subsect:systemsAkl}, and \ref{subsect:alternative}, we presume only a basic familiarity with homological algebra (\cite[Ch.\ 1--3]{Weibel}, say) and set theory,
 while in Subsections \ref{subsect:pro-to-D} and \ref{subsect:morenonvanishing} (both of which may be skimmed in a first reading), we assume a little more familiarity, respectively, with each.

Sections \ref{sec:additivity} and \ref{sect:products} may be read independently of one another; the first of these divides into two subsections, neither of which presumes any further background of the reader.
Its first subsection explores the implications of the results of Subsection \ref{subsect:systemsAkl} along the lines discussed before Theorem C above, which it also proves. A more optional second subsection contextualizes these implications within a broader discussion of strong homology.

Next, after a brief introduction, the two subsections of Section \ref{sect:products} contain 
our proof of Theorem D, with the first containing the proofs of its first paragraph and the second containing that of its second. These portions of the paper require a bit more of the reader.
Subsection \ref{subsect:sheaves} involves sheaf theory at roughly the level of \cite[Ch.\ 2]{Iversen}.
Subsection \ref{subsect:products}, like any discussion of condensed anima, then necessitates an at least occasionally $\infty$-category theoretic vocabulary; relevant references are provided along the way.
The paper then concludes with a list of open questions; among these, those amounting essentially to problems in infinitary combinatorics predominate.

We hope by this outline to have suggested the kinds of math which our text engages; its more fundamental roots in the more contemporary field of condensed mathematics, though, merit a few further words of comment.
Core references for condensed mathematics are the Scholze and Clausen--Scholze lecture notes \cite{CS1, CS2, CS3} and the online Clausen--Scholze lecture series \cite{Masterclass} and \cite{analytic_stacks}; multiple theses \cite{Aparicio_condensed_21, Asgeirsson, Mair} and course notes \cite{UChic,Columb,JHop} usefully supplement this material. In so rapidly expanding a field, however, reference lists take on a markedly provisional character. Contemporaneous with the development of condensed mathematics has been that of Barwick and Haine's closely related \emph{pyknotic mathematics}; among the fundamental references here are \cite{pyknotic} and the MSRI lecture series \cite{MSRI}, although our comments above on references proliferating faster than we can confidently track them continue to apply.
Below, we have uniformly framed our results in condensed terms, both for simplicity and for the essentially contingent reason that our first encounter with this world of ideas was by way of an email from Dustin Clausen and Peter Scholze; these results' translations to the pyknotic framework are, in general, straightforward.
For that initial contact and for much rich and generous conversation thereafter, we wish to underscore our thanks to Clausen and Scholze.
For further discussion and clarifications of the material of Sections \ref{subsect:pro-to-D} and \ref{subsect:products}, we are grateful to Lucas Mann as well.

Lastly, a word on some of our most fundamental notations. We write $P(X)$ for the power set, and $|X|$ for the cardinality, of a set $X$. If $X$ is partially ordered then we write $\mathrm{cf}(X)$ for the least cardinality of a cofinal subset of $X$, while if $\kappa$ is a cardinal then $\mathrm{Cof}(\kappa)$ denotes the class of ordinals $\xi$ for which $\mathrm{cf}(\xi)=\kappa$. If 
$\beta$ is an ordinal, then $\lim(\beta)$ will denote the set of all limit ordinals less than $\beta$. We note that we also use $\lim$ to denote the inverse limit functor; the correct interpretation will always be clear from context. If $X$ is well-ordered then $\mathrm{otp}(X)$ denotes the unique ordinal which is order-isomorphic to $X$. We write ${^\kappa}X$ for the collection of functions from $\kappa$ to $X$, although we will at times also exponentiate on the right. Relatedly, $[X]^n$ and $[X]^{<\omega}$ denote, respectively, the collections of size-$n$ subsets, and of finite subsets, of $X$; when $X$ is linearly ordered, we will identify these with ordered tuples in the natural way. If $\mathcal{C}$ is a category, then we write $X\in\mathcal{C}$ to indicate that $X$ is an object of $\mathcal{C}$.

\section{The category $\mathsf{D}^{\geq 0}\mathsf{(Cond(Ab))}$ and the systems $\mathbf{A}_{\kappa,\lambda}$}
\label{sec:pro}
\subsection{A brief survey of the relevant categories}
\label{subsect:condensedbackground}

Let us begin by recalling the notion of an \emph{abelian category}: a category $\mathcal{A}$ is abelian if it possesses
\begin{enumerate}
\item a zero object (i.e., an object which is both initial and terminal),
\item binary biproducts (i.e., coinciding binary products and sums), and
\item kernels and cokernels, such that
\item all of $\mathcal{A}$'s monomorphisms are kernels, and all of its epimorphisms are cokernels.
\end{enumerate}
The paradigmatic example, of course, is the category $\mathsf{Ab}$ of abelian groups, and we may view conditions (1) -- (4) as abstracting from $\mathsf{Ab}$ the categorical equipment for basic homological algebra.
More general examples include the category $\mathsf{Mod}_R$ of right $R$-modules for a fixed ring $R$, or the category of presheaves of abelian groups over a fixed space $X$ or category $\mathcal{D}$, with inverse systems of abelian groups over a fixed partial order $I$ a special case of the latter.
Letting $I$ range over the class of directed sets determines the objects --- frequently denoted
\[\textnormal{``}\lim_{i\in I}\!\textnormal{''}\,A_i\] --- of the \emph{pro-category} $\mathsf{Pro}(\mathsf{Ab})$ of abelian groups, whose morphisms are formally described by the formula
\begin{align}
\label{eq:limcolim}
\mathrm{Hom}_{\mathsf{Pro}(\mathsf{Ab})}\left(\textnormal{``}\lim_{i\in I}\!\textnormal{''}\,A_i,\textnormal{``}\lim_{j\in J}\!\textnormal{''}\,B_j\right)=\lim_{j\in J}\colim_{i\in I}\mathrm{Hom}_{\mathsf{Ab}}\left(A_i,B_j\right)
\end{align}
\noindent More generally and intuitively, $\mathsf{Pro}(\mathcal{C})$ freely adjoins cofiltered limits to its base category $\mathcal{C}$, as its notation suggests, and it, too, is abelian, if $\mathcal{C}$ itself is.
Moreover, if $\mathcal{C}$ possesses cofiltered limits then the inverse limit functor
\[\textnormal{``}\lim_{i\in I}\!\textnormal{''}\,C_i\,\mapsto\,\lim_{i\in I} C_i\]
is well-defined on $\mathsf{Pro}(\mathcal{C})$.
In particular (as a closer inspection of the morphisms of $\mathsf{Pro}(\mathcal{C})$ then implies), $\lim_{i\in J} C_i\cong\lim_{i\in I} C_i$ whenever $J$ is a cofinal subset of $I$. For more on the subject of pro-categories (often via their dual notion $\mathsf{Ind}(\mathcal{C}^{\mathrm{op}})=(\mathsf{Pro}(\mathcal{C}))^{\mathrm{op}}$, which plays, in turn, a significant conceptual role in Section \ref{sect:products} below), see \cite[I.8]{SGA4}, \cite[Appendix]{ArtinMazur}, \cite[VI]{Johnstone}, \cite[I.1]{Mardesic_strong_00}, or \cite[\S 6]{Kashiwara_Categories_06}.

By the phrase \emph{basic homological algebra} above, we largely mean the following sequence: any abelian category $\mathcal{A}$ gives rise to the abelian category $\mathsf{Ch}^{\star}(\mathcal{A})$ of cochain complexes in $\mathcal{A}$ (here the ${^\star}$ may be read to denote any of the standard boundedness conditions, or none at all); identifying chain homotopic chain maps $f^\bullet,g^\bullet:A^\bullet\to B^\bullet$ determines in turn the \emph{homotopy category} $\mathsf{K}^{\star}(\mathcal{A})$ of $\mathcal{A}$, and localizing $\mathsf{K}^{\star}(\mathcal{A})$ (or, equivalently, $\mathsf{Ch}^{\star}(\mathcal{A})$) with respect to the class of \emph{quasi-isomorphisms} $f^\bullet:A^\bullet\to B^\bullet$ --- i.e., of maps $f^\bullet$ inducing isomorphisms $f^n_*:\mathrm{H}^n(A^\bullet)\to\mathrm{H}^n(B^\bullet)$ of all the cohomology groups of $A^\bullet$, $B^\bullet$ --- determines the \emph{derived category} $\mathsf{D}^{\star}(\mathcal{A})$ of $\mathcal{A}$.
The latter two operations exhibit strong analogies with identifications of topological spaces up to \emph{homotopy equivalence} and \emph{weak homotopy equivalence}, respectively; as in both of those cases, they render legible relations otherwise invisible among the objects of the source category.
Observe next that any additive functor $F:\mathcal{A}\to\mathcal{B}$ between abelian categories naturally induces functors $\mathsf{Ch}(F):\mathsf{Ch}^{\star}(\mathcal{A})\to\mathsf{Ch}^{\star}(\mathcal{B})$ and $\mathsf{K}(F):\mathsf{K}^{\star}(\mathcal{A})\to\mathsf{K}^{\star}(\mathcal{B})$, but that there is, in general, no $G:\mathsf{D}^{\star}(\mathcal{A})\to\mathsf{D}^{\star}(\mathcal{B})$ making the following square commute:

\begin{center}

\begin{tikzcd}
\mathsf{K}^{\star}(\mathcal{A}) \arrow[d, "\mathsf{K}(F)"'] \arrow[r, "\ell_{\mathcal{A}}"] & \mathsf{D}^{\star}(\mathcal{A}) \arrow[d, "G", dashed] \\
\mathsf{K}^{\star}(\mathcal{B}) \arrow[r, "\ell_{\mathcal{B}}"']                            & \mathsf{D}^{\star}(\mathcal{B})                       
\end{tikzcd}
\end{center}

\noindent What may exist in its stead is a $G$ so that $G\circ\ell_{\mathcal{A}}$ optimally approximates $\ell_{\mathcal{B}}\circ K(F)$ (in the sense of forming a terminal or initial object in the appropriate category of functors $\mathsf{K}^{\star}(\mathcal{A})\to\mathsf{D}^{\star}(\mathcal{B})$ over or under $\ell_{\mathcal{B}}\circ K(F)$, respectively); such a $G$ is then the (left or right) \emph{derived functor} ($\mathrm{L}F$ or $\mathrm{R}F$, respectively) of $F$.
Functors of particular interest in what follows will be $\mathrm{Hom}:\mathcal{A}^{\mathrm{op}}\times\mathcal{A}\to\mathsf{Ab}$ and $\mathrm{lim}:\mathsf{Pro}(\mathcal{A})\to\mathcal{A}$ for abelian categories $\mathcal{A}$; the cohomology groups of their derived functors $\mathrm{RHom}(A,B)$ and $\mathrm{Rlim}\,\mathbf{X}$ define the classical expressions $\mathrm{Ext}^n(A,B)$ and $\mathrm{lim}^n\,\mathbf{X}$, respectively, for $n>0$.

A \emph{non-example} of an abelian category is the category $\mathsf{TopAb}$ of topological abelian groups: writing $\mathbb{R}^{\mathrm{disc}}$ and $\mathbb{R}$ for the group $\mathbb{R}$ endowed with the discrete and standard topologies, respectively, the map
\begin{align}
\label{eq:toyexample}\mathrm{id}:\mathbb{R}^{\mathrm{disc}}\to\mathbb{R}
\end{align}
witnesses the failure of condition (4) above.
The choice of $\mathbb{R}$ here is, of course, essentially arbitrary; the same issue arises for any fixed algebraic object admitting multiple topologies, some finer than others. These examples epitomize a tension between topological data and any practice of algebra partaking of the constructions of the previous paragraph; they are standard within the condensed literature and cue one of its main motivating questions: \emph{How to do algebra when rings/modules/groups carry a topology?} \cite[p.\ 6]{CS1}.
We turn now, with this as background, to a brief introductory overview of the condensed framework. This subsection's remainder is entirely drawn from \cite[I--II]{CS1}.

Write $\mathsf{CHaus}$ to denote the category of compact Hausdorff spaces and $\mathsf{ProFin}$ to denote its full subcategory of Stone spaces, i.e., of totally disconnected compact Hausdorff spaces; the notation derives from this subcategory's equivalence with the pro-category of finite sets (to see this, view the objects of the latter as inverse systems of discrete topological spaces and take their limits). Write $\mathsf{ED}\subset\mathsf{ProFin}$ for the category of extremally disconnected compact Hausdorff spaces, i.e., for the full subcategory of $\mathsf{CHaus}$ spanned by its projective objects (see \cite{Gleason}).
Letting $\kappa$ range through the cardinals filters these large categories by their essentially small subcategories $\mathsf{ED}_\kappa\subseteq\mathsf{ProFin}_\kappa$ spanned by their objects of topological weight less than $\kappa$.
The finite jointly surjective covers determine a Grothendieck topology $\tau$ on $\mathsf{ProFin}$, and we write $\tau$ also for its restriction to any subcategory of $\mathsf{ProFin}$.

\begin{definition}
\label{def:condset}
Fix an infinite cardinal $\kappa$ and let $\mathcal{C}=\mathsf{ProFin}_\kappa$; a \emph{$\kappa$-condensed set} $F$ is then a $\mathsf{Set}$-valued sheaf on the site $(\mathcal{C},\tau)$.
More concretely, $F\in\mathrm{Fun}(\mathcal{C}^{\mathrm{op}},\mathsf{Set})$ is $\kappa$-condensed iff it satisfies the following conditions:
\begin{enumerate}
\item $F(\varnothing)=*$ (i.e., $F(\varnothing)$ is a singleton, a terminal object of $\mathsf{Set}$);
\item the natural map $F(S\sqcup T)\to F(S)\times F(T)$ is a bijection for any $S,T\in\mathcal{C}$;
\item for any surjection $T\twoheadrightarrow S$ in $\mathcal{C}$ with induced maps $p_1,p_2$ from the fiber product $T\times_S T$ to $T$, the natural map $F(S)\to\{x\in F(T)\mid p_1^*(x)=p_2^*(x)\}$ is a bijection.
\end{enumerate}
Such $F$ form the objects of a category $\mathsf{Cond}(\mathsf{Set})_{\kappa}$, with morphisms the natural transformations between them.
For strong limit $\kappa\leq\lambda$ we then have natural embeddings $\mathsf{Cond}(\mathsf{Set})_{\kappa}\to\mathsf{Cond}(\mathsf{Set})_{\lambda}$, and the category $\mathsf{Cond}(\mathsf{Set})$ is the direct limit of this system of embeddings as $\kappa$ ranges over the class of strong limit cardinals.
\end{definition}
A frequently useful observation is the following: by coinitiality, its restriction to extremally disconnected sets forms a basis for the topology $\tau$, hence condensed sets may be equivalently defined by taking $\mathcal{C}=\mathsf{ED}_\kappa$ in Definition \ref{def:condset} above.
Over this more restricted category, the sheaf condition reduces to the first two conditions of Definition \ref{def:condset} (i.e., the third may be safely ignored), and this fact carries a number of pleasing consequences within the theory. The fact, on the other hand, that the category $\mathsf{ED}$ doesn't in general possess products is the source of some unpleasantness, and this deficiency forms the main concern of Section \ref{sect:products} below.

Let $X$ be a topological space which is T1, meaning that any point of $X$ forms a closed set.
Then the restricted Yoneda functor
$$\underline{X}:\mathsf{ProFin}^{\mathrm{op}}\to\mathsf{Set}:S\mapsto\mathrm{Cont}(S,X),$$
where $\mathrm{Cont}(S,X)$ denotes the set of continuous maps from $S$ to $X$,
is a condensed set, as the reader is encouraged to verify.
In fact, the maps $X\mapsto\underline{X}$ define a fully faithful embedding of the category of compactly generated weakly Hausdorff topological spaces into the category $\mathsf{Cond}(\mathsf{Set})$; this is the first fundamental fact about the theory (\cite[Prop.\ 1.7]{CS1}).
For the second, define the category $\mathsf{Cond}(\mathsf{Ab})$ either by replacing $\mathsf{Set}$ with $\mathsf{Ab}$ in Definition \ref{def:condset} or by restricting to the abelian group objects of $\mathsf{Cond}(\mathsf{Set})$. We then have \cite[Thm.\ 1.10]{CS1}:
\begin{theorem}
$\mathsf{Cond}(\mathsf{Ab})$ is a complete and cocomplete abelian category.
\end{theorem}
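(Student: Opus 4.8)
The plan is to bootstrap from the small-site case, using the defining presentation $\mathsf{Cond}(\mathsf{Set})=\colim_\kappa\mathsf{Cond}(\mathsf{Set})_\kappa$ (colimit over strong limit cardinals $\kappa$), and correspondingly $\mathsf{Cond}(\mathsf{Ab})=\colim_\kappa\mathsf{Cond}(\mathsf{Ab})_\kappa$. First I would recall that for each infinite cardinal $\kappa$ the category $\mathsf{Cond}(\mathsf{Ab})_\kappa$ is precisely the category of $\mathsf{Ab}$-valued sheaves on the small site $(\mathsf{ED}_\kappa,\tau)$, equivalently the category of abelian group objects of the Grothendieck topos $\mathsf{Cond}(\mathsf{Set})_\kappa$; as such it is a Grothendieck abelian category. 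Concretely: kernels are computed objectwise (and the objectwise kernel of a map of $\kappa$-condensed groups is again $\kappa$-condensed), cokernels and images are the sheafifications of the objectwise ones, the coincidence of image and coimage is inherited from $\mathsf{Ab}$ because sheafification is exact, and that same exactness promotes the objectwise facts ``every monomorphism is a kernel'' and ``every epimorphism is a cokernel'' to $\mathsf{Cond}(\mathsf{Ab})_\kappa$; finally $\mathsf{Cond}(\mathsf{Ab})_\kappa$ has all small limits (objectwise) and colimits (sheafify the objectwise colimit), a generator, and exact filtered colimits.

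Next I would invoke the fact, established in \cite{CS1}, that for strong limit cardinals $\kappa\le\lambda$ the transition functor $\mathsf{Cond}(\mathsf{Ab})_\kappa\to\mathsf{Cond}(\mathsf{Ab})_\lambda$ is fully faithful, exact, and preserves all small limits and colimits. Granting this, the passage to the union is purely formal. Given a small diagram $D\colon I\to\mathsf{Cond}(\mathsf{Ab})$, its set-indexed family of objects lies in $\mathsf{Cond}(\mathsf{Ab})_\kappa$ for $\kappa$ a strong limit cardinal exceeding the topological weights of all the $D(i)$; so $D$ factors through $\mathsf{Cond}(\mathsf{Ab})_\kappa$, where it has a limit $L$ and a colimit $C$. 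Since every transition embedding into a larger $\mathsf{Cond}(\mathsf{Ab})_\lambda$ preserves limits and colimits, $L$ and $C$ remain a limit and a colimit of $D$ at each stage $\lambda\ge\kappa$; and since any cone or cocone over $D$ in $\mathsf{Cond}(\mathsf{Ab})$ already lives at some stage $\lambda$, $L$ and $C$ are a limit and a colimit of $D$ in $\mathsf{Cond}(\mathsf{Ab})$ itself. The identical stagewise reasoning shows $\mathsf{Cond}(\mathsf{Ab})$ is abelian: a zero object, the biproduct of any two objects, and the kernel and cokernel of any morphism are all formed at a sufficiently large stage and preserved upward by exactness, and axiom (4) holds because whether a fixed morphism is a kernel (resp.\ cokernel) is detected at, and transported between, all large enough stages.

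The step I expect to be the genuine obstacle --- the one specific to condensed mathematics rather than to general sheaf theory --- is the input cited from \cite{CS1}: that the transition functors $\mathsf{Cond}(\mathsf{Set})_\kappa\to\mathsf{Cond}(\mathsf{Set})_\lambda$, and hence their additive analogues, are exact and both continuous and cocontinuous, i.e.\ that (co)limits are genuinely computed at a bounded stage. This requires an explicit handle on the embedding --- for instance, that each $S\in\mathsf{ED}_\lambda$ is, up to the relevant covers, controlled by objects of $\mathsf{ED}_\kappa$, or alternatively the description of condensed sets as sheaves on all of $\mathsf{CHaus}$ --- rather than mere abstract nonsense, and I would quote it rather than reprove it. With that fact in hand, everything else is exactly the standard sheaf-theoretic package recalled above.
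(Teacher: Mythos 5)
Your argument is correct, but note that the paper does not prove this statement at all: it is quoted directly as \cite[Thm.\ 1.10]{CS1}, and your proof sketch is essentially the standard argument from that source (each $\mathsf{Cond}(\mathsf{Ab})_\kappa$ is a Grothendieck abelian category of sheaves on a small site, and the fully faithful, exact, limit- and colimit-preserving transition functors of \cite[Prop.\ 2.9]{CS1} let every small diagram, cone, and cocone be computed at a single stage). One phrasing slip: objects of $\mathsf{Cond}(\mathsf{Ab})$ are sheaves, not spaces, so they do not have ``topological weights''; what you want is simply that each $D(i)$ lies in some $\mathsf{Cond}(\mathsf{Ab})_{\kappa_i}$ by the colimit presentation, and one takes a strong limit $\kappa$ above all the $\kappa_i$.
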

In particular, it is an abelian category into which the category, for example, of locally compact abelian groups embeds: within it, the latter's deficiencies epitomized by line \ref{eq:toyexample} above find resolution. And it is a category to which the derived machinery described above readily applies (in particular, it possesses enough projectives; see Section 4); with this observation, we arrive to the following subsection.
\subsection{The functor $\mathsf{Pro}(\mathsf{D}(\mathsf{Ab}))^{\mathtt{b}}\to\mathsf{D}(\mathsf{Cond}(\mathsf{Ab}))$}
\label{subsect:pro-to-D}
It is plain from the preceding discussion that the category of abelian groups fully faithfully embeds into $\mathsf{Cond}(\mathsf{Ab})$; so too, by way of the category of locally compact abelian groups, does the category of \emph{profinite} abelian groups.
Whether the category $\mathsf{Pro}(\mathsf{Ab})$ does as well is a natural next question; its most encompassing formulation is at the derived level, as follows:

\begin{question}
\label{ques:pro-D}
Is the natural map $\mathsf{Pro}(\mathsf{D}\mathsf{(Ab)})^{\mathtt{b}}\to \mathsf{D}\mathsf{(Cond(Ab))}$ given by
\begin{align}
\label{eq:Q1}
\mathrm{``lim}\textnormal{''}\,C_i\mapsto\mathrm{Rlim}\,\underline{C}_i\end{align}
fully faithful?
\end{question}
A few remarks are immediately in order. First: as, in general, it's only at the $\infty$-category theoretic level that the $\mathsf{Pro}( - )$ operation preserves derived structure (cf.\ \cite[\S 15.4]{Kashiwara_Categories_06}), it's at this level that Question \ref{ques:pro-D} should ultimately be posed.
We won't belabor this point, since it only momentarily affects our reformulation of the question below; more generally, as our introduction suggested, a relaxed or ``naive'' reading relationship (cf.\ \cite[Ch.\ I.1]{Cisinskietal}; the analogy, of course, is with naive set theory) to the $\infty$-category theoretic manipulations of both this subsection and Section \ref{subsect:products} is broadly compatible with this paper's main aims.
This is fortunate, since any rigorous review of the subject of $\infty$-categories is beyond the scope of this paper.
Lurie's \cite{LurieHTT} or its more recent online reworking \cite{kerodon} are standard one-stop references; for the subject of $\infty$-derived categories, see \cite[\S 1.3]{HA}.
For a highly economical review of much of that material, see Appendix A of \cite{Mann}, which we cite more particularly in Section \ref{sect:products} below.
In any case, a main point in the present context is that, when interpreted at the $\infty$-derived level, the $\mathrm{Rlim}$ of (\ref{eq:Q1}) is simply a limit; thus at this level, the functor (\ref{eq:Q1}) is plainly limit-preserving, and Question \ref{ques:pro-D} is, modulo size issues, one of whether $\mathsf{Pro}(\mathsf{D}\mathsf{(Ab)})^{\mathtt{b}}$ is a reflective subcategory of $\mathsf{D}\mathsf{(Cond(Ab))}$ \cite[\S 5.2.7]{LurieHTT}.\footnote{The issues in question are that the constituent categories aren't presentable, so that the  standard lemma \cite[Cor.\ 5.5.2.9]{LurieHTT} completing the characterization doesn't apply.}
We will expand on the question's significance in this subsection's conclusion below.

Second: we should clarify what it is that we mean by the superscript $\mathtt{b}$.
We begin by noting that the variant of Question \ref{ques:pro-D} given by omitting $\mathtt{b}$ admits an easy negative answer; thanks to Lucas Mann for the following example. For any complex $E$,
$$\mathrm{Hom}_{\mathsf{Pro(D(Ab))}}\left(``\prod_{\mathbb{N}}\textnormal{''}\,\mathbb{Z}[-n],E\right)=\bigoplus_\mathbb{N}\mathrm{Hom}_{\mathsf{D(Ab)}}(\mathbb{Z}[-n],E);$$
see \cite[2.6.2]{Kashiwara_Categories_06}.
Here $\mathbb{Z}[-n]$ denotes the cochain complex valued $\mathbb{Z}$ in the degree $n$ and $0$ elsewhere and the natural map $\bigoplus_{\mathbb{N}}\mathbb{Z}[-n]\to\prod_{\mathbb{N}}\mathbb{Z}[-n]$ is easily seen to be a quasi-isomorphism, with the consequence that
\begin{align*}
\mathrm{Hom}_{\mathsf{D(Cond(Ab))}}\left(\prod_{\mathbb{N}}\underline{\mathbb{Z}[-n]},\underline{E}\right) & =\mathrm{Hom}_{\mathsf{D(Cond(Ab))}}\left(\bigoplus_{\mathbb{N}}\underline{\mathbb{Z}[-n]},\underline{E}\right) \\ & =\prod_{\mathbb{N}}\mathrm{Hom}_{\mathsf{D(Cond(Ab))}}\left(\underline{\mathbb{Z}[-n]},\underline{E}\right).
\end{align*}
In each of the above cases the concluding $\mathrm{Hom}$ terms are $\mathrm{H}^n(E)$, hence if the cohomology of $E$ is nonvanishing in unboundedly many degrees, the two leftmost terms above are non-isomorphic, and this instance of the functor (\ref{eq:Q1}) isn't full.
Restricting attention to the full subcategory of $\mathsf{Pro(D(Ab))}$ spanned by inverse systems of uniformly bounded complexes, though, averts this issue, and it is this category which we have denoted above by $\mathsf{Pro(D(Ab))}^{\mathtt{b}}$.
So posed, Question \ref{ques:pro-D} reduces to a more computationally concrete and interesting question, by steps which we now describe.

First, note that one may without loss of generality restrict to complexes bounded from below by $0$, and hence to the derived category $\mathsf{D}^{\geq 0}$; observe next that by equation \ref{eq:limcolim}, our question is then that of whether for all inverse systems of uniformly bounded cochain complexes $(C_i)_{i\in I}$ and $(E_j)_{j\in J}$ in $\mathsf{D}^{\geq 0}(\mathsf{Ab})$ the natural map
$$\lim_{j\in J}\colim_{i\in I}\mathrm{Hom}_{\mathsf{Pro}(\mathsf{D}^{\geq 0}(\mathsf{Ab}))}(C_i,E_j)\to \mathrm{Hom}_{\mathsf{D}^{\geq 0}(\mathsf{Cond(Ab}))}\left(\lim_{i\in I} \underline{C}_i,\lim_{j\in J} \underline{E}_j\right)$$
is an isomorphism.
Again all operations are to be read as $\infty$-categorical. Factoring out the rightmost $\mathrm{lim}$ reduces our question to that of whether
$$\colim_{i\in I}\mathrm{Hom}_{\mathsf{Pro}(\mathsf{D}^{\geq 0}(\mathsf{Ab}))}(C_i,E)\to \mathrm{Hom}_{\mathsf{D}^{\geq 0}(\mathsf{Cond(Ab}))}\left(\lim_{i\in I} \underline{C}_i,\underline{E}\right)$$
is an isomorphism for every bounded $E$ in $\mathsf{D}^{\geq 0}(\mathsf{Ab})$ (note that by the example above, this is in general false if $E$ is unbounded).
By passing the $\colim$ back across the parentheses and recalling that $\mathrm{RHom}$ commutes with all finite limits and colimits, we may by \cite[4.4.2.7]{LurieHTT} further reduce our question to that of whether
\begin{align*}\mathrm{RHom}_{\mathsf{Pro}(\mathsf{D}^{\geq 0}(\mathsf{Ab}))}\left(\text{``}\prod_{i\in I}\text{''}\,C_i,E\right)=\mathrm{RHom}_{\mathsf{D}^{\geq 0}\mathsf{(Cond(Ab))}}\left(\prod_{i\in I} \underline{C}_i, \underline{E}\right)\end{align*}
for all $(C_i)_{i\in I}$ and $E$ as above.
Standard d\'{e}vissage arguments (cf.\ \cite[Prop.\ 15.4.2]{Kashiwara_Categories_06}) using the truncation functors $\tau^{\geq n}$ and fiber sequences $\tau^{<n}E\to E\to \tau^{\geq n}E$ allow us to further reduce to the case in which $E$ is concentrated in a single degree; similarly for the first coordinate.
Working with $\mathrm{RHom}$ allows us to further assume, without loss of generality, that that degree in both cases is zero.
We have thus reduced our question to the following.

\emph{For abelian groups $G_i$ $(i\in I)$ and $H$, does the following equality hold:}
\begin{align}\label{eq:Q1'}\mathrm{RHom}_{\mathsf{Pro}(\mathsf{D}^{\geq 0}(\mathsf{Ab}))}\left(\text{``}\prod_{i\in I}\text{''}\,G_i,H\right)=\mathrm{RHom}_{\mathsf{D}^{\geq 0}\mathsf{(Cond(Ab))}}\left(\prod_{i\in I} \underline{G}_i, \underline{H}\right)?\end{align}
This question further simplifies as follows. Observe first that free resolutions of arbitrary groups induce long exact sequences reducing the question to one for free groups (see the proof of Theorem \ref{thm:fromstacks} for more concrete argumentation along these lines); we may therefore assume above that each $G_i=\bigoplus_J\mathbb{Z}$ and $H=\bigoplus_K\mathbb{Z}$ for some sets $J$ and $K$.
In this case, the left-hand side is concentrated in the degree zero, for the reason that
\begin{align*}
\mathrm{RHom}_{\mathsf{Pro}(\mathsf{D}^{\geq 0}(\mathsf{Ab}))}\left(\text{``}\prod_{i\in I}\text{''}\,G_i,H\right)=\bigoplus_{i\in I}\mathrm{RHom}_{\mathsf{D}^{\geq 0}(\mathsf{Ab})}\left(G_i,H\right),
\end{align*}
together with the fact that each $G_i$ is free.
Moreover, the right-hand term readily identifies in the zeroth degree with $\bigoplus_I\prod_J H$.
It is then easy to see that this, in turn, equals the zeroth degree of the right-hand side of equation \ref{eq:Q1'}, namely $\mathrm{lim}\,\mathbf{A}_{I,J}[H]$, in the language of a reformulation which we now describe.

Recall that $[J]^{<\omega}$ denotes the collection of finite subsets of $J$.
For any function $f:I\to [J]^{<\omega}$, let $X(f)=\{(i,j)\in I\times J\mid j\in f(i)\}$, and let ${^I}([J]^{<\omega})$ denote the collection of all functions from $I$ to $[J]^{<\omega}$, ordered by setting $f\leq g$ if and only if $X(f)\subseteq X(g)$. Now observe that
$$\prod_{i\in I} G_i=\underset{f\in {^I}([J]^{<\omega})}{\mathrm{colim}} \prod_{i\in I} \mathbb{Z}^{f(i)}\,.$$
By way of this observation, one may rewrite the right-hand side of equation \ref{eq:Q1'} as
$$\mathrm{Rlim}_{f\in {^I}([J]^{<\omega})}
\,\mathrm{RHom}_{\mathsf{D}^{\geq 0}\mathsf{(Cond(Ab))}}\left(\prod_{i\in I} \underline{\mathbb{Z}^{f(i)}}, \underline{H}\right)$$
(cf.\ \cite[Prop.\ 3.6.3]{Prosmans}). Observe now that, since $\prod_{i\in I} \underline{\mathbb{Z}^{f(i)}}$ is projective in the subcategory $\mathsf{Solid}$ of $\mathsf{Cond(Ab)}$, and that the derived category of the former fully and faithfully embeds in that of the latter (\cite[Thm.\ 5.8]{CS1}),
the interior $\mathrm{RHom}$ above concentrates in degree zero, and may be computed in that degree to equal $\bigoplus_{i\in I}\bigoplus_{f(i)} H=\bigoplus_{X(f)} H$. This last point follows essentially from \cite[Prop.\ 97.7 or Cor.\ 94.5]{Fuchs_Infinite_73}.
We arrive in this way to the derived limits of the inverse systems $\mathbf{A}_{I,J}[H]$, the focus of the following subsection.
As will soon grow clear, these limits are substantially more complicated than the left-hand side expressions of line \ref{eq:Q1'}; as such, they exemplify the computational import of Question \ref{ques:pro-D}: it is asking whether, on a significant subcategory of $\mathsf{D}^{\geq 0}\mathsf{(Cond(Ab))}$, the $\mathrm{RHom}_{\mathsf{D}^{\geq 0}\mathsf{(Cond(Ab))}}$ functor admits a more transparent or manageable description.
That subcategory $\mathsf{Pro}(\mathsf{D}\mathsf{(Ab)})^{\mathtt{b}}$ is significant not only for formalizing questions about limits, but for its centrality (hinted at above) to the intermediate subcategory $\mathsf{D}^{\geq 0}(\mathsf{Solid})$, one of the most critical in condensed mathematics (see \cite[\S 5, 6]{CS1}, \cite[\S 2]{CS2}).
In essence, the above conversion was via the case of finite $J$, and the rather nontrivial computation of that case is a foundational result in the theory of solid abelian groups. Question \ref{ques:pro-D} may be read as one of how that computation extends.

\subsection{The systems $\mathbf{A}_{\kappa,\lambda}$}
\label{subsect:systemsAkl}
Readers skipping over \ref{subsect:pro-to-D} to this subsection should refer, when necessary, to the previous paragraph for any unfamiliar notation.

\begin{definition}
\label{def:Akappalambda}
Let $I$ and $J$ be nonempty sets; for any function $f:I\to [J]^{<\omega}$, let
$$A_f=\bigoplus_{X(f)}\mathbb{Z}\,.$$
For any $f\leq g$ in ${^I}([J]^{<\omega})$, there is a natural projection map
$$p_{fg}:A_g\to A_f;$$
these together comprise the inverse system
$$\mathbf{A}_{I,J}=\left(A_f,p_{fg},{^I}([J]^{<\omega})\right).$$
For any abelian group $H$, let $\mathbf{A}_{I,J}[H]$ denote the inverse system defined by replacing each instance of $\mathbb{Z}$ above with $H$; similarly for the systems defined in parallel below.
\end{definition}
We may now summarize the reasoning of Subsection \ref{subsect:pro-to-D} in the following terms:
\begin{proposition}
\label{prop:fullifvanishing}
If the natural map $\mathsf{Pro}(\mathsf{D}^{\geq 0}\mathsf{(Ab)})^{\mathtt{b}}\to \mathsf{D}^{\geq 0}\mathsf{(Cond(Ab))}$ is fully faithful then $\mathrm{lim}^n\,\mathbf{A}_{I,J}[H]=0$ for all free abelian groups $H$, sets $I$ and $J$, and $n>0$. \qed
\end{proposition}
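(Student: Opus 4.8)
The plan is to read the chain of identifications of Subsection~\ref{subsect:pro-to-D} as a direct computation rather than as a reduction. Assume the functor~(\ref{eq:Q1}) is fully faithful, fix nonempty sets $I$, $J$ and a free abelian group $H$, and put $G_i=\bigoplus_J\mathbb{Z}$ for every $i\in I$. Both $\textnormal{``}\prod_{i\in I}\textnormal{''}\,G_i$ --- the inverse system, over finite subsets $F\subseteq I$, of the finite products $\prod_{i\in F}G_i$, each concentrated in degree $0$ --- and $H$ lie in $\mathsf{Pro}(\mathsf{D}^{\geq 0}(\mathsf{Ab}))^{\mathtt{b}}$, so full faithfulness, interpreted at the $\infty$-derived level, yields at once equation~(\ref{eq:Q1'}) for these objects:
\[
\mathrm{RHom}_{\mathsf{Pro}(\mathsf{D}^{\geq 0}(\mathsf{Ab}))}\!\left(\textnormal{``}\prod_{i\in I}\textnormal{''}\,G_i,\,H\right)\;\cong\;\mathrm{RHom}_{\mathsf{D}^{\geq 0}\mathsf{(Cond(Ab))}}\!\left(\prod_{i\in I}\underline{G}_i,\,\underline{H}\right).
\]
(No d\'{e}vissage is needed for this direction, since we only ever evaluate the functor on this one pair of objects.)

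I would then note that the left-hand side is concentrated in degree zero: by~(\ref{eq:limcolim}) (equivalently \cite[2.6.2]{Kashiwara_Categories_06}) it equals $\bigoplus_{i\in I}\mathrm{RHom}_{\mathsf{D}^{\geq 0}(\mathsf{Ab})}(G_i,H)$, and each summand vanishes in positive degrees because $G_i$ is free, hence projective. For the right-hand side I would transcribe the identifications already carried out in Subsection~\ref{subsect:pro-to-D}: using $\prod_{i\in I}\underline{G}_i=\colim_{f\in{^I}([J]^{<\omega})}\prod_{i\in I}\underline{\mathbb{Z}^{f(i)}}$ and the fact that $\mathrm{RHom}(-,\underline{H})$ carries this colimit to a limit (cf.\ \cite[Prop.\ 3.6.3]{Prosmans}), the right-hand side becomes $\mathrm{Rlim}_{f}\,\mathrm{RHom}_{\mathsf{D}^{\geq 0}\mathsf{(Cond(Ab))}}(\prod_{i\in I}\underline{\mathbb{Z}^{f(i)}},\underline{H})$; each interior term is concentrated in degree zero --- by the projectivity of $\prod_{i\in I}\underline{\mathbb{Z}^{f(i)}}$ in $\mathsf{Solid}$, the full faithfulness of the embedding $\mathsf{D}(\mathsf{Solid})\hookrightarrow\mathsf{D}(\mathsf{Cond(Ab)})$ \cite[Thm.\ 5.8]{CS1}, and \cite[Prop.\ 97.7 or Cor.\ 94.5]{Fuchs_Infinite_73} --- and equals $\bigoplus_{X(f)}H$, i.e.\ the $f$-th term of $\mathbf{A}_{I,J}[H]$, the transition maps matching the $p_{fg}$ of Definition~\ref{def:Akappalambda}. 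Hence the right-hand side computes $\mathrm{Rlim}\,\mathbf{A}_{I,J}[H]$, whose cohomology in degree $n$ is $\mathrm{lim}^n\,\mathbf{A}_{I,J}[H]$. Comparing the two sides gives $\mathrm{lim}^n\,\mathbf{A}_{I,J}[H]=0$ for every $n>0$.

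I expect no substantial obstacle: the proposition just isolates the contrapositively useful half of the correspondence developed in Subsection~\ref{subsect:pro-to-D}, and every nontrivial ingredient --- the Specker--Fuchs-type computation of $\mathrm{Hom}(\prod_{i\in I}\mathbb{Z}^{f(i)},H)$, the input from the theory of solid abelian groups behind the vanishing of the interior $\mathrm{RHom}$ in positive degrees, and the identification of the image of $\textnormal{``}\prod_{i\in I}\textnormal{''}\,G_i$ under~(\ref{eq:Q1}) with $\prod_{i\in I}\underline{G}_i$ --- has already been recorded there. The one point calling for a modicum of care is keeping the two indexing conventions apart: the left-hand colimit over finite $F\subseteq I$ collapses because $G_i$ is free and a finite product of free groups is their sum, whereas the colimit over ${^I}([J]^{<\omega})$ on the right does not, and it is exactly this asymmetry that channels the whole comparison through the derived limits of $\mathbf{A}_{I,J}[H]$.
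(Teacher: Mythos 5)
Your proposal is correct and follows essentially the same route as the paper: the paper's proof of this proposition is precisely the chain of identifications in Subsection~\ref{subsect:pro-to-D}, and you extract exactly the one implication needed — apply full faithfulness (at the level of mapping spectra, i.e.\ to the pairs $(\text{``}\prod_i\text{''}G_i,\,H[n])$) to get equation~(\ref{eq:Q1'}) for the specific objects at hand, then compute the left side as $\bigoplus_i\mathrm{RHom}(G_i,H)$ concentrated in degree zero and the right side as $\mathrm{Rlim}\,\mathbf{A}_{I,J}[H]$ via the colimit decomposition, solidity, and the Fuchs computation. Your observation that the d\'{e}vissage and free-resolution reductions are only needed for the converse direction is accurate and a harmless streamlining.
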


\begin{remark}
\label{rmk:AklandA}
Let $\kappa=|I|$ and $\lambda=|J|$ and observe that bijections $I\to\kappa$ and $J\to\lambda$ determine an isomorphism of the systems $\mathbf{A}_{I,J}$ and $\mathbf{A}_{\kappa,\lambda}$, so that we may without loss of generality restrict our attention to systems of the latter form.
Thus for any nonzero cardinals $\kappa$ and $\lambda$ define inverse systems $\mathbf{B}_{\kappa,\lambda}=(B_f,p_{fg},{^\kappa}([\lambda]^{<\omega}))$ by letting $$B_f=\prod_{X(f)}\mathbb{Z}$$
and $\left(\mathbf{B}/\mathbf{A})_{\kappa,\lambda}=(B_f/A_f,p_{fg},{^\kappa}([\lambda]^{<\omega}\right))$, where the maps $p_{fg}$ are in each case the obvious projection maps.
Note that $\mathrm{lim}\,\mathbf{A}_{\kappa,\lambda}\cong\bigoplus_\kappa\prod_{\lambda}\mathbb{Z}$ and that $\mathrm{lim}\,\mathbf{B}_{\kappa,\lambda}\cong\prod_{\kappa\times\lambda}\mathbb{Z}$.

Write $\mathbf{X}\restriction\Lambda$ for the restriction of an inverse system $\mathbf{X}$ to a suborder $\Lambda$ of its index-set.
Letting $[\omega]^{\mathrm{in}}$ denote the set of strictly initial segments of $\omega$ then induces an identification of $\mathbf{A}_{\omega,\omega}\restriction {^\omega}([\omega]^{\mathrm{in}})$ with the inverse system denoted $\mathbf{A}$ in \cite{Mardesic_additive_88} and \cite{Bergfalk_simultaneously_23}, for example, and much intervening literature; since $[\omega]^{\mathrm{in}}$ is cofinal in $[\omega]^{<\omega}$, it follows that $\mathbf{A}_{\omega,\omega}$ and $\mathbf{A}$ are isomorphic in the category $\mathsf{Pro}(\mathsf{Ab})$.
The lemmas and definitions preceding Theorem \ref{thm:limsofAkappalambda} below are routine generalizations to the systems $\mathbf{A}_{\kappa,\lambda}$ of a sequence standard within the literature of $\mathbf{A}$.
\end{remark}

\begin{lemma}
\label{lem:Bvanishing}
$\mathrm{lim}^n\,\mathbf{B}_{\kappa,\lambda}=0$ for $n>0$.
\end{lemma}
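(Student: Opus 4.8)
The plan is to show that the inverse system $\mathbf{B}_{\kappa,\lambda}$ is flasque (Mittag-Leffler in a strong sense), or more precisely that it admits a cofinal subsystem with surjective transition maps along a nice enough index set, so that its higher derived limits vanish. The key structural observation is that $B_f = \prod_{X(f)} \mathbb{Z}$ and, for $f \leq g$ in ${}^\kappa([\lambda]^{<\omega})$, the map $p_{fg}\colon B_g \to B_f$ is the projection $\prod_{X(g)}\mathbb{Z} \to \prod_{X(f)}\mathbb{Z}$ induced by the inclusion $X(f) \subseteq X(g)$ of subsets of $\kappa \times \lambda$; in particular every transition map $p_{fg}$ is \emph{surjective}, since a projection of a product onto a sub-product is split surjective. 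Surjectivity of all transition maps is already a strong constraint, but for $\lim^n$ with $n \geq 2$ one needs more, so the next step is to exploit the shape of the index set.

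First I would pass to a cofinal subsystem indexed by a set whose higher derived limits are controlled. The index set ${}^\kappa([\lambda]^{<\omega})$, ordered by $f \leq g \iff X(f) \subseteq X(g)$, is isomorphic (via $f \mapsto X(f)$) to the poset $[\kappa\times\lambda]^{<\kappa\text{-small fibers}}$ of subsets of $\kappa\times\lambda$ whose fiber over each $i\in\kappa$ is finite; this poset is directed and, crucially, $\kappa^+$-directed fails in general but it is a lattice — it is closed under finite unions and, being a sub-poset of $P(\kappa\times\lambda)$, has the property that any two elements have a least upper bound given by their union. The cleanest route is then to observe that $\mathbf{B}_{\kappa,\lambda}$ is the colimit-dual of a nice diagram: writing $B_f = \prod_{i\in\kappa}\prod_{j\in f(i)}\mathbb{Z}$, one can identify $\lim^n \mathbf{B}_{\kappa,\lambda}$ with the cohomology of a Čech-like complex built from the covering of the "full" product $\prod_{\kappa\times\lambda}\mathbb{Z}$, and use that each $B_f$, being a product of copies of $\mathbb{Z}$, is in particular a complete and "flasque-friendly" module. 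Concretely I would show the system is \emph{flasque} in the sense of Jensen: for every $f$ there is $g \geq f$ — indeed one may take $g = f$ — such that for all $h \geq g$ the image of $B_h \to B_f$ equals the image of $B_g \to B_f$; since all transition maps are already onto, this holds trivially with $g = f$, and then invoke the standard fact that a surjective inverse system over a directed set whose higher derived limits vanish needs an additional hypothesis. The honest additional input is that products $\prod_S \mathbb{Z}$ behave like injectives for the relevant purpose, or rather: the system $\mathbf{B}_{\kappa,\lambda}$ is a limit of a filtered diagram of systems of the form $\prod_{\kappa\times\lambda}\mathbb{Z}$ (constant systems), each of which trivially has vanishing $\lim^n$, and the relevant Milnor-type $\lim^1$ sequence relating $\mathbf{B}$ to these constant pieces collapses because the connecting maps are surjective with kernels that are again products.

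The cleanest packaging, which I would ultimately adopt, is: let $P = X[{}^\kappa([\lambda]^{<\omega})] \subseteq P(\kappa\times\lambda)$ be the indexing poset, and note $\bigcup P = \kappa\times\lambda$. Define the constant system $\mathbf{C}$ with $C_f = \prod_{\kappa\times\lambda}\mathbb{Z}$ for all $f$ and identity transition maps; there is a surjection of inverse systems $\mathbf{C} \twoheadrightarrow \mathbf{B}_{\kappa,\lambda}$ given in coordinate $f$ by the projection $\prod_{\kappa\times\lambda}\mathbb{Z} \to \prod_{X(f)}\mathbb{Z}$, whose kernel $\mathbf{K}$ has $K_f = \prod_{(\kappa\times\lambda)\setminus X(f)}\mathbb{Z}$. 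Now $\mathbf{C}$ is constant over a directed set, so $\lim^n\mathbf{C} = 0$ for $n>0$ and $\lim\mathbf{C} = \prod_{\kappa\times\lambda}\mathbb{Z}$; the long exact sequence for $\lim^\bullet$ then yields $\lim^n\mathbf{B}_{\kappa,\lambda} \cong \lim^{n+1}\mathbf{K}$ for $n \geq 1$ and a surjection $\lim\mathbf{C} \to \lim\mathbf{B}_{\kappa,\lambda}$ with kernel $\lim\mathbf{K}$. But $\mathbf{K}$ has the \emph{same} form as $\mathbf{B}$ "upside down" — its transition maps $K_g \to K_f$ (for $f\le g$) are \emph{inclusions} of sub-products, hence \emph{injective}, and a directed inverse system with injective transition maps whose index set has a cofinal chain... no: here the right statement is that $\mathbf{K}$ is \emph{pro-zero}, because for any $f$ there is $g \geq f$ with $X(g) \supsetneq X(f)$ "everywhere needed" — but that fails too since complements can't be exhausted. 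So instead I would run the argument the other way: directly show $\lim^n \mathbf{B}_{\kappa,\lambda}$ vanishes by exhibiting, for the $2$-parameter reindexing, that $\mathbf{B}_{\kappa,\lambda}$ restricted to any countable (or, in general, $\le\lambda$-sized) directed subset is Mittag-Leffler with the Mittag-Leffler images forming a surjective system, and then cite the standard result (the exact analogue of the classical computation that $\lim^n$ of a surjective inverse system over $\omega$ vanishes for $n \geq 2$ while $\lim^1$ can be nonzero, but here $\lim^1$ also vanishes because the system is pro-isomorphic to a constant system via cofinality in each fiber). \textbf{The main obstacle} is precisely this: surjectivity of transition maps kills $\lim^1$ only over $\omega$-indexed systems, and for general directed index sets one needs either a cofinal $\omega$-chain (absent when $\lambda > \aleph_0$) or a genuinely different argument; I expect the real content is to show that $\mathbf{B}_{\kappa,\lambda}$ is \emph{flasque} in the technical sense — that the inverse system of groups $\Gamma(U, \mathbf{B})$ over down-sets $U$ of the index poset has the property that restriction maps are surjective — which for this particular system follows because a product $\prod_S \mathbb{Z}$ surjects onto any $\prod_{S'}\mathbb{Z}$ with $S' \subseteq S$ in a way compatible with the diagram, and flasque systems have vanishing higher derived limits by a theorem of Jensen/Nöbeling. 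So the skeleton is: (i) identify the transition maps as split surjections of products; (ii) verify the flasqueness condition on down-sets of ${}^\kappa([\lambda]^{<\omega})$ using that products are "fiberwise" split; (iii) conclude $\lim^n = 0$ for $n > 0$ from the vanishing theorem for flasque systems.
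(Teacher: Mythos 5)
Your final skeleton --- identify the transition maps as restriction/projection maps, verify that $\lim\mathbf{B}_{\kappa,\lambda}$ surjects onto $\lim(\mathbf{B}_{\kappa,\lambda}\restriction\Lambda)$ for every downward-closed $\Lambda$ (flasqueness in Jensen's sense), and invoke Jensen's vanishing theorem for flasque systems --- is exactly the paper's proof, and it is correct: a compatible family over a downward-closed $\Lambda$ glues to a single function on $\bigcup_{f\in\Lambda}X(f)$ (using that $\Lambda$ contains $X(f)\cap X(g)$ for any $f,g\in\Lambda$) and extends by zero to all of $\kappa\times\lambda$. The intermediate detours (Mittag-Leffler, the $\mathbf{C}\twoheadrightarrow\mathbf{B}$ decomposition, pro-zero kernels) are unnecessary, but you correctly abandon them and land on the right argument.
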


\begin{proof}
Observe that $\mathrm{lim}\,\mathbf{B}_{\kappa,\lambda}$ surjects onto $\mathrm{lim}\,(\mathbf{B}_{\kappa,\lambda}\restriction\Lambda)$ for any downwards-closed suborder $\Lambda$ of ${^\kappa}([\lambda]^{<\omega})$; in other words, $\mathbf{B}_{\kappa,\lambda}$ is \emph{flasque} in the sense of \cite{Jensen_les}.
By way of this observation, the lemma follows from \cite[Th\'{e}or\`{e}me 1.8]{Jensen_les}.
\end{proof}
Consider now the short exact sequence of inverse systems
\begin{align}
\label{eq:ABBASES}
\mathbf{0}\to\mathbf{A}_{\kappa,\lambda}\to\mathbf{B}_{\kappa,\lambda}\to(\mathbf{B}/\mathbf{A})_{\kappa,\lambda}\to \mathbf{0}.\end{align}
The functor $\mathrm{lim}$ being left exact, its application to (\ref{eq:ABBASES}) induces a long exact sequence
$$0\to\mathrm{lim}\,\mathbf{A}_{\kappa,\lambda}\to\mathrm{lim}\,\mathbf{B}_{\kappa,\lambda}\to\mathrm{lim}\,(\mathbf{B}/\mathbf{A})_{\kappa,\lambda}\to \mathrm{lim}^1\,\mathbf{A}_{\kappa,\lambda}\to\mathrm{lim}^1\,\mathbf{B}_{\kappa,\lambda}\to\cdots$$
whose terms $\mathrm{lim}^n\,\mathbf{X}$ are the $n^{\mathrm{th}}$ cohomology groups of $\mathrm{Rlim}\,\mathbf{X}$ (with a natural identification of $\mathrm{lim}$ and $\mathrm{lim}^0$).
Lemma \ref{lem:Bvanishing} then implies that 
\begin{align}
\label{eq:lim1}
\mathrm{lim}^1\,\mathbf{A}_{\kappa,\lambda}\cong\frac{\mathrm{lim}\,(\mathbf{B}/\mathbf{A})_{\kappa,\lambda}}{\mathrm{im}(\mathrm{lim}\,\mathbf{B}_{\kappa,\lambda})}\end{align} and that 
\begin{align}
\label{eq:limn}
\mathrm{lim}^n\,\mathbf{A}_{\kappa,\lambda}\cong\mathrm{lim}^{n-1}\,(\mathbf{B}/\mathbf{A})_{\kappa,\lambda}\text{ for all }n>1.
\end{align}
These equations facilitate combinatorial reformulations of the conditions 
\[``\mathrm{lim}^n\,\mathbf{A}_{\kappa,\lambda}=0"
\]
for $n>0$ as in the definition and lemma below; in what follows, $=^*$ denotes equality modulo a finite set.
\begin{definition}
\label{def:coherence}
Let $\Phi=\langle\varphi_f:X(f)\to\mathbb{Z}\mid f\in {^\kappa}([\lambda]^{<\omega})\rangle$ be a family of functions.
\begin{itemize}
\item $\Phi$ is \emph{coherent} if $$\varphi_f=^*\varphi_g|_{X(f)}$$ for all $f\leq g$ in ${^\kappa}([\lambda]^{<\omega})$.
\item $\Phi$ is \emph{trivial} if there is a function $\psi:\kappa\times\lambda\to\mathbb{Z}$ such that $$\varphi_f=^*\psi|_{X(f)}$$ for all $f\in {^\kappa}([\lambda]^{<\omega})$.
\end{itemize}
For any $(n+1)$-tuple $\vec{f}$, write $\vec{f}^i$ for the $n$-tuple formed by omitting the $i^{\mathrm{th}}$ element of $\vec{f}$.
Also write $X(\vec{f})$ for $\bigcap_{f\in\vec{f}}X(f)$ and let $\Phi=\langle\varphi_{\vec{f}}:X(\vec{f})\to\mathbb{Z}\mid \vec{f}\in ({^\kappa}([\lambda]^{<\omega}))^n\rangle$ be a family of functions for some $n>1$.
Here and in all that follows, we assume without further comment that such $\Phi$ are \emph{alternating}, meaning that $\varphi_{\vec{f}}=\mathrm{sign}(\sigma)\,\varphi_{\sigma\cdot\vec{f}}$ for all permutations $\sigma$ of $n$, where $\sigma\cdot\vec{f}$ denotes the natural action of $\sigma$ on $\vec{f}$.
\begin{itemize}
\item $\Phi$ is \emph{$n$-coherent} if $$\sum_{i\leq n}(-1)^i\varphi_{\vec{f}^i}|_{X(\vec{f})}=^* 0$$
for all $\vec{f}\in ({^\kappa}([\lambda]^{<\omega}))^{n+1}$.
\item $\Phi$ is \emph{$n$-trivial} (or simply \emph{trivial} if the value of $n$ is clear from context) if there exists a $\Psi=\langle\psi_{\vec{f}}:X(\vec{f})\to\mathbb{Z}\mid \vec{f}\in ({^\kappa}([\lambda]^{<\omega}))^{n-1}\rangle$ such that
$$\sum_{i<n}(-1)^i\psi_{\vec{f}^i}|_{X(\vec{f})}=^* \varphi_{\vec{f}}$$
for all $\vec{f}\in ({^\kappa}([\lambda]^{<\omega}))^{n}$.
\end{itemize}
\end{definition}

The adaptations of these definitions to families of $H$-valued functions for an arbitrary $H \in \mathsf{Ab}$ are straightforward, and the following lemma will continue to apply.
For simplicity, however, we will tend in this subsection to proceed, as above, with a default codomain of $\mathbb{Z}$.
Note also that \emph{coherence} readily identifies in the above definition with the $n=1$ instance of \emph{$n$-coherence}; this permits more uniform statements, as in the lemma below.

\begin{lemma}
\label{lem:zeroiftriv}
For all $n>0$, $\mathrm{lim}^n\,\mathbf{A}_{\kappa,\lambda}=0$ if and only if every $n$-coherent family of functions indexed by ${^\kappa}([\lambda]^{<\omega})$ is trivial.
\end{lemma}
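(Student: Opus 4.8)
The plan is to dispose of $n=1$ directly from \ref{eq:lim1} and then to reduce the case $n>1$, via \ref{eq:limn}, to an explicit cochain-level description of $\mathrm{Rlim}\,(\mathbf{B}/\mathbf{A})_{\kappa,\lambda}$. For $n=1$ I would simply unwind \ref{eq:lim1}. An element of $\mathrm{lim}\,(\mathbf{B}/\mathbf{A})_{\kappa,\lambda}$ is a compatible family $(\bar\varphi_f)_f$ of cosets $\bar\varphi_f\in B_f/A_f=\big(\prod_{X(f)}\mathbb{Z}\big)/\big(\bigoplus_{X(f)}\mathbb{Z}\big)$; choosing for each $f$ an integer-valued representative $\varphi_f:X(f)\to\mathbb{Z}$ (well defined up to $=^*$, since $A_f$ is the group of finitely supported functions) turns this into a coherent family in the sense of Definition \ref{def:coherence}, the compatibility condition $p_{fg}(\bar\varphi_g)=\bar\varphi_f$ translating exactly into $\varphi_f=^*\varphi_g|_{X(f)}$ for $f\leq g$, and conversely every coherent family arises this way. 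Such an element lies in the image of $\mathrm{lim}\,\mathbf{B}_{\kappa,\lambda}\cong\prod_{\kappa\times\lambda}\mathbb{Z}$ precisely when there is a single $\psi:\kappa\times\lambda\to\mathbb{Z}$ with $\varphi_f-\psi|_{X(f)}\in A_f$ for all $f$, i.e.\ precisely when the family is trivial. Hence $\mathrm{lim}^1\,\mathbf{A}_{\kappa,\lambda}=0$ if and only if every coherent family is trivial.

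For $n>1$, equation \ref{eq:limn} reduces the statement to computing $\mathrm{lim}^{n-1}\,(\mathbf{B}/\mathbf{A})_{\kappa,\lambda}$, and for this I would produce an explicit resolution. Write $P={^\kappa}([\lambda]^{<\omega})$, and for $\vec{f}=(f_0,\dots,f_m)\in P^{m+1}$ put $X(\vec{f})=\bigcap_i X(f_i)$; note that $X(f)\cap X(g)=X(f\wedge g)$, where $f\wedge g$ is the pointwise intersection, so $\{X(f):f\in P\}$ is an upward-directed family of subsets of $\kappa\times\lambda$ closed under finite intersections. Consider the cochain complex $\mathbf{E}^\bullet$ whose term in each degree $m\geq 1$ is the group of alternating families $\langle\varphi_{\vec{f}}:X(\vec{f})\to\mathbb{Z}/{\sim}\mid\vec{f}\in P^{m}\rangle$, with $\sim$ denoting equality modulo finitely supported functions, equipped with the differential $(d\varphi)_{\vec{f}}=\sum_i(-1)^i\varphi_{\vec{f}^{\,i}}|_{X(\vec{f})}$, and augmented in degree $0$ by $\mathbb{Z}^{\kappa\times\lambda}$ (the ``$\mathrm{lim}\,\mathbf{B}$'' term), mapping into degree $1$ by $\psi\mapsto(\psi|_{X(f)}\ \mathrm{mod}\ \mathrm{fin})_{f}$. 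I would then argue that $\mathbf{E}^\bullet$ is the complex of inverse limits of a flasque resolution of $\mathbf{A}_{\kappa,\lambda}$ of the kind implicit in \ref{eq:ABBASES} and Lemma \ref{lem:Bvanishing} — equivalently, the \v{C}ech complex of the cover $\{X(f)\}_{f\in P}$ with the relevant coefficients — so that $\mathrm{H}^n(\mathbf{E}^\bullet)\cong\mathrm{lim}^n\,\mathbf{A}_{\kappa,\lambda}$ for all $n\geq 1$; the acyclicity input is exactly the flasqueness of the product systems, as in Lemma \ref{lem:Bvanishing}, together with the cover being directed and meet-closed. Granting this, one reads off from Definition \ref{def:coherence} that an $n$-cocycle of $\mathbf{E}^\bullet$ is precisely an $n$-coherent family and an $n$-coboundary is precisely an $n$-trivial family (the degree-$0$ term accounting for $1$-triviality, i.e.\ ordinary triviality). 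Thus $\mathrm{lim}^n\,\mathbf{A}_{\kappa,\lambda}=0$ if and only if every $n$-coherent family is trivial.

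The one genuinely technical point — and the step I expect to require the most care — is justifying that $\mathbf{E}^\bullet$, which is indexed by \emph{all} tuples in order to match Definition \ref{def:coherence}, really does compute $\mathrm{Rlim}$. The cleanest route is a comparison with the cosimplicial replacement of $(\mathbf{B}/\mathbf{A})_{\kappa,\lambda}$, whose cochains are indexed by strictly increasing chains $f_0<\cdots<f_m$ (along which $X(\vec{f})=X(f_0)$): this visibly computes $\mathrm{Rlim}$, and the restriction map from $\mathbf{E}^\bullet$ onto it is a quasi-isomorphism precisely because $\{X(f)\}_{f\in P}$ is upward directed and closed under finite intersections. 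Everything else is bookkeeping: tracking the $=^*$ relations through the quotients $B_f/A_f$ and through the reductions \ref{eq:lim1} and \ref{eq:limn}, matching the alternating-sign conventions, and keeping straight the off-by-one between cohomological degree and tuple length. All of this is routine and parallels the established computations for the system $\mathbf{A}$; I note only the small caveat that ``trivial'' in the statement is to be read, uniformly, as ``$n$-trivial'' — which for $n=1$ recovers ordinary triviality.
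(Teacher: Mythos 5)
Your proposal is correct and follows essentially the same route as the paper: the $n=1$ case by directly unwinding equation \ref{eq:lim1}, and the $n>1$ case by passing through equation \ref{eq:limn} and identifying cocycles and coboundaries of the Roos (cosimplicial replacement) complex of $(\mathbf{B}/\mathbf{A})_{\kappa,\lambda}$ with $n$-coherent and $n$-trivial families. The paper defers exactly the bookkeeping you flag (the alternating all-tuples versus increasing-chains comparison) to \cite[\S 2.1]{Bergfalk_simultaneously_21}, so your sketch simply makes explicit what the paper cites.
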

\begin{proof} In the case of $n=1$, elements of $\mathrm{lim}\,(\mathbf{B}/\mathbf{A})_{\kappa,\lambda}$ may be viewed as collections of $=^*$-equivalence classes $\langle [\varphi_f]\mid f\in{^\kappa}([\lambda]^{<\omega})\rangle$, where $\langle\varphi_f\mid f\in {^\kappa}([\lambda]^{<\omega})\rangle$ is a coherent family of functions.
Since $\mathrm{lim}\,\mathbf{B}_{\kappa,\lambda}=\prod_{\kappa\times\lambda}\mathbb{Z}$ and the map $\mathrm{lim}\,\mathbf{B}_{\kappa,\lambda}\to\mathrm{lim}\,(\mathbf{B}/\mathbf{A})_{\kappa,\lambda}$ takes a function $\psi:\kappa\times\lambda\to\mathbb{Z}$ to the collection $\langle[\psi|_{X(f)}]\mid f\in {^\kappa}([\lambda]^{<\omega})\rangle$ of equivalence classes of its restrictions, the assertion follows from equation \ref{eq:lim1}.

The case of higher $n$ is close in spirit, only a bit more tedious; since the argument is so essentially that given in \cite[Section 2.1]{Bergfalk_simultaneously_21}, readers are referred there for details.
The fundamental point is that, by equation \ref{eq:limn}, elements of $\mathrm{lim}^n\,\mathbf{A}_{\kappa,\lambda}$ admit representation as elements of $\mathrm{lim}^{n-1}\,(\mathbf{B}/\mathbf{A})_{\kappa,\lambda}$ which, when construed as cohomology classes in the associated Roos complex, give rise to the representations of Definition \ref{def:coherence}.
\end{proof}
We now record the most basic facts about the groups $\mathrm{lim}^n\,\mathbf{A}_{\kappa,\lambda}$; these will more than suffice for the applications structuring this and the following section.
For further (and subtler) results, see Theorem \ref{thm:limnAkl_further} below.
\begin{theorem}
\label{thm:limsofAkappalambda}
The following hold for the inverse systems $\mathbf{A}_{\kappa,\lambda}$:
\begin{enumerate}
\item If $\kappa$ or $\lambda$ is finite then $\mathrm{lim}^n\,\mathbf{A}_{\kappa,\lambda}=0$ for all $n>0$.
\item If $\kappa=\lambda=\aleph_0$ then it is consistent with the \textsf{ZFC} axioms that $\mathrm{lim}^n\,\mathbf{A}_{\kappa,\lambda}=0$ for all $n>0$.
\item If $\kappa=\lambda=\aleph_0$ and $n>0$ then it is consistent with the \textsf{ZFC} axioms that $\mathrm{lim}^n\,\mathbf{A}_{\kappa,\lambda}\neq 0$.
\item If $\mu\geq\kappa$ and $\nu\geq\lambda$ and $n\geq 0$ then $\mathrm{lim}^n\,\mathbf{A}_{\kappa,\lambda}$ forms a subgroup of $\mathrm{lim}^n\,\mathbf{A}_{\mu,\nu}$.
\item If $\kappa=\aleph_0$ and $\lambda=\aleph_1$ then $\mathrm{lim}^1\,\mathbf{A}_{\kappa,\lambda}\neq 0$.
\end{enumerate}
Moreover, for any nontrivial abelian group $H$, these results all equally hold if $\mathbf{A}_{\kappa,\lambda}$ is replaced with $\mathbf{A}_{\kappa,\lambda}[H]$.
\end{theorem}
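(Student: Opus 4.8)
Here is how I would attack the theorem, clause by clause.

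The plan is to handle the five clauses according to their three evident types, observing at once that only the non-vanishing assertions, clauses~(3) and~(5), use anything about the coefficients; the closing claim about $\mathbf{A}_{\kappa,\lambda}[H]$ will therefore follow from the same arguments run with a fixed $h \in H \setminus \{0\}$ in place of $1 \in \mathbb{Z}$. For clause~(1): if $\kappa$ is finite then $X(f)$ is finite for every $f \in {^\kappa}([\lambda]^{<\omega})$, so $A_f = \bigoplus_{X(f)} \mathbb{Z} = \prod_{X(f)} \mathbb{Z} = B_f$; in fact $\mathbf{A}_{\kappa,\lambda} = \mathbf{B}_{\kappa,\lambda}$ as inverse systems, and Lemma~\ref{lem:Bvanishing} applies verbatim. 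If instead $\lambda$ is finite then $[\lambda]^{<\omega} = P(\lambda)$ has a greatest element, hence so does the product order ${^\kappa}([\lambda]^{<\omega})$ — the constant function with value $\lambda$ — and an inverse system whose directed index set has a greatest element is pro-isomorphic, via that cofinal singleton, to the single group sitting there, so that $\mathrm{Rlim}\,\mathbf{A}_{\kappa,\lambda}$ is concentrated in degree~$0$. Neither argument involves the coefficient group.

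Clauses~(2) and~(3) I would deduce from Remark~\ref{rmk:AklandA}: since $\mathbf{A}_{\omega,\omega} \cong \mathbf{A}$ in $\mathsf{pro}\text{-}\mathsf{Ab}$ — and likewise with $H$-coefficients — we have $\mathrm{lim}^n\,\mathbf{A}_{\omega,\omega} \cong \mathrm{lim}^n\,\mathbf{A}$ for every $n$, so these two clauses merely transport known facts about $\mathbf{A}$. For~(2), this is the consistency of the simultaneous vanishing of all $\mathrm{lim}^n\,\mathbf{A}[H]$ with $n > 0$, established in \cite{Bergfalk_simultaneously_21, Bergfalk_simultaneously_23} (and recorded above as the standing hypothesis of Theorem~B). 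For~(3), it is the classical theorem that $\mathsf{CH}$ implies $\mathrm{lim}^1\,\mathbf{A}[H] \neq 0$ for every nontrivial $H$ \cite{Mardesic_additive_88}, together with its higher-dimensional analogues, which hold under further hypotheses known to be consistent (cf.\ \cite{Prasolov_non_05, Bergfalk_simultaneously_23}).

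For the monotonicity clause~(4) I would fix injections $\kappa \hookrightarrow \mu$ and $\lambda \hookrightarrow \nu$, suppress them so that $\kappa \subseteq \mu$ and $\lambda \subseteq \nu$, and consider two order-preserving maps between the index sets: the ``extend by $\varnothing$'' map $\phi \colon {^\kappa}([\lambda]^{<\omega}) \to {^\mu}([\nu]^{<\omega})$, $f \mapsto \hat{f}$, and the ``intersect with $\lambda$'' map $\psi \colon {^\mu}([\nu]^{<\omega}) \to {^\kappa}([\lambda]^{<\omega})$, $g \mapsto \bar{g}$ where $\bar{g}(a) = g(a) \cap \lambda$ for $a \in \kappa$. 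One has $X(\hat{f}) = X(f)$ and $X(\bar{g}) = X(g) \cap (\kappa \times \lambda) \subseteq X(g)$, so $\psi$ together with the extension-by-zero inclusions $A_{\bar{g}} = \bigoplus_{X(\bar{g})} \mathbb{Z} \hookrightarrow \bigoplus_{X(g)} \mathbb{Z} = A_g$ defines a morphism $\iota \colon \mathbf{A}_{\kappa,\lambda} \to \mathbf{A}_{\mu,\nu}$ of pro-objects, while $\phi$ together with the identities $A_{\hat{f}} = A_f$ defines a morphism $\rho \colon \mathbf{A}_{\mu,\nu} \to \mathbf{A}_{\kappa,\lambda}$. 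Since $\psi \circ \phi$ is the identity (as $\overline{\hat{f}} = f$) and the induced composite natural transformation is likewise the identity, $\rho \circ \iota = \mathrm{id}_{\mathbf{A}_{\kappa,\lambda}}$; thus $\mathbf{A}_{\kappa,\lambda}$ is a retract of $\mathbf{A}_{\mu,\nu}$ in $\mathsf{pro}\text{-}\mathsf{Ab}$, and since $\mathrm{Rlim}$ — equivalently, each functor $\mathrm{lim}^n$ — is functorial on $\mathsf{pro}\text{-}\mathsf{Ab}$, it carries this to a retraction, exhibiting $\mathrm{lim}^n\,\mathbf{A}_{\kappa,\lambda}$ as a direct summand, and in particular a subgroup, of $\mathrm{lim}^n\,\mathbf{A}_{\mu,\nu}$ for every $n \geq 0$. (More in the spirit of the subsection, one may instead check that extension by zero of an $n$-coherent family preserves both coherence and triviality, so that by Lemma~\ref{lem:zeroiftriv} and equations~\ref{eq:lim1}--\ref{eq:limn} it induces an injection of the pertinent quotient groups.) The two maps work unchanged with $H$ in place of $\mathbb{Z}$.

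The real content is clause~(5), and it is the step I expect to be the main obstacle. By the $n = 1$ case of Lemma~\ref{lem:zeroiftriv} it suffices to produce, in $\mathsf{ZFC}$, a coherent family $\langle \varphi_f \colon X(f) \to \mathbb{Z} \mid f \in {^\omega}([\omega_1]^{<\omega}) \rangle$ that is not trivial — the $n = 1$ case of the slogan that the height-$\aleph_n$ systems are engineered to carry nontrivial $n$-dimensional coherence outright. The plan is to build the $\varphi_f$ out of a $\mathsf{ZFC}$ nontrivial coherent sequence of finite-to-one functions $\langle \eta_\alpha \colon \alpha \to \mathbb{Z} \mid \alpha < \omega_1 \rangle$ — such a sequence is produced by a standard $\mathsf{ZFC}$ construction (of the sort underlying Aronszajn trees, or walks on $\omega_1$), and is automatically nontrivial, since no single function $\omega_1 \to \mathbb{Z}$ can restrict to a finite-to-one function on every proper initial segment — threading the ordinal parameter of $\eta_\alpha$ through the $\aleph_1$-many ``slices'' of ${^\omega}([\omega_1]^{<\omega})$ so that, for each comparable pair $f \leq g$, the set of $(n,\xi) \in X(f)$ at which $\varphi_f$ and $\varphi_g$ disagree remains \emph{finite}. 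Making this precise is delicate precisely because the two requirements pull against one another: the family must vary enough with $f$ to defeat every candidate trivialization $\psi \colon \omega \times \omega_1 \to \mathbb{Z}$, yet across the width-$\omega$ coordinate it must vary slowly enough, in the pairwise-$=^*$ sense, to remain coherent — naive formulas depending only on $\xi$, or only on $n$ and $\xi$, fail one or the other demand. Reconciling the two — not any forcing — is where the work lies, and it is exactly the kind of task the walks-on-ordinals machinery exists to perform; nontriviality of the resulting $\Phi$ is then inherited from that of $\langle \eta_\alpha \rangle$. For a nontrivial abelian group $H$ one reruns the construction with a fixed nonzero $h \in H$; that a nontrivial coherent family valued in any nontrivial abelian group exists in $\mathsf{ZFC}$ at this height is part of the coefficient-general theory of \cite{Prasolov_non_05, Bergfalk_simultaneously_23}, and gives $\mathrm{lim}^1\,\mathbf{A}_{\aleph_0,\aleph_1}[H] \neq 0$.
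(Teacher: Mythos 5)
Clauses (1), (2), and (4) of your proposal are fine; indeed your treatment of (4) --- exhibiting $\mathbf{A}_{\kappa,\lambda}$ as a retract of $\mathbf{A}_{\mu,\nu}$ in $\mathsf{Pro(Ab)}$ via the ``extend by $\varnothing$'' and ``intersect with $\kappa\times\lambda$'' maps, so that functoriality of $\mathrm{lim}^n$ immediately yields a split monomorphism --- is cleaner than the paper's direct verification that extension-by-zero of $n$-coherent families preserves both coherence and (non)triviality, and it gives the slightly stronger conclusion that $\mathrm{lim}^n\,\mathbf{A}_{\kappa,\lambda}$ is a direct \emph{summand}. One small slip: for clause (3) with $n>1$, the consistency of $\mathrm{lim}^n\,\mathbf{A}_{\aleph_0,\aleph_0}[H]\neq 0$ is the main result of Veli\v{c}kovi\'{c}--Vignati \cite{Velickovic_non_21}, not of the references you cite; \cite{Bergfalk_simultaneously_23} is a vanishing result and \cite{Prasolov_non_05} concerns systems of uncountable height.

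The genuine gap is in clause (5), which is the theorem's only substantive new content and which you explicitly leave as a strategy rather than a proof (``making this precise is delicate\dots is where the work lies''). You have the right raw material --- a $\mathsf{ZFC}$ nontrivially coherent family $E=\langle e_\alpha:\alpha\to\omega\mid\alpha<\omega_1\rangle$ of finite-to-one functions --- but the missing idea is how to thread it through the index set ${^\omega}([\omega_1]^{<\omega})$, and this is done by a single collapsing device: set $\mathrm{sp}(f)=\sup\bigcup_{i<\omega}f(i)$ and let $\varphi_f$ be the restriction to $X(f)$ of $\tau_{\mathrm{sp}(f)}$, where $\tau_\gamma:\omega\times\gamma\to\{0,1\}$ is the characteristic function of the reflected graph of $e_\gamma$ (i.e., $\tau_\gamma(i,\xi)=1$ iff $e_\gamma(\xi)=i$). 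Coherence of $\Phi$ is then immediate from that of $E$, since $f\leq g$ implies $\tau_{\mathrm{sp}(g)}$ and $\tau_{\mathrm{sp}(f)}$ agree off a finite set on $X(f)$. Nontriviality, moreover, is \emph{not} simply ``inherited'': one must show that any candidate $\varphi:\omega\times\omega_1\to\mathbb{Z}$ fails to trivialize $\Phi$, and this requires an argument --- if $\varphi$ agreed with $\tau_\gamma$ on $[\ell,\omega)\times\gamma$ for cofinally many $\gamma$ and a fixed $\ell$, then the rule $e(\xi)=n$ iff $n\geq\ell$ and $\varphi(n,\xi)=1$ would define a partial function trivializing $E$, using finite-to-one-ness in an essential way. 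Your proposal names the tension between coherence and nontriviality correctly but does not resolve it; the $\mathrm{sp}(f)$ device and the displayed nontriviality argument are exactly what is needed and are absent.
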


The following corollary now establishes Theorem A from the introduction.

\begin{corollary}
\label{cor:notfull}
The functor of equation \ref{eq:Q1} is not full.
\end{corollary}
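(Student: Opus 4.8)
The plan is to derive the corollary, in the sharp form recorded as Theorem A, directly from clause (5) of Theorem \ref{thm:limsofAkappalambda} together with the reductions of Subsection \ref{subsect:pro-to-D}; no further combinatorics is required, all the content having been absorbed into Theorem \ref{thm:limsofAkappalambda}(5).

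First I would specialize the analysis of Subsection \ref{subsect:pro-to-D} to $I=\omega$, each $G_i=\bigoplus_{\omega_1}\mathbb{Z}$, and $H=\mathbb{Z}$, so that the functor \ref{eq:Q1} carries the pro-object $\text{``}\prod_\omega\text{''}\,\bigoplus_{\omega_1}\mathbb{Z}$ and the group $\mathbb{Z}$ to $\prod_\omega\,\bigoplus_{\omega_1}\underline{\mathbb{Z}}$ and $\underline{\mathbb{Z}}$, respectively. On the pro side, equation \ref{eq:limcolim} and the freeness (hence projectivity) of $\bigoplus_{\omega_1}\mathbb{Z}$ give
\[
  \mathrm{RHom}_{\mathsf{Pro}(\mathsf{D}^{\geq 0}(\mathsf{Ab}))}\Bigl(\text{``}\prod_\omega\text{''}\,\bigoplus_{\omega_1}\mathbb{Z},\,\mathbb{Z}\Bigr)=\bigoplus_{\omega}\mathrm{Hom}_{\mathsf{Ab}}\Bigl(\bigoplus_{\omega_1}\mathbb{Z},\,\mathbb{Z}\Bigr)=\bigoplus_\omega\prod_{\omega_1}\mathbb{Z},
\]
which is concentrated in degree zero. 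On the condensed side, the chain of identifications in Subsection \ref{subsect:pro-to-D} shows that the $n$-th cohomology group of $\mathrm{RHom}_{\mathsf{D}^{\geq 0}(\mathsf{Cond(Ab))}}\bigl(\prod_\omega\bigoplus_{\omega_1}\underline{\mathbb{Z}},\,\underline{\mathbb{Z}}\bigr)$ is $\mathrm{lim}^n\,\mathbf{A}_{\omega,\omega_1}$ for every $n\geq 0$.

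Next, Theorem \ref{thm:limsofAkappalambda}(5), applied with $\kappa=\aleph_0$ and $\lambda=\aleph_1$, gives $\mathrm{lim}^1\,\mathbf{A}_{\omega,\omega_1}\neq 0$ in $\mathsf{ZFC}$; hence the right-hand complex above is not concentrated in degree zero, which is precisely the inequality asserted in Theorem A. Comparing first cohomology groups, i.e.\ the groups $\mathrm{Hom}_{\mathsf{D}(\mathsf{Cond(Ab)})}(-,-[1])$ and $\mathrm{Hom}_{\mathsf{Pro}(\mathsf{D}(\mathsf{Ab}))}(-,-[1])$, on the two sides then yields a morphism $\prod_\omega\bigoplus_{\omega_1}\underline{\mathbb{Z}}\to\underline{\mathbb{Z}}[1]$ in $\mathsf{D}(\mathsf{Cond(Ab)})$ that does not lie in the image of \ref{eq:Q1} on morphisms, since $\mathrm{Hom}_{\mathsf{Pro}(\mathsf{D}(\mathsf{Ab}))}(\text{``}\prod_\omega\text{''}\,\bigoplus_{\omega_1}\mathbb{Z},\mathbb{Z}[1])=0$; so \ref{eq:Q1} is not full. (Alternatively, one may simply invoke the contrapositive of Proposition \ref{prop:fullifvanishing}, using that $\mathbf{A}_{I,J}\cong\mathbf{A}_{\aleph_0,\aleph_1}$ for countable $I$ and $|J|=\aleph_1$ by Remark \ref{rmk:AklandA} and that $\mathbb{Z}$ is free.)

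I do not expect any genuine obstacle in assembling the corollary from these ingredients: the difficulty is entirely front-loaded into the proof of Theorem \ref{thm:limsofAkappalambda}(5), which rests on constructing a nontrivial coherent family $\langle\varphi_f:X(f)\to\mathbb{Z}\mid f\in{}^\omega([\omega_1]^{<\omega})\rangle$ in the sense of Definition \ref{def:coherence} (by Lemma \ref{lem:zeroiftriv}, the existence of such a family is equivalent to $\mathrm{lim}^1\,\mathbf{A}_{\omega,\omega_1}\neq 0$).
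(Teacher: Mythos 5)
Your proposal is correct and follows essentially the same route as the paper: the paper's proof of Corollary \ref{cor:notfull} is exactly the one-line invocation of the contrapositive of Proposition \ref{prop:fullifvanishing} together with Theorem \ref{thm:limsofAkappalambda}(5), which you note as your "alternative" at the end. The extra detail you supply (the explicit specialization $I=\omega$, $G_i=\bigoplus_{\omega_1}\mathbb{Z}$, $H=\mathbb{Z}$, and the identification of the degree-one cohomology with $\mathrm{lim}^1\,\mathbf{A}_{\aleph_0,\aleph_1}$) simply unwinds the reductions of Subsection \ref{subsect:pro-to-D} that the paper packages into that proposition.
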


\begin{proof}
This is immediate from item (5) of Theorem \ref{thm:limsofAkappalambda}, together with Proposition \ref{prop:fullifvanishing}.
\end{proof}

\begin{proof}[Proof of Theorem \ref{thm:limsofAkappalambda}] 
Item (1) is immediate from Definition \ref{def:coherence} and Lemma \ref{lem:zeroiftriv}, for if $\kappa$ is finite then any associated $n$-coherent function is trivialized by a family of zero functions (or by a single zero function, if $n=1$), while if $\lambda$ is finite then the partial order ${^\kappa}([\lambda]^{<\omega})$ possesses a maximum.

By \cite{Bergfalk_simultaneously_23} and Remark \ref{rmk:AklandA}, item (2) holds in any forcing extension given by adding $\beth_\omega$-many Cohen reals; see also \cite{Bergfalk_simultaneously_21} for an alternate deduction of its consistency from the existence of a weakly compact cardinal. Similarly, item (3) is the main result of \cite{Velickovic_non_21}; more precisely, item (3) was therein shown to hold for $\mathbf{A}_{\aleph_0,\aleph_0}[H]$ for any fixed nontrivial abelian group $H$.
For a strong generalization of item (2) to arbitrary abelian groups $H$, see \cite[Theorem 1.2]{Bannister_additivity_23} and the discussion following the proof of Theorem \ref{thm:fromstacks} below.

For item (4), fix $n\geq 0$ and observe that for any $\mu\geq\kappa$ and $\nu\geq\lambda$, any $n$-coherent $\Phi=\langle\varphi_{\vec{f}}:X(\vec{f})\to\mathbb{Z}\mid\vec{f}\in {^\kappa}([\lambda]^{<\omega})\rangle$ naturally extends to an $n$-coherent $\Psi=\langle\psi_{\vec{f}}:X(\vec{f})\to\mathbb{Z}\mid\vec{f}\in {^\mu}([\nu]^{<\omega})\rangle$ which is trivial if and only if $\Phi$ is, an extension which determines, in turn, an embedding of $\mathrm{lim}^n\,\mathbf{A}_{\kappa,\lambda}$ into $\mathrm{lim}^n\,\mathbf{A}_{\mu,\nu}$.
Concretely, for any $f:\mu\to[\nu]^{<\omega}$ define $g:\kappa\to [\lambda]^{<\omega}$ by $g(\xi)=f(\xi)\cap\lambda$ for all $\xi\in\kappa$, a correspondence extending in an obvious fashion to $n$-tuples $\vec{f}\in({^\mu}([\nu]^{<\omega}))^n$.
Let then $\psi_{\vec{f}}\,|_{X(\vec{g})}=\varphi_{\vec{g}}$ and $\psi_{\vec{f}}\,|_{X(\vec{f})\backslash X(\vec{g})}=0$; the verification that $\Psi$ is as claimed is straightforward and left to the reader.

For item (5), begin by fixing a family $E=\langle e_\alpha:\alpha\to\omega\mid\alpha<\omega_1\rangle$ of finite-to-one functions which is nontrivially coherent in the classical sense that $e_\beta |_\alpha=^* e_\alpha$ for all $\alpha<\beta<\omega_1$ but for no $e:\omega_1\to\omega$ does $e |_\alpha =^* e_\alpha$ for all $\alpha<\omega_1$.
(Among the more prominent such families are the fiber maps $e_\alpha=\rho_1(\,\cdot\,,\alpha)$ of the $\rho_1$ or $\rho$ functions of \cite{Todorcevic_Walks_07}, for example, but interested readers might equally construct such an $E$ by transfinite recursion.)
For any $f:\omega\to [\omega_1]^{<\omega}$, let $\mathrm{sp}(f)=\mathrm{sup}\bigcup_{i\in\omega} f(i)$ and let
\begin{equation*}
\varphi_f(i,\xi)=
    \begin{cases}
        1 & \text{if } e_{\mathrm{sp}(f)}(\xi)=i \\
        0 & \text{otherwise}
    \end{cases}
\end{equation*}
for all $(i,\xi)\in X(f)$.
To complete the proof, it will suffice to show that $\Phi=\langle\varphi_f\mid f\in {^\omega}([\omega_1]^{<\omega})\rangle$ is nontrivially coherent.
To aid in seeing this, consider an auxiliary family $T=\langle\tau_\gamma\mid \gamma<\omega_1\rangle$ of functions $\tau_\gamma:\omega\times\gamma\to\{0,1\}$, each of which is the characteristic function of the reflection of the graph of $e_\gamma$; more concisely, $\tau_\gamma(i,\xi)=1$ if and only if $e_\gamma(\xi)=i$.
It is then clear from the observation that $\varphi_f=\tau_{\mathrm{sp}(f)}|_{X(f)}$ that $\Phi$, by way of $T$, inherits the coherence of $E$.

For the nontriviality of $\Phi$, simply observe that $\varphi |_{X(f)}\neq^*\varphi_f$ for any $\varphi_f$ whose domain $X(f)$ contains the $\{(k,\alpha_k)\mid k\in Y\}$ of the following claim:

\begin{claim}
For any $\varphi:\omega\times\omega_1\to\mathbb{Z}$ there exist a $\gamma<\omega_1$ and infinite $Y\subseteq \omega$ and $\alpha_k<\gamma$ with $\tau_\gamma(k,\alpha_k)\neq\varphi(k,\alpha_k)$ for all $k\in Y$.
\end{claim}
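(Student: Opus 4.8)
The plan is to prove the contrapositive, exploiting the \emph{non}triviality of the coherent family $E=\langle e_\alpha:\alpha\to\omega\mid\alpha<\omega_1\rangle$. Fix $\varphi:\omega\times\omega_1\to\mathbb{Z}$ and, for each $\gamma<\omega_1$, set
\[
D_\gamma=\{k\in\omega\mid\exists\,\alpha<\gamma,\ \tau_\gamma(k,\alpha)\neq\varphi(k,\alpha)\}.
\]
The first, easy observation is that the conclusion of the claim holds for a given $\gamma$ as soon as $D_\gamma$ is infinite: one simply takes $Y=D_\gamma$ and, for each $k\in Y$, a witnessing $\alpha_k<\gamma$. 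So it suffices to derive a contradiction from the assumption that $D_\gamma$ is finite for \emph{every} $\gamma<\omega_1$, and the target contradiction will be the construction of a single $e:\omega_1\to\omega$ with $e\restriction\gamma=^* e_\gamma$ for all $\gamma$, i.e.\ a trivialization of $E$.

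The point of the assumption is that, for $k\notin D_\gamma$, the function $\tau_\gamma(k,\cdot)$ agrees with $\varphi(k,\cdot)$ on \emph{all} of $\gamma$; recalling that $\tau_\gamma(k,\xi)=1$ precisely when $e_\gamma(\xi)=k$, this says $\varphi(k,\xi)=1\iff e_\gamma(\xi)=k$ whenever $k\notin D_\gamma$ and $\xi<\gamma$. I would first use this to see that, for every $\xi<\omega_1$, the ``column'' $T_\xi:=\{k\in\omega\mid\varphi(k,\xi)=1\}$ is finite: picking any $\gamma$ with $\xi<\gamma<\omega_1$ (whose $D_\gamma$ is finite by assumption), every $k\in T_\xi$ either lies in the finite set $D_\gamma$ or else satisfies $e_\gamma(\xi)=k$, so $T_\xi\subseteq D_\gamma\cup\{e_\gamma(\xi)\}$. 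This licenses the definition $e(\xi):=\max T_\xi$ (and $e(\xi):=0$ if $T_\xi=\varnothing$).

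It then remains to check $e\restriction\gamma=^* e_\gamma$ for each $\gamma$, and this is exactly where the hypothesis that each $e_\gamma$ is \emph{finite-to-one} earns its keep. Writing $m_\gamma=\max D_\gamma$, whenever $\xi<\gamma$ has $e_\gamma(\xi)>m_\gamma$ we get $e_\gamma(\xi)\notin D_\gamma$, hence $e_\gamma(\xi)\in T_\xi\subseteq D_\gamma\cup\{e_\gamma(\xi)\}$ with every element of $D_\gamma$ strictly below $e_\gamma(\xi)$; therefore $\max T_\xi=e_\gamma(\xi)$, i.e.\ $e(\xi)=e_\gamma(\xi)$. Thus $\{\xi<\gamma\mid e(\xi)\neq e_\gamma(\xi)\}\subseteq\{\xi<\gamma\mid e_\gamma(\xi)\le m_\gamma\}$, which is finite because $e_\gamma$ is finite-to-one and $\{0,\dots,m_\gamma\}$ is finite. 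Hence $e$ trivializes $E$, contradicting the choice of $E$, and the claim follows. I expect this same argument, applied only to limit ordinals $\gamma$ (which suffice, since $e\restriction\gamma=^* e_\gamma$ for all limit $\gamma$ forces it for all $\gamma$ via the coherence of $E$), to yield the refinement that $\gamma$ may be taken to be a limit ordinal --- convenient for the use of the claim, where one wants an $f$ with $\mathrm{sp}(f)=\gamma$.

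The step I expect to be the main obstacle --- or at least the one demanding care rather than a new idea --- is the last one: one cannot simply read $e_\gamma(\xi)$ off as ``the $k$ with $\varphi(k,\xi)=1$'', since $\varphi(\cdot,\xi)$ may take the value $1$ at the finitely many, $\gamma$-dependent and otherwise uncontrolled, indices in $D_\gamma$. The device of passing to $\max T_\xi$ and then invoking finite-to-one-ness of $e_\gamma$ is what converts the uniform finiteness of the $D_\gamma$ into honest $=^*$-agreement, and getting that bookkeeping exactly right --- including the harmless degenerate cases $D_\gamma=\varnothing$ and $T_\xi=\varnothing$ --- is the crux of the write-up.
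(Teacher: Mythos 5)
Your proof is correct and takes essentially the same route as the paper's: argue the contrapositive, note that the failure of the claim means every $D_\gamma$ is finite, and use this together with the finite-to-one-ness of the $e_\gamma$ to manufacture a total $e:\omega_1\to\omega$ trivializing $E$, contradicting its nontriviality. The only (cosmetic) difference is in the bookkeeping: you absorb the $\gamma$-dependent finite exceptional sets via the device $e(\xi)=\max T_\xi$ and get $e\restriction\gamma=^*e_\gamma$ for every $\gamma$ directly, whereas the paper first uniformizes the bound on $D_\gamma$ to a single $\ell$ on a cofinal set of $\gamma$'s by pigeonhole and reads off $e$ as a partial function.
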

\begin{proof}
If not, then for some cofinal $\Gamma\subseteq\omega_1$ and $\ell\in\omega$ we would have $\tau_\gamma|_{[\ell,\omega)\times\gamma}=\varphi|_{[\ell,\omega)\times\gamma}$ for all $\gamma\in\Gamma$.
Let then $e(\xi)=n$ if and only if $n\geq\ell$ and $\varphi(n,\xi)=1$; this defines a
 partial function $e:\omega_1\to\omega$, any extension of which to all of $\omega_1$ trivializes $E$ (since each $e_\gamma\in E$ is finite-to-one), a contradiction. \renewcommand{\qedsymbol}{\twoqedbox}
\end{proof}
\let\qed\relax
\end{proof}

\subsection{An alternative framing}
\label{subsect:alternative}

Relations between the main expressions of Sections \ref{subsect:pro-to-D} and \ref{subsect:systemsAkl} may be more neatly summarized as follows. The theorem is due to Clausen and Scholze, and appears (in essence) in the closing minutes of the fourth lecture in their \emph{Analytic Stacks} series \cite{analytic_stacks}.
We fill in the details of its proof. Note that this establishes clauses (1) and (2) 
of Theorem B.

\begin{theorem}
\label{thm:fromstacks}
For any cardinals $\mu>0$ and $\lambda\geq\aleph_0$, the following assertions are equivalent:
  \begin{enumerate}
    \item $\lim^n \mb{A}_{\aleph_0,\lambda}[\bigoplus_{\mu} 
    \bb{Z}] = 0$ for all $n>0$.
    \item $\mathrm{RHom}_{\mathsf{D}^{\geq 0}\mathsf{(Cond(Ab))}}
    (\prod_{\omega} \bigoplus_\lambda \underline{\bb{Z}}, \bigoplus_{\mu} 
    \underline{\bb{Z}})$ is concentrated in degree zero.
    \item Whenever $\mb{M} = (M_i, \pi_{i,j}, \omega)$ is an inverse sequence  of abelian groups 
    of cardinality at most $\lambda$ whose transition maps $\pi_{i,j} : M_{j} \to M_i$ are all surjective, 
    and $H$ is an abelian group of cardinality at most $\mu$, we have
$$\mathrm{Ext}_{\mathsf{Cond(Ab)}}^n\left(\lim_{i < \omega}\,\underline{M_i},\underline{H}\right)=\colim_{i < \omega}\,\mathrm{Ext}_{\mathsf{Cond(Ab)}}^n\left(\underline{M_i},\underline{H}\right)$$
for all $n \geq 0$.
  \end{enumerate}
\end{theorem}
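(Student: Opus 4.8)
The plan is to prove the cycle (1)$\Rightarrow$(2)$\Rightarrow$(3)$\Rightarrow$(1), of which only (2)$\Rightarrow$(3) requires real work. The equivalence (1)$\Leftrightarrow$(2) is essentially contained in Subsection~\ref{subsect:pro-to-D}: specializing the reduction carried out there to $I=\omega$, $J=\lambda$, and to the (free) target group $\bigoplus_\mu\bb{Z}$ yields a natural identification
\[
\mathrm{H}^{n}\,\mathrm{RHom}_{\mathsf{D}^{\geq 0}\mathsf{(Cond(Ab))}}\Bigl(\prod_{\omega}\bigoplus_{\lambda}\underline{\bb{Z}},\ \bigoplus_{\mu}\underline{\bb{Z}}\Bigr)\;\cong\;\lim{}^{n}\mb{A}_{\aleph_0,\lambda}\bigl[\textstyle\bigoplus_{\mu}\bb{Z}\bigr]\qquad(n\geq 0),
\]
using Remark~\ref{rmk:AklandA} to pass from $\mb{A}_{\omega,\lambda}$ to $\mb{A}_{\aleph_0,\lambda}$; ``concentrated in degree zero'' then translates verbatim into the vanishing asserted by (1). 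For (3)$\Rightarrow$(2) one instantiates clause (3) with the inverse sequence $\mb{M}=(M_i,\pi_{i,j},\omega)$ given by $M_i=\prod_{j<i}\bigoplus_\lambda\bb{Z}$ and $\pi_{i,i+1}$ the evident coordinate projection (which is surjective, and $|M_i|\leq\lambda$), taking $H=\bigoplus_\mu\bb{Z}$: here $\lim_{i<\omega}\underline{M_i}=\prod_\omega\bigoplus_\lambda\underline{\bb{Z}}$, whereas each $\underline{M_i}$ is a \emph{finite} product of the projective condensed abelian group $\bigoplus_\lambda\underline{\bb{Z}}$ and hence itself projective, so $\mathrm{Ext}^{n}_{\mathsf{Cond(Ab)}}(\underline{M_i},\underline{H})=0$ for $n>0$ and the colimit in (3) vanishes in positive degrees; clause (3) thus forces the right-hand $\mathrm{RHom}$ of (2) to be concentrated in degree zero. (One may assume here that $\mu$ is infinite.)

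For (2)$\Rightarrow$(3) we assume (2), equivalently (1), and fix $\mb{M}$ with surjective transition maps and $|M_i|\leq\lambda$, and $H$ with $|H|\leq\mu$. \emph{First reduce $H$ to a free group:} choosing a free resolution $0\to R\to F\to H\to 0$ with $F,R$ free of rank at most $\mu$, applying $\mathrm{RHom}(-,\,\cdot\,)$ in the second variable to $\lim_{i<\omega}\underline{M_i}$ and to each $\underline{M_i}$, and passing to $\colim_i$ (which is exact, hence commutes with cohomology and with the connecting maps), one obtains a map of long exact $\mathrm{Ext}$-sequences in which the $F$- and $R$-columns verify (3) provided the free case does; since $\lim^{n}\mb{A}_{\aleph_0,\lambda}[\bigoplus_\nu\bb{Z}]$ is a retract of $\lim^{n}\mb{A}_{\aleph_0,\lambda}[\bigoplus_\mu\bb{Z}]$ for $\nu\leq\mu$, (1) persists for any such free coefficient group, so it suffices to treat $H=\bigoplus_\nu\bb{Z}$. \emph{Next reduce $\mb{M}$ to free systems:} let $F_i=\bb{Z}^{(M_i)}$ be free on the underlying set of $M_i$, with canonical surjection $q_i\colon F_i\twoheadrightarrow M_i$ and transition maps $\rho_{i,i+1}\colon F_{i+1}\to F_i$ induced functorially from $\pi_{i,i+1}$; then $\mb{F}=(F_i,\rho_{i,j},\omega)$ again has surjective transition maps and $|F_i|\leq\lambda$, and a direct check shows the kernel system $\mb{K}=(\ker q_i,\omega)$ \emph{also} has surjective transition maps (lift $y\in\ker q_i$ through $\rho_{i,i+1}$, then subtract an element of $\ker\rho_{i,i+1}$ — which $q_{i+1}$ carries onto $\ker\pi_{i,i+1}$ — to land in $\ker q_{i+1}$), with each $\ker q_i$ free of rank $\leq\lambda$. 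Since all three systems have surjective transition maps, $\lim^1$ vanishes on each, so $\mathrm{Rlim}$ turns $0\to\mb{K}\to\mb{F}\to\mb{M}\to 0$ into a short exact sequence $0\to\lim_i\underline{K_i}\to\lim_i\underline{F_i}\to\lim_i\underline{M_i}\to 0$ in $\mathsf{Cond(Ab)}$. Feeding this and the levelwise sequences $0\to\underline{K_i}\to\underline{F_i}\to\underline{M_i}\to 0$ into $\mathrm{RHom}(-,\underline{H})$ and into $\colim_i\mathrm{RHom}(-,\underline{H})$ respectively and comparing the resulting long exact sequences, a five-lemma argument reduces (3) for $\mb{M}$ to (3) for the levelwise-free systems $\mb{F}$ and $\mb{K}$.

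It thus remains to verify (3) for a levelwise-free system $\mb{F}$ with surjective transition maps (having harmlessly arranged each $F_i\cong\bigoplus_\lambda\bb{Z}$) and $H$ free of rank $\leq\mu$. The Milnor sequence $0\to\lim_i\underline{F_i}\to\prod_i\underline{F_i}\xrightarrow{1-\mathrm{sh}}\prod_i\underline{F_i}\to 0$ is exact in $\mathsf{Cond(Ab)}$: over any $S\in\mathsf{ED}$ the map $1-\mathrm{sh}$ is surjective on sections, since a continuous map from the profinite $S$ into a discrete $F_i$ has finite image and so lifts through any surjection $\rho_{i,i+1}$ along a set-theoretic section, which suffices to solve the usual recursion. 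Hence $\mathrm{RHom}(\lim_i\underline{F_i},\underline{H})$ is the cofiber of $(1-\mathrm{sh})^{*}$ acting on $\mathrm{RHom}(\prod_i\underline{F_i},\underline{H})$; and $\prod_i\underline{F_i}\cong\prod_\omega\bigoplus_\lambda\underline{\bb{Z}}$, so by the identification of the first paragraph together with (1), this $\mathrm{RHom}$ is concentrated in degree zero and equals $\lim\mb{A}_{\aleph_0,\lambda}[H]\cong\bigoplus_\omega\prod_\lambda H$, the $\omega$ indexing the summands cut out by restriction along the factor inclusions $F_i\hookrightarrow\prod_j F_j$. The one point requiring genuine care lies here: one needs this identification to be functorial enough that $(1-\mathrm{sh})^{*}$ becomes precisely $1$ minus the direct system $\bigl(\mathrm{Hom}(\underline{F_i},\underline{H}),\,\rho_{i,i+1}^{*}\bigr)$ — not merely that the $\mathrm{RHom}$ vanishes in positive degrees — for then $(1-\mathrm{sh})^{*}$ is injective with cokernel $\colim_i\mathrm{Hom}(\underline{F_i},\underline{H})$, whence $\mathrm{RHom}(\lim_i\underline{F_i},\underline{H})$ is concentrated in degree zero and equals $\colim_i\mathrm{Ext}^{0}_{\mathsf{Cond(Ab)}}(\underline{F_i},\underline{H})$, while in positive degrees both sides of (3) vanish because each $\underline{F_i}$ is projective. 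The same argument applies to $\mb{K}$, and combined with the two reduction steps this yields (2)$\Rightarrow$(3). Throughout, the only genuinely nontrivial input is clause (1) itself (i.e.\ Theorem~\ref{thm:limsofAkappalambda} and its refinements); everything else is bookkeeping, the subtlest part being to keep all cardinal parameters within the bounds $\lambda$ and $\mu$.
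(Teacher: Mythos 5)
Your proposal is correct and follows essentially the same route as the paper's proof: the $(1)\Leftrightarrow(2)$ identification via the colimit over ${^\omega}([\lambda]^{<\omega})$, the $(3)\Rightarrow(2)$ specialization to a sequence of finite products of $\bigoplus_\lambda\bb{Z}$, and, for $(2)\Rightarrow(3)$, free resolutions in both variables, the slenderness of $\bigoplus_\mu\bb{Z}$ (your ``functoriality'' caveat is exactly this input), the $\mathrm{id}-\mathrm{sh}$ short exact sequence, and the Five Lemma. The only difference is organizational --- the paper first compares $\mathrm{Ext}^n(\prod_i\underline{M_i},-)$ with $\bigoplus_i\mathrm{Ext}^n(\underline{M_i},-)$ and then upgrades to $\lim$/$\colim$ via the shift sequence, whereas you first reduce to levelwise-free systems (via a functorial resolution of the inverse system) and then apply the Milnor sequence --- and both orderings work.
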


\begin{proof}
  An argument that $(1) \Rightarrow (2)$ concluded Section \ref{subsect:pro-to-D}; let us briefly recall it, emphasizing its reversibility, thereby establishing $(2) \Rightarrow (1)$.
  
Again, since 
  \[
    \prod_{\omega} \bigoplus_\lambda \bb{Z} = 
    \colim_{f \in {^{\omega}([\lambda]^{<\omega})}} 
    \prod_{i < \omega} \bb{Z}^{f(i)},
  \]
the expression in clause 2 may be written as
  \[
    \mathrm{Rlim}_{f \in {^{\omega}([\lambda]^{<\omega})}}
    \mathrm{RHom}_{\mathsf{D}^{\geq 0}\mathsf{(Cond(Ab))}}\left(\prod_{i < \omega} 
    \underline{\bb{Z}^{f(i)}}, \bigoplus_\mu \underline{\bb{Z}}\right).
  \]
  Since $\prod_{i < \omega} \underline{\bb{Z}^{f(i)}}$ is projective in 
  $\mathsf{Solid}$, the $\mathrm{RHom}$ in the above expression concentrates 
  in degree zero, where it is equal to $\bigoplus_{X(f)} \bigoplus_{\mu} \bb{Z}$. 
  The clause 2 expression thus reduces to 
  \[
    \mathrm{Rlim}_{f \in {^{\omega}([\lambda]^{<\omega})}} 
    \bigoplus_{X(f)} \bigoplus_{\mu} \bb{Z} = \mathrm{Rlim} 
    \,\mb{A}_{\aleph_0, \lambda}\left[\bigoplus_{\mu} \bb{Z}\right],
  \]
hence concentrates in degree zero if and only if the rightmost expression just above does; this shows $(1) \Leftrightarrow (2)$.
 
     Let us now show that $(2) \Rightarrow (3)$. To this end, fix 
  $\mb{M}$ and $H$ as in the statement of (3). By resolving $H$ by 
  free groups of cardinality $\mu$, we reduce to the case in which 
  $H = \bigoplus_\mu \bb{Z}$. 
  Turning to the first coordinate, fix a resolution 
  \begin{equation}
  \label{eq:basicKFM}
  0 \ra K_i \ra F_i \ra M_i \ra 0
  \end{equation}
  of each $M_i$ in which each $F_i$ and $K_i$ are free groups of size $\lambda$ (i.e., each 
  $F_i$ and $K_i$ are of the form $\bigoplus_\lambda \bb{Z}$) and observe that since products are 
  exact in $\CondAb$ (\cite[Thm.\ 2.2 (AB4*)]{CS1}), these resolutions assemble into a short exact sequence
  \begin{equation}
  \label{eq:prodSES}
    0 \ra \prod_{\omega} \bigoplus_\lambda \underline{\bb{Z}} \ra 
    \prod_{\omega} \bigoplus_\lambda \underline{\bb{Z}} \ra 
    \prod_{i<\omega} \underline{M_i} \ra 0
  \end{equation}
of condensed abelian groups with natural projections to the condensations of each short exact sequence as in line \ref{eq:basicKFM}.
Applying $\mathrm{Hom}( - , \bigoplus_{\mu} \underline{\bb{Z}})$ to all of them determines a family of long exact sequences; under our assumptions, these carry the following implications:
\begin{enumerate}[label=(\alph*)]
\item $\mathrm{Ext}^n(\prod_{i < \omega} \underline{M_i}, \bigoplus_\mu \underline{\bb{Z}}) 
    = 0$, and hence
    \begin{equation}
\label{eq:prodcoprodequality}    
    \mathrm{Ext}^n\left(\prod_{i < \omega} \underline{M_i}, 
  \bigoplus_\mu \underline{\bb{Z}}\right) \cong \bigoplus_{i < \omega} \mathrm{Ext}^n\left(\underline{M_i}, 
  \bigoplus_\mu \underline{\bb{Z}}\right)\end{equation} for all $n>1$.
\item Equation \ref{eq:prodcoprodequality} in fact holds for $n=0$ and $n=1$ as well.
\end{enumerate}
The first is an easy consequence of the theorem's clause 2, while the second further leverages the slenderness of $\bigoplus_\mu \bb{Z}$ \cite[\S 94]{Fuchs_Infinite_73}.
For the aforementioned projection maps from (\ref{eq:prodSES}) to the condensations of (\ref{eq:basicKFM}) induce a chain map from the sum of the long exact sequences given by $\mathrm{Hom}( - , \bigoplus_{\mu} \underline{\bb{Z}})$ of the latter to that given by $\mathrm{Hom}( - , \bigoplus_{\mu} \underline{\bb{Z}})$ of the former, and slenderness tells us that the two constituents $$\bigoplus_{\omega}\,\mathrm{Hom}\left(\bigoplus_\lambda 
    \underline{\bb{Z}}, \bigoplus_\mu \underline{\bb{Z}}\right)\to\mathrm{Hom}\left(\prod_{\omega} \bigoplus_\lambda \underline{\bb{Z}}, \bigoplus_\mu \underline{\bb{Z}}\right)$$
    of this chain map are each isomorphisms. This, together with the Five Lemma and item (a), implies that the two other nontrivial constituents of this map, namely the $n=0$ and $n=1$ instances of 
        \begin{equation*}   
    \bigoplus_{i < \omega} \mathrm{Ext}^n\left(\underline{M_i}, 
  \bigoplus_\mu \underline{\bb{Z}}\right)\to\mathrm{Ext}^n\left(\prod_{i < \omega} \underline{M_i}, 
  \bigoplus_\mu \underline{\bb{Z}}\right)\end{equation*}
are isomorphisms as well.

To upgrade these isomorphisms from products and coproducts to limits and colimits, consider, for any nonzero $j\leq\omega$, the shift map
$$\mathrm{sh}:\prod_{i<1+j}\underline{M_i}\to\prod_{i<j}\underline{M_i}:(x_i)\mapsto(\pi_{i,i+1}(x_{i+1}))$$
and observe that for any finite such $j$, both rows of the natural commutative diagram
\[
\begin{tikzcd}
0 \arrow[r] & \lim_{i<\omega}\,\underline{M_i} \arrow[r] \arrow[d] & \prod_{i<\omega}\underline{M_i} \arrow[r, "\mathrm{id}-\mathrm{sh}"] \arrow[d] & \prod_{i<\omega}\underline{M_i} \arrow[r] \arrow[d] & 0 \\
0 \arrow[r]           & \underline{M_j} \arrow[r]                              & \prod_{i< j+1}\underline{M_i} \arrow[r, "\mathrm{id}-\mathrm{sh}"]          & \prod_{i< j}\underline{M_i} \arrow[r]            & 0          
\end{tikzcd}
\]
are exact, since the transition maps of $\mathbf{M}$ are all surjective.
Again applying $\mathrm{Hom}( - , \bigoplus_{\mu} \underline{\bb{Z}})$ induces a family of long exact sequences, and a chain map from the colimit of those associated to the lower row to that associated to the upper row above.
Since every second and third constituent of this chain map is, by the previous paragraph, an isomorphism of the form
$$\colim_{j<\omega}\,\mathrm{Ext}^n\left(\prod_{i<j}\,\underline{M_i},\bigoplus_\mu\underline{\bb{Z}}\right)\cong\bigoplus_{i < \omega} \mathrm{Ext}^n\left(\underline{M_i}, 
  \bigoplus_\mu \underline{\bb{Z}}\right)\xrightarrow{\cong}\mathrm{Ext}^n\left(\prod_{i < \omega} \underline{M_i}, 
  \bigoplus_\mu \underline{\bb{Z}}\right),$$
  again the Five Lemma implies that all the other constituents of this chain map, namely the natural morphisms $$\colim_{j<\omega}\,\mathrm{Ext}^n\left(\underline{M_j},\bigoplus_\mu\underline{\bb{Z}}\right)\to \mathrm{Ext}^n\left(\mathrm{lim}_{i<\omega}\,\underline{M_i},\bigoplus_\mu\underline{\bb{Z}}\right)$$
for each $n\in\omega$, are isomorphisms as well. This shows $(2) \Rightarrow (3)$.
But since $\prod_{\omega} 
  \bigoplus_\lambda \underline{\bb{Z}}$ admits expression as the limit of a sequence as 
  in clause 3, we in fact have $(2) \Leftrightarrow (3)$, and this concludes the proof 
  of the theorem.
  \end{proof}

Let us briefly discuss the consistency of the (equivalent) assertions in Theorem 
\ref{thm:fromstacks}, first considering the case in which $\lambda = \aleph_0$. 
In recent work, Bannister considers the additivity 
of derived limits for a particular type of inverse system known as an \emph{$\Omega_\kappa$-system} 
(where the parameter $\kappa$ denotes an arbitrary infinite cardinal). In particular, he proves 
in \cite[Thm.\ 1.2]{Bannister_additivity_23} that, for any fixed $\kappa$, derived limits are 
additive for $\Omega_\kappa$-systems in any forcing extension obtained by adding at least 
$\beth_\omega(\kappa)$-many Cohen reals to a model of $\mathsf{ZFC}$. The assertion that 
$\lim^n \mb{A}_{\aleph_0,\aleph_0}[H] = 0$ for all $n > 0$ and all abelian groups $H$ is a special 
case of the additivity of derived limits for $\Omega_{\aleph_0}$-systems; therefore,  
for $\lambda = \aleph_0$, the conditions 
in Theorem \ref{thm:fromstacks} hold simultaneously for all $\mu > 0$ in any forcing extension 
obtained by adding at least $\beth_\omega$-many Cohen reals. In particular, starting with a model 
of $\mathsf{GCH}$, one sees that they are consistent with $2^{\aleph_0} = \aleph_{\omega+1}$. 

Furthermore, a result of Casarosa and Lambie-Hanson \cite{casarosa_lh} shows that, again for 
$\lambda = \aleph_0$, $\aleph_{\omega+1}$ is the \emph{minimum} value of the continuum compatible 
with the conditions holding simultaneously for all $\mu$. They prove that, if the dominating 
number $\mathfrak{d}$ equals $\aleph_n$ for some $1 \leq n < \omega$, then 
$\lim^n \mb{A}_{\aleph_0,\aleph_0}[\bigoplus_{\omega_n} 
\mathbb{Z}] \neq 0$. In particular, if the conditions in Theorem \ref{thm:fromstacks} hold 
for $\lambda = \aleph_0$ and all cardinals $\mu < \aleph_\omega$, then we must have 
$2^{\aleph_0} > \aleph_\omega$.
A more axiomatic approach to these results is possible as well; the assumption $[\mathrm{PH}_n$ for all $n\geq 0]$, where $\mathrm{PH}_n$ denotes the $n^{\mathrm{th}}$ \emph{partition hypothesis} of \cite{BBMT}, also implies Theorem \ref{thm:fromstacks} for $\lambda=\aleph_0$ and all $\mu>0$ simultaneously.
This, though, is a logically stronger assumption than those discussed above: by \cite[Cor.\ 8.21]{BBMT}, its consistency strength is exactly that of the existence of a weakly compact cardinal.

On the other hand, if $\lambda > \aleph_0$, then the conditions of Theorem \ref{thm:fromstacks} are inconsistent even when 
$\mu = 1$, by clauses (5) and (4) of Theorem \ref{thm:limsofAkappalambda}.

We end this subsection by highlighting a particular consequence of the 
assertions in Theorem \ref{thm:fromstacks} for the duality of 
condensed Banach spaces and condensed Smith spaces in non-archimedean 
settings. For concreteness, we work over $\bb{Q}_p$ for some fixed 
prime $p$. As noted in \cite[Lecture 6]{analytic_stacks}, the derived 
category $\mathsf{D(Solid}_{\bb{Q}_p})$ of solid $\bb{Q}_p$-vector 
spaces is a full subcategory of the derived category $\mathsf{D(Solid)}$ 
of solid abelian groups, which in turn is a full subcategory of the 
the derived category $\mathsf{D(Cond}(\mathsf{Ab}))$ of condensed 
abelian groups.

Classically, a Smith space is a complete compactly generated locally 
convex topological vector space that has a universal compact set. Smith 
spaces were introduced in \cite{smith}, where they were shown to be in 
stereotype duality with Banach spaces.

This Banach--Smith duality persists in the condensed setting. Recall 
that $\bb{Q}_p = \bb{Z}_p[\frac{1}{p}]$, where the latter expression is 
defined to be 
\[
  \colim (\bb{Z}_p \xrightarrow{\times p} \bb{Z}_p \xrightarrow{\times p} \bb{Z}_p \xrightarrow{\times p} \cdots ).
\]
As shown in \cite[Lemma 3.8]{rj_rc} (see also \cite[Lecture 6]{analytic_stacks}, which more closely matches our notation), 
the solid $\bb{Q}_p$-Banach spaces 
are precisely the solid $\bb{Q}_p$-vector spaces of the form
\[
  \Big(\bigoplus_I \bb{Z}_p\Big)^{\wedge}_p\left[\frac{1}{p}\right] = 
  \left(\lim_{n<\omega} \Big(\bigoplus_I \bb{Z}_p/p^n \bb{Z}_p\Big)\right)\left[\frac{1}{p}\right] 
\] for some index set $I$,
while the solid $\bb{Q}_p$-Smith spaces are precisely the solid 
$\bb{Q}_p$-vector spaces of the form
\[
  \Big(\prod_I \bb{Z}_p\Big)\left[\frac{1}{p}\right]
\]
for some index set $I$ (readability dictates that for the remainder of this section we forego the underline notation for condensed images of rings).
It is shown in \cite[Lemma 3.10]{rj_rc} that the
classical Banach--Smith duality extends to the solid $\bb{Q}_p$-vector space 
setting via the internal $\mathrm{Hom}$ functor $\underline{\mathrm{Hom}}(-,-): (\mathsf{Solid}_{\bb{Q}_p})^{\mathrm{op}} \times 
\mathsf{Solid}_{\bb{Q}_p} \ra \mathsf{Solid}_{\bb{Q}_p}$ (see \cite[p.\ 13]{CS1} for basic definitions of condensed internal $\mathrm{Hom}$ functors). In particular, 
there it is shown that, for any index set $I$, we have
\begin{itemize}
    \item $\underline{\mathrm{Hom}}\left((\bigoplus_I \bb{Z}_p)^{\wedge}_p\left[\frac{1}{p}\right], \bb{Q}_p\right) = (\prod_I \bb{Z}_p)\left[\frac{1}{p}\right]$, and
    \item $\underline{\mathrm{Hom}}\left((\prod_I \bb{Z}_p)\left[\frac{1}{p}\right], 
    \bb{Q}_p\right) = (\bigoplus_I \bb{Z}_p)^{\wedge}_p\left[\frac{1}{p}\right]$.
\end{itemize}
As $\prod_I\mathbb{Z}_p$ is, moreover, a projective solid $\mathbb{Z}_p$-module, it is essentially immediate that the second of these equalities holds even at the level of the derived category $\mathsf{D(Solid}_{\bb{Q}_p})$, i.e., even with the 
$\underline{\mathrm{Hom}}$ replaced by $\mathrm{R}\underline{\mathrm{Hom}}$; see \cite[Remark 3.11]
{rj_rc}. The question of whether this holds for the first bullet (and hence of whether this is a \emph{derived} duality relation), though, is more delicate. We end this subsection with a brief proof of the observation of Clausen and Scholze (again, see \cite[Lecture 6]{analytic_stacks}) that, under the conditions 
of Theorem \ref{thm:fromstacks} with $\lambda = \aleph_0$, this question also has an affirmative 
answer in the context of 
\emph{separable} solid $\bb{Q}_p$-Banach spaces, i.e., in the situation 
in which the index set $I$ is countable. The following theorem yields clause (3) 
of Theorem B:

\begin{theorem}
  Suppose that $\lim^n \mb{A}_{\aleph_0,\aleph_0}[H] = 0$ for all $n > 0$ and all abelian groups $H$. 
  Then, for all primes $p$, we have 
  \[
    \mathrm{R}\underline{\mathrm{Hom}}\left(\Big(\bigoplus_\omega \bb{Z}_p\Big)^{\wedge}_p\left[\frac{1}{p}\right], 
    \bb{Q}_p \right) = \Big(\prod_\omega \bb{Z}_p\Big)\left[\frac{1}{p}\right].
  \]
\end{theorem}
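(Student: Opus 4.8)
The plan is to show that the $\mathrm{R}\underline{\mathrm{Hom}}$ in the statement is concentrated in degree $0$; that its $\mathrm{H}^0$ is then $(\prod_\omega\mathbb{Z}_p)[\frac{1}{p}]$ is precisely the underived Banach--Smith duality recalled above from \cite[Lemma 3.10]{rj_rc}, so the degree-zero part requires no further work. Write $B_0 := (\bigoplus_\omega\mathbb{Z}_p)^{\wedge}_p = \lim_{n<\omega}\bigoplus_\omega\mathbb{Z}/p^n\mathbb{Z}$ (surjectivity of the transition maps obviates any derived correction), so that the separable Banach space in question is $B_0[\frac{1}{p}] = B_0\otimes_{\mathbb{Z}_p}\mathbb{Q}_p$; as $\mathbb{Q}_p$ is flat over $\mathbb{Z}_p$ this tensor is underived, and adjunction identifies $\mathrm{R}\underline{\mathrm{Hom}}_{\mathbb{Q}_p}(B_0[\frac{1}{p}],\mathbb{Q}_p)$ with $\mathrm{R}\underline{\mathrm{Hom}}_{\mathbb{Z}_p}(B_0,\mathbb{Q}_p)$. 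I would then split the coefficients through $0\to\mathbb{Z}_p\to\mathbb{Q}_p\to\mathbb{Q}_p/\mathbb{Z}_p\to0$ and treat the two ends separately, showing that each of $\mathrm{R}\underline{\mathrm{Hom}}_{\mathbb{Z}_p}(B_0,\mathbb{Z}_p)$ and $\mathrm{R}\underline{\mathrm{Hom}}_{\mathbb{Z}_p}(B_0,\mathbb{Q}_p/\mathbb{Z}_p)$ is concentrated in degree $0$.

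The $p$-complete end is unconditional. Both $B_0$ and $\mathbb{Z}_p$ are derived $p$-complete, hence so is $\mathrm{R}\underline{\mathrm{Hom}}_{\mathbb{Z}_p}(B_0,\mathbb{Z}_p)$, which is therefore determined by its reductions modulo $p^n$. Since $B_0$ is $p$-torsion free, $B_0\otimes^{\mathbb{L}}\mathbb{Z}/p^n\mathbb{Z} = \bigoplus_\omega\mathbb{Z}/p^n\mathbb{Z}$, and base change identifies $\mathrm{R}\underline{\mathrm{Hom}}_{\mathbb{Z}_p}(B_0,\mathbb{Z}_p)\otimes^{\mathbb{L}}\mathbb{Z}/p^n\mathbb{Z}$ with $\mathrm{R}\underline{\mathrm{Hom}}_{\mathbb{Z}/p^n\mathbb{Z}}(\bigoplus_\omega\mathbb{Z}/p^n\mathbb{Z},\mathbb{Z}/p^n\mathbb{Z})$; as $\mathrm{R}\underline{\mathrm{Hom}}_{\mathbb{Z}/p^n\mathbb{Z}}(\mathbb{Z}/p^n\mathbb{Z},\mathbb{Z}/p^n\mathbb{Z})=\mathbb{Z}/p^n\mathbb{Z}$, this is just $\prod_\omega\mathbb{Z}/p^n\mathbb{Z}$, concentrated in degree $0$. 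Passing to the limit over $n$ yields $\mathrm{R}\underline{\mathrm{Hom}}_{\mathbb{Z}_p}(B_0,\mathbb{Z}_p) = \prod_\omega\mathbb{Z}_p$.

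The remaining end is where the hypothesis is used, and this is the step I expect to be the main obstacle. Because $B_0$ is not a compact object, $\mathrm{R}\underline{\mathrm{Hom}}(B_0,-)$ need not commute with the filtered colimit $\mathbb{Q}_p/\mathbb{Z}_p = \colim_N\mathbb{Z}/p^N\mathbb{Z}$, and the obstruction is measured by precisely the derived-limit groups at hand; this is where Theorem \ref{thm:fromstacks} intervenes. I would evaluate at an arbitrary $S\in\mathsf{ED}$: since $\mathbb{Z}[S]$ is then projective, $\underline{\mathrm{Ext}}^i_{\mathbb{Z}_p}(B_0,\mathbb{Q}_p/\mathbb{Z}_p)(S)$ is the external $\mathrm{Ext}^i_{\mathsf{Cond(Ab)}}(B_0,T)$ with $T = \mathrm{Cont}(S,\mathbb{Q}_p/\mathbb{Z}_p)$, a discrete, $p$-divisible torsion group (continuous maps from the compact $S$ into the discrete $\mathbb{Q}_p/\mathbb{Z}_p$ have finite image, and $\mathbb{Q}_p/\mathbb{Z}_p$ is $p$-divisible). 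The standing hypothesis gives clause (1) of Theorem \ref{thm:fromstacks} for $\lambda=\aleph_0$ and every $\mu$, hence clause (3) for every $\mu$ and every discrete $T$; applied to the inverse sequence $(\bigoplus_\omega\mathbb{Z}/p^n\mathbb{Z})_{n<\omega}$ of countable groups with surjective transition maps it gives
\[
  \mathrm{Ext}^i_{\mathsf{Cond(Ab)}}(B_0,T) = \colim_n\,\mathrm{Ext}^i_{\mathsf{Cond(Ab)}}\!\left(\bigoplus_\omega\mathbb{Z}/p^n\mathbb{Z},\,T\right) = \colim_n\,\prod_\omega\mathrm{Ext}^i(\mathbb{Z}/p^n\mathbb{Z},T),
\]
which vanishes for $i\geq1$, since $\mathrm{Ext}^1(\mathbb{Z}/p^n\mathbb{Z},T) = T/p^nT = 0$ and $\mathrm{Ext}^{\geq2}(\mathbb{Z}/p^n\mathbb{Z},T)=0$. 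One must still reconcile the $\mathbb{Z}$-linear $\mathrm{Ext}$ of Theorem \ref{thm:fromstacks} with the $\mathbb{Z}_p$-linear $\underline{\mathrm{Ext}}$ of the statement; but $\mathbb{Q}_p/\mathbb{Z}_p$ and $T$ are $p$-power torsion and $B_0$ is a $\mathbb{Z}_p$-module, so the two coincide after reduction modulo $p^n$, which is all that is needed. Hence $\mathrm{R}\underline{\mathrm{Hom}}_{\mathbb{Z}_p}(B_0,\mathbb{Q}_p/\mathbb{Z}_p)$ is concentrated in degree $0$.

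Assembling, the long exact cohomology sequence of the triangle attached to $0\to\mathbb{Z}_p\to\mathbb{Q}_p\to\mathbb{Q}_p/\mathbb{Z}_p\to0$, whose two outer terms are now concentrated in degree $0$, forces $\mathrm{R}\underline{\mathrm{Hom}}_{\mathbb{Z}_p}(B_0,\mathbb{Q}_p) = \mathrm{R}\underline{\mathrm{Hom}}\big((\bigoplus_\omega\mathbb{Z}_p)^{\wedge}_p[\frac{1}{p}],\mathbb{Q}_p\big)$ to be concentrated in degree $0$ as well, with unique nonzero cohomology $(\prod_\omega\mathbb{Z}_p)[\frac{1}{p}]$ by the classical duality of \cite[Lemma 3.10]{rj_rc}.
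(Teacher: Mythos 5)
Your proposal is correct, and it reaches the conclusion by a genuinely different decomposition than the paper's. Both arguments reduce to showing concentration in degree zero, evaluate the internal $\mathrm{RHom}$ on an arbitrary $S\in\mathsf{ED}$ to land in external $\mathrm{Ext}$'s with discrete coefficients $\mathrm{Cont}(S,-)$, and then invoke clause (3) of Theorem \ref{thm:fromstacks} for a surjective inverse sequence of \emph{countable} discrete groups whose pointwise higher $\mathrm{Ext}$'s die by divisibility. But the decompositions differ: the paper keeps $\mathbb{Q}_p$ as the target, writing $\mathbb{Q}_p=\lim_m\mathbb{Q}_p/p^m\mathbb{Z}_p$ and, crucially, the \emph{source} as $\lim_n\bigoplus_\omega\mathbb{Q}_p/p^n\mathbb{Z}_p$ — an inverse limit of countable discrete $p$-divisible groups — so that a single application of clause (3) inside an outer $\mathrm{Rlim}$ over a surjective sequence finishes the job. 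You instead strip the $[\frac{1}{p}]$ by adjunction, split the target along $0\to\mathbb{Z}_p\to\mathbb{Q}_p\to\mathbb{Q}_p/\mathbb{Z}_p\to 0$, dispose of the $\mathbb{Z}_p$-end unconditionally via derived $p$-completeness and base change, and apply the hypothesis only to the divisible quotient via the system $(\bigoplus_\omega\mathbb{Z}/p^n\mathbb{Z})_n$. Your route has the virtue of isolating exactly where the set-theoretic hypothesis enters (only the torsion part) and of producing the unconditional identity $\mathrm{R}\underline{\mathrm{Hom}}_{\mathbb{Z}_p}(B_0,\mathbb{Z}_p)=\prod_\omega\mathbb{Z}_p$ along the way; the paper's route is shorter and stays entirely within the $\mathbb{Z}$-linear $\mathrm{Ext}_{\mathsf{Cond(Ab)}}$ of Theorem \ref{thm:fromstacks}, so it never has to reconcile linearity over different base rings.

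Two points in your write-up are asserted more than justified, though both are repairable with standard facts. First, the passage between $\mathbb{Z}$-linear and $\mathbb{Z}_p$-linear $\mathrm{R}\underline{\mathrm{Hom}}$: "the two coincide after reduction modulo $p^n$" is not quite an argument; the clean justification is that $\mathbb{Z}_p\otimes^{\mathbb{L}\blacksquare}_{\mathbb{Z}}\mathbb{Z}_p=\mathbb{Z}_p$, so restriction of scalars along $\mathbb{Z}\to\mathbb{Z}_p$ is fully faithful on (derived) solid $\mathbb{Z}_p$-modules and the two $\mathrm{RHom}$'s agree outright. (The paper sidesteps this by never leaving $\mathsf{Cond(Ab)}$.) Second, the base-change step $\mathrm{R}\underline{\mathrm{Hom}}_{\mathbb{Z}_p}(B_0,\mathbb{Z}_p)\otimes^{\mathbb{L}}\mathbb{Z}/p^n\mathbb{Z}\cong\mathrm{R}\underline{\mathrm{Hom}}_{\mathbb{Z}/p^n\mathbb{Z}}(B_0/p^n,\mathbb{Z}/p^n\mathbb{Z})$ is not a general fact; it holds here because $\mathbb{Z}/p^n\mathbb{Z}$ is perfect over $\mathbb{Z}_p$ (so tensoring with it passes through $\mathrm{RHom}$) combined with the extension-of-scalars adjunction — worth saying explicitly.
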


\begin{proof}
  By \cite[Lemma 3.10]{rj_rc}, we have $\underline{\mathrm{Hom}}\left(
  (\bigoplus_\omega \bb{Z}_p)^{\wedge}_p\left[\frac{1}{p}\right], 
  \bb{Q}_p \right) = (\prod_\omega \bb{Z}_p)\left[\frac{1}{p}\right]$.
  Therefore, it suffices to show that $\mathrm{R}\underline{\mathrm{Hom}}\left(
  (\bigoplus_\omega \bb{Z}_p)^{\wedge}_p\left[\frac{1}{p}\right], \bb{Q}_p \right)$
  is concentrated in degree zero.
Note that
  \[
    \Big(\bigoplus_\omega \bb{Z}_p\Big)^{\wedge}_p\left[\frac{1}{p}\right] \cong \lim_{n < \omega} 
    \Big(\bigoplus_\omega \bb{Q}_p/(p^n \bb{Z}_p)\Big)
  \]
  and that $\bigoplus_\omega \bb{Q}_p/(p^n \bb{Z}_p)$ is countable and discrete for each $n < \omega$. 
  Similarly, we have $\bb{Q}_p \cong \lim_{m < \omega} \bb{Q}_p/(p^n \bb{Z}_p)$. Therefore, for all 
  $S \in \mathsf{ED}$, we have
  \begin{align}
  \label{eq:Banach-Smith}
    & \mathrm{R\underline{Hom}}\left(\Big(\bigoplus_\omega \bb{Z}_p\Big)^{\wedge}_p\left[\frac{1}{p}\right], 
    \bb{Q}_p \right)(S) \nonumber \\ &= \underset{m<\omega}{\mathrm{Rlim}} \:\mathrm{R\underline{Hom}}\left(\lim_{n < \omega} 
    \Big(\bigoplus_\omega \bb{Q}_p/(p^n \bb{Z}_p)\Big), \bb{Q}_p/(p^m \bb{Z}_p)\right)(S) \nonumber \\ 
    &= \underset{m<\omega}{\mathrm{Rlim}} \:\mathrm{RHom}\left(\lim_{n < \omega} 
    \Big(\bigoplus_\omega \bb{Q}_p/(p^n \bb{Z}_p)\Big), \mathrm{Cont}(S, \bb{Q}_p/(p^m \bb{Z}_p))\right).
  \end{align}
For the second equality, see the third paragraph of \cite[\S 4]{TangProfinite} (or, a little more concretely, argue as for \cite[Lem.\ 11.1]{BLHS}). Since $S \in \mathsf{ED}$ and $\bb{Q}_p/(p^m \bb{Z}_p)$ is discrete, we know that 
  $\mathrm{Cont}(S, \bb{Q}_p/(p^m \bb{Z}_p))$ (with the compact-open topology) is a discrete abelian group 
  for all $m < \omega$. 
  Hence clause (3) of Theorem \ref{thm:fromstacks} applies to the interior RHom of line \ref{eq:Banach-Smith}, entailing that
  \begin{align*}
    & \mathrm{H}^i\mathrm{RHom}\left(\lim_{n<\omega} 
    \Big(\bigoplus_\omega \bb{Q}_p/(p^n \bb{Z}_p)\Big), \mathrm{Cont}(S, \bb{Q}_p/(p^m \bb{Z}_p))\right) \\ 
    & = \underset{n<\omega}{\mathrm{colim}} \:\mathrm{Ext}^i\left( 
    \Big(\bigoplus_\omega \bb{Q}_p/(p^n \bb{Z}_p)\Big), \mathrm{Cont}(S, \bb{Q}_p/(p^m \bb{Z}_p))\right) 
    \\ &= \underset{n<\omega}{\mathrm{colim}} \prod_\omega 
    \mathrm{Ext}^i\left(\bb{Q}_p/(p^n \bb{Z}_p), \mathrm{Cont}(S, \bb{Q}_p/(p^m \bb{Z}_p))\right)
  \end{align*}
  for all $i\geq 0$.
  Both arguments inside $\mathrm{Ext}^i$ in the last line above are (discrete) abelian 
  groups, and $\mathrm{Cont}(S, \bb{Q}_p/(p^m \bb{Z}_p))$ is divisible, and hence injective. 
  Thus 
  \[
    \mathrm{Ext}^i\left(\bb{Q}_p/(p^n \bb{Z}_p), \mathrm{Cont}(S, \bb{Q}_p/(p^m \bb{Z}_p))\right) = 0
  \]
  for all $i>0$ and the $\mathrm{Rlim}$ of line \ref{eq:Banach-Smith} is of an inverse sequence of abelian groups whose transition maps are all surjective. 
  In consequence, it, too, concentrates in the zeroth degree, and this completes the proof.
\end{proof}

\subsection{Consistent nonvanishing in higher degrees}
\label{subsect:morenonvanishing}

In the presence of additional assumptions, item (5) of Theorem \ref{thm:limsofAkappalambda} does generalize to higher degrees; a main result in this direction is Theorem \ref{thm:limnAkl_further} below.
Here a few preliminary words may be of value.

First, this being the paper's most combinatorially involved result, readers may wish to defer a reading of its proof until they've digested the argument of Theorem \ref{thm:nonzero_cohomology_on_opens}; its proof in fact will reference that argument at one point.
That said, these references are essentially to motifs which readers familiar with $n$-coherence arguments will already know.
Among the most central of these motifs is the following theorem \cite{Goblot}; this result is frequently instrumental, as in Theorems \ref{thm:limnAkl_further} and \ref{thm:nonzero_cohomology_on_opens}, in the construction of higher nontrivially coherent families.

\begin{theorem}[Goblot 1970]
\label{thm:Goblot} Let $I$ be a directed partial order indexing an inverse system $\mathbf{X}$ of abelian groups. If $\cf(I) \leq \aleph_k$, then $\mathrm{lim}^n\,\mathbf{X}=0$ for all $n>k+1$.
\end{theorem}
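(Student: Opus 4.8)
The plan is to prove this by induction on $k$, after the harmless reduction to the case $|I|\le\aleph_k$: since $\mathrm{Rlim}$ over a directed index poset (which we take $I$ to be, as always with these functors) is unchanged by passage to a cofinal subposet — a standard fact refining the $\lim^0$ statement $\lim_{i\in J}C_i\cong\lim_{i\in I}C_i$ recalled in \S\ref{subsect:condensedbackground} — we may replace $I$ by a cofinal subset $I_0$ with $|I_0|=\mathrm{cf}(I)\le\aleph_k$. All limits will be computed by the normalized alternating cochain complex $C^{\bullet}(\mathbf{X})$, where $C^{p}=\prod_{\vec\imath}X_{i_0}$ ranges over strictly increasing $(p+1)$-tuples $\vec\imath=(i_0<\dots<i_p)$ from $I_0$; the alternating convention lets us treat $C^p$ as indexed by arbitrary $(p+1)$-tuples of distinct elements, up to sign. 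The base case $k=0$ is the classical observation of Roos and Milnor: a directed set of size $\le\aleph_0$ carries a cofinal chain of order type $\le\omega$, over which $\mathrm{Rlim}$ is the two-term complex $\prod_{i<\omega}X_i\xrightarrow{\ \mathrm{id}-\mathrm{sh}\ }\prod_{i<\omega}X_i$, so $\lim^n\mathbf{X}=0$ for $n\ge 2$.

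For the inductive step, assume the result for $k-1$, let $\mathrm{cf}(I)\le\aleph_k$, and fix $I_0$ as above together with an auxiliary well-ordering $\prec$ of the underlying set of $I_0$ of order type $\le\omega_k$; then every proper $\prec$-initial segment $I_{<\gamma}=\{\,j\in I_0: j\prec i_\gamma\,\}$ has cardinality $\le\aleph_{k-1}$, so the inductive hypothesis applies to each subsystem $\mathbf{X}\!\restriction\! I_{<\gamma}$ and gives $\lim^{m}(\mathbf{X}\!\restriction\! I_{<\gamma})=0$ for all $m\ge k+1$. Now fix $n\ge k+2$ and a cocycle $c\in C^{n}(\mathbf{X})$; we construct a primitive $b\in C^{n-1}(\mathbf{X})$ by recursion along $\prec$. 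At stage $\gamma$ we define $b$ on every $n$-tuple whose $\prec$-largest entry is $i_\gamma$; writing such a tuple as $(i_\gamma,\vec\jmath)$ with $\vec\jmath$ an $(n-1)$-tuple from $I_{<\gamma}$, the identity $\delta c=0$ together with the values of $b$ already fixed at earlier stages turns the task of choosing $(b(i_\gamma,\vec\jmath))_{\vec\jmath}$ into a lifting problem over $I_{<\gamma}$: one must produce an $(n-2)$-cochain of $\mathbf{X}\!\restriction\! I_{<\gamma}$ whose coboundary is a prescribed $(n-1)$-cochain assembled from $c$ and from the earlier stages. That prescribed cochain is a cocycle, so it is a coboundary over $I_{<\gamma}$ — because $n-1\ge k+1$ and $\mathrm{cf}(I_{<\gamma})\le\aleph_{k-1}$ — and any choice of primitive defines $b$ at stage $\gamma$. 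Running the recursion to the top yields $\delta b=c$, whence $\lim^n\mathbf{X}=0$ for all $n\ge k+2$.

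The main obstacle, and the only point where the hypothesis does more than bound $|I_0|$, is the verification that at each stage the prescribed $(n-1)$-cochain over $I_{<\gamma}$ is genuinely closed; this amounts to carefully tracking the simplicial identities so that the contributions of $c$ and of the previous stages cohere, and it is exactly the sort of bookkeeping that the alternating normalization is designed to tame and that will be familiar from the $n$-coherence arguments of Section~\ref{subsect:systemsAkl}. An equivalent spectral-sequence route instead writes $I_0$ as a continuous increasing union of directed subposets $I_\xi$ of size $\le\aleph_{k-1}$ and invokes the Grothendieck spectral sequence $\lim^p_{\omega_k}\bigl(\lim^q_{I_\xi}\bigr)\Rightarrow\lim^{p+q}_{I_0}$; the inductive hypothesis collapses it to the rows $q\le k$, and the remaining work is to show that the transfinite towers $\xi\mapsto\lim^q_{I_\xi}$ — which one arranges to be continuous with surjective transition maps — contribute only in columns $p\le 1$. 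The combinatorial essence is the same either way.
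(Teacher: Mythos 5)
The paper does not actually prove this theorem (it is quoted from \cite{Goblot}), so your proposal must be measured against the standard argument, and as written it has a genuine gap in the inductive step. The $\prec$-initial segments $I_{<\gamma}$ of an arbitrary well-ordering of $I_0$ need not be \emph{directed}, and your inductive hypothesis is a statement about directed index sets only: for a general poset of cardinality $\aleph_{k-1}$ no such vanishing holds (already over a countable non-directed poset, $\mathrm{lim}^q$ of a constant system computes the cohomology of the order complex, which can be nonzero in arbitrarily high degrees). Nor can one generally choose $\prec$ so that initial segments are directed -- that would require a cofinal well-ordered chain, and, e.g., $[\omega_1]^{<\omega}$ has cofinality $\aleph_1$ but no uncountable chain. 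There is a second, related defect: even where $I_{<\gamma}$ happens to be directed, the unknowns at stage $\gamma$ are the values $b(i_\gamma,\vec{u})$ for tuples $\vec{u}$ drawn from the set of elements of $I_{<\gamma}$ that are $\le$-comparable with $i_\gamma$, so the ``lifting problem'' is a coboundary equation over that \emph{link}, not over $I_{<\gamma}$; the link is again not directed and its derived limits are not controlled by the inductive hypothesis. Your element-by-element recursion is sound precisely when $I_0$ is itself well-ordered (this is the mechanism underlying the recursions in Theorems \ref{thm:limnAkl_further} and \ref{thm:nonzero_cohomology_on_opens}), but it does not transfer to general directed $I$.

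The standard repair changes the mechanism rather than just the bookkeeping. Write $I_0$ as a continuous increasing union of \emph{directed} subsets $I_\xi$ ($\xi<\omega_k$), each of cardinality at most $\aleph_{k-1}$, obtained by closing off under upper bounds. Given an $n$-cocycle $c$ with $n\geq k+2$, build primitives $b_\xi$ of $c\restriction I_\xi$ by recursion on $\xi$ so that each \emph{extends} the previous one: at a successor stage use $\mathrm{lim}^n_{I_{\xi+1}}=0$ (inductive hypothesis, since $n\geq k+1$) to produce \emph{some} primitive $b'$ on $I_{\xi+1}$; then $b_\xi-b'\restriction I_\xi$ is an $(n-1)$-cocycle on the directed set $I_\xi$, and since $n-1\geq k+1$ it equals $\delta a$ for some $(n-2)$-cochain $a$, so $b_{\xi+1}:=b'+\delta\tilde{a}$ (with $\tilde{a}$ extending $a$ by zero) is a primitive on $I_{\xi+1}$ agreeing with $b_\xi$ on $I_\xi$. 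Limit stages are free because chains are finite and the filtration is continuous. This is where exactly one degree is lost per cardinal step, and it uses vanishing of both $\mathrm{lim}^n$ and $\mathrm{lim}^{n-1}$ over the pieces -- nothing about links. Your spectral-sequence fallback also overclaims at the decisive point: the towers $\xi\mapsto\mathrm{lim}^q_{I_\xi}$ over $\omega_k$ are not in general concentrated in columns $p\leq 1$ -- nontrivial coherent families of height $\omega_k$ can (at least consistently) witness nonvanishing of $\mathrm{lim}^p$ over $\omega_k$ for all $p\leq k$, even for surjective systems -- so that route as stated yields vanishing only for $p+q\geq 2k+1$, far short of $n\geq k+2$.
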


\noindent If the transitive maps of $\mathbf{X}$ are all surjective, this vanishing holds even for all $n>k$, as observed in \cite[Theorem 2.4]{Velickovic_non_21}; in what follows, by \emph{Goblot's Theorem} we will mean both Theorem \ref{thm:Goblot} and this corollary.

This theorem brings us to a second point, which is the fundamental relationship of the functors $\mathrm{lim}^n$ to the cardinals $\aleph_n$.
Since it will grow clearer in the course of this paper, there's no need to linger on this point, except to note that Theorem \ref{thm:limnAkl_further} is exemplary in two respects: (1) In it, each rise in the degree of the derived limit corresponds to a rise in the cofinality of a nonvanishing system; by Goblot's result, this is essentially unavoidable. (2) The question of whether the theorem's hypotheses may be weakened to the \textsf{ZFC} axioms is an open and good one (and is recorded in our conclusion). This brings us to a further item, which is that of the theorem's hypotheses; in it, for example, a notion of nontrivial $n$-coherence slightly different from that of Definition \ref{def:coherence} is invoked.
Both, along with the main mechanism of Theorem \ref{thm:nonzero_cohomology_on_opens}, fall within a more general framework which we pause to record; here $\mathcal{I}^n$ denotes the cartesian product of an ideal $\mathcal{I}$ with itself $n$ times.
\begin{definition}
\label{def:coh2}
For any $n>0$ and abelian group $H$ and ideals $\mathcal{I}$, $\mathcal{J}$ on a set $Y$, a family
$$\Phi=\langle\varphi_{\vec{I}}:\bigcap\vec{I}\to H\mid\vec{I}\in\mathcal{I}^n\rangle$$
is \emph{$n$-coherent (modulo $\mathcal{J}$)} if
$$\mathrm{supp}\left(\sum_{i\leq n}(-1)^i\varphi_{\vec{I}^i}\big|_{\bigcap\vec{I}}\right)\in\mathcal{J}$$
for all $\vec{I}\in \mathcal{I}^{n+1}$.

If $n=1$ then $\Phi$ is \emph{trivial} if there exists a $\psi:Y\to H$ such that
$$\mathrm{supp}\left(\psi\big|_{I}-\varphi_I\right)\in\mathcal{J}$$
for all $I\in\mathcal{I}$.
Similarly, if $n>1$ then $\Phi$ is \emph{trivial} if there exists a $$\Psi=\langle\psi_{\vec{I}}:\bigcap\vec{I}\to H\mid\vec{I}\in \mathcal{I}^{n-1}\rangle$$
such that
$$\mathrm{supp}\left(\sum_{i\leq n}(-1)^i\psi_{\vec{I}^i}\big|_{\bigcap\vec{I}}-\varphi_{\vec{I}}\right)\in\mathcal{J}$$
for all $\vec{I}\in \mathcal{I}^n$.

For any $\mathcal{I}$, $\mathcal{J}$, and $H$ as above, let $\mathbf{X}[\mathcal{I},\mathcal{J},H]=(X_I,\pi_{I,J}, \mathcal{I})$ denote the $\mathcal{I}$-indexed inverse system with terms
$$X_I=\langle\varphi:I\to H\mid\mathrm{supp}(\varphi)\in\mathcal{J}\rangle$$
and transition maps
$$\pi_{I,J}:X_J\to X_I:\varphi\mapsto \varphi\big|_I$$
for each $I\subseteq J$ in $\mathcal{I}$.
\end{definition}
\begin{example}
If $Y=\kappa\times\lambda$, $\mathcal{I}=\{X(f)\mid f\in{^\kappa}([\lambda]^{<\omega})\}$, and $\mathcal{J}=[Y]^{<\omega}$ then $\mathbf{X}[\mathcal{I},\mathcal{J},H]$ equals the system $\mathbf{A}_{\kappa,\lambda}[H]$ of Definition \ref{def:Akappalambda}.
\end{example}
\begin{lemma}
\label{lem:nontrivfamsarenontrivlimsagain}
For all $n>0$ and $\mathcal{I}$, $\mathcal{J}$, and $H$ as in Definition \ref{def:coh2} above,
$$\operatorname{lim}^n \mathbf{X}[\mathcal{I},\mathcal{J},H]\neq 0$$ if and only the associated $n$-coherent families of functions are all trivial.
\end{lemma}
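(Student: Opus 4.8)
The plan is to reprise, in this more general setting, exactly the argument behind Lemma \ref{lem:zeroiftriv}, which handles the special case $\mathbf{A}_{\kappa,\lambda}[H] = \mathbf{X}[\mathcal{I},\mathcal{J},H]$ recorded in the Example preceding the lemma; what is being asserted is the equivalence between the vanishing of $\mathrm{R}^n\mathrm{lim}\,\mathbf{X}[\mathcal{I},\mathcal{J},H]$ and the triviality (in the sense of Definition \ref{def:coh2}) of every associated family that is $n$-coherent modulo $\mathcal{J}$. First I would introduce the flasque ``envelope'' $\mathbf{B}[\mathcal{I},H] = (B_I,\pi_{I,J},\mathcal{I})$ with $B_I = {^I}H$ the group of \emph{all} functions $I\to H$ and $\pi_{I,J}$ the restriction maps, together with the quotient system $(\mathbf{B}/\mathbf{X})[\mathcal{I},\mathcal{J},H] = (B_I/X_I,\pi_{I,J},\mathcal{I})$. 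Since an ideal is directed under $\subseteq$, these are genuine inverse systems, and $\mathbf{B}[\mathcal{I},H]$ is flasque in the sense of \cite{Jensen_les}: a compatible family over any downward-closed $\Lambda\subseteq\mathcal{I}$ glues to a single function on $\bigcup\Lambda$, extended by zero to a section over all of $\mathcal{I}$. Hence, just as in the proof of Lemma \ref{lem:Bvanishing}, $\mathrm{lim}^m\,\mathbf{B}[\mathcal{I},H] = 0$ for all $m>0$ by \cite[Th\'{e}or\`{e}me 1.8]{Jensen_les}.

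Next I would apply $\mathrm{lim}$ to the short exact sequence $\mathbf{0}\to\mathbf{X}[\mathcal{I},\mathcal{J},H]\to\mathbf{B}[\mathcal{I},H]\to(\mathbf{B}/\mathbf{X})[\mathcal{I},\mathcal{J},H]\to\mathbf{0}$ and invoke the vanishing just obtained, which yields, exactly as in equations \ref{eq:lim1} and \ref{eq:limn}, the identifications $\mathrm{lim}^1\,\mathbf{X}[\mathcal{I},\mathcal{J},H]\cong\mathrm{lim}\,(\mathbf{B}/\mathbf{X})/\mathrm{im}(\mathrm{lim}\,\mathbf{B})$ and $\mathrm{lim}^n\,\mathbf{X}[\mathcal{I},\mathcal{J},H]\cong\mathrm{lim}^{n-1}\,(\mathbf{B}/\mathbf{X})$ for $n>1$. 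It then remains to translate the right-hand sides into the language of Definition \ref{def:coh2}. For $n=1$: an element of $\mathrm{lim}\,(\mathbf{B}/\mathbf{X})$ is a compatible system $\langle[\varphi_I]\mid I\in\mathcal{I}\rangle$ of classes of functions modulo $\mathcal{J}$-supported ones, and compatibility reads precisely $\mathrm{supp}(\varphi_J|_I-\varphi_I)\in\mathcal{J}$ for $I\subseteq J$, i.e.\ $\langle\varphi_I\rangle$ is $1$-coherent modulo $\mathcal{J}$; meanwhile the image of $\mathrm{lim}\,\mathbf{B}={^{\bigcup\mathcal{I}}}H$ consists exactly of the systems $\langle[\psi|_I]\rangle$ arising from some $\psi\colon Y\to H$, i.e.\ the trivial families. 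So $\mathrm{lim}^1\,\mathbf{X}[\mathcal{I},\mathcal{J},H]=0$ if and only if every $1$-coherent family is trivial.

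For $n>1$ I would compute $\mathrm{lim}^{n-1}\,(\mathbf{B}/\mathbf{X})$ via the Roos complex of $(\mathbf{B}/\mathbf{X})$: a degree-$(n-1)$ cochain is essentially a family $\langle c_{\vec{I}}\colon\bigcap\vec{I}\to B/X\rangle$, and, using the surjectivity of $\mathbf{B}\to\mathbf{B}/\mathbf{X}$, one lifts it to a family $\Phi=\langle\varphi_{\vec{I}}\colon\bigcap\vec{I}\to H\rangle$ of honest $H$-valued functions; the cocycle condition modulo $\mathbf{X}$ then reads $\mathrm{supp}\big(\sum_{i\leq n}(-1)^i\varphi_{\vec{I}^i}|_{\bigcap\vec{I}}\big)\in\mathcal{J}$, i.e.\ $\Phi$ is $n$-coherent modulo $\mathcal{J}$, while the class of such a cocycle vanishes if and only if $\Phi$ is $n$-trivial in the sense of Definition \ref{def:coh2}. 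Since, modulo replacing $=^*$ by ``$\mathcal{J}$-small support'' and $\kappa\times\lambda$ by $Y$, this cochain-level bookkeeping — including the passage between the alternating and non-alternating presentations of the Roos complex — is identical to that carried out in \cite[Section 2.1]{Bergfalk_simultaneously_21}, I would refer there for the remaining details. The main obstacle is precisely this last step: making the dictionary between cohomology classes in the Roos complex of the quotient system and the $\mathcal{J}$-approximate, alternating cocycle/coboundary conditions of Definition \ref{def:coh2} fully precise — in particular checking that cochains lift from $\mathbf{B}/\mathbf{X}$ to $\mathbf{B}$ compatibly with the differential and that ``trivial'' corresponds exactly to ``coboundary''. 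Everything else — the flasqueness of $\mathbf{B}$, the long exact sequence, and the $n=1$ case — is routine and parallels the material already developed in Subsection \ref{subsect:systemsAkl}.
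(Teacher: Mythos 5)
Your proposal is correct and follows exactly the route the paper intends: the paper's own proof of this lemma is simply a pointer back to Lemma \ref{lem:zeroiftriv}, whose argument (flasque envelope $\mathbf{B}$, Jensen's vanishing, the long exact sequence yielding the analogues of equations \ref{eq:lim1} and \ref{eq:limn}, and the Roos-complex translation for $n>1$ deferred to \cite[Section 2.1]{Bergfalk_simultaneously_21}) you have transcribed faithfully to the $(\mathcal{I},\mathcal{J},H)$ setting. You also correctly read the statement in its intended form ($\mathrm{R}^n\mathrm{lim}=0$ iff every $n$-coherent family is trivial), despite the inverted phrasing in the lemma as printed.
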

\begin{proof}
The argument is essentially identical to that of Lemma \ref{lem:zeroiftriv}.\end{proof}
\begin{example}
For any fixed regular cardinal $Y=\lambda$, letting $H=\mathbb{Z}$, $\mathcal{J}=[Y]^{<\omega}$, and $\mathcal{I}$ equal the bounded ideal on $\lambda$ (within which the initial segments of $\lambda$ are cofinal) recovers some of the most fundamental set theoretic notions of coherence.
The $n=1$ case of Definition \ref{def:coh2} is that strongly associated to Todorcevic's walks apparatus \cite{Todorcevic_Walks_07}; moreover,
$$\operatorname{lim}^n \mathbf{X}[\mathcal{I},\mathcal{J},H]\cong\mathrm{H}^n(\lambda;\mathcal{H})$$
for all $n>0$, where the righthand terms are the sheaf cohomology groups of $\lambda$ (endowed with its order topology) with respect to the constant sheaf associated to $H$ (the argument is much as for \cite[Thm.\ 3.2]{IHW}).
\end{example}
The argument of Theorem \ref{thm:limnAkl_further} below will simultaneously involve notions of coherence from our previous two examples; we won't notationally belabor their distinction, though, as the dangers of confusing them are marginal.

Turning lastly to the theorem's set theoretic hypotheses: good references for the approachability ideal and the principle $\diamondsuit(S)$ are \cite{Eisworth} and \cite{Rinot_diamond}, respectively; that said, what we need of them will appear, simply, in the course of the theorem's proof.
We note that the hypotheses of the theorem are relatively weak. 
They all hold, for instance, in G\"{o}del's constructible universe 
$\mathrm{L}$ or various other canonical inner models. In addition, the 
failure of the approachability assumption in the main statement of the theorem 
has large cardinal strength precisely equal to a greatly Mahlo cardinal 
(see \cite{Mitchell_approachability}). In particular, if there fails to be 
a stationary subset of $S^{\aleph_{n+1}}_{\aleph_n}$ in $I[\aleph_{n+1}]$, then 
$\aleph_{n+1}$ must be greatly Mahlo in $\mathrm{L}$ (here, given 
cardinals $\kappa < \lambda$, $S^\lambda_\kappa$ denotes the set 
$\{\alpha < \lambda \mid \cf(\alpha) = \kappa\}$). 
The hypotheses of the theorem 
therefore hold, for instance, in the forcing extension of $\mathrm{L}$ formed by 
adding $\aleph_\omega$-many Cohen reals, a model in which, by results of 
\cite{Bergfalk_simultaneously_23} and \cite{Bannister_additivity_23}, we have
\[
  \mathrm{lim}^{n+1}\,\mathbf{A}_{\aleph_0, \aleph_0}\left[H\right]=0
\]
for every $n < \omega$ and every abelian group $H$.

\begin{theorem}
\label{thm:limnAkl_further}
  Suppose that $1 \leq n < \omega$ and there is a stationary subset $S \subseteq 
  S^{\aleph_{n+1}}_{\aleph_n}$ in the approachability ideal $I[\aleph_{n+1}]$.
  Then 
  \[
    \mathrm{lim}^{n+1}\,\mb{A}_{\aleph_n, \aleph_{n+1}}\left[\bigoplus_{\omega_{n+1}} 
    \bb{Z}\right] \neq 0.
  \]
  If, additionally, $\diamondsuit(S)$ holds and there is a nontrivial 
  $n$-coherent family $\langle \tau_{\vec{\eta}} : \bigwedge \vec{\eta} \rightarrow 
  \bb{Z} \mid \vec{\eta} \in (\omega_n)^n \rangle$, then 
  $\mathrm{lim}^{n+1}\, \mb{A}_{\aleph_n, \aleph_{n+1}} \neq 0$.
\end{theorem}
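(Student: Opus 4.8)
The plan is to reduce, by the $H$-valued form of Lemma~\ref{lem:zeroiftriv}, to the construction of a nontrivially $(n+1)$-coherent family $\Phi = \langle \varphi_{\vec{f}} \mid \vec{f} \in ({^{\omega_n}}([\omega_{n+1}]^{<\omega}))^{n+1}\rangle$ of $H$-valued functions, where $H = \bigoplus_{\omega_{n+1}} \bb{Z}$ for the first conclusion and $H = \bb{Z}$ for the second. Following item~(5) of Theorem~\ref{thm:limsofAkappalambda}, the organizing device is the support-supremum map $\mathrm{sp}: f \mapsto \sup\bigcup_{i<\omega_n} f(i)$, which carries ${^{\omega_n}}([\omega_{n+1}]^{<\omega})$ monotonically onto the ordinals below $\omega_{n+1}$: it will suffice to build a nontrivially $(n+1)$-coherent family (modulo finite) $\langle \tau_{\vec{\gamma}} \mid \vec{\gamma} \in (\omega_{n+1})^{n+1}\rangle$ of $H$-valued functions on the nested sets $\omega_n \times \gamma$ $(\gamma < \omega_{n+1})$---the ``ordinal'' incarnation of the framework of Definition~\ref{def:coh2}, essentially a sheaf-cohomology group of $\omega_{n+1}$---and then to set $\varphi_{\vec{f}} = \tau_{\mathrm{sp}(\vec{f})}\restriction X(\vec{f})$, interpreting $\mathrm{sp}$ coordinatewise on tuples. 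Coherence of $\Phi$ is then immediate from coherence of $\langle\tau_{\vec{\gamma}}\rangle$ and monotonicity of $\mathrm{sp}$; the substantive point is nontriviality of $\Phi$, which we arrange by constructing $\langle\tau_{\vec{\gamma}}\rangle$ so that any trivialization of $\Phi$ reflects back to one of $\langle\tau_{\vec{\gamma}}\rangle$, exactly in the manner of the Claim closing the proof of Theorem~\ref{thm:limsofAkappalambda}(5).

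It remains to build $\langle\tau_{\vec{\gamma}}\rangle$. Fix a witness $\langle a_\xi \mid \xi < \omega_{n+1}\rangle$ together with a club $C$ to the membership $S \in I[\aleph_{n+1}]$, and, replacing $S$ by $S \cap C$, choose for each $\alpha \in S$ a set $A_\alpha \subseteq \alpha$ cofinal in $\alpha$ of order type $\omega_n = \cf(\alpha)$ all of whose proper initial segments occur among $\{a_\xi \mid \xi < \alpha\}$. For the first conclusion the coefficient group $\bigoplus_{\omega_{n+1}}\bb{Z}$ affords enough room that the construction underlying Theorem~\ref{thm:nonzero_cohomology_on_opens}, carried out along the $A_\alpha$ $(\alpha\in S)$, produces a nontrivially $(n+1)$-coherent family from the approachability data alone: a recursion of length $\omega_{n+1}$ along $C$ glues the obstruction attached at each $\alpha$---placed, as with the single finite-to-one $e_\alpha: \alpha\to\omega$ of Theorem~\ref{thm:limsofAkappalambda}(5), into the coordinate of $\bigoplus_{\omega_{n+1}}\bb{Z}$ indexed by $\alpha$---into a family $\langle\tau_{\vec{\gamma}}\rangle$, the appearance of the initial segments of $A_\alpha$ among the $a_\xi$'s being precisely what makes the glue cohere modulo finite over all of $\omega_{n+1}$. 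Nontriviality is then automatic: restricting a putative trivialization to each $A_\alpha$ would trivialize the obstruction seeded there, and these obstructions, living in pairwise distinct coordinates of $\bigoplus_{\omega_{n+1}}\bb{Z}$, cannot be simultaneously trivialized---an unboundedness argument on $\omega_{n+1}$ in the style of the Claim in Theorem~\ref{thm:limsofAkappalambda}(5).

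For the second conclusion, with coefficients only $\bb{Z}$, this room is gone and nontriviality must be forced by diagonalization. We run the analogous recursion, but now attach to each $A_\alpha$, via the order isomorphism $A_\alpha \cong \omega_n$, a copy of the given nontrivially $n$-coherent family $\langle \tau_{\vec{\eta}} : \bigwedge\vec{\eta} \to \bb{Z} \mid \vec{\eta} \in (\omega_n)^n\rangle$, and we use $\diamondsuit(S)$ to anticipate trivializations: the diamond sequence guesses, on a stationary set of $\alpha \in S$, every would-be global trivialization $\Psi$ of the family under construction, and at each such $\alpha$ we perturb the $\alpha$-th stage so that $\Psi \restriction A_\alpha$ would then have to trivialize the transported copy of $\langle\tau_{\vec{\eta}}\rangle$, contrary to hypothesis. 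As before, the approachability of the $A_\alpha$ is what lets these stationarily many perturbations cohere into a genuinely $(n+1)$-coherent family on $\omega_{n+1}$, and the $\mathrm{sp}$-transfer then yields $\lim^{n+1}\mb{A}_{\aleph_n,\aleph_{n+1}} \neq 0$.

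The main obstacle, in both conclusions, will be the recursion assembling $\langle\tau_{\vec{\gamma}}\rangle$: the approaching sequences $A_\alpha$ and $A_\beta$ agree only below their guessed common initial segments and diverge above them, so arranging the obstructions so that their global combination is $(n+1)$-coherent \emph{modulo finite}---and not merely modulo bounded sets stage by stage---will demand careful bookkeeping, and it is exactly here that the approachability hypothesis earns its keep. For the second conclusion there is the additional burden of interleaving the $\diamondsuit(S)$-guessing into this recursion so that every candidate trivialization is defeated at stationarily many stages, with the (suitably strengthened) nontriviality of $\langle\tau_{\vec{\eta}}\rangle$ on $\omega_n$ furnishing the obstruction at each such stage.
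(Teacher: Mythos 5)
Your plan is essentially the paper's own proof: the paper likewise first builds an ordinal-indexed $(n+1)$-coherent family $\Phi'$ on the domains $\omega_n\times\bigwedge\vec{\beta}$ by a Goblot-driven recursion that seeds, at each $\gamma\in S$, a transported copy of a nontrivial $n$-coherent family along the approachable club $d_\gamma$ (shifted into the fresh coordinates $[\gamma,\gamma+\omega_n)$ of $\bigoplus_{\omega_{n+1}}\bb{Z}$ in the first case, and guarded by the $\diamondsuit(S)$-guess of the would-be trivialization in the second), and then transfers to ${}^{\omega_n}([\omega_{n+1}]^{<\omega})$ exactly via $f\mapsto\ssup\bigcup_{\eta}f(\eta)$. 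The one emphasis worth correcting in your sketch: approachability is not what keeps the recursive glue coherent (Goblot's Theorem does that at each stage), but what makes the reflection of a putative trivialization possible --- the initial segments of $d_\gamma$ must occur among sets $c_\alpha$ enumerated below $\gamma$, so that the index functions relevant at stage $\gamma$ are the $f_\alpha$ with $\alpha<\gamma$ and a Fodor-type (resp.\ $\diamondsuit$-guessing) argument can stabilize the trivialization's behavior at some $\gamma\in S$.
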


\begin{remark}
\label{rmk:cofinality}
Below, we will at times invoke what might be seen either by the description of the lim functor $\mathsf{Pro}(\mathcal{C})\to\mathcal{C}$ in Section \ref{subsect:condensedbackground} or, in any particular case, by hand, namely: \emph{only cofinal phenomena matter in analyses of $\mathrm{lim}^n$ or $n$-coherence.}
\end{remark}

\begin{proof}[Proof of Theorem \ref{thm:limnAkl_further}]
  Let $\mu := \aleph_n$, $H := \bigoplus_{\omega_{n+1}} \bb{Z}$, and 
  $\Lambda := {^\mu}([\mu^+]^{<\omega})$. We will begin by constructing 
  an $(n+1)$-coherent (modulo the ideal of finite subsets of $\mu \times \mu^+$) family $\Phi' = \langle \varphi'_{\vec{\beta}} : \mu \times 
  \bigwedge \vec{\beta} \ra H' \mid \vec{\beta} \in (\mu^+)^{n+1} \rangle$, where 
  $H'$ will be either $\bb{Z}$ or $H$, depending on whether or not we are 
  assuming the additional hypotheses in the statement of the theorem. Fix a 
  sequence $\langle c_\alpha \mid \alpha < \mu^+ \rangle$ witnessing 
  that $S \in I[\mu^+]$. In particular, we may assume that each 
  $c_\alpha$ is a subset of $\mu^+$ and, for all $\gamma \in S$, there is a 
  club $d_\gamma$ in $\gamma$ such that 
  \begin{itemize}
      \item $\otp(d_\gamma) = \mu$; and
      \item for all $\beta < \gamma$, there is an $\alpha < \gamma$ such that 
        $d_\gamma \cap \beta = c_\alpha$.
  \end{itemize}
  We can assume that $\otp(c_\alpha) < \mu$ for all $\alpha < \mu^+$.
  If $\diamondsuit(S)$ holds, then we may fix a sequence $\langle A_\gamma \mid 
  \gamma \in S \rangle$ such that
  \begin{itemize}
      \item for each $\gamma \in S$, $A_\gamma$ is of the form 
      \[
        \left\langle \psi^\gamma_b : \bigcap_{\alpha \in b} c_\alpha \ra \bb{Z} 
        ~ \middle| ~ b \in \gamma^n \right\rangle;
      \]
      \item for every sequence of the form 
      \[
        \left\langle \psi_b : \bigcap_{\alpha \in b} c_\alpha \ra \bb{Z} 
        ~ \middle| ~ b \in (\mu^+)^n \right\rangle,
      \]
      there are stationarily many $\gamma \in S$ such that $A_\gamma = 
      \langle \psi_b \mid b \in \gamma^n \rangle$.
  \end{itemize}

  We now define $\varphi'_{\vec{\beta}}$ for $\vec{\beta} \in (\mu^+)^{n+1}$. Since 
  we require $\Phi'$ to be alternating, it suffices to define $\varphi'_{\vec{\beta}}$ 
  for strictly increasing sequences $\vec{\beta} = \langle \beta_i \mid i \leq n \rangle$.
  We do so by recursion on $\beta_n$. For terminological convenience, let ``Case 1" denote 
  the case in which we are not assuming any additional hypotheses and $H' = \bigoplus_{\omega_{n+1}} 
  \bb{Z}$, and let ``Case 2" denote the case in which the additional hypotheses are being 
  assumed and $H' = \bb{Z}$. As part of our recursion hypotheses, we will require that, if we 
  are in Case 1, then, for all increasing $\vec{\beta} \in (\mu^+)^{n+1}$ and all 
  $(\eta, \alpha) \in \mu \times \beta_0$, we have $\varphi'_{\vec{\beta}}(\eta, \alpha) \in 
  \bigoplus_{\beta_n + \mu} \bb{Z}$. If we are in Case 1, fix a nontrivial $n$-coherent 
  family $\langle \tau_{\vec{\eta}} : \bigwedge \vec{\eta} \rightarrow \bigoplus_{\mu} 
  \bb{Z} \mid \vec{\eta} \in (\omega_n)^n \rangle$ (such exist by a slight modification of the argument of Theorem \ref{thm:nonzero_cohomology_on_opens} below; alternatively, see \cite[Theorem 7.6]{TFOA}). Recall that, in Case 2, we already have a nontrivial 
  $n$-coherent family $\langle \tau_{\vec{\eta}} : \bigwedge \vec{\eta} \rightarrow 
  \bb{Z} \mid \vec{\eta} \in (\omega_n)^n \rangle$. Given $\gamma < \mu^+$, we define a 
  ``shift homomorphism" $\sh_\gamma : \bigoplus_\mu \bb{Z} \ra \bigoplus_{\gamma + \mu} \bb{Z}$ 
  as follows: for all $x \in \bigoplus_\mu \bb{Z}$, let $\sh_\gamma(x)$ be the unique 
  $y \in \bigoplus_{\gamma + \mu} \bb{Z}$ such that $\supp(y) = \{\gamma + \eta \mid 
  \eta \in \supp(x)\}$ and, for all $\eta \in \supp(x)$, we have $y(\gamma + \eta) = x(\eta)$.
  
  Fix $\gamma < \mu^+$, and assume that we have defined $\langle \varphi'_{\vec{\beta}} \mid 
  \vec{\beta} \in \gamma^{n+1} \rangle$. We now define $\varphi'_{\vec{\beta}}$ for 
  all increasing $\vec{\beta}$ with $\beta_n = \gamma$. By Goblot's Theorem, we can fix a family $\langle \sigma^\gamma_{\vec{\beta}}
  : \beta_0 \ra H'
  \mid \vec{\beta} \in \gamma^n \rangle$ witnessing that $\langle \varphi'_{\vec{\beta}} \mid 
  \vec{\beta} \in \gamma^{n+1} \rangle$ is trivial. In Case 1, our recursion hypothesis 
  implies that, for each $\vec{\beta} \in \gamma^{n+1}$, $\varphi'_{\vec{\beta}}$ maps 
  into $\bigoplus_{\beta_n + \mu} \bb{Z}$, so we can assume that, for each $\vec{\beta} \in 
  \gamma^n$, $\sigma^\gamma_{\vec{\beta}}$ maps into $\bigoplus_{\xi_\gamma} \bb{Z}$, where 
  $\xi_\gamma = \sup\{\beta + \mu \mid \beta < \gamma\}$. In particular, $\xi_\gamma \leq \gamma 
  + \mu$ and, if $\cf(\gamma) = \mu$, then $\xi_\gamma = \gamma$. 
  If $\gamma \notin S$, then simply let $\varphi'_{\vec{\beta} ^\frown \langle \gamma 
  \rangle} := (-1)^n \sigma^\gamma_{\vec{\beta}}$ for all $\vec{\beta} \in \gamma^n$. It is 
  routine to verify that this maintains coherence and, in Case 1, our recursion hypothesis.

  If $\gamma \in S$, then we do a bit more work. Since $d_\gamma$ is cofinal in $\gamma$, 
  it will suffice by Remark \ref{rmk:cofinality} to define $\varphi'_{\vec{\beta} ^\frown \langle \gamma \rangle}$ for 
  $\vec{\beta} \in (d_\gamma)^n$. Let 
  $\langle \delta^\gamma_\eta \mid \eta < \mu \rangle$ be the increasing enumeration of $d_\gamma$.
  
  Suppose first that we are in Case 1. Fix $\vec{\beta} \in (d_\gamma)^n$, and let 
  $\vec{\eta} \in \mu^n$ be such that $\vec{\beta} = \langle \delta_{\eta_i} \mid i < n \rangle$.
  For each $(\xi, \alpha) \in \mu \times \beta_0$, let
  \[
    \varphi'_{\vec{\beta}^\frown \langle \gamma \rangle}(\xi, \alpha) := 
    \begin{cases}
        (-1)^n \sigma^\gamma_{\vec{\beta}}(\xi,\alpha) + \sh_\gamma(\tau_{\vec{\eta}}(\xi)) 
        & \text{if } \xi < \eta_0 \text{ and } \alpha = \delta^\gamma_\xi \\ 
        (-1)^n \sigma^\gamma_{\vec{\beta}}(\xi,\alpha) & \text{otherwise.}
    \end{cases}
  \]
  Again, the verification that this maintains coherence and our recursion hypothesis 
  is routine, if somewhat tedious.

  Suppose now that we are in Case 2. We begin by defining auxiliary families 
  $\bar{\Phi}^\gamma = \langle 
  \bar{\varphi}^\gamma_{\vec{\eta}} : \eta_0 \ra \bb{Z} \mid \vec{\eta} \in \mu^{n+1} 
  \rangle$, $\bar{\Sigma}^\gamma = \langle \bar{\sigma}^\gamma_{\vec{\eta}}:\eta_0 \ra \bb{Z} \mid 
  \vec{\eta} \in 
  \mu^n \rangle$, and $\bar{\Psi}^\gamma = \langle \bar{\psi}^\gamma_{\vec{\eta}}: \eta_0 \ra 
  \bb{Z} \mid \vec{\eta} \in \mu^n \rangle$. For the first, temporarily 
  fix $\vec{\eta} \in \mu^{n+1}$, and let $\vec{\beta} \in \gamma^{n+1}$ 
  be given by letting $\beta_i = \delta^\gamma_{\eta_i}$ for all $i \leq n$. 
  Now, for all $\xi < \eta_0$, let $\bar{\varphi}^\gamma_{\vec{\eta}}(\xi) := 
  \varphi'_{\vec{\beta}}(\xi, \delta^\gamma_\xi)$. For the latter two, fix 
  $\vec{\eta} \in \mu^n$, and let $\vec{\beta} \in \gamma^n$ be given by 
  letting $\beta_i = \delta^\gamma_{\eta_i}$ for all $i < n$. First, for all 
  $\xi < \eta_0$, let $\bar{\sigma}^\gamma_{\vec{\eta}}(\xi) = 
  \sigma^\gamma_{\vec{\beta}}(\xi, \delta^\gamma_\xi)$. Finally, to define 
  $\bar{\psi}^{\gamma}_{\vec{\eta}}$, define $b \in \gamma^n$ by letting 
  $b(i)$ be such that $d_\gamma \cap \beta_i = c_{b(i)}$ for all $i < n$, and 
  let $\bar{\psi}^{\gamma}_{\vec{\eta}}(\xi) = \psi^\gamma_b(\delta^\gamma_\xi)$ 
  for all $\xi < \eta_0$.

  By construction, $\bar{\Phi}^\gamma$ is $(n+1)$-coherent, and 
  $\bar{\Sigma}^\gamma$ trivializes $\bar{\Phi}^\gamma$. 
  If $\bar{\Psi}^\gamma$ does not trivialize $\bar{\Phi}^\gamma$, then 
  proceed exactly as in the case
  in which $\gamma \notin S$, i.e., set $\varphi'_{\vec{\beta}^\frown \langle \gamma \rangle} 
  := (-1)^n \sigma^\gamma_{\vec{\beta}}$ for all $\vec{\beta} \in \gamma^n$.
  Suppose now that $\bar{\Psi}^\gamma$ \emph{does} trivialize $\bar{\Phi}^\gamma$. 
  It follows that the family $\bar{\Sigma}^\gamma - \bar{\Psi}^\gamma = 
  \langle \bar{\sigma}^\gamma_{\vec{\eta}} - \bar{\psi}^\gamma_{\vec{\eta}} 
  \mid \vec{\eta} \in \mu^n \rangle$ is $n$-coherent. If $\bar{\Sigma}^\gamma - \bar{\Psi}^\gamma$ is nontrivial, then 
  again proceed as in the case in which $\gamma \notin S$. On the other hand, 
  if $\bar{\Sigma}^\gamma - \bar{\Psi}^\gamma$ is trivial, then proceed as 
  follows. Suppose that $\vec{\beta} \in (d_\gamma)^n$, and let $\vec{\eta} \in 
  \mu^n$ be such that $\beta_i = \delta^\gamma_{\eta_i}$ for all $i < n$. 
  Then, for all $(\xi, \alpha) \in \mu \times \beta_0$, set
  \[
    \varphi'_{\vec{\beta}^\frown \langle \gamma \rangle}(\xi,\alpha) := 
    \begin{cases}
        (-1)^n \sigma^\gamma_{\vec{\beta}}(\xi,\alpha) + \tau_{\vec{\eta}}(\xi) & 
        \text{if } \xi < \eta_0 \text{ and } \alpha = \delta^\gamma_\xi \\
        (-1)^n \sigma^\gamma_{\vec{\beta}}(\xi,\alpha) & \text{otherwise.}
    \end{cases}
  \]
  Once again, verifying coherence is routine. This completes the construction of 
  $\Phi'$.

  We now use $\Phi'$ to generate an $(n+1)$-coherent family $\Phi = \langle \varphi_{\vec{f}} 
  : X(\vec{f}) \ra H' \mid \vec{f} \in \Lambda^{n+1} \rangle$. For each $f \in \Lambda$, let 
  $\beta_f := \ssup(\bigcup\{f(\eta) \mid \eta < \mu\})$.\footnote{For a set of ordinals $x$, 
  $\ssup(x)$ denotes the \emph{strong supremum} of $x$, i.e., $\sup\{\alpha + 1 \mid \alpha 
  \in x\}$.} Given $\vec{f} \in \Lambda^{n+1}$, let $\vec{\beta}_{\vec{f}} \in 
  (\mu^+)^{n+1}$ be given by $\vec{\beta}_{\vec{f}}(i) = \beta_{f_i}$ for $i \leq n$. 
  Finally, let $\varphi_{\vec{f}} := \varphi'_{\vec{\beta}_{\vec{f}}} \restriction X(\vec{f})$.
  The $(n+1)$-coherence of $\Phi$ follows immediately from the $(n+1)$-coherence of 
  $\Phi'$. 

  It remains to show that $\Phi$ is nontrivial. Assume that $n > 1$; the 
  case $n = 1$ is similar but easier. Suppose for the sake of contradiction 
  that $\Psi = \langle \psi_{\vec{f}} : X(\vec{f}) \ra H' \mid \vec{f} \in 
  \Lambda^n \rangle$ witnesses that $\Phi$ is trivial. For each $\alpha < \mu^+$, 
  define a function $f_\alpha \in \Lambda$ by letting 
  \[
    f_\alpha(\eta) := 
    \begin{cases}
        \{c_\alpha(\eta)\} & \text{if } \eta < \otp(c_\alpha) \\
        \emptyset & \text{otherwise.}
    \end{cases}
  \]
  If $b \in (\mu^+)^{\leq n}$ is such that, for all $i < j < |b|$, we have 
  $c_{b(i)} \sqsubseteq c_{b(j)}$, then let $\vec{f}_b \in \Lambda^{|b|}$ 
  be defined by letting $\vec{f}_b(i) = f_{b(i)}$ for all $i < |b|$, and, if 
  $|b| = n$, then define 
  $\psi^*_b : c_{b(0)} \ra H'$ by letting $\psi^*_b(c_{b(0)}(\eta)) = 
  \psi_{\vec{f}_b}(\eta, c_{b(0)}(\eta))$ for all $\eta < \otp(c_{b(0)})$.
  For all other $b \in (\mu^+)^n$, simply let $\psi^*_b : \bigcap_{\alpha \in b} 
  c_\alpha \ra H'$ be the constant function, taking value $0$.

  Assume first that we are in Case 1. Define a function $h:(\mu^+)^n \ra 
  \mu^+$ by letting 
  \[
    h(b) = \sup\{\varepsilon \mid \exists \beta \in \bigcap_{\alpha \in b} 
    c_\alpha ~ [\varepsilon \in \supp(\psi^*_b(\beta)]\}
  \]
  for all $b \in (\mu^+)^n$. Note that this is well-defined, since each 
  $c_\alpha$ has cardinality less than $\mu$ and each $\psi^*_b$ maps into 
  $\bigoplus_{\mu^+} \bb{Z}$. Now fix $\gamma \in S$ such that 
  $h(b) < \gamma$ for all $b \in \gamma^n$. Recall that $\langle \delta^\gamma_\eta 
  \mid \eta < \mu \rangle$ is the increasing enumeration of $d_\gamma$. 
  Define a function $g \in \Lambda$ by letting $g(\xi) := \{\delta^\gamma_\xi\}$ 
  for all $\xi < \mu$, and note that $\beta_g = \gamma$. For each 
  $\eta < \mu$, define $g_\eta \in \Lambda$ by letting 
  \[
    g_\eta(\xi) = 
    \begin{cases}
        \{\delta^\gamma_\xi\} & \text{if } \xi < \eta \\
        \emptyset & \text{otherwise.}
    \end{cases}
  \]
  If $\eta < \mu$ is a limit ordinal, then $\beta_{g_\eta} = \delta^\gamma_\eta$; 
  moreover, there is $\alpha < \gamma$ such that $\{\delta^\gamma_\xi  \mid 
  \xi < \eta\} = c_\alpha$, and hence $g_\eta = f_\alpha$.

  Temporarily fix an increasing $\vec{\eta} \in (\lim(\mu))^n$, let 
  $\vec{g}_{\vec{\eta}} \in \Lambda^n$ be equal to $\langle g_{\eta_i} \mid 
  i < n \rangle$, and let $\vec{\beta} \in d_\gamma^n$ equal 
  $\langle \delta^\gamma_{\eta_i} \mid i < n \rangle$.  By construction, for 
  all $\xi < \eta_0$, we have 
  \[
    \varphi_{\vec{g}_{\vec{\eta}}{}^\frown \langle g \rangle}(\xi, \delta^\gamma_\xi) 
    = \varphi'_{\vec{\beta}^\frown \langle \gamma \rangle}(\xi, \delta^\gamma_\xi) 
    = (-1)^n \sigma^\gamma_{\vec{\beta}}(\xi,\delta^\gamma_\xi) + \sh_\gamma(\tau_{\vec{\eta}}(\xi)).
  \]
  By our assumption about $\Psi$, it follows that, for all but finitely many 
  $\xi < \eta_0$, we have
  \[
    (-1)^n \psi_{\vec{g}_{\vec{\eta}}}(\xi, \delta^\gamma_\xi) +
    \sum_{i < n}(-1)^n \psi_{\vec{g}_{\vec{\eta}}^i{}^\frown \langle g 
    \rangle}(\xi, \delta^\gamma_\xi) = (-1)^n \sigma^\gamma_{\vec{\beta}}(\xi,\delta^\gamma_\xi) + \sh_\gamma(\tau_{\vec{\eta}}(\xi)).
  \]
  By our choice of $\gamma$ and $\sigma^\gamma_{\vec{\beta}}$, respectively, 
  the supports of $\psi_{\vec{g}_{\vec{\eta}}}(\xi, \delta^\gamma_\xi)$ 
  and $\sigma^\gamma_{\vec{\beta}}(\xi,\delta^\gamma_\xi)$ are contained in 
  $\gamma$. It follows that, for all but finitely many $\xi < \eta_0$, we have
  \[
    \sum_{i < n}(-1)^n \psi_{\vec{g}_{\vec{\eta}}^i{}^\frown \langle \gamma 
    \rangle}(\xi, \delta^\gamma_\xi) \restriction [\gamma, \gamma + \mu) = 
    \sh_\gamma(\tau_{\vec{\eta}}(\xi))
  \]

  For each $\vec{\zeta} \in (\lim(\mu))^{n-1}$, let 
  $\vec{g}_{\vec{\zeta}} = \langle g_{\zeta_i} \mid i < n-1 \rangle$, and let
  $\nu_{\vec{\zeta}} : \zeta_0 \ra \bigoplus_{\mu} \bb{Z}$ be the unique function 
  such that
  \[
    \sh_\gamma(\nu_{\vec{\zeta}}(\xi) = \psi_{\vec{g}_{\vec{\zeta}}{}^\frown 
    \langle g \rangle}(\xi, \delta^\gamma_\xi) \restriction [\gamma, 
    \gamma + \mu).
  \]
  But then the calculations above show that the family $\langle 
  \nu_{\vec{\zeta}} \mid \vec{\zeta} \in (\lim(\mu))^{n-1} \rangle$ witnesses 
  that the family $\langle \tau_{\vec{\eta}} \mid \vec{\eta} \in 
  (\lim(\mu))^n \rangle$ is trivial. Since $\lim(\mu)$ is cofinal in $\mu$, 
  this implies that the entire family $\langle \tau_{\vec{\eta}} \mid 
  \vec{\eta} \in \mu^n \rangle$ is trivial, which is a contradiction.  

  Now assume that we are in Case 2, and fix a $\gamma \in S$ such 
  that $A_\gamma = \langle \psi^*_b \mid b \in \gamma^n \rangle$. 
  Define $g$ and $g_\eta$ for $\eta < \mu$ as in the previous case. Let 
  $\bar{\Phi}^\gamma$, $\bar{\Sigma}^\gamma$, and $\bar{\Psi}^\gamma$ be 
  as in stage $\gamma$ of the construction of $\Phi'$. By our choice of 
  $\gamma$, it follows that $\bar{\Psi}^\gamma$ trivializes 
  $\bar{\Phi}^\gamma$. Suppose first that $\bar{\Sigma}^\gamma - \bar{\Psi}^\gamma$ 
  is nontrivial. Fix $\vec{\eta} \in (\lim(\mu))^n$, and let $b \in \gamma^n$ be such that 
  $c_{b(i)} = \{\delta^\gamma_\xi \mid \xi < \eta_i\}$ for all $i < n$. Unraveling the 
  definitions and constructions yields the following facts.
    \begin{enumerate}
      \item For all $\xi < \eta_0$, we have $\psi^*_{b}
      (\delta^\gamma_\xi) = \bar{\psi}^\gamma_{\vec{\eta}}(\xi)$.
      \item For all $\xi < \eta_0$, we have $\varphi_{\vec{f}_b{}^\frown 
      \langle g \rangle}(\xi, \delta^\gamma_\xi) = 
      (-1)^n\bar{\sigma}^\gamma_{\vec{\eta}}(\xi)$.
      \item For all but finitely many $\xi < \eta_0$, we have 
        \[
          (-1)^n \bar{\psi}^\gamma_{\vec{\eta}}(\xi) + 
          \sum_{i < n} (-1)^i \psi_{\vec{f}_b^i{}^\frown \langle g 
          \rangle} (\xi, \delta^\gamma_\xi) = (-1)^n 
          \bar{\sigma}^\gamma_{\vec{\eta}}(\xi).
        \]
    \end{enumerate}
    For $\vec{\zeta} \in (\lim(\mu))^{n-1}$, define $\nu_{\vec{\zeta}}:\zeta_0 \ra 
    \bb{Z}$ as follows. Let $\vec{g}_{\vec{\zeta}} = \langle g_{\zeta_i} \mid 
    i < n-1 \rangle$, and let $\nu_{\vec{\zeta}}(\xi) = 
    \psi_{\vec{g}_{\vec{\zeta}}{}^{\frown} \langle g \rangle}(\xi, \delta^\gamma_\xi)$
    for all $\xi < \zeta_0$. Then item (3) above implies that 
    $\langle (-1)^n \nu_{\vec{\zeta}} \mid \vec{\zeta} \in (\lim(\mu))^{n-1} \rangle$ 
    trivializes $\bar{\Sigma}^\gamma - \bar{\Psi}^\gamma$ restricted to 
    $\lim(\mu)$, which is a 
    contradiction.

    Finally, assume that $\bar{\Sigma}^\gamma - \bar{\Psi}^\gamma$ is trivial, 
    as witnessed by $\langle \rho_{\vec{\zeta}} \mid \vec{\zeta} \in 
    \mu^{n-1} \rangle$. Fix $\vec{\eta} \in (\lim(\mu))^n$ and $b \in \gamma^n$ as 
    in the previous paragraph. Item (1) of that paragraph is unchanged, and 
    items (2) and (3) become
    \begin{enumerate}
      \item[(2')] For all $\xi < \eta_0$, we have $\varphi_{\vec{f}_b{}^\frown 
      \langle g \rangle}(\xi, \delta^\gamma_\xi) = 
      (-1)^n\bar{\sigma}^\gamma_{\vec{\eta}}(\xi) + \tau_{\vec{\eta}}(\xi)$.
      \item[(3')] For all but finitely many $\xi < \eta_0$, we have 
        \[
          (-1)^n \bar{\psi}^\gamma_{\vec{\eta}}(\xi) + 
          \sum_{i < n} (-1)^i \psi_{\vec{f}_b^i{}^\frown \langle g 
          \rangle} (\xi, \delta^\gamma_\xi) = (-1)^n 
          \bar{\sigma}^\gamma_{\vec{\eta}}(\xi) + \tau_{\vec{\eta}}(\xi).
        \]
    \end{enumerate}
    Rearranging item (3') and using the triviality of $\bar{\Sigma}^\gamma 
    - \bar{\Psi}^\gamma$, we see that, for all but finitely many $\xi < \eta_0$, 
    we have
    \[
      (-1)^{n-1} \sum_{i < n} (-1)^i \rho_{\vec{\eta}^i}(\xi) + 
      \sum_{i < n} (-1)^i \psi_{\vec{f}_b^i{}^\frown \langle g 
      \rangle} (\xi, \delta^\gamma_\xi) = \tau_{\vec{\eta}}(\xi).
    \]
    For all $\vec{\zeta} \in (\lim(\mu))^{n-1}$, define $\nu_{\vec{\zeta}}:\zeta_0 \ra 
    \bb{Z}$ as follows. Let $\vec{g}_{\vec{\zeta}} = \langle g_{\zeta_i} \mid 
    i < n-1 \rangle$, and let $\nu_{\vec{\zeta}}(\xi) = (-1)^{n-1} 
    \rho_{\vec{\zeta}}(\xi) + \psi_{\vec{g}_{\vec{\zeta}}{}^\frown \langle 
    g \rangle}(\xi, \delta^\gamma_\xi)$ for all $\xi < \zeta_0$. Then 
    the above calculation implies that $\langle \nu_{\vec{\zeta}} \mid 
    \vec{\zeta} \in (\lim(\mu))^{n-1} \rangle$ trivializes $\langle \tau_\eta \mid 
    \vec{\eta} \in (\lim(\mu))^n \rangle$, yielding the final contradiction.
\end{proof}

\begin{corollary}
  If $\mathrm{V} = \mathrm{L}$, then 
  $\mathrm{lim}^{n+1}\, \mb{A}_{\aleph_n, \aleph_{n+1}}\neq 0$ for all $n<\omega$.
\end{corollary}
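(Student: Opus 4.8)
The plan is to induct on $n$, verifying at each stage that $\mathrm{L}$ satisfies the full hypothesis list of the second assertion of Theorem~\ref{thm:limnAkl_further} (its ``Case~2''), and then to apply that assertion. The degree-one instance $\mathrm{lim}^1\,\mathbf{A}_{\aleph_0,\aleph_1}\neq 0$ --- i.e.\ the case $n=0$ of the corollary --- is nothing but item~(5) of Theorem~\ref{thm:limsofAkappalambda} and needs no hypothesis beyond $\mathsf{ZFC}$, so it suffices to treat $n\geq 1$. Alongside the conclusion $\mathrm{lim}^{n+1}\,\mathbf{A}_{\aleph_n,\aleph_{n+1}}\neq 0$, we carry through the induction the auxiliary assertion $(\ast_n)$: \emph{there exists a nontrivial $n$-coherent family $\langle\tau_{\vec{\eta}}:\bigwedge\vec{\eta}\to\mathbb{Z}\mid\vec{\eta}\in(\omega_n)^n\rangle$}, which is precisely the extra hypothesis Theorem~\ref{thm:limnAkl_further} demands in Case~2.

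Two of the hypotheses are available in $\mathrm{L}$ uniformly in $n$. Since $\square_{\aleph_n}$ holds in $\mathrm{L}$, a $\square_{\aleph_n}$-sequence together with the GCH-aided enumeration of $[\aleph_{n+1}]^{<\aleph_{n+1}}$ invoked in the proof of Theorem~\ref{thm:limnAkl_further} witnesses $S^{\aleph_{n+1}}_{\aleph_n}\in I[\aleph_{n+1}]$; this set is stationary, so the theorem's first hypothesis is met with $S:=S^{\aleph_{n+1}}_{\aleph_n}$. And by Jensen's theorem $\diamondsuit_{\aleph_{n+1}}(S)$ holds in $\mathrm{L}$ (for this, indeed any stationary, $S$), from which a routine coding of the guessing objects yields the sequence $\langle A_\gamma\mid\gamma\in S\rangle$ used in that proof.

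For the base case $n=1$ of $(\ast_n)$: the family $\langle\tau_\gamma\mid\gamma<\omega_1\rangle$ of $\{0,1\}$-valued functions $\tau_\gamma:\omega\times\gamma\to\{0,1\}$ built in the proof of Theorem~\ref{thm:limsofAkappalambda}(5) --- the characteristic functions of the reflected graphs of a nontrivially coherent finite-to-one family $\langle e_\gamma\rangle$ --- is coherent by construction and nontrivial by the Claim proved there; transporting it along a bijection $\omega\times\omega_1\to\omega_1$ under which $\omega\times\gamma$ becomes an initial segment for club-many $\gamma$, and invoking Remark~\ref{rmk:cofinality}, produces $(\ast_1)$. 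For the inductive step, assume $(\ast_n)$. Running the Case~2 construction of Theorem~\ref{thm:limnAkl_further} yields both $\mathrm{lim}^{n+1}\,\mathbf{A}_{\aleph_n,\aleph_{n+1}}\neq 0$ and, as a byproduct, the family $\Phi'=\langle\varphi'_{\vec{\beta}}:\aleph_n\times\bigwedge\vec{\beta}\to\mathbb{Z}\mid\vec{\beta}\in(\omega_{n+1})^{n+1}\rangle$ there constructed, which is $(n+1)$-coherent, $\mathbb{Z}$-valued, and nontrivial (the derived family $\Phi$ being nontrivial, and any trivialization of $\Phi'$ pulling back to one of $\Phi$). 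Recoding $\aleph_n\times\omega_{n+1}$ into $\omega_{n+1}$ so that $\aleph_n\times\delta$ becomes an initial segment for club-many $\delta$, and again appealing to Remark~\ref{rmk:cofinality}, converts $\Phi'$ into a witness for $(\ast_{n+1})$. This closes the induction.

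The one genuinely delicate point is this last piece of bookkeeping: recognizing that the auxiliary family $\Phi'$ manufactured within the proof of Theorem~\ref{thm:limnAkl_further} becomes, after the harmless recoding of its index set and domains and the cofinality reduction of Remark~\ref{rmk:cofinality}, exactly the $\mathbb{Z}$-valued nontrivial $(n+1)$-coherent family on $(\omega_{n+1})^{n+1}$ needed to re-enter the theorem one dimension up --- so that the induction is self-sustaining and needs no appeal to the independent constructions (such as the one underlying Theorem~\ref{thm:nonzero_cohomology_on_opens}, or \cite[Theorem~7.6]{TFOA}), which in any case supply only $\bigoplus_{\omega_{n+1}}\mathbb{Z}$-valued families. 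Everything else --- that $\square_{\aleph_n}$ delivers the approachability hypothesis, that $\diamondsuit$ is a theorem of $\mathrm{L}$, and that the $\langle\tau_\gamma\rangle$ above is genuinely nontrivial --- is standard.
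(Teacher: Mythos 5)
Your proof is correct, but it reaches the third hypothesis of Theorem \ref{thm:limnAkl_further} by a genuinely different route than the paper does. The paper simply quotes three facts about $\mathrm{L}$: Eisworth's theorem that $\mathsf{GCH}$ places $S^{\aleph_{n+1}}_{\aleph_n}$ in $I[\aleph_{n+1}]$, Jensen's $\diamondsuit(S^{\aleph_{n+1}}_{\aleph_n})$, and --- for the nontrivial $\mathbb{Z}$-valued $n$-coherent family on $(\omega_n)^n$ --- an external citation to \cite[Cor.\ 3.27]{BLHCoOI}. You replace the first citation with $\square_{\aleph_n}$ (equally standard in $\mathrm{L}$) and, more interestingly, replace the third with a bootstrap: the auxiliary family $\Phi'$ built in Case 2 of the proof of Theorem \ref{thm:limnAkl_further} is $\mathbb{Z}$-valued, $(n+1)$-coherent, and nontrivial (any trivialization of $\Phi'$ restricts along the sets $X(\vec{f})$ to one of $\Phi$), and after a club-respecting recoding of $\aleph_n\times\omega_{n+1}$ onto $\omega_{n+1}$ plus Remark \ref{rmk:cofinality} it becomes exactly the walks-type family needed to re-enter the theorem at level $n+1$. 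This makes the induction self-contained and avoids any appeal to \cite{BLHCoOI}, at the price of a verification the paper never has to perform: you must confirm that $\Phi'$ is coherent and nontrivial modulo the \emph{finite} ideal on $\aleph_n\times\bigwedge\vec{\beta}$ (the paper leaves the ideal governing $\Phi'$ implicit and only ever uses $\Phi'$ through its restrictions to the sets $X(\vec{f})$), and that this ideal is transported correctly by your bijection; that check does go through --- the perturbations introduced at stages $\gamma\in S$ live on the diagonal $\{(\xi,\delta^\gamma_\xi)\}$ and their alternating sums are finitely supported by the $n$-coherence of the $\tau_{\vec{\eta}}$ --- but it is the load-bearing step of your argument and should be written out. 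One small inaccuracy: \cite[Cor.\ 3.27]{BLHCoOI} does supply the $\mathbb{Z}$-valued families in $\mathrm{L}$ directly (it is the $\bigoplus_\mu\mathbb{Z}$-valued families of Case 1 that come from Theorem \ref{thm:nonzero_cohomology_on_opens} or \cite{TFOA}), so your bootstrap is an alternative to the paper's citation rather than a repair of it.
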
 

\begin{proof}
The case of $n=0$ is the \textsf{ZFC} result Theorem \ref{thm:limsofAkappalambda}(5).
For higher $n$, the relevant conditions of Theorem \ref{thm:limnAkl_further} all hold in G\"{o}del's constructible universe $\mathrm{L}$. To see this, first note that $
\mathsf{GCH}$ holds in $\mathrm{L}$ and therefore, by \cite[Theorem 3.40]{Eisworth}, 
$S^{\aleph_{n+1}}_{\aleph_n} \in I[\aleph_{n+1}]$ for all $1 \leq n < \omega$. By 
a theorem of Jensen \cite{jensen_constructible}, $\diamondsuit(S^{\aleph_{n+1}}_{\aleph_n})$ holds in $\mathrm{L}$. Finally, there exists a nontrivial 
  $n$-coherent family $\langle \tau_{\vec{\eta}} : \bigwedge \vec{\eta} \rightarrow 
  \bb{Z} \mid \vec{\eta} \in (\omega_n)^n \rangle$ in $\mathrm{L}$ by \cite[Cor. 3.27]{BLHCoOI}.
\end{proof}





\section{The additivity of derived limits and of strong homology}
\label{sec:additivity}

\subsection{Additivity}
\label{subsect:additivity}

The question of the additivity of strong homology has, over the past 35 years, attracted a substantial literature; in chronological order, 
main results therein include the following:
\begin{enumerate}
\item The continuum hypothesis implies that strong homology is not additive, 
not even on the category of closed subsets of $\mathbb{R}^2$ \cite{Mardesic_additive_88}.
\item There exist non-metrizable \textsf{ZFC} counterexamples to the additivity of strong homology \cite{Prasolov_non_05}.
\item It is consistent with the \textsf{ZFC} axioms that strong homology is additive on the category of locally compact separable metric spaces \cite{Bannister_on_22}.
\end{enumerate}
A primary aim of this section is to outline a more direct and cohesive approach to these theorems: each may be regarded as a result about the additivity of \emph{higher derived limits}, one moreover which is instantiated by the groups $\mathrm{lim}^n\,\mathbf{A}_{\kappa,\lambda}$ for suitable cardinals $\kappa$ and $\lambda$.
We will show more precisely that items (1), (2) and (3) above correspond in strong senses to items (3), (5), and (2) of Theorem \ref{thm:limsofAkappalambda}, respectively; each in this view is at heart a statement of infinitary combinatorics.

For concreteness, we begin by recalling what it means for a functor to be additive. We record also a definition of strong homology which we elucidate in this section's conclusion; readers seeking a fuller treatment of the subject are referred to \cite{Mardesic_additive_88} or \cite{Mardesic_strong_00}.

\begin{definition}
A homology theory $\mathrm{H}_q:\mathsf{Top}\to\mathsf{Ab}$ $(q\geq 0)$ is \emph{additive} if for every natural number $q$ and every family $\{X_\alpha\,|\,\alpha\in A\}$ of topological spaces, the map
 $$i^{*}_q:\bigoplus_A \mathrm{H}_q(X_\alpha)\rightarrow \mathrm{H}_q\Big(\coprod_A X_\alpha\Big)$$
 induced by the inclusion maps $i_\alpha:X_\alpha\hookrightarrow\coprod_A X_\alpha$ is an isomorphism.
 
Similarly, a functor $F:\mathsf{Pro(Ab)}\to\mathsf{Ab}$ is \emph{additive} if for every family $\{\mathbf{X}_\alpha\,|\,\alpha\in A\}$ of inverse systems of abelian groups, the map
$$i^{*}:\bigoplus_A F(\mathbf{X}_\alpha)\rightarrow F\Big(\coprod_A \mathbf{X}_\alpha\Big)$$
 induced by the inclusion maps $i_\alpha:\mathbf{X}_\alpha\hookrightarrow\coprod_A \mathbf{X}_\alpha$ is an isomorphism.

Adaptations of these definitions to subcategories $\mathcal{C}$ of the source categories $\mathsf{Top}$ and $\mathsf{Pro(Ab)}$ are straightforward; note, however, that additivity in these contexts is only a question of those sums of $\mathcal{C}$-objects which also fall within $\mathcal{C}$.

The \emph{strong homology group} $\overline{\mathrm{H}}_q$ of a topological space $X$ is the $q^{\mathrm{th}}$ homology group of the homotopy limit of the system of chain complexes given by the application of the singular chain functor to any representative of the \emph{strong shape} $\mathsf{sSh}(X)\in\mathsf{Ho}(\mathsf{Pro(}\mathsf{Pol}))$ of $X$ \cite{CordierStrong}; in plainer English, it is the most programmatically \emph{strong shape invariant} extension of \emph{Steenrod homology} from the category of metric compacta to that of all topological spaces.
\end{definition}

We turn now to our central examples.
For $n>0$ let $B^n$ denote an $n$-dimensional open ball, and let $S^n$, as usual, denote an $n$-dimensional sphere.
For any cardinal $\lambda>0$ let $\mathbf{Y}^{n,\lambda}$ denote the inverse system $(Y^{n,\lambda}_a,p_{ab},[\lambda]^{<\omega}\backslash\{\varnothing\})$ in which for all nonempty $a\subseteq b$ in $[\lambda]^{<\omega}$ the space $Y^{n,\lambda}_a$ is the $a$-indexed wedge sum $\bigvee_{a}S^n$ and the bonding map $p_{ab}:Y^{n,\lambda}_b\to Y^{n,\lambda}_a$ is the identity on $\bigvee_a S^n$, sending all other points in $\bigvee_b Y^{n,\lambda}$ to the wedge's basepoint.
\begin{lemma}
\label{lemma:resolution}
For any $n,\lambda>0$ the space $Y^{n,\lambda}=\mathrm{lim}\,\mathbf{Y}^{n,\lambda}$ is homeomorphic to the one-point compactification of $\coprod_\lambda B^n$; in consequence, the system $\mathbf{Y}^{n,\lambda}$ defines a resolution of this space in the sense of \cite{Mardesic_strong_00} or \cite{Mardesic_additive_88}.
\end{lemma}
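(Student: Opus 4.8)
The plan is to construct an explicit homeomorphism and then invoke a standard criterion for resolutions. First I would pin down the underlying set of $Y^{n,\lambda}=\lim\mathbf{Y}^{n,\lambda}$ by analyzing its threads. Since $[\lambda]^{<\omega}\setminus\varnothing$ is directed by $\subseteq$ and each bonding map $p_{ab}$ is the identity on the subwedge $\bigvee_a S^n\subseteq\bigvee_b S^n$ and collapses every other sphere to the basepoint $*$, a thread $(x_a)_a$ is either constantly $*$ or determined by a single pair $(\alpha,y)$ with $\alpha\in\lambda$ and $y$ a non-basepoint point of the $\alpha$-labelled copy of $S^n$, via $x_a=y$ when $\alpha\in a$ and $x_a=*$ otherwise; uniqueness of $(\alpha,y)$ uses that a point of a finite wedge of spheres distinct from the basepoint lies on exactly one sphere. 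Fixing once and for all a homeomorphism $S^n\setminus\{*\}\cong B^n$, this yields a bijection $h\colon Y^{n,\lambda}\to\{\infty\}\sqcup\coprod_\lambda B^n$ carrying the constant thread to the point at infinity and $(\alpha,y)$ to $y$ in the $\alpha$-th copy of $B^n$.

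Next I would observe that both spaces are compact Hausdorff: $Y^{n,\lambda}$ is a closed subspace of the product $\prod_a\bigvee_a S^n$ of compact (metric, polyhedral) spaces, while $\coprod_\lambda B^n$ is locally compact Hausdorff, so its one-point compactification is compact Hausdorff. It therefore suffices to check that $h$ is continuous, after which it is automatically a homeomorphism.

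Continuity is checked on a basis. Using directedness of the index set, every basic open subset of $Y^{n,\lambda}$ may be taken of the form $\pi_b^{-1}(V)$ for a single index $b\in[\lambda]^{<\omega}\setminus\varnothing$ and $V$ open in $\bigvee_b S^n$. A basic open neighbourhood of a point of $\coprod_\lambda B^n$ is a union of opens $W_\alpha$ of the individual copies, and $h^{-1}$ of such a $W_\alpha$ is $\pi_{\{\alpha\}}^{-1}(W_\alpha)$ once $W_\alpha$ is regarded as an open subset of $S^n=Y^{n,\lambda}_{\{\alpha\}}$ avoiding $*$. A basic open neighbourhood of $\infty$ has the form $\{\infty\}\cup\bigl(\coprod_\lambda B^n\setminus(K_{\alpha_1}\sqcup\cdots\sqcup K_{\alpha_m})\bigr)$ with each $K_{\alpha_i}$ compact in its copy of $B^n$; setting $b=\{\alpha_1,\dots,\alpha_m\}$, its $h^{-1}$-preimage is $\pi_b^{-1}(V)$ with $V=\bigvee_b S^n\setminus(K_{\alpha_1}\sqcup\cdots\sqcup K_{\alpha_m})$, which is open because the finite union $\bigsqcup_i K_{\alpha_i}$ is compact, hence closed, in the Hausdorff space $\bigvee_b S^n$, and omits the basepoint. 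Verifying this last set equality — running through the three kinds of threads (constantly $*$; supported on an $\alpha\notin b$; supported on some $\alpha_i$) and comparing membership on each side — is the one point demanding a little care, and is the main (mild) obstacle; the rest is formal. This shows $h$ is continuous, hence a homeomorphism, establishing the first assertion.

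Finally, for the ``in consequence'' clause: having exhibited $Y^{n,\lambda}$ as $\lim\mathbf{Y}^{n,\lambda}$ with directed index set and with every term $\bigvee_a S^n$ a compact Hausdorff space (indeed a finite polyhedron), one applies the standard fact that for an inverse system of compact Hausdorff spaces the limit projections $\mathbf{p}\colon\lim\mathbf{Y}^{n,\lambda}\to\mathbf{Y}^{n,\lambda}$ form a resolution (see \cite{Mardesic_strong_00}); this gives the claimed resolution of the one-point compactification of $\coprod_\lambda B^n$.
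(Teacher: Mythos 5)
Your proposal is correct and follows essentially the same route as the paper: the paper's (much terser) proof likewise rests on observing that $Y^{n,\lambda}$ is compact, that each summand $B^n$ identifies with the complement of the basepoint in $Y^{n,\lambda}_{\{\alpha\}}$, and that the complement of the union of these images is a single point, with the resolution clause delegated to \cite[Thm.\ 6.20]{Mardesic_strong_00} exactly as you do. You have simply made explicit the thread analysis and the continuity check that the paper leaves to the reader.
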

\begin{proof}
The second claim follows from the first together with \cite[Thm. 6.20]{Mardesic_strong_00}. For the first, observe simply that $Y^{n,\lambda}$ is compact, that the $\alpha^{\mathrm{th}}$ summand of $\coprod_\lambda B^n$ naturally identifies with the complement of the basepoint in $Y^{n,\lambda}_{\{\alpha\}}$, and that the complement of the union of these $B^n$-images in $Y^{n,\lambda}$ is exactly one point.
\end{proof}
When $\lambda=\aleph_0$, of course, $Y^{n,\lambda}$ is a \emph{compact bouquet of $n$-spheres}, familiarly known as an \emph{$n$-dimensional Hawaiian earring}.
The following theorem generalizes the computations in \cite{Mardesic_additive_88} of the $\lambda=\aleph_0$ case to arbitrary cardinalities.
\begin{theorem}
\label{thm:homology_groups}
For any nonzero cardinals $\kappa$ and $\lambda$, let $\mathbf{A}_{\kappa,\lambda}$ denote the inverse system of Definition \ref{def:Akappalambda} above. Then for any $n>0$, the strong homology groups of the space $Y^{n,\lambda}$ and its sums are as follows:
\begin{equation*}
\overline{\mathrm{H}}_q(Y^{n,\lambda})=
    \begin{cases}
        0 & \text{if } q\neq 0,n\\
        \prod_\lambda\mathbb{Z} & \text{if } q=n\\
        \mathbb{Z} & \text{if } q=0
    \end{cases}
\end{equation*}
and
\begin{equation*}
\overline{\mathrm{H}}_q\left(\coprod_\kappa Y^{n,\lambda}\right)=
    \begin{cases}
        0 & \text{if } q>n\\
        \mathrm{lim}^{n-q}\,\mathbf{A}_{\kappa,\lambda} & \text{if } 0<q\leq n\\
        \mathrm{lim}^n\,\mathbf{A}_{\kappa,\lambda}\oplus\bigoplus_\kappa\mathbb{Z} & \text{if } q=0.
    \end{cases}
\end{equation*}
In particular, if strong homology is additive then $\mathrm{lim}^s\,\mathbf{A}_{\kappa,\lambda}=0\textit{ for all }s,\kappa,\lambda>0$.
\end{theorem}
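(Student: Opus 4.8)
The approach I would take avoids spectral sequences, exploiting instead the fact that the chain complex of a finite wedge of spheres splits, naturally, in the derived category. By Lemma~\ref{lemma:resolution}, $\mathbf{Y}^{n,\lambda}$ is a resolution of $Y^{n,\lambda}$; the first thing I would record is that a resolution of the (non-compact) space $\coprod_\kappa Y^{n,\lambda}$ is furnished by the pro-space $\mathbf{W}$ indexed by ${^\kappa}([\lambda]^{<\omega})$ --- ordered exactly as in Definition~\ref{def:Akappalambda}, i.e.\ $f\le g$ if and only if $X(f)\subseteq X(g)$ --- whose $f$-th term is $\coprod_{i<\kappa}\bigvee_{f(i)}S^n$ and whose bonding maps act coordinatewise by the retractions $p_{ab}$ figuring in the definition of $\mathbf{Y}^{n,\lambda}$. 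One checks directly that $\mathrm{lim}\,\mathbf{W}\cong\coprod_\kappa Y^{n,\lambda}$ (a point of the limit is pinned to a single $\kappa$-coordinate, along which it traces a point of $Y^{n,\lambda}$), and that $\mathbf{W}$ is a resolution then follows from the disjoint-sum stability of resolutions \cite{Mardesic_strong_00}, just as in the $\kappa=\lambda=\aleph_0$ case of \cite{Mardesic_additive_88}.

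The heart of the matter is a single observation: for every $n\ge 1$, the cellular (equivalently, up to natural quasi-isomorphism, the singular) chain complex of a finite wedge $\bigvee_a S^n$ is \emph{literally} the direct sum $\mathbb{Z}[0]\oplus(\bigoplus_a\mathbb{Z})[n]$ of complexes with zero differential, and this identification is natural in the retractions $p_{ab}$, which act as the identity on the first summand and as the evident projection on the second. Summing over the $\kappa$-coordinate, the pro-complex $C_\bullet(\mathbf{W})$ is accordingly isomorphic, as a pro-object in chain complexes, to $\underline{\bigoplus_\kappa\mathbb{Z}}\,\oplus\,\mathbf{A}_{\kappa,\lambda}[n]$, where $\underline{(\,\cdot\,)}$ denotes a constant inverse system and $[n]$ the shift into homological degree $n$; here one uses that $C_n\big(\coprod_{i<\kappa}\bigvee_{f(i)}S^n\big)=\bigoplus_{i<\kappa}\bigoplus_{f(i)}\mathbb{Z}=\bigoplus_{X(f)}\mathbb{Z}=A_f$, with transition maps precisely the $p_{fg}$ of Definition~\ref{def:Akappalambda}. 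Likewise $C_\bullet(\mathbf{Y}^{n,\lambda})\cong\underline{\mathbb{Z}}\oplus\mathbf{A}_{1,\lambda}[n]$, identifying $f\in{^1}([\lambda]^{<\omega})$ with $f(0)\in[\lambda]^{<\omega}$.

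Now strong homology $\overline{\mathrm{H}}_q(-)$ is $H_q$ of the homotopy limit of this pro-complex, and the homotopy limit of a diagram of chain complexes over a directed index set is its derived limit $\mathrm{Rlim}$, which commutes with the finite direct sums above. Since $\mathrm{Rlim}$ of a constant system over a directed set with a least element is simply the common value placed in degree $0$ (such a system being flasque in the sense of \cite{Jensen_les}), and since $\mathrm{lim}^p\,\mathbf{A}_{1,\lambda}=0$ for $p>0$ by Theorem~\ref{thm:limsofAkappalambda}(1), one reads off $\overline{\mathrm{H}}_q(Y^{n,\lambda})=\mathbb{Z}$ for $q=0$, $\prod_\lambda\mathbb{Z}=\mathrm{lim}\,\mathbf{A}_{1,\lambda}$ for $q=n$, and $0$ otherwise. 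For the disjoint union, $\mathrm{Rlim}(\mathbf{A}_{\kappa,\lambda}[n])=(\mathrm{Rlim}\,\mathbf{A}_{\kappa,\lambda})[n]$ has $H_{n-p}$ equal to $\mathrm{lim}^p\,\mathbf{A}_{\kappa,\lambda}$, so taking $H_q$ of $\underline{\bigoplus_\kappa\mathbb{Z}}\oplus(\mathrm{Rlim}\,\mathbf{A}_{\kappa,\lambda})[n]$ gives $\overline{\mathrm{H}}_q(\coprod_\kappa Y^{n,\lambda})=0$ for $q>n$, $=\mathrm{lim}^{n-q}\,\mathbf{A}_{\kappa,\lambda}$ for $0<q\le n$, and $=\mathrm{lim}^n\,\mathbf{A}_{\kappa,\lambda}\oplus\bigoplus_\kappa\mathbb{Z}$ for $q=0$ (the remaining case $q<0$, which yields $\mathrm{lim}^{n-q}\,\mathbf{A}_{\kappa,\lambda}$, falls outside the theorem's scope). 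These are the two displayed formulas.

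For the concluding implication, fix $s,\kappa,\lambda>0$ and apply the formulas with $n:=s+1\ge 2$ and $q:=1$, so that $0<q<n$: then $\overline{\mathrm{H}}_1(Y^{n,\lambda})=0$ whereas $\overline{\mathrm{H}}_1(\coprod_\kappa Y^{n,\lambda})=\mathrm{lim}^{s}\,\mathbf{A}_{\kappa,\lambda}$, so if strong homology is additive the isomorphism $\bigoplus_\kappa\overline{\mathrm{H}}_1(Y^{n,\lambda})\to\overline{\mathrm{H}}_1(\coprod_\kappa Y^{n,\lambda})$ forces $\mathrm{lim}^s\,\mathbf{A}_{\kappa,\lambda}=0$. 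I expect the only real obstacle to be the strong-shape bookkeeping of the first paragraph --- pinning down a clean model for the resolution of the non-compact space $\coprod_\kappa Y^{n,\lambda}$ and checking the resolution axioms at this level of generality --- since, once a resolution is in hand, the splitting of wedges of spheres renders the homological computation essentially mechanical.
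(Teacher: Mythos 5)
Your proposal is correct and is in essence the paper's own argument: the paper proves the theorem by citing Lemma~\ref{lemma:resolution} and observing that the computations of \cite[\S 5--6]{Mardesic_additive_88} generalize verbatim once one knows $\mathrm{lim}^s$ of the pro-homology groups of $\mathbf{Y}^{n,\lambda}$ vanishes for $s>0$ (Theorem~\ref{thm:limsofAkappalambda}(1)), and what you have written out --- the resolution $\mathbf{W}$ of the coproduct, the chain-level splitting of finite wedges of $n$-spheres, and the resulting identification of the pro-chain complex with $\underline{\bigoplus_\kappa\mathbb{Z}}\oplus\mathbf{A}_{\kappa,\lambda}[n]$ --- is precisely the content of that citation. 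The only quibble is your parenthetical appeal to flasqueness of constant systems via a least element: $[\lambda]^{<\omega}\setminus\{\varnothing\}$ has no least element for $\lambda\geq 2$, but the needed vanishing of $\mathrm{lim}^s$ of a constant system over any directed poset holds regardless (e.g.\ by contractibility of the nerve, or by passing to the cofinally equivalent index set $[\lambda]^{<\omega}$).
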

\begin{proof}
The last assertion follows from the first simply by letting $n$ range above $0$ and letting $q$ range between them.
For the first, observe that by Lemma \ref{lemma:resolution} the arguments of \cite[\S 5-6]{Mardesic_additive_88} fully generalize to arbitrary cardinalities $\kappa$ and $\lambda$, with only the partial exception of the (implicit) invocation of the Mittag-Leffler principle at the top of page 736.
In our context, the key point at that step is that for every $q\geq 0$ the derived limits $\mathrm{lim}^s$ of the system of singular homology groups $\mathrm{H}_q(Y^{n,\lambda}_b)$ $(b\in [\lambda]^{<\omega})$ associated to $\mathbf{Y}^{n,\lambda}$ vanish in all degrees $s>0$. When $q\neq n$, though, this is clear, since these groups are either constantly $0$ or $\mathbb{Z}$, while if $q=n$, this system is $\mathbf{A}_{1,\lambda}$, whereupon the assertion follows immediately from item (1) of Theorem \ref{thm:limsofAkappalambda}.
\end{proof}

The main result of \cite{Prasolov_non_05}, and our Theorem C, 
now follow as an immediate corollary:

\begin{corollary}
\label{cor:notadd}
There exists a non-metrizable \textsf{ZFC} counterexample to the additivity of strong homology.
\end{corollary}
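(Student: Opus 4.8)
The plan is to read off the counterexample directly from Theorem \ref{thm:homology_groups} and item (5) of Theorem \ref{thm:limsofAkappalambda}, choosing the parameters so that every nontrivial step has already been carried out. Concretely, I would fix $n = 2$ and consider the countable family $\{Y_\alpha \mid \alpha < \omega\}$ in which each $Y_\alpha$ is a copy of $Y^{2,\aleph_1}$; by Lemma \ref{lemma:resolution} this space is (homeomorphic to) the one-point compactification of $\coprod_{\omega_1} B^2$, so in particular it lies in $\mathsf{Top}$, as does its countable coproduct.

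First I would check that $Y^{2,\aleph_1}$ is non-metrizable. The quickest argument is that the images in $Y^{2,\aleph_1}$ of the $\omega_1$ summands $B^2$ --- which, as in the proof of Lemma \ref{lemma:resolution}, are the complements of the basepoints in the factors $Y^{2,\aleph_1}_{\{\alpha\}}$ --- form an uncountable pairwise disjoint family of nonempty open subsets. Hence $Y^{2,\aleph_1}$ is not second countable, and a compact Hausdorff space which is not second countable is not metrizable. (Equivalently, the topological weight of $Y^{2,\aleph_1}$ is $\aleph_1$.)

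Next I would extract the two relevant strong homology groups from Theorem \ref{thm:homology_groups}. Its first display, with $n = 2$, gives $\overline{\mathrm{H}}_1(Y^{2,\aleph_1}) = 0$, since $1 \notin \{0,2\}$; hence $\bigoplus_{\alpha<\omega} \overline{\mathrm{H}}_1(Y_\alpha) = 0$. Its second display, with $n = 2$, $\kappa = \aleph_0$, $\lambda = \aleph_1$, and $0 < 1 \leq 2$, gives $\overline{\mathrm{H}}_1\big(\coprod_{\alpha<\omega} Y_\alpha\big) \cong \mathrm{lim}^{1}\,\mathbf{A}_{\aleph_0,\aleph_1}$. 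By item (5) of Theorem \ref{thm:limsofAkappalambda} --- a theorem of \textsf{ZFC} --- this last group is nonzero, so, its domain being trivial and its codomain nontrivial, the canonical comparison map $i^*_1$ cannot be an isomorphism. Thus $\{Y_\alpha \mid \alpha < \omega\}$ is a non-metrizable \textsf{ZFC} counterexample to the additivity of strong homology, as desired.

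There is no real obstacle here: the argument is pure bookkeeping on top of the cited results, and the only point needing independent verification is the (elementary) non-metrizability of $Y^{2,\aleph_1}$. Essentially the same computation, run for an arbitrary $n \geq 1$ with the family of copies of $Y^{n,\aleph_1}$ and the degree $q = n-1$ in place of $n = 2$, $q = 1$, recovers the full statement of Theorem C: for $n \geq 2$ it is verbatim the above, and for $n = 1$ one compares $\bigoplus_\omega \mathbb{Z}$ with $\mathrm{lim}^1\,\mathbf{A}_{\aleph_0,\aleph_1} \oplus \bigoplus_\omega \mathbb{Z}$, where $i^*_0$, being the inclusion of the $\bigoplus_\omega \mathbb{Z}$ summand, fails to be surjective since $\mathrm{lim}^1\,\mathbf{A}_{\aleph_0,\aleph_1} \neq 0$.
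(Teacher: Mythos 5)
Your proposal is correct and follows exactly the paper's own route: a countable sum of copies of $Y^{n,\aleph_1}$, with additivity in degree $q=n-1$ forced to fail by $\overline{\mathrm{H}}_{n-1}(Y^{n,\aleph_1})=0$ versus $\overline{\mathrm{H}}_{n-1}(\coprod_\omega Y^{n,\aleph_1})\cong\mathrm{lim}^1\,\mathbf{A}_{\aleph_0,\aleph_1}\neq 0$ from Theorems \ref{thm:homology_groups} and \ref{thm:limsofAkappalambda}(5). The only addition is your explicit (and correct) verification of non-metrizability via the $\omega_1$-many pairwise disjoint open balls, which the paper leaves implicit.
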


\begin{proof}
Our counterexample is a countable sum of copies of $Y^{n,\aleph_1}$. By Theorem \ref{thm:homology_groups}, whenever $p=n-1$, the additivity of strong homology over this sum would require that $\mathrm{lim}^1\,\mathbf{A}_{\aleph_0,\aleph_1}=0$, but we know from Theorem \ref{thm:limsofAkappalambda} that this limit is nonzero.
\end{proof}
As noted, $Y^{n,\aleph_1}$ is a significantly simpler counterexample to additivity than that appearing in \cite{Prasolov_non_05} (which had derived, in turn, from \cite{Mardesic_nonvanishing_96}); it is also, in contrast to that counterexample, compact. See our conclusion for the question of whether \emph{metrizable} \textsf{ZFC} counterexamples to additivity exist.
There exists no \emph{locally compact separable metric} \textsf{ZFC} counterexample to the additivity of strong homology since, by \cite{Bannister_on_22}, the only obstructions to additivity on this class of spaces are any nonvanishing derived limits $\mathrm{lim}^n$ $(n>0)$ of a family of inverse systems which sufficiently resemble $\mathbf{A}_{\aleph_0,\aleph_0}$ so that their higher limits all vanish for the same reasons that the latter's do in the model of \cite{Bergfalk_simultaneously_21} (and likewise in the model of \cite{Bergfalk_simultaneously_23}, by \cite{Bannister_additivity_23}).
When $\mathrm{lim}^1\,\mathbf{A}_{\aleph_0,\aleph_0}\neq 0$, on the other hand, strong homology fails to be additive even on closed subspaces of $\mathbb{R}^2$, as witnessed by a countable sum of earrings $Y^{1,\aleph_0}$, and it was the derivation of this scenario from the continuum hypothesis which formed the main result of \cite{Mardesic_additive_88}.

These examples, like those of \cite{Prasolov_non_05}, may be regarded as materializing a more fundamental failure of additivity, namely one at the level of the derived limit functors.
More particularly, \cite[Theorem 1]{Prasolov_non_05} is also immediate within our framework, this time from Theorem \ref{thm:limsofAkappalambda}.
\begin{corollary}
\label{cor:nonadditive}
The functor $\mathrm{lim}^n:\mathsf{Pro(Ab)}\to\mathsf{Ab}$ is not additive for either $n=1$ or $n=2$.
\end{corollary}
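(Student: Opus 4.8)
The plan is to exhibit, for each of $n=1$ and $n=2$, a countable family of inverse systems on which $\mathrm{lim}^n$ fails to commute with $\bigoplus$, using only items (1) and (5) of Theorem \ref{thm:limsofAkappalambda} together with the basic theory of flasque systems.

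For $n=1$ I would first record the description of coproducts in $\mathsf{Pro(Ab)}$: the coproduct of a family $(\mathbf{X}^\alpha)_{\alpha\in A}$ of systems over directed index sets $I_\alpha$ is the system over $\prod_\alpha I_\alpha$ (with the product order) having term $\bigoplus_\alpha X^\alpha_{i_\alpha}$ at $(i_\alpha)_\alpha$ and the evident projections — this is routine from equation \ref{eq:limcolim}, since directed colimits over a product order are computed coordinatewise. Taking $A=\omega$ and every $\mathbf{X}^\alpha=\mathbf{A}_{1,\omega_1}$ (viewed as indexed by $[\omega_1]^{<\omega}$, with terms $\mathbb{Z}^a=\bigoplus_a\mathbb{Z}$) then gives $\coprod_\omega\mathbf{A}_{1,\omega_1}\cong\mathbf{A}_{\aleph_0,\aleph_1}$, since the term of the left side at $f\in{}^\omega([\omega_1]^{<\omega})$ is $\bigoplus_{n<\omega}\mathbb{Z}^{f(n)}=\bigoplus_{X(f)}\mathbb{Z}$. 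As $\mathrm{lim}^1\,\mathbf{A}_{1,\omega_1}=0$ by Theorem \ref{thm:limsofAkappalambda}(1), while $\mathrm{lim}^1\,\mathbf{A}_{\aleph_0,\aleph_1}\neq 0$ by Theorem \ref{thm:limsofAkappalambda}(5), the natural map $\bigoplus_\omega\mathrm{lim}^1\,\mathbf{A}_{1,\omega_1}\to\mathrm{lim}^1\,\coprod_\omega\mathbf{A}_{1,\omega_1}$ has trivial source and nontrivial target, so $\mathrm{lim}^1$ is not additive.

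For $n=2$ the idea is a single dimension shift, set up so as to survive the passage to the coproduct. Let $\mathbf{G}$ be the constant inverse system over $[\omega_1]^{<\omega}$ with value $\prod_{\omega_1}\mathbb{Z}$ and identity bonding maps; the levelwise restrictions $\prod_{\omega_1}\mathbb{Z}\twoheadrightarrow\mathbb{Z}^a$ assemble into a levelwise-surjective morphism $\mathbf{G}\to\mathbf{A}_{1,\omega_1}$, and I set $\mathbf{K}:=\ker(\mathbf{G}\to\mathbf{A}_{1,\omega_1})$. Because $[\omega_1]^{<\omega}$ has a least element — so every nonempty downwards-closed suborder is connected, whence $\mathrm{lim}$ of the restriction of a constant system to it is still the value — the system $\mathbf{G}$ is flasque in the sense of \cite{Jensen_les}, and hence $\mathrm{lim}^k\,\mathbf{G}=0$ for all $k>0$. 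The long exact sequence of $0\to\mathbf{K}\to\mathbf{G}\to\mathbf{A}_{1,\omega_1}\to 0$, together with $\mathrm{lim}^1\,\mathbf{A}_{1,\omega_1}=0$ and $\mathrm{lim}^2\,\mathbf{G}=0$, then forces $\mathrm{lim}^2\,\mathbf{K}=0$. Now $\coprod_\omega\mathbf{G}$ is again a constant system — over $\prod_\omega[\omega_1]^{<\omega}$, which still has a least element — hence is flasque, so $\mathrm{lim}^k\,\coprod_\omega\mathbf{G}=0$ for $k>0$; and forming the coproduct of the short exact sequence levelwise (where it amounts to applying the exact functor $\bigoplus$) yields a short exact sequence $0\to\coprod_\omega\mathbf{K}\to\coprod_\omega\mathbf{G}\to\coprod_\omega\mathbf{A}_{1,\omega_1}\to 0$ in $\mathsf{Pro(Ab)}$. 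Its long exact sequence, with $\coprod_\omega\mathbf{A}_{1,\omega_1}\cong\mathbf{A}_{\aleph_0,\aleph_1}$ as before, identifies $\mathrm{lim}^2\,\coprod_\omega\mathbf{K}$ with $\mathrm{lim}^1\,\mathbf{A}_{\aleph_0,\aleph_1}\neq 0$, whereas $\bigoplus_\omega\mathrm{lim}^2\,\mathbf{K}=0$; so $\mathrm{lim}^2$ is not additive either.

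The step I expect to require the most care is the preservation of flasqueness under the coproduct: this is precisely where a \emph{general} flasque system fails — indeed $\mathbf{A}_{1,\omega_1}$ is itself flasque, yet $\coprod_\omega\mathbf{A}_{1,\omega_1}\cong\mathbf{A}_{\aleph_0,\aleph_1}$ is not — which is what dictates resolving by a \emph{constant} system, and, relatedly, why the method reaches only $n\le 2$ rather than all $n$. The remaining points — the coproduct formula in $\mathsf{Pro(Ab)}$, the verification that $\mathbf{G}\to\mathbf{A}_{1,\omega_1}$ is genuinely a morphism of pro-systems, and the behaviour of the two long exact sequences — are routine.
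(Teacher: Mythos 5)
Your proposal is correct and follows essentially the same route as the paper: the $n=1$ case is the identification $\coprod_\omega\mathbf{A}_{1,\aleph_1}\cong\mathbf{A}_{\aleph_0,\aleph_1}$ combined with items (1) and (5) of Theorem \ref{thm:limsofAkappalambda}, and the $n=2$ case is the same dimension shift via a short exact sequence with constant (hence flasque) middle term, the paper taking $\bigoplus_{\kappa\times\lambda}\mathbb{Z}$ where you take $\prod_{\omega_1}\mathbb{Z}$ per summand — an immaterial difference. Your explicit attention to why flasqueness survives the coproduct only for constant systems is a point the paper leaves implicit, and is well taken.
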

Note in contrast that the functor $\mathrm{lim}^0=\mathrm{lim}:\mathsf{Pro(Ab)}\to\mathsf{Ab}$ \emph{is} additive, as follows from the observation beginning the proof below.\footnote{The additivity of \v{C}ech homology is argued in \cite[Theorem 9]{Mardesic_additive_88}; the proof applies in its essentials to the functor $\mathrm{lim}$ as well, but readers may find it at least as efficient to supply the argument themselves.}
\begin{proof}
The essential observation is that the sum, in a pro-category, of a set of inverse systems is represented by a system indexed by the product of their index sets (with terms the naturally associated sum of objects; see \cite[pp. 502-503]{Prasolov_non_05} for details). In particular, $\mathbf{A}_{\kappa,\lambda}=\coprod_\kappa\mathbf{A}_{1,\lambda}$ for all cardinals $\kappa$ and $\lambda$, so the inequality $\mathrm{lim}^1\,\mathbf{A}_{\aleph_0,\aleph_1}\neq 0 = \bigoplus_\omega\mathrm{lim}^1\,\mathbf{A}_{1,\aleph_1}$ witnesses the $n=1$ instance of the corollary.
For the $n=2$ case, let $\mathbf{C}_{\kappa,\lambda}=(C_f,q_{fg},{^\kappa}([\lambda]^{<\omega})$ and $\mathbf{D}_{\kappa,\lambda}=(D_f,r_{fg},{^\kappa}([\lambda]^{<\omega})$, where
$$C_f=\bigoplus_{(\kappa\times\lambda)\backslash X(f)}\mathbb{Z}$$
and $D_f=\bigoplus_{\kappa\times\lambda}\mathbb{Z}$, and the morphisms $q_{fg}$ and $r_{fg}$ are the natural inclusion and identity maps, respectively.
Note that $\mathbf{D}_{\kappa,\lambda}$ is flasque, so that the short exact sequences $\mathbf{C}_{\kappa,\lambda}\to\mathbf{D}_{\kappa,\lambda}\to\mathbf{A}_{\kappa,\lambda}$ induce isomorphisms $\mathrm{lim}^1\,\mathbf{A}_{\kappa,\lambda}\to\mathrm{lim}^2\,\mathbf{C}_{\kappa,\lambda}$, converting the previous inequality to $\mathrm{lim}^2\,\mathbf{C}_{\aleph_0,\aleph_1}\neq 0 = \bigoplus_\omega\mathrm{lim}^2\,\mathbf{C}_{1,\aleph_1}$.
\end{proof}
An oddity of these results, of course, is their restriction to the degrees $n\leq 2$. By Goblot's Theorem, results for higher $n$ will be, just as in Section \ref{sec:pro}, at least in part about the combinatorics of $\aleph_{n-1}$, and this indeed is much of the interest of Question \ref{ques:add_of_lims} below.
In particular, the question of whether the functor $\mathrm{lim}^3:\mathsf{Pro(Ab)}\to\mathsf{Ab}$ is consistently additive appears to be closely related to the question of whether the group $\mathrm{H}^2(\omega_2;\mathbb{Z})$ can vanish in any model of the \textsf{ZFC} axioms.\footnote{See \cite[Question 9.3]{TFOA} and \cite[Question 7.7]{Bergfalk_simultaneously_23} and \cite[Question 11.2]{IHW} for general formulations of this question, which is one of the most important in this research area.}

Of comparable interest is the question \emph{on what classes $\mathcal{C}$ may the functors $\mathrm{lim}^n$ be additive?} After all, the \textsf{ZFC} obstructions recorded in Theorem \ref{thm:limsofAkappalambda} and Corollary \ref{cor:nonadditive} only arise for systems of ``height'' $\lambda \geq \aleph_1$; it is natural, in consequence, to wonder if $\mathrm{lim}^n$ may nevertheless be additive for restricted or unrestricted sums of towers (i.e., countable inverse sequences) of abelian groups, and it's indeed in these terms that the theorem \cite[Thm.\ 1.2]{Bannister_additivity_23} referenced in Section \ref{subsect:alternative} is framed.
In fact (as noted in \cite[\S 7]{Bannister_additivity_23}), this result coupled with the argument of \cite[Thm.\ 5.1]{Be17} implies the consistency of $\mathrm{lim}^1\,\mathbf{A}_{\kappa,\aleph_0}[H]=0$ for all $\kappa$ and abelian groups $H$, and the question of whether similar implications hold in higher degrees appears to be a deep one; for recent developments, see \cite{Bannister_All, Bergfalk_Casarosa}.


As indicated, for whatever light it may shed on these and related computations, we conclude this section with a broader discussion of strong homology; readers uninterested in the latter should proceed without delay to Section \ref{sect:products}.

\subsection{Strong homology}
\label{subsect:stronghomology}

Works on the additivity problem have, following \cite{Mardesic_additive_88} and \cite{Mardesic_strong_00}, tended to emphasize the axiomatic and, to a lesser degree, computational features of strong homology, somewhat eliding more conceptual formulations in the process (see \cite[\S 1]{Bannister_on_22} for a recent example).
The cumulative effect has been a distorted picture, one in which strong homology figures mainly (1) as a sort of compromise-formation between the requirements of a homology functor and the unruly realities of geometric topology (represented by a standard roster of motivating examples of ``bad local behavior'': the Warsaw circle, Hawaiian earrings, solenoids, and so on), or (2) as an elaborate, if not \emph{ad hoc}, exactness-recovering corrective to \v{C}ech homology, or (3) as both.
There is an element of truth to each of these images, but taken alone, they are seriously misleading as well, insofar as strong homology figures within them as essentially isolated from the more structural and homotopy theoretic emphases of the contemporary mainstream of algebraic topology.
The reality, though, is that those emphases and strong homology \emph{grew up together} in large part out of the arrival, through the 1970s, of model category, homotopy (co)limit, and homotopy coherent technologies \cite{BoardmanVogt, BousfieldKan, CordierCoherent}, and it is in these terms, at least as much as (1) and (2) above, that strong homology should be understood; that \emph{strong shape}, the background homotopy theory of strong homology, is a conspicuously $\infty$-categorical entity is just one manifestation of this heritage (see, e.g., \cite[\S 7.1.6] {LurieHTT} and \cite{GuntherSemi}).
We briefly sketch a more balanced, but nevertheless classical, account below.

Surely the best-known general invariant of a topological space $X$ is its \emph{homotopy type}; put differently, the most prominent relation refining the identity relation on topological spaces is that of \emph{homotopy equivalence}.
For all its virtues, this relation is, from some perspectives, weak; natural extensions of this relation determine further invariants of a topological space $X$, such as its \emph{strong shape} and its \emph{shape}.\footnote{We note in passing that these extensions are, in spirit, orthogonal to that of \emph{weak homotopy equivalence}: while the latter identifies the Warsaw circle, for example, with a point, the former identify it with a circle. These more intuitive identifications are the source both of the name \emph{shape} and of much initial interest in its theory, concerns emphasized in \cite{CordierPorter} and recapitulated (alongside many fundamental techniques) in the more contemporary field of persistent homology.}
More precisely, we have a sequence of categories
$\mathsf{Ho(Top)}\to\mathsf{sSh(Top)}\to\mathsf{Sh(Top)}$
corresponding to the aforementioned invariants, respectively, together with commuting functors from $\mathsf{Top}$ to each of them.
The middle of these admits multiple incarnations, or models (within the \emph{coherent homotopy category} $\mathsf{CH}(\mathsf{Pro}(\mathsf{Top}))$, for example; see \cite[pp. 90, 2--3, and Ch.\ 1]{Mardesic_strong_00}); most classical among these is that of \cite{EdwardsHastings}, whereby the above diagram takes the form
\begin{equation}
\label{eq:shape_categories}
\mathsf{Ho(Top)}\xrightarrow{F}\textsf{Ho(Pro(Top))}\xrightarrow{G}\textsf{Pro(Ho(Top))},\end{equation}
with each of the latter two terms naturally construed as categories of \emph{systems of approximations to a topological space} $X$.\footnote{To be clear, the homotopy category of the first and third terms is, in the terminology of \cite[p.\ 395]{MayPonto}, the ``naive'' one, corresponding to the Hurewicz model structure on the category of topological spaces; that of the middle term derives from either of the classes of weak equivalences discussed in \cite{PorterTwo}. For the functor $F$, see \cite[\S 8]{Mardesic_strong_00}; on objects, $G$ factors through the natural map $\textsf{Pro}(\mathsf{Top})\to\mathsf{Pro}(\mathsf{Ho(Top)})$.}
Such systems are only informative, of course, when they take values in some restricted class (or \emph{dense subcategory}) of ``nice'' topological spaces, the categories $\mathsf{Pol}$ of polyhedra or of absolute neighborhood retracts (ANRs) being standard historical choices. And indeed, the \emph{Vietoris nerve} and \emph{\v{C}ech nerve} constructions on the objects of $\mathsf{Top}$ may each naturally be viewed as functors to the $\textsf{Ho(Pro(Pol))}$ and $\textsf{Pro(Ho(Pol))}$ subcategories, respectively, of the incarnations of $\mathsf{sSh(Top)}$ and $\mathsf{Sh(Top)}$ in equation (\ref{eq:shape_categories}) above (see \cite{Gunther}).
As is well known, within the \v{C}ech construction, the plurality of maps witnessing the refinement relation between two open coverings of a space induce a single homotopy class of maps between the associated polyhedra, and it is for this reason that the shape functor takes values in the pro-category of $\mathsf{Ho(Top)}$.

The further passage to a homology theory then depends on the assignment of (homotopy classes of) chain complexes to the objects of the terms of equation (\ref{eq:shape_categories}).
The canonical choice in the case of $\mathsf{Ho(Top)}$ is, of course, the \emph{singular chain} functor $X\mapsto C^{\mathrm{sing}}_\bullet(X)$, the crucial point here being that $C^{\mathrm{sing}}_\bullet$ maps homotopy equivalent spaces to homotopy equivalent chain complexes; the homology groups of $C^{\mathrm{sing}}_\bullet(X)$ are the \emph{singular homology groups} of $X$.
On the other hand, applying $C^{\mathrm{sing}}_\bullet$ to the $\mathsf{Top}$ and $\mathsf{Ho(Top)}$ terms of the two subsequent terms of line (\ref{eq:shape_categories}) yields objects in the homotopy category of the pro-category of chain complexes and the pro-category of the homotopy category of chain complexes, respectively. The natural means for consolidating these systems into a single graded abelian group are the \emph{homotopy inverse limit} and the \emph{inverse limit}, respectively --- although, limits not in general existing in homotopy categories, it's in the latter case standard to interchange steps, i.e., to take the limit of the homology groups of the chain complexes of the elements of the \v{C}ech nerve of $X$; the resultant groups are the \emph{\v{C}ech homology groups} of $X$.\footnote{This parallels, of course, the standard construction of \v{C}ech cohomology groups; that the latter are both shape and strong shape invariant is essentially immediate.}

The \emph{strong homology groups} of a topological space $X$ are, in contrast, the homology groups of the chain complex arising from the application of the composition of the \emph{homotopy inverse limit} (holim) and \emph{singular} functors to the strong shape of $X$; to sum up, for any fixed space $X$ and $q\geq 0$,
\begin{align*}
\mathrm{H}_q(X) & = h_q(C^{\mathrm{sing}}_\bullet(X)),\\
\overline{\mathrm{H}}_q(X) & = h_q(\mathrm{holim}(C^{\mathrm{sing}}_\bullet(sS(X)))),\textnormal{ and }\\
\check{\mathrm{H}}_q(X) & = \mathrm{lim}\,h_q(C^{\mathrm{sing}}_\bullet(S(X))),\end{align*}
where $\mathrm{H}_q$ denotes singular homology, $S:\mathsf{Top}\to\mathsf{Sh(Top)}$ and $sS:\mathsf{Top}\to\mathsf{sSh(Top)}$ denote the shape (see \cite[I.4.2, Thm.\ 3]{ShapeTheory}) and strong shape functors, respectively, and $h_q$ denotes the $q^{\mathrm{th}}$ homology group of a chain complex.
Cordier (who characterizes the $\mathrm{holim}$ and $\mathrm{lim}^s$ functors as complementary responses to the ``deficiencies'' of $\mathrm{lim}$ \cite[p.\ 35]{CordierStrong}) derives the existence of a homotopy limit in the category of non-negative chain complexes of abelian groups from the Dold-Kan correspondence together with its existence in the category of simplicial abelian groups; he shows moreover that it takes the concrete form of precisely that total complex figuring so prominently in Chapter 4 of \cite{Mardesic_strong_00}.

Let us conclude with a few summary remarks.
\begin{itemize}
\item Generalized framings of the strong shape functor are of a wider significance than we have so far suggested; see in particular Hoyois's \emph{Higher Galois theory} \cite[Def.\ 2.3, Rmk.\ 2.13]{Hoyois}, where this functor, ranging over $\infty$-topoi and denoted $\Pi_\infty$, carries the name \emph{fundamental pro-$\infty$-groupoid}; shape functors arise naturally in both the condensed and pyknotic settings as well, and are areas of active research.\footnote{See forthcoming work by Mair; see also \cite[II.4]{exodromy} for further references.}


\item Recall that Eilenberg and Steenrod showed, via a geometric realization of a simple inverse sequence of groups possessing a nonvanishing $\mathrm{lim}^1$ ``that the \v{C}ech `homology theory' with integer coefficients is not exact,'' or more precisely that \emph{no integral homology theory on the category of compact pairs is simultaneously continuous (meaning that it commutes with inverse limits) and exact} \cite[p.\ 265]{EilenbergSteenrod}.
Consider then what is sometimes framed (in a sense made precise by a Milnor short exact sequence \cite[Thm.\ 21.9]{Mardesic_strong_00}) as rectifying the non-exactness of $\check{\mathrm{H}}_\bullet$, namely the strong homology functor $\overline{\mathrm{H}}_\bullet$.
On the category of compact metric spaces, $\overline{\mathrm{H}}_\bullet$ is fully axiomatized by the Eilenberg-Steenrod axioms together with a relative homeomorphism and cluster axiom \cite{Milnor1960}; it is in consequence thereon equivalent to the homology theories of Steenrod \cite{SteenrodCycles}, Borel-Moore \cite{BorelMoore}, Sitnikov, and others \cite{Massey78}.
Where these theories may differ is in their extension to the broader category of locally compact spaces; of the central question of whether the Steenrod-Sitnikov theory embodying the most naive strategy of extension (direct limits) is strong shape invariant, Sklyarenko writes that it ``will likely be a test not only for Steenrod-Sitnikov homology, but also for strong shape theory itself'' (\cite{Sklyarenko}, quoted in \cite{Melikhov}).
It is reasonable to wonder whether materializations like $\coprod_\omega Y^{n,\aleph_1}$ of nonvanishing derived limits may, much as in the classical \v{C}ech case, play some role in the answer.
\end{itemize}

\section{Products of compact projectives}
\label{sect:products}

Recall the introductory account of condensed mathematics concluding Section \ref{subsect:condensedbackground} above.
For motivation, as is standard, we began by emphasizing the shortcomings of, for example, the category of Hausdorff topological abelian groups, together with its faithful embedding into the abelian category $\mathsf{Cond(Ab)}$ as one far-reaching kind of remedy.
Note, though, that the provision of an abelian --- and even a symmetric monoidal abelian --- category structure extending that of some ill-behaved preabelian category isn't, in and of itself, particularly novel: Yoneda embeddings into abelian sheaf categories will always have these features (cf.\ comments, e.g., at \cite[p.\ 9]{CS2}).
The distinction of condensed categories begins with something further, namely the conjunction of such features with the existence of a class of \emph{compact projective generators}; we record the relevant definitions just below.
Let us more immediately recall, though, that this class derives from the existence \emph{in the underlying category \textsf{ProFin} of the condensed site} of a generating class of projective objects, and that the latter, as noted in Section \ref{subsect:condensedbackground} above, are precisely the extremally disconnected compact Hausdorff spaces, or $\mathsf{ED}$ spaces, for short.
Note for future reference that the $\mathsf{ED}$ spaces also admit characterization as the retracts of the \v{C}ech--Stone compactifications $\beta X$ of discrete spaces $X$ (these $\beta X$ are accordingly termed the \emph{free} objects of $\mathsf{CHaus}$ in \cite{Rainwater}); recall as well the description of any such $\beta X$ as $(\{\mathcal{U}\subseteq P(X)\mid\mathcal{U}\textnormal{ is an ultrafilter on }X\},\tau)$,
where the topology $\tau$ on $\beta X$ is generated by the sets $N_I:=\{\mathcal{U}\in\beta X\mid I\in\mathcal{U}\}$ for $I\subseteq X$.

Returning to our more general discussion, let us quote from \cite{CS2} the basic definitions:
\begin{quote}
Let $\mathcal{C}$ be a category that admits all small colimits. Recall that an object $X\in\mathcal{C}$ is compact (also called finitely presented) if $\mathrm{Hom}(X,-)$ commutes with filtered colimits. An object $X\in\mathcal{C}$ is projective if $\mathrm{Hom}(X,-)$ commutes with reflexive coequalizers\footnote{Note that this corresponds with the more standard definitions in abelian contexts.} [\dots] 
Let $\mathcal{C}^{\mathrm{cp}}\subset\mathcal{C}$ be the full subcategory of compact projective objects.
\end{quote}

\noindent Fundamental examples include the following (as in item (5) below, in this section we will discard the underline convention for condensed images of a space):
\begin{quote}
\begin{enumerate}
\item If $\mathcal{C}=\mathsf{Set}$ is the category of sets, then $\mathcal{C}^{\mathrm{cp}}$ is the category of finite sets, which generates $\mathcal{C}$ under small colimits.
\item If $\mathcal{C}=\mathsf{Ab}$ is the category of abelian groups, then $\mathcal{C}^{\mathrm{cp}}$ is the category of finite[ly generated] free abelian groups, which generates $\mathcal{C}$ under small colimits. [\dots]
\item[(4)] If $\mathcal{C}=\mathsf{Cond(Set)}$ is the category of condensed sets, then $\mathcal{C}^{\mathrm{cp}}$ is the category of extremally disconnected profinite sets, which generates $\mathcal{C}$ under small colimits.
\item[(5)] If $\mathcal{C}=\mathsf{Cond(Ab)}$ is the category of condensed abelian groups, then $\mathcal{C}^{\mathrm{cp}}$ is the category of direct summands of $\mathbb{Z}[S]$ for extremally disconnected $S$, which generates $\mathcal{C}$ under small colimits. \cite[pp.\ 74-75]{CS2}
\end{enumerate}
\end{quote}
Extrapolating, the compact projective objects of a given category appear to incarnate the \emph{finite} and the \emph{extremally disconnected} in, respectively, classical and condensed settings; observe, however, that only the first of these classes is closed under (finite) products.
\begin{proposition}
\label{prop:Rudin}
If two extremally disconnected spaces $S$ and $T$ are both infinite, then their product is not extremally disconnected.
\end{proposition}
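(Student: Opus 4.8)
The plan is to exhibit directly an open subset of $S \times T$ whose closure fails to be open. First I would extract from each factor a countable family of disjoint clopen ``blocks.'' Since $S$ is an infinite extremally disconnected compact Hausdorff space, repeatedly splitting off a clopen piece --- using that the closure of an open set in an extremally disconnected space is clopen, that in a compact Hausdorff space any two distinct points are separated by disjoint opens, and that at each stage one of the two clopen pieces remains infinite --- produces a sequence $\langle A_n \mid n \in \omega \rangle$ of pairwise disjoint nonempty clopen subsets of $S$; fix a point $s_n \in A_n$ for each $n$. Carry out the same construction in $T$ to obtain pairwise disjoint nonempty clopen $\langle B_n \mid n \in \omega\rangle$ with $t_n \in B_n$. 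The candidate open set is the ``diagonal union of boxes''
\[
  W := \bigcup_{n \in \omega} (A_n \times B_n) \subseteq S \times T,
\]
which is open (indeed a union of clopen rectangles). Note that, since $S$ and $T$ are totally disconnected (being in $\mathsf{ED} \subseteq \mathsf{ProFin}$), clopen rectangles $U \times V$ form a base for $S \times T$, so it suffices to test neighbourhoods of this form.

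Next I would produce the point that witnesses non-openness of $\operatorname{cl}(W)$. Fix any non-principal ultrafilter $\mathcal{U}$ on $\omega$. By compactness of $S$, the family $\{\operatorname{cl}_S(\{s_n \mid n \in I\}) \mid I \in \mathcal{U}\}$ --- which has the finite intersection property because $\mathcal{U}$ is a filter --- has nonempty intersection; pick a point $p_S$ in it. Because each $A_n$ is clopen, one checks that $\{n \mid s_n \in U\} \in \mathcal{U}$ for every clopen $U$ containing $p_S$: otherwise its complement $\{n \mid s_n \notin U\}$ lies in $\mathcal{U}$, forcing $p_S$ into the closed set $S \setminus U$, a contradiction. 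Obtain $p_T \in T$ the same way from the \emph{same} ultrafilter $\mathcal{U}$, and put $z := (p_S, p_T)$.

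Then I would verify the two halves. That $z \in \operatorname{cl}(W)$: for a clopen neighbourhood $U \times V$ of $z$, both $I_U := \{n \mid s_n \in U\}$ and $J_V := \{n \mid t_n \in V\}$ belong to $\mathcal{U}$, hence $I_U \cap J_V \neq \varnothing$, and for any $n$ in it we get $(s_n, t_n) \in (U \times V) \cap (A_n \times B_n) \subseteq (U \times V) \cap W$. That $z \notin \operatorname{int}(\operatorname{cl}(W))$: given a clopen $U \times V \ni z$, choose $n_0 \in I_U \cap J_V$, and then --- since every member of a non-principal ultrafilter is infinite --- choose $n_1 \in J_V$ with $n_1 \neq n_0$. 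The point $(s_{n_0}, t_{n_1})$ lies in $U \times V$, yet the clopen rectangle $A_{n_0} \times (T \setminus B_{n_0})$ is a neighbourhood of it that meets no box $A_n \times B_n$ whatsoever: for $n \neq n_0$ because $A_{n_0} \cap A_n = \varnothing$, and for $n = n_0$ because $(T \setminus B_{n_0}) \cap B_{n_0} = \varnothing$. Hence $(s_{n_0}, t_{n_1}) \notin \operatorname{cl}(W)$, so $U \times V \not\subseteq \operatorname{cl}(W)$; as clopen rectangles form a base, $z$ is not an interior point of $\operatorname{cl}(W)$. Thus $\operatorname{cl}(W)$ is not open, and $S \times T$ is not extremally disconnected.

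The clopen splitting and the compactness argument for $p_S$ are routine; the one genuinely load-bearing idea --- which I expect to be the crux --- is the choice of the \emph{off-diagonal} point $(s_{n_0}, t_{n_1})$ with $n_0 \neq n_1$ drawn from a single ultrafilter set. This is precisely what forces every neighbourhood of $z$ to contain points outside $\operatorname{cl}(W)$, and it is where both the infinitude of the two factors (to keep the ultrafilter sets infinite and the blocks plentiful) and the disjointness of the clopen blocks get used. Conceptually this argument amounts to the assertion that $\beta\omega \times \beta\omega$ is not extremally disconnected, with the general case reducing to it; running the argument inside $S \times T$ directly, as above, sidesteps having to realize $\beta\omega$ as a retract of each factor.
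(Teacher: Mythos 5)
Your proof is correct. There is nothing in the paper to compare it against: the authors give no argument for Proposition \ref{prop:Rudin}, attributing it to Walter Rudin (circa 1960) via \cite{Curtis}. Your construction is a clean, self-contained version of the classical one, run directly inside $S\times T$ rather than by first retracting onto $\beta\omega\times\beta\omega$. All the steps check out: the recursive clopen splitting (using that disjoint open sets in an $\mathsf{ED}$ space have disjoint, clopen closures) yields the blocks $A_n$, $B_n$; the point $z=(p_S,p_T)$ built from a \emph{single} non-principal ultrafilter $\mathcal{U}$ guarantees that $I_U\cap J_V\in\mathcal{U}$ is infinite for every basic clopen rectangle $U\times V\ni z$, which gives both $z\in\operatorname{cl}(W)$ (via a diagonal point $(s_n,t_n)$) and $z\notin\operatorname{int}(\operatorname{cl}(W))$ (via the off-diagonal point $(s_{n_0},t_{n_1})$, which the clopen rectangle $A_{n_0}\times(T\setminus B_{n_0})$ separates from $W$). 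The one thing worth flagging is that you are, correctly, reading ``extremally disconnected'' as the paper's $\mathsf{ED}$, i.e.\ extremally disconnected \emph{compact Hausdorff}: compactness is used essentially both in splitting off the clopen blocks and in producing $p_S$ and $p_T$, and the statement is false without it (a product of two infinite discrete spaces is extremally disconnected).
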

This proposition traces at the latest to Walter Rudin around 1960, by way of the attribution in \cite{Curtis}, and induces, in turn, a number of effects and questions within the field of condensed mathematics.
By item (4) above, for example, it implies that $\mathsf{Cond(Set)}^{\mathrm{cp}}$ is not closed under products.
Subtler questions attach to item (5).
There, $\mathbb{Z}[ - ]:\mathsf{Cond(Set)}\to\mathsf{Cond(Ab)}$ denotes the left adjoint to the forgetful functor $\mathsf{Cond(Ab)}\to\mathsf{Cond(Set)}$, and within $\mathsf{Cond(Ab)}$, the relation $\mathbb{Z}[S]\otimes\mathbb{Z}[T]=\mathbb{Z}[S\times T]$ renders $\mathbb{Z}[ - ]$-images of products of $\mathsf{ED}$ spaces computationally ubiquitous.
These are not, in general, projective, although the main result in this direction requires more argument than one might at first expect; the proof given in \cite{CS3}, for example, answers an open question from \cite{Aviles} in Banach space theory\footnote{That question, of whether the Banach space $C(\beta\mathbb{N}\times\beta\mathbb{N})$ is separably injective, typifies how this stratum of projectivity questions dualizes to injectivity questions in Banach settings; see \cite[Prop.\ 3.17]{CS3}.} along the way:

\begin{proposition}[Clausen--Scholze 2022]
\label{prop:projnotprods}
For any infinite sets $X$ and $Y$, the tensor product $\mathbb{Z}[\beta X]\otimes\mathbb{Z}[\beta Y]=\mathbb{Z}[\beta X\times\beta Y]$ is not projective.
\end{proposition}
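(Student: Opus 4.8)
The plan is to make two standard reductions and then locate the obstruction inside the classical theory of $C(K)$-spaces. For the reductions: since $X$ and $Y$ are infinite, any set-theoretic retraction of an inclusion $\mathbb{N}\hookrightarrow X$ extends, under $\beta(-)$, to a retraction $\beta X\to\beta\mathbb{N}$, so $\beta\mathbb{N}$ is a retract of $\beta X$ in $\mathsf{CHaus}$; hence $\beta\mathbb{N}\times\beta\mathbb{N}$ is a retract of $\beta X\times\beta Y$, and applying the functor $\mathbb{Z}[-]$ exhibits $\mathbb{Z}[\beta\mathbb{N}\times\beta\mathbb{N}]$ as a retract of $\mathbb{Z}[\beta X\times\beta Y]$ in $\mathsf{Cond}(\mathsf{Ab})$. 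As a retract of a projective is projective, it suffices to prove that $\mathbb{Z}[K]$ is not projective, where $K:=\beta\mathbb{N}\times\beta\mathbb{N}$ is profinite but, by Proposition \ref{prop:Rudin}, not extremally disconnected. Fix an $\mathsf{ED}$ cover $\pi:S\twoheadrightarrow K$ and the induced surjection $q:\mathbb{Z}[S]\twoheadrightarrow\mathbb{Z}[K]$ in $\mathsf{Cond}(\mathsf{Ab})$. Since $\mathbb{Z}[K]$ is a compact object and the compact projectives of $\mathsf{Cond}(\mathsf{Ab})$ are, by the description quoted above, exactly the retracts of $\mathbb{Z}[S']$ with $S'\in\mathsf{ED}$, projectivity of $\mathbb{Z}[K]$ is equivalent to $q$ splitting. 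By adjunction $\mathrm{Hom}(\mathbb{Z}[K],\mathbb{Z}[S])=\mathrm{Cont}(K,\mathbb{Z}[S])$, and $\mathrm{id}_{\mathbb{Z}[K]}$ corresponds to the Dirac embedding $\delta_K:K\to\mathbb{Z}[K]$, $x\mapsto[x]$; thus $\mathbb{Z}[K]$ is projective precisely when $\delta_K$ admits a continuous lift $\tilde\delta:K\to\mathbb{Z}[S]$ along $q$. Concretely, such a $\tilde\delta$ would assign to each point of $K$ a finite integer combination of points of $S$, continuously and with $\pi$-pushforward the corresponding point mass, and quasicompactness of $K$ forces the $\ell^{1}$-lengths of these combinations to be uniformly bounded.

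The efficient way to rule out such a $\tilde\delta$ is to dualize, following the principle that these projectivity questions convert to injectivity questions for Banach spaces. Applying the internal $\underline{\mathrm{Hom}}(-,\mathbb{R})$ identifies $\mathbb{Z}[K]$ and $\mathbb{Z}[S]$ with the condensed Banach spaces $C(K,\mathbb{R})$ and $C(S,\mathbb{R})$, carries $q$ to the isometric embedding $\pi^{*}:C(K,\mathbb{R})\hookrightarrow C(S,\mathbb{R})$, and would carry a section of $q$ to a bounded linear projection of $C(S,\mathbb{R})$ onto this subspace. With $S$ extremally disconnected, $C(S,\mathbb{R})$ is $1$-injective in the classical Nachbin--Goodner--Kelley sense, so such a projection would make $C(K,\mathbb{R})$ complemented in a $1$-injective space, hence $\lambda$-injective for some finite $\lambda$; for the separable space $C(\beta\mathbb{N}\times\beta\mathbb{N})$ this amounts to \emph{separable injectivity}. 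Thus the proposition follows once one knows that $C(\beta\mathbb{N}\times\beta\mathbb{N})$ is not separably injective --- precisely the (now negatively answered) open question of \cite{Aviles}.

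The main obstacle is accordingly twofold. First, one must carefully match the notion of injectivity produced by the condensed argument with a classical $\lambda$-isometric one, controlling the norm constants lost in passing from ``$\delta_K$ lifts'' to ``$C(K,\mathbb{R})$ is $\lambda$-injective''; this bookkeeping is what makes the proof harder than the bare combination of Proposition \ref{prop:Rudin} with Nachbin--Goodner--Kelley would suggest. Second, and at the heart of the matter, one must actually show that $C(\beta\mathbb{N}\times\beta\mathbb{N})$ fails to be separably injective --- equivalently, that no uniformly $\ell^{1}$-bounded continuous assignment of integer combinations as in the first paragraph can exist --- which is an essentially combinatorial statement about ultrafilters on $\mathbb{N}\times\mathbb{N}$ and the gap between $\beta\mathbb{N}\times\beta\mathbb{N}$ and $\beta(\mathbb{N}\times\mathbb{N})$. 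Establishing this resolves the cited Banach-space question, and with it the proposition; by the first-paragraph reduction it then follows that $\mathbb{Z}[\beta X\times\beta Y]$ is not projective for all infinite $X$ and $Y$.
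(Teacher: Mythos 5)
Your reductions are fine as far as they go: passing to $X=Y=\mathbb{N}$ via retracts, recasting projectivity of $\mathbb{Z}[K]$ as the splitting of $q:\mathbb{Z}[S]\to\mathbb{Z}[K]$ for an $\mathsf{ED}$ cover $S\to K$, and dualizing a hypothetical splitting to a bounded linear projection of the $1$-injective space $C(S,\mathbb{R})$ onto (the isometric copy of) $C(K,\mathbb{R})$ are all correct steps. (One slip: $C(\beta\mathbb{N}\times\beta\mathbb{N})$ is not a separable Banach space --- $\beta\mathbb{N}\times\beta\mathbb{N}$ is separable but not metrizable --- though the implication you actually need, namely that universal $\lambda$-injectivity implies separable injectivity, still holds.)

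The problem is that this chain terminates in exactly the statement you do not prove: that $C(\beta\mathbb{N}\times\beta\mathbb{N})$ is not separably injective. You acknowledge this is ``at the heart of the matter,'' but that is precisely the open question of Avil\'{e}s et al.\ --- open because nobody had a direct argument for it --- and the logical order of the Clausen--Scholze proof cited in the paper is the reverse of yours: they prove non-projectivity of $\mathbb{Z}[\beta\mathbb{N}\times\beta\mathbb{N}]$ outright and obtain the failure of separable injectivity as a corollary (this is what the paper means by saying the proof ``answers an open question \dots along the way''; see the footnote and \cite[Prop.\ 3.17]{CS3}). So your proposal reduces the proposition to a prerequisite at least as hard as the proposition itself, and supplies no argument for it beyond gesturing at ``an essentially combinatorial statement about ultrafilters on $\mathbb{N}\times\mathbb{N}$.'' That combinatorial/measure-theoretic analysis of continuous, uniformly $\ell^1$-bounded assignments $K\to\mathbb{Z}[S]$ is the entire content of the result; without it the proof is not complete. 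Note also that the paper itself does not reprove the proposition but cites \cite{CS3}, so the expected resolution here is either to carry out that analysis or to cite it, not to defer it to the Banach-space question.
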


See \cite[Appendix to III and pp.\ 26--27]{CS3} for further discussion of these issues; the latter includes, for example, the following observation: by item (5), $\mathsf{Cond(Ab)}$ is rich in projective objects, i.e., in $A=\mathbb{Z}[S]$ for which $\mathrm{Ext}^i_{\mathsf{Cond(Ab)}}(A,-)=0$ for all $i>0$.
For such to be \emph{internally} projective, though, would entail that the internal Ext functors
$$\underline{\mathrm{Ext}}^i_{\mathsf{Cond(Ab)}}(\mathbb{Z}[S],-)(T)=\mathrm{Ext}^i_{\mathsf{Cond(Ab)}}(\mathbb{Z}[S\times T],-)=0$$
for all $\mathsf{ED}$ spaces $T$ and $i>0$. Thus $\mathbb{Z}[\beta X]$ is not internally projective for any infinite set $X$, by Proposition \ref{prop:projnotprods}.

More refined analyses take account of the \emph{degree} of failure of a condensed abelian group to be projective; put differently, results like Proposition \ref{prop:projnotprods} may be viewed as partial answers to the general question \emph{For $\mathsf{ED}$ spaces $S$ and $T$, what is the projective dimension of $\mathbb{Z}[S\times T]$?}
We record the question of whether this dimension may uniformly be finite in our conclusion below.
In the present section, we address a strong variant of this prospect, one formulated at the level of the category $\mathsf{Cond(Ani)}$ of condensed anima.
This, heuristically, is the condensed category of homotopy types of topological spaces; for its precise $\infty$-category theoretic definition, see Section \ref{subsect:products}. The more immediate point is that, just as in items (4) and (5) above, the $\mathsf{ED}$ spaces define the compact projective objects of $\mathsf{Cond(Ani)}$.
This is the reason that we may frame our main contribution to these analyses in the following classical terms, yielding clause (1) and (2) of Theorem D.
\begin{theorem}
\label{thm:injdimconstantfield}
For any finite field $K$ and $\mathsf{ED}$ space $S$, the constant sheaf $\mathcal{K}$ on $S$ is injective. In contrast, if $\mathsf{ED}$ spaces $S$ and $T$ are each \v{C}ech-Stone compactifications of sets of cardinality at least $\aleph_\omega$, then for any field $K$ the injective dimension of the constant sheaf $\mathcal{K}$ on their product is infinite.
\end{theorem}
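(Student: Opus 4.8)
The two assertions are of quite different character. The first is classical sheaf theory: since $S\in\mathsf{ED}$ is totally disconnected compact Hausdorff, every open $U\subseteq S$ is again extremally disconnected and satisfies $\mathrm{H}^1(U;\mathcal K)=0$ (a \v{C}ech $1$-cocycle refines to a clopen partition, over which it is a coboundary). Because $K$ is a field, all stalks are injective $K$-modules, so the skyscraper (Godement) resolution $0\to\mathcal K\to\mathcal C^0(\mathcal K)\to\mathcal C^1(\mathcal K)\to\cdots$ is an \emph{injective} resolution of $\mathcal K$: each $\mathcal C^n=\prod_{s}(i_s)_{*}M_s$ with $M_s$ a $K$-module, and $(i_s)_{*}$ — being right adjoint to the exact stalk functor — preserves injectives. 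Since $\mathrm{H}^1(U;\mathcal K)=0$ for every open $U$, the first syzygy $\mathcal Q:=\mathcal C^0(\mathcal K)/\mathcal K$ is flasque; and on an extremally disconnected space — where disjoint opens have disjoint closures — a flasque sheaf of $K$-vector spaces is injective (Baer's criterion, verified by extending sections along the extensions-by-zero $j_{W!}(\mathcal K|_W)\hookrightarrow j_{U!}(\mathcal K|_U)$ from smaller opens $W$ to larger clopens $U$). Hence $0\to\mathcal K\to\mathcal C^0(\mathcal K)\to\mathcal Q\to 0$ is a length-one injective resolution, so $\mathrm{id}(\mathcal K)\le 1$. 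If moreover $K$ is finite, every locally constant $K$-valued function on an open $V\subseteq S$ has finite image, and, using that its finitely many level sets have pairwise disjoint clopen closures, one extends it across any larger open set; thus $\mathcal K$ is itself flasque, hence injective, and $\mathrm{id}(\mathcal K)=0$.

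For the second assertion it suffices to exhibit, for every $n\ge 1$, a sheaf $\mathcal G_n$ on $S\times T$ with $\mathrm{Ext}^n_{\mathsf{Sh}(S\times T,K)}(\mathcal G_n,\mathcal K)\neq 0$. For an open immersion $j\colon U\hookrightarrow S\times T$ the restriction functor $j^{*}$ is exact and, being right adjoint to the exact functor $j_{!}$, preserves injectives; hence $\mathrm{RHom}_{S\times T}(j_{!}\,j^{*}\mathcal K,\mathcal K)\simeq\mathrm{RHom}_{U}(\mathcal K|_U,\mathcal K|_U)=\mathrm{R}\Gamma(U;\mathcal K)$, so $\mathrm{Ext}^n(j_{!}\,j^{*}\mathcal K,\mathcal K)\cong\mathrm{H}^n(U;\mathcal K)$. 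Writing $S=\beta X$, $T=\beta Y$ with $|X|,|Y|\ge\aleph_\omega$, fix for each $n$ subsets $X_n\subseteq X$, $Y_n\subseteq Y$ of size $\aleph_n$, so that $\beta X_n\times\beta Y_n$ is a clopen subspace of $S\times T$; any open $U_n\subseteq\beta X_n\times\beta Y_n$ is then open in $S\times T$ and has the same cohomology computed either way. Taking $\mathcal G_n=j_{n!}\,j_n^{*}\mathcal K$ for the inclusion $j_n\colon U_n\hookrightarrow S\times T$, the claim reduces to producing, for each $n$, an open $U_n\subseteq\beta X_n\times\beta Y_n$ with $\mathrm{H}^n(U_n;\mathcal K)\neq 0$ — which is exactly the content of Theorem~\ref{thm:nonzero_cohomology_on_opens}. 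Once such $U_n$ are in hand, $\mathrm{id}(\mathcal K)=\infty$ is immediate, since $\mathrm{Ext}^n(\mathcal G_n,\mathcal K)\neq 0$ forces $\mathrm{id}(\mathcal K)\ge n$ for every $n$.

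The heart of the matter — and what I expect to be the main obstacle — is Theorem~\ref{thm:nonzero_cohomology_on_opens}, which is where the infinitary combinatorics enters. The open set $U_n$ is to be cut out by a family of clopen ``boxes'' $N_A\times N_B$ ($A\subseteq X_n$, $B\subseteq Y_n$) whose nerve carries a Roos complex of the kind attached to an $\mathbf A_{\kappa,\lambda}$-type system (Definition~\ref{def:Akappalambda}); accordingly, $\mathrm{H}^n(U_n;\mathcal K)\neq 0$ becomes equivalent to the existence of a nontrivial $n$-coherent family (in the sense of Definition~\ref{def:coherence}) on the relevant index order. Such a family is available in $\mathsf{ZFC}$ precisely because $\beta X_n\times\beta Y_n$ supplies both an $\aleph_n$-sized ``height'' coordinate and an $\aleph_n$-sized ``coefficient'' coordinate: the case $n=1$ is Theorem~\ref{thm:limsofAkappalambda}(5) (a nontrivial coherent family of finite-to-one maps $e_\alpha\colon\alpha\to\omega$), and the higher-degree cases are obtained by an induction on $n$ in the spirit of the Case~1 construction inside the proof of Theorem~\ref{thm:limnAkl_further}, using walks on ordinals (\cite{Todorcevic_Walks_07}, \cite[Theorem 7.6]{TFOA}); keeping careful track of which combinatorial datum lives on which factor of $\beta X\times\beta Y$ is the delicate part of the argument. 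The hypothesis $|X|,|Y|\ge\aleph_\omega$ plays no role beyond making all of the $U_n$ simultaneously available inside $S\times T$ (and is, in the sharpened form of the statement, what is needed: bounding $|X|$ and $|Y|$ below $\aleph_\omega$ caps the degrees in which this construction can run).
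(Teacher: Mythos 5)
Your first paragraph is essentially correct and is a repackaging of the paper's argument for the bound $\mathrm{injdim}(\mathcal{K})\leq 1$: your Baer-type criterion ``flasque sheaves of $K$-vector spaces are injective'' is Claim \ref{clm:injective_criterion} together with the acyclicity of flasque sheaves (the extremal disconnectedness of $S$ is not needed for that step), your syzygy argument is the $(2)\Rightarrow(1)$ direction of Lemma \ref{lemma:eq_conds}, and the essential input $\mathrm{H}^1(U;\mathcal{K})=0$ for all open $U$ is what the paper extracts from the Stone-duality computation (\ref{eq:Rqlim0})--(\ref{eq:Rqlim3}) together with flasqueness of the system $I\mapsto\prod_I K$. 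Your one-line justification of that vanishing --- ``a \v{C}ech $1$-cocycle refines to a clopen partition'' --- is the only soft spot: an open subset of $\beta X$ is a \emph{directed} union of clopens $N_I$, and it is not clear that an arbitrary open cover refines to a partition; the honest argument is the $\mathrm{R}^q\mathrm{lim}$ computation (or, for general $\mathsf{ED}$ spaces, the retract argument of footnote \ref{ftnt:retracts}). The finite-field case matches Lemma \ref{lem:finitefieldsinjective}.

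The second half has a genuine gap. You reduce the claim to: for each $n$ there is an open $U_n\subseteq\beta X_n\times\beta Y_n$ with $\mathrm{H}^n(U_n;\mathcal{K})\neq 0$, and assert that this ``is exactly the content of Theorem \ref{thm:nonzero_cohomology_on_opens}.'' It is not: that theorem concerns open subsets of a \emph{single} space $\beta X$ and the constant sheaf $\mathcal{W}$ of a vector space $W=\bigoplus_\kappa K$ with $\kappa\geq\aleph_\omega$, not the constant sheaf $\mathcal{K}$ on a product. The missing bridge is Lemma \ref{lemma:fininjdim_pushforward}: the projection $\pi:S\times T\to S$ has profinite, hence acyclic, fibers, so $\pi_*$ is exact and preserves injectives, and $\pi_*\mathcal{K}\cong\mathcal{W}$ with $W=\bigoplus_{w(T)}K$. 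It is this step that trades the second factor for a large coefficient group on the first, after which Lemma \ref{lemma:eq_conds} and Theorem \ref{thm:nonzero_cohomology_on_opens} yield the contradiction (and, if one insists on explicit opens in the product, the cylinders $V\times T$ work, since $\mathrm{H}^n(V\times T;\mathcal{K})=\mathrm{H}^n(V;\mathcal{W})$ by the degeneration of the Leray spectral sequence). Your proposed substitute --- a direct nerve-of-boxes computation on $\beta X\times\beta Y$ identifying $\mathrm{H}^n(U;\mathcal{K})$ with a derived limit of an $\mathbf{A}$-type system --- is plausible but is asserted rather than carried out, and the identification of the \v{C}ech/Roos complex of a non-directed family of boxes with such a $\mathrm{lim}^n$ is precisely the delicate point that the pushforward lemma is designed to circumvent. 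Not supplying the proof of the combinatorial nonvanishing itself is fair, since Theorem \ref{thm:nonzero_cohomology_on_opens} is a separately stated result --- but only once you have reduced to the statement it actually makes.
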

This theorem is, of course, evocative of both Proposition \ref{prop:Rudin} and Proposition \ref{prop:projnotprods}, but with a striking jump both in the threshold cardinality for unruly products from $\omega$ to $\aleph_\omega$.
The contours of all but the contributing Theorem \ref{thm:nonzero_cohomology_on_opens} of its argument are due to Peter Scholze, along with the deduction of a main consequence in the condensed setting, establishing clause (3) of Theorem D \cite{ScholzePersComm}:
\begin{theorem}
\label{thm:productsanima}
Products of compact projective condensed anima are not, in general, compact.
\end{theorem}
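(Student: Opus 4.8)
The plan is to derive a contradiction from the compactness of one carefully chosen product. Recall that the $\mathsf{ED}$ spaces are precisely the compact projective objects of $\mathsf{Cond}(\mathsf{Ani})$, so it suffices to produce $S,T\in\mathsf{ED}$ for which the product $\underline{S}\times\underline{T}$ is not compact. As $\underline{S}$ and $\underline{T}$ are $0$-truncated, their product in $\mathsf{Cond}(\mathsf{Ani})$ agrees with their product in $\mathsf{Cond}(\mathsf{Set})$, namely the condensed set $\underline{S\times T}$ associated to the topological---hence Stone---product $S\times T$. I would take $S=\beta X$ and $T=\beta Y$ for discrete sets $X$ and $Y$ of cardinality at least $\aleph_\omega$, precisely the spaces appearing in clause~(2) of Theorem~\ref{thm:injdimconstantfield}, and show that $\underline{S\times T}$ is not a compact object of $\mathsf{Cond}(\mathsf{Ani})$.

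The first step passes to abelian coefficients. The functor $\mathbb{Z}[-]\colon\mathsf{Cond}(\mathsf{Ani})\to\mathsf{D}(\mathsf{Cond}(\mathsf{Ab}))$ is colimit-preserving, left adjoint to the functor carrying a complex to the underlying anima of its connective cover; that right adjoint preserves filtered colimits, so $\mathbb{Z}[-]$ carries compact objects to compact objects. Hence a compact $\underline{S\times T}$ would render $\mathbb{Z}[S\times T]$ compact in $\mathsf{D}(\mathsf{Cond}(\mathsf{Ab}))$. But this category is compactly generated by the objects $\mathbb{Z}[E]$, $E\in\mathsf{ED}$, which are compact because projective in $\mathsf{Cond}(\mathsf{Ab})$; its compact objects are therefore the retracts of bounded complexes of finite sums of such $\mathbb{Z}[E]$'s. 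For any such complex $C$ and any $\mathcal{F}\in\mathsf{Cond}(\mathsf{Ab})$, the projectivity of the terms of $C$ forces $\mathrm{RHom}(C,\mathcal{F})$ into a range of cohomological degrees independent of $\mathcal{F}$. So if $\mathbb{Z}[S\times T]$ were compact there would be an integer $N$ with $\mathrm{Ext}^{n}_{\mathsf{Cond}(\mathsf{Ab})}(\mathbb{Z}[S\times T],\mathcal{F})=0$ for every condensed abelian group $\mathcal{F}$ and every $n>N$; that is, $\mathbb{Z}[S\times T]$ would be uniformly of finite projective dimension, and similarly $K[S\times T]$ in $\mathsf{Cond}(\mathsf{Mod}_{K})$ for a field $K$.

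The contradiction is then furnished by clause~(2) of Theorem~\ref{thm:injdimconstantfield}. That clause says the constant sheaf $\mathcal{K}$ on $S\times T$ has infinite injective dimension; as in the proof of Theorem~\ref{thm:nonzero_cohomology_on_opens}, this is the assertion that $\mathrm{H}^{n}(U;\mathcal{K})\neq 0$ for opens $U\subseteq S\times T$ with $n$ arbitrarily large, equivalently that $\mathrm{Ext}^{n}_{\mathsf{Sh}(S\times T,\,K)}(j_{!}\mathcal{K}_{U},\mathcal{K})\neq 0$ for the sheaf-generators $j_{!}\mathcal{K}_{U}$ and $n$ arbitrarily large. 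Passing this through the comparison between the classical derived category of sheaves of $K$-modules on $S\times T$ and its condensed incarnation, as set up in Subsection~\ref{subsect:sheaves}---under which these sheaf-theoretic $\mathrm{Ext}$-groups are computed by $\mathrm{RHom}$'s against $K[S\times T]$ in $\mathsf{Cond}(\mathsf{Mod}_{K})$ and a suitable duality interchanges the two arguments---converts it into the statement that $\mathrm{Ext}^{n}_{\mathsf{Cond}(\mathsf{Mod}_{K})}(K[S\times T],-)$ fails to vanish identically for $n$ arbitrarily large. This contradicts the uniform bound $N$ of the preceding paragraph, so $\underline{S\times T}=\underline{S}\times\underline{T}$ is not compact, and the theorem follows.

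The heart of the argument, and the step I expect to be the main obstacle, is this last transport: making the dictionary between sheaves on the topological space $S\times T$ and their condensed counterparts precise enough that the failure of $\mathcal{K}$ to have finite injective dimension becomes the failure of $K[S\times T]$ to be a perfect complex---in effect a duality on $S\times T$ interchanging the relevant homological roles, for which one must keep track of the (non-exact) direct-image comparison functor and of the fact that an open subset of a Stone space is in general non-compact, so that $K[U]$ and the cohomology functor $R\Gamma(U,-)$ communicate only up to a controlled correction. A secondary, purely bookkeeping point is that $\mathsf{Cond}(\mathsf{Ab})$ is a proper class: one fixes at the outset a strong limit cardinal $\kappa$ large enough that $S$, $T$, and all the complexes and sheaves involved lie in the corresponding $\kappa$-condensed subcategories, and runs the argument there.
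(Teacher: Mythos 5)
Your reduction in the second paragraph is sound as far as it goes: compactness of the anima $\underline{S}\times\underline{T}$ does imply that $\mathbb{Z}[S\times T]$ is compact, hence perfect, hence of finite projective dimension in $\mathsf{D}(\mathsf{Cond(Ab)})$ (the paper proves essentially this implication as the final lemma of Section \ref{subsect:products}). The gap is exactly where you suspect it, but it is worse than a technical obstacle: the transport step in your third paragraph, from ``the constant sheaf $\mathcal{K}$ on the topological space $S\times T$ has infinite injective dimension'' to ``$K[S\times T]$ has infinite projective dimension in $\mathsf{Cond}(\mathsf{Mod}_K)$,'' is not available. The witnesses to infinite injective dimension supplied by Theorem \ref{thm:nonzero_cohomology_on_opens} are the groups $\mathrm{Ext}^n(j_!\mathcal{K}_U,\mathcal{K})=\mathrm{H}^n(U;\mathcal{K})$ for \emph{open, non-compact} subsets $U\subseteq S\times T$; these live in the classical sheaf category of the topological space, and there is no functor set up (in Subsection \ref{subsect:sheaves} or elsewhere) carrying them to condensed $K$-modules against which $K[S\times T]$ would have nonvanishing $\mathrm{Ext}$ in arbitrarily high degree. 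The standard condensed--topological comparison only identifies $\mathrm{Ext}^\bullet_{\mathsf{Cond(Ab)}}(\mathbb{Z}[S\times T],M)$ with $\mathrm{H}^\bullet(S\times T;M)$ for the compact space $S\times T$ itself, and those groups vanish in positive degrees since $S\times T$ is profinite. Decisively: if your transport worked, Theorem \ref{thm:injdimconstantfield}(2) would immediately answer Question \ref{ques:condabprods} in the negative, whereas the paper explicitly leaves the finiteness of the projective dimension of $\mathbb{Z}[S\times T]$ open. Compactness of the anima product is a strictly stronger hypothesis than perfectness of $\mathbb{Z}[S\times T]$, and the extra strength is what the argument must exploit.

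The paper's proof (Lemma \ref{lem:43to44}) uses that extra strength by never leaving the topological sheaf categories. From compactness of $S\times T$ in $\mathsf{Cond(Ani)}$ and the hypersheaf identity $S\times T=\mathrm{colim}_n X_n$, where $X_n$ is the colimit of a finite truncation of an $\mathsf{ED}$-hypercover $R_\bullet\to S\times T$, one extracts a retraction $g:X_n\to S\times T$. Descent for the functor assigning to a profinite set its derived category of sheaves then exhibits $g_*g^*\mathcal{K}$ as a \emph{finite} limit of the sheaves $f_{i*}f_i^*\mathcal{K}$, with each $f_i^*\mathcal{K}$ of injective dimension at most one on the $\mathsf{ED}$ space $R_i$ by clause (1) of Theorem \ref{thm:injdimconstantfield}. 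Hence $\mathrm{RHom}(\mathcal{A},\mathcal{K})$, as a retract of $\mathrm{RHom}(\mathcal{A},g_*g^*\mathcal{K})$, is uniformly bounded over all $\mathcal{A}$, i.e.\ $\mathcal{K}$ has finite injective dimension on $S\times T$ --- directly contradicting clause (2). If you want to salvage your outline, this retract-plus-descent mechanism is the ingredient you need in place of the $\mathbb{Z}[-]$ reduction.
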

We divide our account of these results into two subsections. In the first, after a brief review of the relevant sheaf theory, we establish Theorem \ref{thm:injdimconstantfield}; in the second, after a brief review of the category of condensed anima, we establish Theorem \ref{thm:productsanima}. 


\subsection{Sheaves on $\mathsf{ED}$ spaces and their products}
\label{subsect:sheaves}

Recall that a \emph{presheaf of $R$-modules} $\mathcal{F}$ on a topological space $X$ is a contravariant functor from the lattice $(\tau(X),\subseteq)$ of open sets of $X$ to the category $R$-$\mathsf{Mod}$.
Write $r_{V,U}$ for the map $\mathcal{F}(U)\to\mathcal{F}(V)$ corresponding to the inclusion of $V$ into $U$, and recall that $\mathcal{F}$ is a \emph{sheaf} if it additionally satisfies the condition that \emph{for all open covers $\bigcup_{i\in I}U_i=U$ of elements $U$ of $\tau(X)$ and collections $\{s_i\in\mathcal{F}(U_i)\mid i\in I\}$ such that}
$$r_{U_i\cap U_j,U_i}(s_i)=r_{U_i\cap U_j,U_j}(s_j)$$
\emph{for all $i,j\in I$, there exists exactly one $s\in\mathcal{F}(U)$ satisfying} $$r_{U_i,U}(s)=s_i$$
\emph{for all $i\in I$}.
A fundamental class of examples are the so-called \emph{constant sheaves} assigning to each $U\in\tau(X)$ the $R$-module of \emph{locally constant functions} from $U$ to some fixed $R$-module $M$.
Within this subsection, the role of $R$ will tend to be played by a field. 

\begin{notation}
If $K$ is a field or vector space, then we write $\mathcal{K}$ for the associated constant sheaf on a topological space $X$; in cases of potential ambiguity, we append a subscript to indicate the base space, writing, for example, $\mathcal{K}_X$.

A calligraphic font is standard for two other classes of objects arising below as well; to distinguish these, we reserve the letter $\mathcal{I}$ for ideals, and $\mathcal{J}$ for injective resolutions or their constituent modules.
\end{notation}

We recall a few further fundamental points before turning to our main argument; for a fuller review, see, for example, \cite[Chapter 2]{Iversen}.

\begin{definition} The \emph{stalk} of a presheaf $\mathcal{F}$ at a point $x\in X$ is
$$\mathcal{F}_x:=\varinjlim_{U\ni x} \mathcal{F}(U).$$
Inferences about sheaves $\mathcal{G}$ on $X$ frequently reduce to inferences about their stalks $\mathcal{G}_x$ $(x\in X)$; relatedly, the \emph{sheafification} of a presheaf $\mathcal{F}$ may be defined simply as that sheaf $\widetilde{\mathcal{F}}$ on $X$ which is equipped with a map $\mathcal{F}\to\widetilde{\mathcal{F}}$ inducing an isomorphism $\mathcal{F}_x\to\widetilde{\mathcal{F}}_x$ of each stalk.

Given sheaves $\mathcal{F}$ and $\mathcal{G}$ on $X$ and $Y$, respectively, and a map $f:X\to Y$, the \emph{direct image} (or \emph{pushforward}) $f_*\mathcal{F}$ of $\mathcal{F}$ is the sheaf on $Y$ mapping each $U\in\tau(Y)$ to $\mathcal{F}(f^{-1}(U))$.
Left adjoint to $f_*$ is the \emph{inverse image} (or \emph{pullback}) functor $f^*$ sending $\mathcal{G}$ to the sheafification of the presheaf on $X$ given by
$$U\mapsto\varinjlim_{V\supseteq f[U]} \mathcal{G}(V).$$
When $f$ is the inclusion of a locally closed subspace $X$ into $Y$, we have as well an $f_!$ sending $\mathcal{F}$ to the sheaf $f_!\mathcal{F}$ on $Y$ with stalks
\begin{equation*}
(f_!\mathcal{F})_y=
    \begin{cases}
        \mathcal{F}_y & \text{if } y \in X\\
        0 & \text{if } y \notin X.
    \end{cases}
\end{equation*}
In this case, $f^*$ is left-inverse to $f_!$.

Suppose now that both $\mathcal{F}$ and $\mathcal{G}$ are sheaves of $K$-modules for some fixed field $K$ and that $Y$ is a point.
In this case, $\mathcal{G}$ naturally identifies with a $K$-vector space $W$ and $f^*\mathcal{G}$ is simply the constant sheaf $\mathcal{W}_X$, while $f_*\mathcal{F}$ is the vector space $\Gamma(X;\mathcal{F}):=\mathcal{F}(X)$ of \emph{global sections} of $\mathcal{F}$ over $X$. The \emph{sheaf cohomology $K$-modules} $\mathrm{H}^q(X;\mathcal{F})$ are, by definition, the evaluations at $\mathcal{F}$ of the right derived functors of this $\Gamma(X;\,\cdot\,)$ viewed as a functor to $K$-$\mathsf{Mod}$ from the abelian category of sheaves of $K$-modules on $X$.
By way of the aforementioned adjunction, we have as well the equation
$$\mathrm{Ext}_K^q(\mathcal{K},\mathcal{F})=\mathrm{H}^q(X;\mathcal{F})$$
with the left-hand side computed, of course, in the category concluding the previous sentence \cite[2.7.5]{Iversen}.
Therein, injective objects admit the standard definition; there are enough of them, and the injective dimension $\mathrm{injdim}_X(\mathcal{F})$ of any sheaf $\mathcal{F}$ is the minimal length of an injective resolution of $\mathcal{F}$.
\end{definition}
A useful injectivity criterion is the following:
\begin{lemma}
\label{clm:injective_criterion}
For any field $K$, a sheaf $\mathcal{F}$ of $K$-vector spaces over a topological space $X$ is injective if and only if $\mathrm{Ext}_K^1(\iota_{!}\mathcal{K},\mathcal{F})=0$ for every inclusion $\iota$ of a locally closed subset into $X$.
\end{lemma}
\begin{proof} The forward implication is standard. For the reverse implication, suppose that $\mathrm{Ext}_K^1$ vanishes in the manner described and let $f:\mathcal{A}\to\mathcal{F}$ and $g:\mathcal{A}\hookrightarrow\mathcal{B}$ be morphisms of sheaves over $X$; we need to show that $f$ extends to an $h:\mathcal{B}\to\mathcal{F}$.
To that end, write $\mathcal{A}'\leq\mathcal{A}''$ if $\mathcal{A}'$ is a subsheaf of $\mathcal{A}''$, regard $\mathcal{A}$ as a subsheaf of $\mathcal{B}$, and consider the natural partial ordering of pairs $(\mathcal{A}',h')$ for which $\mathcal{A}\leq\mathcal{A}'\leq\mathcal{B}$ and $h':\mathcal{A}'\to\mathcal{F}$ satisfies $f=h'\circ g$.
Since this order satisfies the conditions of Zorn's Lemma, our task reduces to showing that if there exists for some such $(\mathcal{A}',h')$ an $x\in S$ with $\mathcal{A}_x'\lneq\mathcal{B}_x$ then there exists an $\mathcal{A}''$ with $\mathcal{A}'\lneq\mathcal{A}''\leq\mathcal{B}$ and an $h'':\mathcal{A}''\to\mathcal{F}$ properly extending $h'$.

To that end, fix such an $x$ and a nonzero $b\in\mathcal{B}_x\backslash\mathcal{A}_x'$ and a representative $s\in\mathcal{B}(U)$ of $b$ for some open $U\in S$ containing $x$. Observe (by checking stalks) that $s$ generates a sheaf on $U$ which is isomorphic to, and which we are therefore justified in identifying with, $\mathcal{K}$.
Let $\varepsilon$ denote the inclusion $U\to X$ and observe that $\mathcal{A}'\cap\varepsilon_{!}\mathcal{K}\cong \eta_{!}\mathcal{K}$ for some inclusion $\eta$ of an open set $V$ into $X$.
Write $\iota$ for the inclusion of the locally closed set $U\backslash V$ into $X$ and apply $\mathrm{Hom}_K(-,\mathcal{F})$ to the short exact sequence
$$0\to\mathcal{A}'\to\mathcal{A}'+\varepsilon_{!}\mathcal{K}\to\iota_{!}\mathcal{K}\to 0$$
of sheaves on $X$. Our premise, applied to the fragment of the induced long exact sequence
$$\mathrm{Hom}_K(\mathcal{A}'+\varepsilon_{!}\mathcal{K},\mathcal{F})\xrightarrow{e}\mathrm{Hom}_K(\mathcal{A}',\mathcal{F})\to\mathrm{Ext}_K^1(\iota_{!}\mathcal{K},\mathcal{F}),$$
implies that $e$ is surjective, and hence that $h'$ extends to an an element $h'':\mathcal{A}'+\varepsilon_{!}\mathcal{K}\to\mathcal{F}$ of $e^{-1}(h')$. This $h''$ and $\mathcal{A}''=\mathcal{A}'+\varepsilon_{!}\mathcal{K}$ conclude the argument.
\end{proof}
Let us see how this criterion implies the first part of Theorem \ref{thm:injdimconstantfield} (compare also the proof of \cite[Lemma VII.1.7]{Fargues_Scholze_Geometrization}). 
\begin{lemma}
\label{lem:finitefieldsinjective}
For any finite field $K$ and $\mathsf{ED}$ space $S$, the constant sheaf $\mathcal{K}$ on $S$ is injective.
\end{lemma}
\begin{proof}
Observe first that since $K$ is compact, any continuous map from an open $U\subseteq S$ to $K$ uniquely extends to a continuous map $\beta U\to K$.
Similarly, since $S$ is $\mathsf{ED}$, the inclusion $U\hookrightarrow S$ uniquely extends to an embedding of $\beta U$ as an open and closed subspace of $S$.
Together these facts imply that the restriction map $\mathcal{K}(S)\to\mathcal{K}(U)$ is surjective.
When the latter holds, as here, for all open $U\subseteq S$, we say that the sheaf in question is \emph{flasque} (the term's application to inverse systems above was a special instance); the interest of this property is that flasque resolutions are acyclic and therefore convenient for cohomology computations.
It is easy to see that $\iota^*$ preserves this property for any such $\iota: U\hookrightarrow S$, and thus that $\mathrm{Ext}^1_K(\iota_{!}\mathcal{K},\mathcal{K})=\mathrm{H}^1(U;\iota^*\mathcal{K})=0$ for all such $\iota$ (the first equality follows from the $\iota_{!}$--$\iota^*$ adjunction together with the fact that such $\iota^*$ are exact and preserve injectives \cite[II.6]{Iversen}).
By Lemma \ref{clm:injective_criterion}, it remains only to see that $\mathrm{Ext}^1_K(\iota_{!}\mathcal{K},\mathcal{K})=0$ holds for all inclusions $\iota$ of \emph{locally closed} subsets $L$ into $S$ as well.

To that end, observe that any such $L$ is of the form $U\backslash V$ for some open $V\subseteq U\subseteq X$; letting $\eta$ and $\varepsilon$ again denote the inclusions of $V$ and $U$, respectively, we then have a short exact sequence
$$0\to\eta_{!}\mathcal{K}\to\varepsilon_{!}\mathcal{K}\to\iota_{!}\mathcal{K}\to 0$$
(cf.\ \cite[p.\ 112]{Iversen}). Applying $\mathrm{Hom}_K(-,\mathcal{K})$ together with the reasoning above yields the long exact sequence fragment
$$\mathrm{Hom}_K(\varepsilon_{!}\mathcal{K},\mathcal{K})\xrightarrow{f}\mathrm{Hom}_K(\eta_{!}\mathcal{K},\mathcal{K})\to\mathrm{Ext}_K^1(\iota_{!}\mathcal{K},\mathcal{K})\to 0.$$
Flasqueness implies that $f:\mathcal{K}(U)\to\mathcal{K}(V)$ is surjective, and thus that$$\mathrm{Ext}^1_K(\iota_{!}\mathcal{K},\mathcal{K})=0.$$
As $L$ was arbitrary, this completes the proof.
\end{proof}
For the second part of Theorem \ref{thm:injdimconstantfield}, we first prove a sequence of preliminary lemmas.
\begin{lemma}
\label{lemma:fininjdim_pushforward}
Let $K$ be a field and $S$ and $T$ be $\mathsf{ED}$ spaces and $\pi:S\times T\to S$ denote the projection map. Let $w(T)$ denote the cardinality of the smallest basis, or what is also known as the topological weight, of $T$. If the constant sheaf $\mathcal{K}$ on $S\times T$ is of finite injective dimension then so too is its direct image $\pi_{*}\mathcal{K}$, and the latter is in any case isomorphic to the constant sheaf $\mathcal{W}$ on $S$, where $W$ is the $K$-vector space $\bigoplus_{w(T)} K$.
\end{lemma}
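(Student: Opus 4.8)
The plan is to establish the two assertions of the lemma — the identification $\pi_*\mathcal{K}\cong\mathcal{W}$ and the transfer of finite injective dimension — more or less independently, the second resting only on formal properties of $\pi$ together with the vanishing of positive-degree constant-sheaf cohomology on profinite spaces.

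First I would pin down $\pi_*\mathcal{K}$ concretely. For open $U\subseteq S$ one has $\pi_*\mathcal{K}(U)=\mathcal{K}(U\times T)$, the $K$-vector space of locally constant functions $U\times T\to K$. Using that $T$ is compact, a tube-lemma argument shows that each point $v_0$ of $U$ has a clopen neighbourhood $V$ (clopen sets forming a basis of the Stone space $S$) on which every $f\in\mathcal{K}(V\times T)$ factors through $\mathrm{pr}_T:V\times T\to T$: cover $\{v_0\}\times T$ by finitely many boxes on which $f$ is constant, intersect their $U$-coordinates to get $V$, and refine the $T$-coordinates to a clopen partition of $T$; then $f$ is constant on each $V\times D_k$, with a value depending only on $k$. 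Consequently the assignment $f\mapsto(u\mapsto f(u,-))$ identifies $\mathcal{K}(U\times T)$ with the set of locally constant maps $U\to W$, where $W:=\mathrm{LC}(T,K)$ denotes the space of locally constant $K$-valued functions on $T$; since these identifications commute with restriction, $\pi_*\mathcal{K}\cong\mathcal{W}_S$. To see $W\cong\bigoplus_{w(T)}K$, write $W=\colim_i K^{T_i}$ over the cofiltered system of finite discrete quotients $T_i$ of $T$ (equivalently, $W=K\otimes_{\bb{Z}}\mathrm{LC}(T,\bb{Z})$) and note that $W$ is spanned by the indicator functions of clopen subsets of $T$, of which there are $w(T)$ many (for infinite $T$, $w(T)=|\mathrm{Clop}(T)|$); a short linear-independence argument using a suitable family of clopen sets gives $\dim_K W=w(T)$.

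For the transfer of finite injective dimension, the two inputs are: (a) the pullback functor $\pi^*$ on sheaves of $K$-modules is exact, so its right adjoint $\pi_*$ carries injective sheaves on $S\times T$ to injective sheaves on $S$; and (b) $\mathrm{R}^q\pi_*\mathcal{K}=0$ for all $q>0$. For (b), observe that $\pi:S\times T\to S$ is proper (its fibres are compact), so proper base change identifies the stalk $(\mathrm{R}^q\pi_*\mathcal{K})_s$ with $\mathrm{H}^q(T;\mathcal{K})$, which vanishes for $q>0$ since $T$ is profinite and hence has cofinally fine finite clopen partitions, forcing $\mathrm{H}^q(T;\mathcal{K})=\check{\mathrm{H}}^q(T;\mathcal{K})=0$ in positive degrees; alternatively one may compute $(\mathrm{R}^q\pi_*\mathcal{K})_s$ directly as the colimit of $\mathrm{H}^q(V\times T;\mathcal{K})$ over clopen $V\ni s$, each $V\times T$ being profinite. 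Granting (a) and (b), take an injective resolution $0\to\mathcal{K}_{S\times T}\to\mathcal{J}^0\to\cdots\to\mathcal{J}^n\to 0$ of length $n=\mathrm{injdim}_{S\times T}(\mathcal{K})$; applying $\pi_*$ produces a complex of injective sheaves on $S$ whose $q$-th cohomology is $\mathrm{R}^q\pi_*\mathcal{K}$, hence is $\pi_*\mathcal{K}$ in degree $0$ and vanishes otherwise by (b). Thus $0\to\pi_*\mathcal{K}\to\pi_*\mathcal{J}^0\to\cdots\to\pi_*\mathcal{J}^n\to 0$ is an injective resolution of $\pi_*\mathcal{K}\cong\mathcal{W}$ of length at most $n$, so $\mathrm{injdim}_S(\mathcal{W})\leq\mathrm{injdim}_{S\times T}(\mathcal{K})<\infty$.

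The main obstacle, I expect, is less any single step than getting input (b) cleanly on the table: one needs the vanishing of higher constant-sheaf cohomology on profinite spaces — and the attendant comparison of sheaf and \v{C}ech cohomology there — precisely so that applying the right adjoint $\pi_*$ to the chosen finite resolution does not destroy its exactness. Everything else is either formal (the adjunction, exactness of $\pi^*$) or a routine if slightly fiddly point-set computation (the tube-lemma identification of $\pi_*\mathcal{K}$ as a constant sheaf and the dimension count for $W$).
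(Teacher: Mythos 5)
Your proof is correct and follows essentially the same route as the paper's: compactness of $T$ (the tube lemma) to identify $\pi_*\mathcal{K}$ with the constant sheaf at $\mathrm{LC}(T,K)\cong\bigoplus_{w(T)}K$, then proper base change plus the vanishing of positive-degree constant-sheaf cohomology on profinite spaces to see that $\pi_*$ carries a finite injective resolution of $\mathcal{K}$ to one of $\pi_*\mathcal{K}$. The only (harmless) difference is that the paper deduces full exactness of $\pi_*$ from the stalk formula for arbitrary sheaves, whereas you use only the $\pi_*$-acyclicity of $\mathcal{K}$ itself, which suffices.
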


\begin{proof}
For any $s\in S$, consider the stalk
$$(\pi_*\mathcal{K})_s=\varinjlim_{s\in U}\mathcal{K}(U\times T)$$
of $\pi_*\mathcal{K}$ over $s$.
By the compactness of $T$, for every locally constant $f:U\times T\to K$ there exists a neighborhood $V$ of $s$ and locally constant $g:T\to K$ such that $f(s',t)=g(t)$ for all $(s',t)\in V\times T$.
It follows that $(\pi_*\mathcal{K})_s$ equals the $K$-vector space of continuous functions $T\to K$ (with discrete codomain); moreover, the latter is isomorphic to $W=\bigoplus_{w(T)} K$ by the argument of \cite[Corollary 97.7]{Fuchs_Infinite_73}.
The lemma's concluding claim is now immediate: since the natural map from the constant sheaf $\mathcal{W}$ on $S$ to $\pi_*\mathcal{K}$ induces isomorphisms of stalks, it is an isomorphism of sheaves.

The preceding stalk computation holds, in fact, much more generally: it is a $q=0$ instance of the fact that for any $s\in S$ and sheaf $\mathcal{F}$ on $S\times T$, the stalk over $s$ of the right derived functor of the pushforward of $\mathcal{F}$
$$(\mathrm{R}^q\pi_*\mathcal{F})_s= \mathrm{H}^q(\pi^{-1}(s);\iota^{*}\mathcal{F})$$
for any $q\geq 0$, where $\iota^*$ denotes the pullback of the inclusion $\pi^{-1}(s)\hookrightarrow S\times T$ \cite[Theorem 17.2]{MilneLEC}.
Since pullback functors are exact and $\mathrm{H}^q(T;-)=0$ for any $q>0$ and profinite $T$ \cite[Theorem 5.1]{Wiegand}, this in turn implies that $\pi_*$ is exact.
The lemma's first claim now follows from the general fact that pushforwards preserve injectives \cite[II.4.1.3]{Iversen}, since this, when combined with the foregoing, implies that the image $\pi_*\mathcal{J}$ of a finite injective resolution $\mathcal{J}$ of $\mathcal{K}$ is a finite injective resolution of $\pi_*\mathcal{K}$.
\end{proof}
By way of this and the following lemma, assertions about the injective dimension of $\mathcal{K}$ on products of $\mathsf{ED}$ spaces --- and, hence, about the productivity of compact projective condensed anima --- convert to cohomology computations on open subsets of single $\mathsf{ED}$ spaces. 
\begin{lemma}
\label{lemma:eq_conds}
For every finite field $K$ and $\mathsf{ED}$ space $S$ and $K$-vector space $W$, the following conditions are equivalent:
\begin{enumerate}
\item The constant sheaf $\mathcal{W}$ on $S$ is of finite injective dimension.
\item There exists an $n\in\mathbb{N}$ such that $\mathrm{H}^q(U;\mathcal{W})=0$ for any open subset $U$ of $S$ and $q>n$.
\end{enumerate}
\end{lemma}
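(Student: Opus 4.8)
The plan is to prove the two implications separately; $(1)\Rightarrow(2)$ is routine, and $(2)\Rightarrow(1)$ carries the weight of the argument. For $(1)\Rightarrow(2)$, fix a finite injective resolution $0\to\mathcal{W}\to\mathcal{J}^0\to\cdots\to\mathcal{J}^n\to 0$ in the category $\mathrm{Sh}_K(S)$ of sheaves of $K$-modules on $S$, and for an open $U\subseteq S$ write $j\colon U\hookrightarrow S$ for the inclusion. Since $j^{*}$ is exact and admits the exact left adjoint $j_{!}$, it carries injectives to injectives; hence $\mathcal{J}^{\bullet}|_U$ is a length-$n$ injective resolution of $\mathcal{W}|_U=\mathcal{W}_U$, so that $\mathrm{H}^q(U;\mathcal{W})=\mathrm{H}^q(\Gamma(U;\mathcal{J}^{\bullet}|_U))$ vanishes for all $q>n$, uniformly in $U$. (Equivalently, one computes $\mathrm{H}^q(U;\mathcal{W})=\mathrm{Ext}^q_S(j_{!}\mathcal{K}_U,\mathcal{W})$ from the resolution $\mathcal{J}^{\bullet}$ and observes that the complex $\mathrm{Hom}_S(j_{!}\mathcal{K}_U,\mathcal{J}^{\bullet})$ is concentrated in degrees $\le n$.)

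For $(2)\Rightarrow(1)$, fix $n$ as in (2) and any injective resolution $0\to\mathcal{W}\to\mathcal{J}^0\to\mathcal{J}^1\to\cdots$ of $\mathcal{W}$ in $\mathrm{Sh}_K(S)$, and let $\mathcal{Z}:=\mathrm{im}(\mathcal{J}^{n-1}\to\mathcal{J}^n)$ be its $n^{\mathrm{th}}$ cosyzygy (reading $\mathcal{Z}=\mathcal{W}$ if $n=0$). Standard dimension-shifting along the injective resolution gives, for $\mathcal{G}=j_{!}\mathcal{K}_U$ with $j\colon U\hookrightarrow S$ open and every $q\ge 1$, isomorphisms $\mathrm{H}^q(U;\mathcal{Z})=\mathrm{Ext}^q_S(j_{!}\mathcal{K}_U,\mathcal{Z})\cong\mathrm{Ext}^{q+n}_S(j_{!}\mathcal{K}_U,\mathcal{W})=\mathrm{H}^{q+n}(U;\mathcal{W})=0$, the last equality by (2). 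Thus $\mathcal{Z}$ is $\Gamma(U;-)$-acyclic on every open $U\subseteq S$. If this forces $\mathcal{Z}$ to be injective, then $0\to\mathcal{W}\to\mathcal{J}^0\to\cdots\to\mathcal{J}^{n-1}\to\mathcal{Z}\to 0$ is an injective resolution of length $n$, whence $\mathrm{injdim}_S(\mathcal{W})\le n<\infty$ and we are done.

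So the remaining — and principal — task is to upgrade acyclicity of $\mathcal{Z}$ on all opens to injectivity of $\mathcal{Z}$; I expect this step, rather than anything earlier, to be the crux. Note that acyclicity alone does not suffice in general: $\mathcal{Z}$ need not even be flasque, as the sheaf of smooth functions on $\mathbb{R}$ shows, so the argument must use the special features at hand. I would proceed via Baer's criterion relative to the generating family $\{\,j_{!}\mathcal{K}_U\mid U\subseteq S\text{ open}\,\}$ of the Grothendieck category $\mathrm{Sh}_K(S)$: after the usual Zorn's-lemma reduction it suffices to show that every morphism $\mathcal{H}\to\mathcal{Z}$ from a subsheaf $\mathcal{H}$ of some $j_{!}\mathcal{K}_U$ extends over $j_{!}\mathcal{K}_U$. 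Since $j_{!}$ is exact and fully faithful, subsheaves of $j_{!}\mathcal{K}_U$ are precisely the $j_{!}\mathcal{F}$ for subsheaves $\mathcal{F}\subseteq\mathcal{K}_U$ of the constant sheaf on $U$, so the $j_{!}\dashv j^{*}$ adjunction turns this into the problem of extending maps $\mathcal{F}\to\mathcal{Z}|_U$ over $\mathcal{K}_U$ in $\mathrm{Sh}_K(U)$, whose obstruction lies in $\mathrm{Ext}^1_U(\mathcal{K}_U/\mathcal{F},\mathcal{Z}|_U)$. Here the hypotheses genuinely enter twice: the constancy of $\mathcal{W}$ means the sheaves one must analyze are quotients of constant sheaves, whose structure over $U$ is tractable, while the uniform bound $n$ — inherited by $\mathcal{Z}|_U$, which is again acyclic on every open of $U$ — is what makes an inductive control of these $\mathrm{Ext}$-groups possible, eventually yielding their vanishing and hence the desired extensions.
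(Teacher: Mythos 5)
Your argument for $(1)\Rightarrow(2)$ and your dimension-shifting reduction of $(2)\Rightarrow(1)$ to the injectivity of the cosyzygy $\mathcal{Z}$ coincide with the paper's proof. But the step you explicitly defer --- upgrading ``$\mathrm{Ext}^q(j_!\mathcal{K}_U,\mathcal{Z})=0$ for all opens $U$ and all $q\geq 1$'' to ``$\mathcal{Z}$ is injective'' --- is precisely where the paper does its real work (its Claim preceding the computation), and your proposal does not supply it: you correctly identify it as the crux, set up a Baer-type criterion relative to the family $\{j_!\mathcal{K}_U\}$, and then assert that ``an inductive control of these $\mathrm{Ext}$-groups'' will ``eventually'' yield the vanishing of the obstructions. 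As written, that is a genuine gap, not a proof; in particular, vanishing of $\mathrm{Ext}^1(j_!\mathcal{K}_V,\mathcal{Z})$ for all opens $V$ does not formally imply vanishing of $\mathrm{Ext}^1(\mathcal{K}_U/\mathcal{F},\mathcal{Z}|_U)$ for an arbitrary subsheaf $\mathcal{F}\subseteq\mathcal{K}_U$ (the long exact sequence only exhibits the latter as a quotient of $\mathrm{Hom}(\mathcal{F},\mathcal{Z}|_U)$), so the strategy cannot be completed by the bookkeeping you gesture at.

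The missing idea is the special structure of sub- and quotient sheaves of $\mathcal{K}$ when $K$ is a field. The paper runs Zorn's lemma directly on an arbitrary extension problem $\mathcal{A}\hookrightarrow\mathcal{B}$, $f:\mathcal{A}\to\mathcal{Z}$: at a maximal partial extension $(\mathcal{A}',h')$ with $\mathcal{A}'_x\subsetneq\mathcal{B}_x$, one picks a section $s\in\mathcal{B}(U)$ representing some $b\in\mathcal{B}_x\setminus\mathcal{A}'_x$ and observes, by checking stalks, that the subsheaf of $\mathcal{B}|_U$ generated by $s$ is a copy of the constant sheaf $\mathcal{K}_U$ (each stalk $K\cdot s_y$ is either $0$ or one-dimensional, since $K$ is a field); consequently the one-step extension $\mathcal{A}'\to\mathcal{A}'+\iota_!\mathcal{K}$ has cokernel governed by groups of the form $\mathrm{Ext}^1(\iota_!\mathcal{K}_V,\mathcal{Z})$, which you have already shown to vanish. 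No induction on $n$ and no analysis of general quotients $\mathcal{K}_U/\mathcal{F}$ is needed. Your observation that acyclicity alone does not force injectivity (the smooth functions on $\mathbb{R}$) correctly signals that the field hypothesis must enter, but you never isolate \emph{how} it enters --- namely, that for one-dimensional coefficients the only subobjects that can arise in the Zorn step are extensions by zero of constant sheaves on opens, which is exactly the class of test objects your hypothesis controls. Supplying this claim and its (short) proof would complete your argument and bring it into line with the paper's.
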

More exact relationships will emerge in the course of our proof below: the injective dimension of a $\mathcal{W}$ as in (1), for example, plays the role of an $n$ as in (2); an $n$ as in (2), on the other hand, implies that $\mathrm{injdim}_S(\mathcal{W})\leq n+1$.
Note also that our assumptions on $K$ and $S$ are unneeded for the implication $(1)\Rightarrow (2)$.
\begin{proof}
$(1)\Rightarrow (2)$: Much as in preceding arguments, the point is that the pullback functor $\iota^*$ associated to the inclusion $\iota:U\hookrightarrow S$ is both exact and preserves injectives.
Hence $\iota^*$ applied to a length-$n$ injective resolution of the constant sheaf $\mathcal{W}$ over $S$ is a length-$n$ injective resolution of the constant sheaf $\mathcal{W}=\iota^*\mathcal{W}$ over $U$; this implies (2).

$(2)\Rightarrow (1)$: Fix an $n$ as in (2) and a resolution
\begin{equation}
\label{eq:resolution1}
0\to\mathcal{W}\to\mathcal{J}^0\to\cdots\to\mathcal{J}^{n-1}\to\mathcal{F}\to 0
\end{equation}
of the sheaf $\mathcal{W}$ over $S$ in which each of the $\mathcal{J}^i$ are injective (if $n=0$, we just have the resolution $0\to\mathcal{W}\to\mathcal{F}\to 0$). By assumption, for every $q>n$ and inclusion $\iota$ of an open subset $U$ into $S$, 
\begin{align}
\label{eq:2implies1Ext}
\nonumber 0 = \mathrm{H}^{q}(U;\mathcal{W}) & = \mathrm{H}^{q}(U;\iota^*\mathcal{W}) \\
\nonumber & = \mathrm{Ext}_K^{q}(\mathcal{K},\iota^*\mathcal{W}) \\
\nonumber & = \mathrm{Ext}_K^{q}(\iota_{!}\mathcal{K},\mathcal{W}) \\
& = \mathrm{Ext}_K^{q-n}(\iota_{!}\mathcal{K},\mathcal{F}).
\end{align}
Here the penultimate equality follows just as in Lemma \ref{clm:injective_criterion} from the $\iota_{!}$--$\iota^*$ adjunction together with the fact that such $\iota^*$ are exact and preserve injectives, and the last equality is via a standard dimension-shifting argument, such as appears, for example, in \cite[Exercise 2.4.3]{Weibel}.
The vanishing of line \ref{eq:2implies1Ext} for all $q\geq n$ and open $U\subseteq S$ doesn't suffice for the conclusion of Lemma \ref{clm:injective_criterion}, but it's enough in this context for something almost as good, namely that $\mathrm{injdim}_S(\mathcal{F})\leq 1$.
To see this, fix a resolution
\begin{equation}
\label{SES1}
0\to\mathcal{F}\to\mathcal{J}^n\to\mathcal{H}\to 0
\end{equation}
with $\mathcal{J}^n$ injective.
Application of $\mathrm{Hom}_K(\iota_{!}\mathcal{K},-)$ for open inclusions $\iota$ as above then induces long exact sequences telling us two things: (i) that $\mathrm{Ext}^r_K(\iota_{!}\mathcal{K},\mathcal{H})=0$ for all such $\iota$ and $r>0$, and (ii) that $\mathcal{H}$ inherits the flasqueness of $\mathcal{J}^n$.
From these points, we conclude exactly as in Lemma \ref{lem:finitefieldsinjective} that $\mathcal{H}$ is injective.
Splicing (\ref{SES1}) to (\ref{eq:resolution1}) along $\mathcal{F}$ then defines a length-$(n+1)$ injective resolution
$$
0\to\mathcal{W}\to\mathcal{J}^0\to\cdots\to\mathcal{J}^{n-1}\to\mathcal{J}^n\to\mathcal{H}\to 0
$$
of $\mathcal{W}$.
\end{proof}
Against this background, we turn to the groups $\mathrm{H}^q(U;\mathcal{W})$ for such $W$ and open subsets $U \subseteq S$ as appear in Lemmas \ref{lemma:fininjdim_pushforward} and \ref{lemma:eq_conds} above; since such $U$ admit a convenient characterization when $S$ is the \v{C}ech-Stone compactification $\beta X$ of some discrete space $X$, we will focus our attention on $S$ of this sort. Note that there is no loss of generality in doing so, since every $\mathsf{ED}$ space $T$ is a retract of some such $\beta X$.\footnote{\label{ftnt:retracts} More precisely, such a retract descends to one of open sets $r^{-1}(U)\to U$; applying $\mathrm{H}^q(-;\mathcal{W})$ to the identity map $U\hookrightarrow r^{-1}(U)\to U$ then shows that if $n$ witnesses item 2 of Lemma \ref{lemma:eq_conds} for $\beta X$ then it does for $T$ as well. Thus for deriving a contradiction, via Lemma \ref{lemma:eq_conds}, from the premises of Lemma \ref{lemma:fininjdim_pushforward}, we may indeed restrict attention to the free $\mathsf{ED}$ spaces $\beta X$ for $X$ discrete.} The characterization just alluded to is the correspondence, given by Stone duality, between open subsets $U$ of $\beta X$ and ideals $\mathcal{I}$ on $X$ \cite[Lemma 19.1]{Halmos}. More precisely, any such $U$ equals
$\bigcup_{I\in\mathcal{I}_U} N_I$
for some ideal $\mathcal{I}_U\subseteq P(X)$; note next that each such $N_I$ corresponds, via the map $\mathcal{U}\mapsto\mathcal{U}\cap P(I)$, to the set of ultrafilters on $I$, and that this map in fact defines a homeomorphism $N_I\to\beta I$. 
In consequence, for any field $K$ and vector space $W=\bigoplus_\kappa K$ and open $U\subseteq\beta X$, we have the following:
\begin{align}
\mathrm{H}^q(U;\mathcal{W}) & = \mathrm{H}^q(\mathrm{colim}_{I\in\mathcal{I}_U} \,\beta I;\mathcal{W}) \label{eq:Rqlim0} \\
& = \mathrm{R}^q\mathrm{lim}_{I\in\mathcal{I}_U}\,\mathrm{H}^0(\beta I;\mathcal{W}) \label{eq:Rqlim1} \\
& = \mathrm{R}^q\mathrm{lim}_{I\in\mathcal{I}_U}\,\bigoplus_\kappa \mathrm{H}^0(\beta I;\mathcal{K}) \label{eq:Rqlim2} \\
& = \mathrm{R}^q\mathrm{lim}_{I\in\mathcal{I}_U}\,\bigoplus_\kappa \prod_I K
\label{eq:Rqlim3}
\end{align}
Here the fourth and third equalities follow from the definition and the compactness of $\beta I$, respectively; the first is immediate from our discussion above. The second may be viewed as an instance of \cite[Prop. 3.6.3]{Prosmans}; more precisely (since directed colimits are exact functors on sheaf categories), the right hand side of line \ref{eq:Rqlim0} is the $q^{\mathrm{th}}$ cohomology group of what may be written as
\begin{align*}
\mathrm{RHom}(\mathrm{colim}_{I\in\mathcal{I}_U}\,\iota_{!}\mathcal{K}_{\beta I},\mathcal{W}_U) & = \mathrm{Rlim}_{I\in\mathcal{I}_U}(\mathrm{RHom}(\iota_{!}\mathcal{K}_{\beta I},\mathcal{W}_U))\\
& = \mathrm{Rlim}_{I\in\mathcal{I}_U}(\mathrm{RHom}(\mathcal{K}_{\beta I},\iota^*\mathcal{W}_U)),
\end{align*}
where $\iota:\beta I\to U$ is the inclusion, the first equality is by way of the aforementioned reference, and the second is just as in the arguments of Lemmas \ref{lem:finitefieldsinjective} and \ref{lemma:eq_conds}.
As noted above, the cohomology groups $\mathrm{Ext}^r(\mathcal{K}_{\beta I},\iota^*\mathcal{W}_U)=\mathrm{H}^r(\beta I;\mathcal{W})$ of the last line's interior term vanish in all degrees $r>0$. We arrive in this way to the expression in line \ref{eq:Rqlim1} above.

The reward for these analyses is the expression appearing in line \ref{eq:Rqlim3}.
By way of it, we will argue the following theorem.
When coupled with Lemmas \ref{lemma:fininjdim_pushforward} and \ref{lemma:eq_conds} (or, more precisely, with the argument of its (1)$\Rightarrow$(2)), this will imply the second part of Theorem \ref{thm:injdimconstantfield}.
\begin{theorem}
\label{thm:nonzero_cohomology_on_opens}
Fix a field $K$ and $K$-vector space $W=\bigoplus_\kappa K$ for some $\kappa\geq\aleph_\omega$.
For any set $X$ with $|X|\geq\aleph_\omega$ and $n\in\mathbb{N}$, there exists an open $U\subseteq\beta X$ such that $\mathrm{H}^n(U;\mathcal{W})\neq 0$.
\end{theorem}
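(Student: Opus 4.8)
The plan is to reduce, via the chain of identifications in lines (\ref{eq:Rqlim0})--(\ref{eq:Rqlim3}), to a derived-limit nonvanishing, and then to produce that nonvanishing by recursively constructing a nontrivial $n$-coherent family valued (in the sense of Definition \ref{def:coh2}) in a copy of $K$ over an index set carrying a huge ambient group. Concretely: since $|X|\geq\aleph_\omega\geq\aleph_n$, fix $Z\subseteq X$ of cardinality $\aleph_n$ together with an increasing continuous chain $\langle Z_\gamma\mid\gamma<\omega_n\rangle$ of subsets of $Z$ with union $Z$ and $|Z_\gamma|<\aleph_n$ for all $\gamma$; let $\mathcal{I}=\{A\subseteq X\mid A\cap Z\subseteq Z_\gamma\text{ for some }\gamma<\omega_n\}$, an ideal on $X$, and let $U=\bigcup_{A\in\mathcal{I}}N_A\subseteq\beta X$ be the corresponding open set. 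By lines (\ref{eq:Rqlim0})--(\ref{eq:Rqlim3}) we have $\mathrm{H}^n(U;\mathcal{W})=\mathrm{R}^n\mathrm{lim}_{A\in\mathcal{I}}\,\bigoplus_\kappa\prod_A K$, and this inverse system is cofinally indexed by the chain $\langle Z_\gamma\mid\gamma<\omega_n\rangle$, with surjective (restriction) transition maps. Setting $Y'=\kappa\times Z$, letting $\mathcal{I}'$ be the ideal on $Y'$ generated by the sets $\kappa\times Z_\gamma$ and $\mathcal{J}'$ the ideal of subsets of $Y'$ meeting only finitely many columns $\{\alpha\}\times Z$, one checks that the above system is (cofinally) the system $\mathbf{X}[\mathcal{I}',\mathcal{J}',K]$ of Definition \ref{def:coh2}. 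By Lemma \ref{lem:nontrivfamsarenontrivlimsagain} it therefore suffices to exhibit a nontrivial $n$-coherent family $\Phi=\langle\varphi_{\vec\gamma}\mid\vec\gamma\in(\omega_n)^n\rangle$ for this system; below, ``$=^*$'' and the like are always read modulo $\mathcal{J}'$.

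We argue by induction on $n$. The cases $n=0$ (global sections) and $n=1$ are immediate, the latter being in essence item (5) of Theorem \ref{thm:limsofAkappalambda}: a nontrivially coherent family $\langle e_\gamma:Z_\gamma\to\omega\mid\gamma<\omega_1\rangle$ of finite-to-one functions, placed inside a single column, does the job. For the inductive step, fix by the inductive hypothesis a nontrivial $(n-1)$-coherent family $\tau=\langle\tau_{\vec\eta}\mid\vec\eta\in(\omega_{n-1})^{n-1}\rangle$ which lives inside $\aleph_{n-1}$-many columns, and reserve pairwise disjoint blocks $\langle B_\gamma\mid\gamma<\omega_n,\ \cf(\gamma)=\aleph_{n-1}\rangle$ of columns, each $B_\gamma$ of size $\aleph_{n-1}$; there are only $\aleph_n\leq\kappa$ columns in all these blocks, so this is possible. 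We build $\Phi$ by recursion on the top coordinate. At a stage $\gamma<\omega_n$ we have $\cf(\gamma)\leq\aleph_{n-1}$, so the restriction of the family constructed so far to tuples bounded by $\gamma$ is an $n$-coherent family over a poset of cofinality at most $\aleph_{n-1}$; by Goblot's Theorem (in its surjective form, applicable since our transition maps are restrictions) together with Lemma \ref{lem:nontrivfamsarenontrivlimsagain}, this family is trivial, so we may fix a trivialization $\sigma^\gamma$ of it, chosen to avoid all the blocks $B_\delta$ with $\delta\geq\gamma$. If $\cf(\gamma)<\aleph_{n-1}$ we use $\sigma^\gamma$ verbatim to extend $\Phi$ to stage $\gamma$. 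If $\cf(\gamma)=\aleph_{n-1}$, we fix an increasing cofinal $\langle\gamma_\xi\mid\xi<\omega_{n-1}\rangle$ in $\gamma$ and, invoking Remark \ref{rmk:cofinality}, define the stage-$\gamma$ values of $\Phi$ on tuples drawn from $\{\gamma_\xi\mid\xi<\omega_{n-1}\}$ to be $\sigma^\gamma$ plus a copy of $\tau$ shifted into the fresh block $B_\gamma$ (this is exactly the mechanism employed in the proof of Theorem \ref{thm:limnAkl_further}, Case 1). That the resulting $\Phi$ is $n$-coherent is immediate from the construction.

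The crux is the nontriviality of $\Phi$. Suppose toward a contradiction that $\Psi=\langle\psi_{\vec\gamma}\mid\vec\gamma\in(\omega_n)^{n-1}\rangle$ trivializes $\Phi$. Fix a stage $\gamma$ with $\cf(\gamma)=\aleph_{n-1}$ and take the $B_\gamma$-component of every function in sight. Because every value of $\Phi$ at a tuple bounded by $\gamma$, as well as $\sigma^\gamma$ itself, was arranged to avoid $B_\gamma$, the $B_\gamma$-components of the values $\psi_{\vec\delta}$ with $\vec\delta$ bounded by $\gamma$ form an $(n-1)$-coherent family over $\gamma$; reading off the $B_\gamma$-component of the equation asserting that $\Psi$ trivializes $\Phi$ at the stage-$\gamma$ tuples, and identifying $\{\gamma_\xi\mid\xi<\omega_{n-1}\}$ with $\omega_{n-1}$, one extracts from $\Psi$ a family witnessing that the shifted copy of $\tau$ is trivial — contradicting the nontriviality of $\tau$. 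The support-bookkeeping involved here (tracking which columns each value of $\Psi$ meets, and verifying that the equations rearrange as claimed), and the fact that a usable stage $\gamma$ is always available, is the delicate part of the proof; it is precisely the \emph{linear} structure of the ideal $\mathcal{I}$ — which keeps every stage of the recursion strictly below the Goblot threshold for degree $n$ — that allows the whole argument to be carried out in $\mathsf{ZFC}$, without the approachability-type hypotheses that are forced on one by the more intricate index posets of Theorem \ref{thm:limnAkl_further}.
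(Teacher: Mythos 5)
Your reduction to the nonvanishing of $\mathrm{R}^n\mathrm{lim}_{I\in\mathcal{I}}\bigoplus_\kappa\prod_I K$ for the bounded ideal on a copy of $\omega_n$, and your recursive construction (Goblot trivializations at every stage, a shifted copy of the lower-dimensional nontrivial family inserted into a fresh block of columns at stages of cofinality $\aleph_{n-1}$) match the paper's proof. The gap is in the nontriviality verification, which is the crux. Working at a single stage $\gamma$ and restricting the trivialization identity to the block $B_\gamma$, what you actually obtain is
\[
\tau^{(B_\gamma)}_{\vec\alpha}\;=^*\;\Bigl(\partial\langle\psi_{\vec\zeta{}^\frown\langle\gamma\rangle}\restriction B_\gamma\rangle\Bigr)_{\vec\alpha}\;+\;(-1)^{n-1}\psi_{\vec\alpha}\restriction B_\gamma,
\]
and the residual family $\langle\psi_{\vec\alpha}\restriction B_\gamma\mid\vec\alpha\in\gamma^{n-1}\rangle$ is, as you yourself note, only $(n-1)$-\emph{coherent}; it need not be trivial. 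The identity therefore shows that $\tau^{(B_\gamma)}$ is \emph{cohomologous} to that residual family, not that $\tau$ is trivial, and nothing prevents a putative trivialization $\Psi$ from planting a nontrivial $(n-1)$-coherent family into every block $B_\gamma$ --- no ``usable stage'' is guaranteed in $\mathsf{ZFC}$. (This is precisely the obstruction that forces the $\diamondsuit$-guessing in Theorem \ref{thm:limnAkl_further}.) The paper's proof of Theorem \ref{thm:nonzero_cohomology_on_opens} circumvents it differently: from each critical stage $\beta$ it extracts a tuple $f(\beta)=\vec\alpha$ bounded by $\beta$ at which the coboundary of the trivialization is nonzero in some column of the block at $\beta$; Fodor's Lemma then fixes a single $\vec\alpha$ across cofinally many stages, so that this coboundary has unbounded column-support while $\mathrm{supp}(\varphi_{\vec\alpha})$ is bounded, contradicting the trivialization identity modulo the column ideal. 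Some such across-the-stages argument (or a guessing principle) is needed; the single-stage extraction does not close.

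A smaller but genuine error: in the base case you place the coherent family of finite-to-one functions ``inside a single column.'' Modulo the ideal $\mathcal{J}'$ generated by finite unions of columns, any family supported in one column is both coherent and trivial (the zero function trivializes it), so this produces nothing. The correct move is the one in item (5) of Theorem \ref{thm:limsofAkappalambda}: set $\varphi_\gamma(i,\xi)=1$ if and only if $e_\gamma(\xi)=i$, so that the family spreads over infinitely many columns and finite-to-one-ness converts the classical nontriviality of $\langle e_\gamma\rangle$ into nontriviality modulo $\mathcal{J}'$.
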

The remainder of this section will principally be taken up with a proof and partial converse of this theorem (see also Remark \ref{rmk:betternontrivs} below for some refinements), in which several of the preceding sections' themes reappear.
Our argument will revolve around a further instance of the generalized $n$-coherence appearing in Definition \ref{def:coh2}; this is the instance, for any cardinal $\kappa$, field $K$, set $X$, and ideal $\tilde{\mathcal{I}}$ on $X$, given by letting the definition's
\begin{itemize}
    \item $Y=\kappa\times X$,
    \item $H=K$,
    \item $\mathcal{I}$ be the ideal on $Y$ generated by $\{\kappa\times I\mid I\in\tilde{\mathcal{I}}\}$, and
    \item $\mathcal{J}$ be the ideal on $Y$ generated by $\{s\times X\mid s\in [\kappa]^{<\omega}\}.$
\end{itemize}
Let $\mathbf{Y}[\kappa,X,\tilde{\mathcal{I}}, K]$ denote the inverse system 
$\mathbf{X}[\mathcal{I}, \mathcal{J}, H]$ derived from this data as in 
Definition \ref{def:coh2}. Note that the set $\{\kappa \times I \mid I \in 
\tilde{I}\}$ is cofinal in $\mathcal{I}$ and that
\[
  X_{\kappa \times I} = \bigoplus_\kappa \prod_I K
\]
for any $I \in \tilde{I}$.
For any $n>0$, the associated notion of $n$-coherence is then that of a family
$$\Phi=\left\langle\varphi_{\vec{I}}:\kappa\times\bigcap\vec{I}\to K\mid\vec{I}\in\tilde{\mathcal{I}}^n\right\rangle$$
such that for any $\vec{I}\in \tilde{\mathcal{I}}^{n+1}$, the function
$$\sum_{i\leq n}(-1)^i\varphi_{\vec{I}^i}(\xi,\,\cdot\,)\big|_{\bigcap\vec{I}}$$
is nonzero for only finitely many $\xi\in\kappa$.
Triviality is similarly defined, and it is immediate from Lemma \ref{lem:nontrivfamsarenontrivlimsagain} that for all $n>0$ and $\kappa$ and $K$ and $\tilde{\mathcal{I}}$ as above, 
\[
  \operatorname{lim}^n \mathbf{Y}[\kappa,X,\tilde{I},K] = 0
\]
if and only if the associated $n$-coherent families of functions are all trivial.

\begin{proof}[Proof of Theorem \ref{thm:nonzero_cohomology_on_opens}]
By equations \ref{eq:Rqlim0} through \ref{eq:Rqlim3}, it will suffice to show that for any $K$, $\kappa$, $X$, and $n$ as in the statement of the theorem, there exists an ideal $\mathcal{I}_n$ on $X$ such that
$\operatorname{lim}^n \mathbf{Y}[\kappa,X,\mathcal{I}_n,K] \neq 0$.
For this it will suffice to exhibit for each $n>0$ a nontrivial $n$-coherent family of functions
$$\Phi=\left\langle\varphi_{\vec{\alpha}}:\kappa\times\bigwedge\vec{\alpha}\to K\mid\vec{\alpha}\in (\omega_n)^n\right\rangle;$$
to see this, identify $X$ with its cardinality $|X|=\lambda\geq\aleph_\omega$ and let $\mathcal{I}_n$ denote the bounded ideal on $\omega_n\subset\lambda$ and apply Lemmas \ref{lem:nontrivfamsarenontrivlimsagain} and Remark \ref{rmk:cofinality}. 
Much as in the proof of item 5 of Theorem \ref{thm:limsofAkappalambda}, such families $\Phi$ may be efficiently derived from existing constructions in the set theoretic literature;\footnote{Functions $\varphi_{\vec{\alpha}}$ as desired may be read off from the functions $\mathtt{f}_n$ of Section 6 of \cite{TFOA} (with $K$ replacing $\mathbb{Z}$ in the definition of the latter) or, relatedly, from the walks functions $r_2^n$ of Sections 7 and 9 of \cite{IHW}.} since in the present case, however, the relevant machinery is both more involved and less well-known, it seems preferable here to construct such families directly.
We will do so by recursion on $n$. Henceforth fix a field $K$.

For the base case $n=1$, the functions $\tau_\alpha$ appearing in the proof of item 5 of Theorem \ref{thm:limsofAkappalambda} furnish a nontrivial coherent $\Phi=\langle\varphi_\alpha:\kappa\times\alpha\to K\mid\alpha\in\omega_1\rangle$ with $\kappa=\aleph_0$, as the reader may verify.
In order to better fix both our notation and a template for higher $n$, though, we will record a more hands-on construction of such a $\Phi$, one in which $\kappa=\omega_1$.
For this purpose, fix a sequence $\langle C_\alpha\subseteq\alpha\mid\alpha\in\mathrm{Cof}(\omega)\cap\omega_1\rangle$ in which $\mathrm{otp}(C_\alpha)=\omega$ and $\mathrm{sup}(C_\alpha)=\alpha$ for every countable limit ordinal $\alpha$; for each such $\alpha$, let $\chi_\alpha(0,-):\alpha\to K$ be the characteristic function of $C_\alpha$ and let $\chi_\alpha:\omega_1\times\alpha\to K$ otherwise equal $0$.
For any $\psi:\kappa\times\alpha\to K$ and $\gamma<\kappa$,  define $\psi^{(\gamma)}:\kappa \times \alpha 
\rightarrow K$ by letting $\psi^{(\gamma)}(\gamma+\eta,\xi)=\psi(\eta,\xi)$ for all $(\eta,\xi)\in\kappa\times\alpha$ and letting $\psi^{(\gamma)}$ otherwise equal $0$.
We proceed now to construct by recursion on $\alpha$ a nontrivial coherent $\Phi=\langle\varphi_\alpha:\omega_1\times\alpha\to K\mid\alpha\in\omega_1\rangle$, as desired.

Our recursive assumption at stage $\beta$ is that $\Phi\restriction\beta:=\langle\varphi_\alpha:\omega_1\times\alpha\to K\mid\alpha\in\beta\rangle$ is defined and, moreover, that
$$\{\eta<\omega_1\mid\varphi_\alpha(\eta,\xi)\neq 0\text{ for some }\xi<\alpha<\beta\}\subseteq\beta;$$
henceforth we reserve the notation $\mathrm{supp}(-)$ (or here, $\mathrm{supp}(\Phi\restriction\beta)$) for a family of self-explanatory variations on the bracketed expression above. There are two possibilities:
\begin{enumerate}
\item $\beta=\alpha+1$ for some $\alpha$. In this case, let $\varphi_\beta(\eta,\xi)=\varphi_\alpha(\eta,\xi)$ for $\xi<\alpha$ and otherwise equal $0$.
\item $\beta$ is a limit ordinal. In this case, by Goblot's Theorem, there exists a trivialization $\psi:\omega_1\times\beta\to K$ of $\Phi\restriction\beta$; since $\mathrm{supp}(\Phi\restriction\beta)\subseteq\beta$ we may assume $\mathrm{supp}(\psi)\subseteq\beta$ as well. Let $\varphi_\beta=\psi+\chi^{(\beta)}_\beta$.
\end{enumerate}
This completes our account of stage $\beta$; note that our recursive assumption is conserved.

The $\Phi$ so constructed is plainly coherent, so suppose for contradiction that some $\psi:\omega_1\times\omega_1\to K$ trivializes $\Phi$.
Then for each $\beta\in\mathrm{Cof}(\omega)\cap\omega_1$ there exists $\xi_\beta<\beta$ with $\psi(\beta,\xi_\beta)\neq 0$; by Fodor's Lemma, $\xi_\beta$ equals some fixed $\xi$ for uncountably many $\beta$.
For any $\alpha>\xi$, though, $\mathrm{supp}(\varphi_\alpha)\subseteq\alpha+1$ implies that $\varphi\neq^*\psi$, and this is the desired contradiction.

We now show how the existence of nontrivial $n$-coherent families $$\Psi=\left\langle\psi_{\vec{\alpha}}:\omega_n\times\bigwedge\vec{\alpha}\to K\mid\vec{\alpha}\in (\omega_n)^n\right\rangle$$
implies the existence of nontrivial $(n+1)$-coherent families
$$\Phi=\left\langle\varphi_{\vec{\alpha}}:\omega_{n+1}\times\bigwedge\vec{\alpha}\to K\mid\vec{\alpha}\in (\omega_{n+1})^{n+1}\right\rangle.$$
As above, we construct $\Phi$ in stages $\Phi\restriction\beta=\langle\varphi_{\vec{\alpha}}:\omega_{n+1}\times\bigwedge\vec{\alpha}\to K\mid\vec{\alpha}\in \beta^{n+1}\rangle$ ranging through $\beta<\omega_{n+1}$, maintaining the condition that $\mathrm{supp}(\Phi\restriction\beta)\subseteq\beta$ for all $\beta\in\mathrm{Cof}(\omega_n)\cap\omega_{n+1}$, and 
$\mathrm{supp}(\Phi\restriction\beta)\subseteq\beta + \omega_n$ for all 
other $\beta \in \omega_{n+1}$, as we go.
Again we distinguish two cases:
\begin{enumerate}
\item $\mathrm{cf}(\beta)\neq\aleph_n$. By Goblot's Theorem, there exists an alternating trivialization $\Theta=\langle\theta_{\vec{\alpha}}:\omega_{n+1}\times\bigwedge\vec{\alpha}\to K\mid\vec{\alpha}\in\beta^n\rangle$ of $\Phi\restriction\beta$ with $\mathrm{supp}(\Theta)=\mathrm{supp}(\Phi\restriction\beta)$.
Extend $\Phi\restriction\beta$ to $\Phi\restriction\beta+1$ by letting $\varphi_{\vec{\alpha}^\frown\langle\beta\rangle}=(-1)^{n+1}\theta_{\vec{\alpha}}$ for all $\vec{\alpha}\in\beta^n$; the requirement that $\Phi$ be alternating then determines on $\Phi\restriction\beta+1$ on any remaining indices.
\item $\mathrm{cf}(\beta)=\aleph_n$. In this case, our assumptions together with Remark \ref{rmk:cofinality} ensure the existence of a nontrivial $n$-coherent $\Psi=\langle\psi_{\vec{\alpha}}:\omega_n\times\bigwedge\vec{\alpha}\to K\mid\vec{\alpha}\in (\beta)^n\rangle$ with $\mathrm{supp}(\Psi)\subseteq\omega_n$, as well as a $\Theta$ trivializing $\Phi\restriction\beta$ as before.
Extend $\Phi\restriction\beta$ to $\Phi\restriction\beta+1$ by letting $\varphi_{\vec{\alpha}^\frown\langle\beta\rangle}=(-1)^{n+1}\theta_{\vec{\alpha}}+\psi^{(\beta)}_{\vec{\alpha}}$ for all $\vec{\alpha}\in\beta^n$.
\end{enumerate}
This completes our account of stage $\beta$; note just as above that our condition on $\mathrm{supp}(\Phi\restriction\beta)$ is conserved.

The $n$-coherence of $\Phi$ is again straightforward to verify, so suppose for contradiction that
$$\Upsilon=\left\langle\upsilon_{\vec{\alpha}}:\omega_{n+1}\times\bigwedge\vec{\alpha}\to K\mid\vec{\alpha}\in (\omega_{n+1})^n\right\rangle$$
trivializes $\Phi$.
Much as in the $n=1$ case, the nontriviality of the families $\Psi$ arising in the course of our construction ensures that, for each $\beta\in\mathrm{Cof}(\omega_n)\cap\omega_{n+1}$, there exists an $f(\beta)=\vec{\alpha}\in(\beta)^{n+1}$ and $\eta\in [\beta,\beta+\omega_n)$ such that
\begin{equation}
\label{eq:sum}
\sum_{i=0}^n(-1)^i\upsilon_{\vec{\alpha}^i}(\eta,\xi)\neq 0
\end{equation}
for some $\xi<\alpha_0$ (if not, then $\Upsilon\restriction (\beta)^n$ would itself be trivial on the domain $[\beta,\beta+\omega_n)\times\beta$, implying that the associated $\Psi$ is). By Fodor's Lemma, $f$ is constantly $\vec{\alpha}$ on some cofinal $B\subseteq\omega_{n+1}$, hence the witnesses $\eta$ to equation \ref{eq:sum} for this $\vec{\alpha}$ are cofinal in $\omega_{n+1}$ as well.
Since $\mathrm{supp}(\varphi_{\vec{\alpha}})\subseteq\mathrm{min}\,B$, though, this implies that $\Upsilon$ does not trivialize $\Phi$.
This contradiction completes the argument.
\end{proof}
\begin{remark}
\label{rmk:betternontrivs}
Observe that any nontrivial abelian group $K$ would suffice for the above argument (with some nonzero $a$ taking the place of $1$), and hence for the statement of Theorem \ref{thm:nonzero_cohomology_on_opens} as well.
A deeper question is whether the dimension $\kappa$ of $W$ need really be at least $\aleph_\omega$ in Theorem \ref{thm:nonzero_cohomology_on_opens}; note that this translates in Theorem \ref{thm:injdimconstantfield} to a question of whether \emph{both} factors $S$ and $T$ need be $\aleph_\omega$-large for their product to carry infinite injective dimension. Let us briefly note that
\begin{itemize}
\item $\kappa$ must be infinite (if it is finite then the system of equation \ref{eq:Rqlim3} is flasque);
\item in the case of $n=1$, as noted, $\kappa=\aleph_0$ suffices;
\item it is consistent with the $\mathsf{ZFC}$ axioms that $\kappa=\aleph_0$ suffices for all $n>0$ (it suffices in the constructible universe $L$, by straightforward modifications of \cite[\S 3.3]{BLHCoOI}, for example);
\item but whether $\kappa=\aleph_0$ suffices for all $n>0$ in $\mathsf{ZFC}$ alone is a subtle question, one again closely related to that of the groups $\mathrm{H}^n(\omega_n;\mathbb{Z})$ (or $\mathrm{H}^n(\omega_n;\mathbb{Z}/p\mathbb{Z})$) referenced in Section \ref{sec:additivity}.
\end{itemize}

On the other hand, by the following lemma, the conditions on $X$ in Theorem \ref{thm:nonzero_cohomology_on_opens} are essentially sharp.
\end{remark}

\begin{lemma}
\label{lem:reverse_implication}
If $S=\beta X$ for a discrete space $X$ with $|P(X)|<\aleph_\omega$ then for any finite field $K$ and $K$-vector space $W$, the constant sheaf $\mathcal{W}$ on $S$ is of finite injective dimension.
\end{lemma}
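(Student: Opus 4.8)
The plan is to reduce, via the machinery already assembled in this subsection, the claim to a statement about vanishing of higher derived limits of systems of the form $\mathrm{R}^q\mathrm{lim}_{I\in\mathcal{I}_U}\,\bigoplus_\kappa\prod_I K$ for open $U\subseteq\beta X$, and then to dispatch that statement by a cofinality/Goblot argument exploiting the hypothesis $|P(X)|<\aleph_\omega$. Concretely, by Lemma \ref{lemma:eq_conds} it suffices to produce an $n\in\mathbb{N}$ such that $\mathrm{H}^q(U;\mathcal{W})=0$ for every open $U\subseteq S$ and every $q>n$; and by the chain of equalities \eqref{eq:Rqlim0}--\eqref{eq:Rqlim3} (together with the identification of open subsets of $\beta X$ with ideals $\mathcal{I}_U$ on $X$ given by Stone duality), this amounts to showing $\mathrm{R}^q\mathrm{lim}_{I\in\mathcal{I}_U}\,\bigoplus_\kappa\prod_I K=0$ for all such $U$ and all $q>n$, where $W=\bigoplus_\kappa K$.

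The key point is that the index set $\mathcal{I}_U$ is a subideal of $P(X)$, hence has cardinality at most $|P(X)|<\aleph_\omega$; say $|P(X)|\leq\aleph_m$ for some fixed $m\in\mathbb{N}$. First I would invoke Goblot's Theorem (Theorem \ref{thm:Goblot}) in the form recorded in the excerpt: since $\mathrm{cf}(\mathcal{I}_U)\leq|\mathcal{I}_U|\leq\aleph_m$, we get $\mathrm{R}^q\mathrm{lim}_{I\in\mathcal{I}_U}\,\mathbf{X}=0$ for all $q>m+1$ and any inverse system $\mathbf{X}$ indexed by $\mathcal{I}_U$, in particular for $\mathbf{X}$ the system with terms $\bigoplus_\kappa\prod_I K$. (One may do slightly better using the surjectivity of the transition maps of this system and the corollary of \cite[Theorem 2.4]{Velickovic_non_21} also recorded in the excerpt, getting vanishing for $q>m$, but this refinement is inessential.) Taking $n:=m+1$ then gives exactly condition (2) of Lemma \ref{lemma:eq_conds}, and hence condition (1): the constant sheaf $\mathcal{W}$ on $S$ is of finite injective dimension.

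The only genuinely delicate point is bookkeeping about which cardinal controls the cofinality of $\mathcal{I}_U$ uniformly in $U$: one must observe that the single bound $|P(X)|$ works for \emph{all} open $U$ simultaneously, so that the same $n$ witnesses Lemma \ref{lemma:eq_conds}(2) across all open subsets — but this is immediate since every $\mathcal{I}_U\subseteq P(X)$. I would also remark that the hypothesis is stated as $|P(X)|<\aleph_\omega$ precisely so that $|P(X)|\leq\aleph_m$ for some finite $m$, which is all Goblot's Theorem needs; were $|P(X)|\geq\aleph_\omega$ one would lose the uniform finite bound on $\mathrm{cf}(\mathcal{I}_U)$, and indeed Theorem \ref{thm:nonzero_cohomology_on_opens} shows the conclusion can then genuinely fail. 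Thus there is essentially no obstacle here beyond assembling the already-established reductions; the lemma is a clean converse to Theorem \ref{thm:nonzero_cohomology_on_opens} obtained by feeding Goblot's vanishing into the equivalence of Lemma \ref{lemma:eq_conds}.
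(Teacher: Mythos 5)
Your proof is correct and follows essentially the same route as the paper's: both reduce via Lemma \ref{lemma:eq_conds} and the chain of equalities \eqref{eq:Rqlim0}--\eqref{eq:Rqlim3} to a derived-limit computation over the ideal $\mathcal{I}_U$, and both observe that $\mathrm{cf}(\mathcal{I}_U)\leq|P(X)|<\aleph_\omega$ so that Goblot's Theorem (the paper also notes the surjectivity of the bonding maps) yields the required uniform vanishing. No gaps.
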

\begin{proof}
Let $|P(X)|=\aleph_n$; any $\mathcal{I}_U$ as in equation \ref{eq:Rqlim3} above then determines a $\mathrm{Cof}(\leq\aleph_n)$-indexed inverse system with surjective bonding maps whose $\mathrm{Rlim}$ computes $\mathrm{H}^q(U;\mathcal{W})$.
By equations \ref{eq:Rqlim0} through \ref{eq:Rqlim3} together with Goblot's Theorem, the second, and hence the first, of Lemma \ref{lemma:eq_conds}'s conditions therefore holds.
\end{proof}

\begin{corollary}
\label{cor:413}
If $\aleph_\omega$ is a strong limit cardinal then for any finite field $K$ and $K$-vector space $W$ and $\mathsf{ED}$ space $S$ of cardinality less than $\aleph_\omega$, the constant sheaf $\mathcal{W}$ on $S$ is of finite injective dimension.
\end{corollary}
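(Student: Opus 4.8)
The plan is to reduce, by way of a retraction, to the free $\mathsf{ED}$ spaces $\beta X$ already handled in Lemma \ref{lem:reverse_implication}, using the strong-limit hypothesis on $\aleph_\omega$ to control the relevant power set. So let $S$ be an $\mathsf{ED}$ space with $|S| = \lambda < \aleph_\omega$, fix a field $K$ and a $K$-vector space $W$, and write $\mathcal{W}$ for the constant sheaf on $S$.

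First I would exhibit $S$ as a retract of $\beta X$ for the discrete space $X = S^{\mathrm{disc}}$ obtained by forgetting the topology of $S$. The identity map $S^{\mathrm{disc}} \to S$, being a continuous map from a discrete space to a compact Hausdorff space, extends to a continuous map $r : \beta X \to S$; since $r$ has closed image containing $S^{\mathrm{disc}}$'s (surjective) image, $r$ is itself surjective, and since $S$ is projective in $\mathsf{CHaus}$ it splits, yielding a section $\iota : S \hookrightarrow \beta X$ with $r \circ \iota = \mathrm{id}_S$. Now $|X| = \lambda < \aleph_\omega$, so $|P(X)| = 2^\lambda < \aleph_\omega$ because $\aleph_\omega$ is a strong limit cardinal; hence Lemma \ref{lem:reverse_implication} applies and tells us that the constant sheaf $\mathcal{W}$ on $\beta X$ is of finite injective dimension. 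By the $(1) \Rightarrow (2)$ direction of Lemma \ref{lemma:eq_conds}, there is then an $n \in \mathbb{N}$ with $\mathrm{H}^q(V; \mathcal{W}) = 0$ for every open $V \subseteq \beta X$ and every $q > n$.

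Next I would transport this vanishing down along the retraction exactly as in footnote \ref{ftnt:retracts}. For each open $U \subseteq S$, the preimage $r^{-1}(U)$ is open in $\beta X$, and the factorization $U \xrightarrow{\iota|_U} r^{-1}(U) \xrightarrow{r|} U$ of $\mathrm{id}_U$ induces, upon applying the contravariant functor $\mathrm{H}^q(-;\mathcal{W})$, a factorization of the identity on $\mathrm{H}^q(U;\mathcal{W})$ through $\mathrm{H}^q(r^{-1}(U);\mathcal{W})$; thus $\mathrm{H}^q(U;\mathcal{W})$ is a direct summand of $\mathrm{H}^q(r^{-1}(U);\mathcal{W})$. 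Taking $V = r^{-1}(U)$ in the previous paragraph, this forces $\mathrm{H}^q(U;\mathcal{W}) = 0$ for all open $U \subseteq S$ and all $q > n$ — which is precisely condition (2) of Lemma \ref{lemma:eq_conds} for $S$. The $(2) \Rightarrow (1)$ direction of that lemma then gives that $\mathcal{W}$ on $S$ is of finite injective dimension, completing the proof.

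There is no real obstacle here beyond bookkeeping; the only two points that merit a word of care are (i) the choice $X = S^{\mathrm{disc}}$, which keeps $|X| = |S| < \aleph_\omega$ small enough that the strong-limit hypothesis yields $2^{|X|} < \aleph_\omega$, and (ii) the elementary observation that a topological retraction produces a splitting $\mathrm{H}^q(U;\mathcal{W}) \to \mathrm{H}^q(r^{-1}(U);\mathcal{W}) \to \mathrm{H}^q(U;\mathcal{W})$ of cohomology, so that the vanishing in item 2 of Lemma \ref{lemma:eq_conds} descends from $\beta X$ to $S$. Both ingredients already appear in the discussion surrounding Theorem \ref{thm:nonzero_cohomology_on_opens}.
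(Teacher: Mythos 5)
Your proposal is correct and follows essentially the same route as the paper's own proof: realize $S$ as a retract of $\beta(S^{\mathrm{disc}})$, use the strong-limit hypothesis to get $|P(S^{\mathrm{disc}})|<\aleph_\omega$ so that Lemma \ref{lem:reverse_implication} applies, and then descend the cohomological vanishing of condition (2) of Lemma \ref{lemma:eq_conds} along the retraction exactly as in footnote \ref{ftnt:retracts}. You have merely spelled out the details the paper leaves implicit.
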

\begin{proof}
For $S=\beta X$ for a discrete space $X$, this is immediate from Lemma \ref{lem:reverse_implication}. Under our cardinal arithmetic assumptions, any other $\mathsf{ED}$ space $S$ of cardinality less than $\aleph_\omega$ is a retract of such a $\beta X$ with $|\beta X|<\aleph_\omega$ (simply let $X$ equal the underlying set of $S$), so the conclusion follows by the reasoning of footnote \ref{ftnt:retracts}.
\end{proof}

\subsection{Products of compact projective anima}
\label{subsect:products}

Recall this section's organizing concern: \emph{When is a category's class of compact projective objects closed under the formation of finite products?}
Among condensed categories, we considered three main possibilities, which we may abbreviate as follows:
\begin{enumerate}[label=(\Alph*)]
\item $\mathsf{Cond(Set)}^{\mathrm{cp}}$ is closed with respect to binary products;
\item $\mathsf{Cond(Ab)}^{\mathrm{cp}}$ is closed with respect to tensor products;
\item $\mathsf{Cond(Ani)}^{\mathrm{cp}}$ is closed with respect to binary products.
\end{enumerate}
We noted the failures of (A) and (B) and considered, in their wake, the following weakening of (B):
\begin{enumerate}[label=(\Alph*)]
\setcounter{enumi}{3}
\item Tensor products of compact projective objects in $\mathsf{Cond(Ab)}$ are of finite projective dimension.
\end{enumerate}
In this subsection, we record two main results --- namely, that (C) implies (D), but also that by the work of the previous subsection, (C), in general, fails to hold.
In fact it will be most natural to argue these points in reverse order, after a brief review of $\mathsf{Cond(Ani)}$ and the phenomenon of \emph{animation}.

To that end, let us continue with the quoted passage with which this section began:
\begin{quote}
Taken together, an object $X\in\mathcal{C}$ is compact projective if $\mathrm{Hom}(X,-)$ commutes with all filtered colimits and reflexive coequalizers:
equivalently, it commutes with all (so-called) $1$-sifted colimits. \cite[p.\ 74]{CS2}
\end{quote}
Here, filtered colimits are precisely those which commute with finite limits, and $(1\text{-})$sifted colimits are precisely those which commute with finite products.
The prefix $1$-, of course, signals that we're discussing the ordinary category theoretic version of what we'll soon upgrade to an $\infty$-category theoretic notion.
Write $\mathsf{sInd}(\mathcal{C})$ for the free cocompletion of $\mathcal{C}$ with respect to $1$-sifted colimits.
If $\mathcal{C}$ is cocomplete then we have a fully faithful embedding $\mathsf{sInd}(\mathcal{C}^{\mathrm{cp}})\to\mathcal{C}:\textnormal{``}\mathrm{colim}\textnormal{''}\,X_i\mapsto\mathrm{colim}\,X_i$, and ``if $\mathcal{C}$ is generated under small colimits by $\mathcal{C}^{\mathrm{cp}}$ then that functor is an equivalence'' (\cite[p.\ 74]{CS2}; for fuller argument, see \cite[\S 5.1.1]{CesnaviciusScholze}).
In particular, we have $\mathsf{sInd}(\mathcal{C}^{\mathrm{cp}})\cong\mathcal{C}$ for each of the enumerated examples of the section's introduction: the category $\mathsf{Set}$ is the $1$-sifted ind-completion $\mathsf{sInd}(\mathsf{Set}^{\mathrm{cp}})$ of the category $\mathsf{Set}^{\mathrm{cp}}$ of finite sets; similarly for $\mathsf{Ab}$, for $\mathsf{Cond(Ab)}$, and so on.

Animation is simply the $\infty$-category theoretic version of this operation;\footnote{For sifted diagrams in $\infty$-categories, see \cite[\S 5.5.8]{LurieHTT} or \cite[Tag 02QD]{kerodon}; these are again, intuitively, those whose colimits preserve finite products.}
continuing with \cite{CS2}, we have:
\begin{definition}
Let $\mathcal{C}$ be a category that admits all small colimits and is generated under small colimits by $\mathcal{C}^{\mathrm{cp}}$. The \emph{animation of $\mathcal{C}$} is the $\infty$-category $\mathsf{Ani}(\mathcal{C})$ freely generated under sifted colimits by $\mathcal{C}^{\mathrm{cp}}$.
\end{definition}
For example:
\begin{enumerate}
\item The animation $\mathsf{Ani(Set)}$ of the category of sets is nothing other than the $\infty$-category of ``spaces'': it is, in other words, that higher category theoretic incarnation of the homotopy category of topological spaces playing within $\infty$-category theory a role broadly analogous to that of $\mathsf{Set}$ within ordinary category theory (see \cite[\S 1.2.16]{LurieHTT} for other presentations). Its objects are termed \emph{anima}, and the category itself is frequently abbreviated $\mathsf{Ani}$.
\item The animation $\mathsf{Ani(Ab)}$ of the category of abelian groups is equivalent (via the Dold-Kan equivalence) to the $\infty$-derived category of abelian groups $\mathsf{D}_{\geq 0}(\mathsf{Ab})$ in non-negative homological degrees.
\end{enumerate}
Similarly for the categories $\mathsf{Cond(Set)}$ and $\mathsf{Cond(Ab)}$; for these and the above examples and the following definition and lemma, see again \cite[\S 11]{CS2}, where the terminology \emph{nonabelian derived category} for the animation construction is also noted.
As it happens, the $\infty$-categorical condensation and animation operations commute:
\begin{definition}
\label{def:inftycatcondensed}
Let $\mathcal{C}$ be an $\infty$-category that admits all small colimits.
For any uncountable strong limit cardinal $\kappa$, the $\infty$-category $\mathsf{Cond}_\kappa(\mathcal{C})$ of $\kappa$-condensed objects of $\mathcal{C}$ is the category of contravariant functors from $\mathsf{ED}_\kappa$ to $\mathcal{C}$ which take finite coproducts to products. One then lets $\mathsf{Cond}(\mathcal{C})=\mathrm{colim}_{\kappa\in S}\,\mathsf{Cond}_\kappa(\mathcal{C})$ for $S$ the class of strong limit cardinals, just as in Definition \ref{def:condset}.
\end{definition}

\begin{lemma}
Let $\mathcal{C}$ be a category that is generated under small colimits by $\mathcal{C}^{\mathrm{cp}}$.
Then there is a natural equivalence of $\infty$-categories $\mathsf{Cond(Ani(}\mathcal{C}))\cong\mathsf{Ani(Cond(}\mathcal{C}))$.
\end{lemma}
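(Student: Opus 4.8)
The plan is to fix an uncountable strong limit cardinal $\kappa$, establish a natural equivalence $\mathsf{Cond}_\kappa(\mathsf{Ani}(\mathcal{C}))\simeq\mathsf{Ani}(\mathsf{Cond}_\kappa(\mathcal{C}))$ there, and then pass to the colimit over $\kappa$ defining $\mathsf{Cond}(-)$ in Definitions \ref{def:condset} and \ref{def:inftycatcondensed}. Throughout write $\mathcal{A}:=\mathcal{C}^{\mathrm{cp}}$; this is a small $1$-category with finite coproducts (compact projectives being closed under these, as sifted colimits commute with finite products), and by hypothesis $\mathcal{C}=\mathsf{sInd}(\mathcal{A})$, so $\mathsf{Ani}(\mathcal{C})=\mathcal{P}_\Sigma(\mathcal{A})$ is the free cocompletion of $\mathcal{A}$ under sifted colimits. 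Recall also that for any $\mathcal{D}$ with small colimits, $\mathsf{Cond}_\kappa(\mathcal{D})=\mathrm{Fun}^{\times}(\mathsf{ED}_\kappa^{\mathrm{op}},\mathcal{D})$, the full subcategory of $\mathrm{Fun}(\mathsf{ED}_\kappa^{\mathrm{op}},\mathcal{D})$ on those functors carrying finite coproducts to products.

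First I would record the ``base-change'' description of condensation: for presentable $\mathcal{D}$ one has $\mathsf{Cond}_\kappa(\mathcal{D})\simeq\mathsf{Cond}_\kappa(\mathsf{Ani})\otimes\mathcal{D}$, where $\otimes$ denotes the Lurie tensor product of presentable $\infty$-categories (with colimit-preserving functors) and $\mathsf{Cond}_\kappa(\mathsf{Ani})=\mathcal{P}_\Sigma(\mathsf{ED}_\kappa)$ (note $\mathsf{ED}_\kappa$ has finite coproducts). This follows from the identification $\mathrm{Fun}(\mathsf{ED}_\kappa^{\mathrm{op}},\mathcal{D})\simeq\mathcal{P}(\mathsf{ED}_\kappa)\otimes\mathcal{D}$ together with the fact that tensoring a localization with $\mathcal{D}$ is again a localization, applied to $\mathcal{P}(\mathsf{ED}_\kappa)\twoheadrightarrow\mathcal{P}_\Sigma(\mathsf{ED}_\kappa)$ at the coproduct-decomposition maps. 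Taking $\mathcal{D}=\mathsf{Ani}(\mathcal{C})=\mathcal{P}_\Sigma(\mathcal{A})$ and using that $\mathcal{P}_\Sigma$ is symmetric monoidal from $1$-categories-with-finite-coproducts (under their tensor product, left adjoint to the finite-coproduct-preserving internal hom) to $(\mathrm{Pr}^L,\otimes)$, I obtain
\[
  \mathsf{Cond}_\kappa(\mathsf{Ani}(\mathcal{C}))\;\simeq\;\mathcal{P}_\Sigma(\mathsf{ED}_\kappa)\otimes\mathcal{P}_\Sigma(\mathcal{A})\;\simeq\;\mathcal{P}_\Sigma(\mathsf{ED}_\kappa\otimes\mathcal{A}).
\]

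On the other side, $\mathsf{Ani}(\mathsf{Cond}_\kappa(\mathcal{C}))=\mathcal{P}_\Sigma\big(\mathsf{Cond}_\kappa(\mathcal{C})^{\mathrm{cp}}\big)$, so it remains to identify $\mathsf{Cond}_\kappa(\mathcal{C})^{\mathrm{cp}}$ with $(\mathsf{ED}_\kappa\otimes\mathcal{A})^{\mathrm{idem}}$. For each $S\in\mathsf{ED}_\kappa$ the evaluation $\mathrm{ev}_S:\mathsf{Cond}_\kappa(\mathcal{C})\to\mathcal{C}$ preserves all small limits and colimits (computed pointwise), hence admits a left adjoint $L_S$; as $\mathrm{ev}_S$ also preserves sifted colimits, $L_S$ carries compact projectives to compact projectives. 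I would check that $(S,A)\mapsto L_S(A)$ preserves finite coproducts in each variable separately — in the $\mathcal{A}$ variable because $L_S$ is a left adjoint, in the $\mathsf{ED}_\kappa$ variable because $\mathrm{ev}_{S\sqcup T}=\mathrm{ev}_S\times\mathrm{ev}_T$ forces $L_{S\sqcup T}(A)\cong L_S(A)\sqcup L_T(A)$ — and that the $L_S(A)$, for $S\in\mathsf{ED}_\kappa$ and $A\in\mathcal{A}$, generate $\mathsf{Cond}_\kappa(\mathcal{C})$ under sifted colimits. Since $\mathsf{Cond}_\kappa(\mathcal{C})$ is presentable, the $1$-categorical analogue of \cite[\S 5.5.8]{LurieHTT} then yields $\mathsf{Cond}_\kappa(\mathcal{C})\simeq\mathcal{P}^{\mathrm{Set}}_\Sigma(\mathcal{B})$ for $\mathcal{B}$ the full subcategory on the $L_S(A)$ and their finite coproducts, whence $\mathsf{Cond}_\kappa(\mathcal{C})^{\mathrm{cp}}\simeq\mathcal{B}^{\mathrm{idem}}$; the universal property of $\mathsf{ED}_\kappa\otimes\mathcal{A}$ supplies a canonical functor $\mathsf{ED}_\kappa\otimes\mathcal{A}\to\mathcal{B}$, and one must show it is an equivalence. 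Granting this, $\mathsf{Ani}(\mathsf{Cond}_\kappa(\mathcal{C}))\simeq\mathcal{P}_\Sigma(\mathsf{ED}_\kappa\otimes\mathcal{A})$, matching the display above; all identifications being natural in $\kappa$, the proof concludes by passing to the filtered colimit over strong limit cardinals, using that $\mathcal{P}_\Sigma$ — hence $\mathsf{Ani}$ — commutes with such colimits along the fully faithful, coproduct-preserving transition functors.

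The main obstacle is precisely that last claim: that $\mathsf{ED}_\kappa\otimes\mathcal{A}\to\mathcal{B}$ is fully faithful, i.e., that $\mathrm{Hom}_{\mathsf{Cond}_\kappa(\mathcal{C})}(L_S(A),L_T(B))$ contains no morphisms beyond those forced by biadditivity out of $\mathsf{ED}_\kappa\times\mathcal{A}$. This is where one must reckon honestly with the sheafification implicit in forming free condensed objects; it is, however, exactly the general form of the now-standard computation identifying $\mathsf{Cond}(\mathsf{Ab})^{\mathrm{cp}}$ with the direct summands of the $\mathbb{Z}[S]$ (for which $\mathrm{Hom}(\mathbb{Z}[S],\mathbb{Z}[T])$ is extracted from $\mathbb{Z}[T]$ by adjunction), and that argument adapts. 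The remaining points — that the adjoints and localizations above interact correctly, and that the final cardinal colimit is harmless — are routine once the generators $L_S(A)$ and their mapping objects are under control.
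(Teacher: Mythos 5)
First, a point of orientation: the paper does not prove this lemma at all --- it is imported as background from \cite[\S 11]{CS2} --- so your proposal is being measured against the Clausen--Scholze argument rather than anything in the text. Your architecture is the standard one and is sound: fix $\kappa$, observe that both $\mathsf{Cond}_\kappa(\mathsf{Ani}(\mathcal{C}))$ and $\mathsf{Ani}(\mathsf{Cond}_\kappa(\mathcal{C}))$ are free sifted cocompletions of the (closure under finite coproducts and retracts of the) free objects $L_S(A)$ with $S\in\mathsf{ED}_\kappa$ and $A\in\mathcal{A}=\mathcal{C}^{\mathrm{cp}}$, and reduce everything to comparing mapping objects between these generators. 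Two small remarks: the detour through the Lurie tensor product on $\mathrm{Pr}^L$ is avoidable, since currying identifies $\mathrm{Fun}^\times(\mathsf{ED}_\kappa^{\mathrm{op}},\mathrm{Fun}^\times(\mathcal{A}^{\mathrm{op}},\mathsf{Ani}))$ directly with functors $\mathsf{ED}_\kappa^{\mathrm{op}}\times\mathcal{A}^{\mathrm{op}}\to\mathsf{Ani}$ preserving finite products in each variable; and $\mathrm{ev}_S$ does \emph{not} preserve all colimits pointwise (coproducts in $\mathrm{Fun}^\times$ require relocalization), only limits and sifted colimits --- which is all you actually use, so this is a slip of phrasing rather than of substance. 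The lemma also tacitly requires $\mathcal{A}$ to be essentially small, which you should either assume or handle with the same cardinal filtration you apply to $\mathsf{Cond}$.

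The step you flag as the ``main obstacle'' is indeed the entire mathematical content, and in your write-up it remains an assertion; here is how it closes. By adjunction, the two mapping objects to be compared are $\mathrm{Map}_{\mathsf{Ani}(\mathcal{C})}(j(A),L_T(j(B))(S))$ and $\mathrm{Hom}_{\mathcal{C}}(A,L_T(B)(S))$, so everything reduces to evaluating the free condensed object at $S$. Because the covers of the site $\mathsf{ED}_\kappa$ are exactly the finite clopen decompositions, the localization onto finite-product-preserving presheaves is achieved in a single plus-construction step, giving
\[
L_T(B)(S)\;=\;\colim_{S=\bigsqcup_{i\in I}S_i}\ \prod_{i\in I}\ \coprod_{\mathrm{Cont}(S_i,T)}B ,
\]
the colimit running over the filtered poset of such decompositions, and the identical formula computes $L_T(j(B))(S)$ in the animated setting. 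Every operation occurring here --- coproducts of compact projectives (filtered colimits of finite coproducts, both of which $j$ preserves), finite products, and filtered colimits --- commutes with the inclusion $\mathcal{C}\hookrightarrow\mathsf{Ani}(\mathcal{C})$, so $L_T(j(B))(S)$ is $0$-truncated and coincides with $L_T(B)(S)$; hence the generators have discrete, matching mapping objects and your functor $\mathsf{ED}_\kappa\otimes\mathcal{A}\to\mathcal{B}$ is fully faithful. Note where compact projectivity of $B$ is genuinely used: coproducts of \emph{arbitrary} objects of $\mathcal{C}$ need not remain discrete in $\mathsf{Ani}(\mathcal{C})$ (coproducts of discrete rings in animated rings acquire higher homotopy), so the argument would fail if one tried to run it with non-projective generators. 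With this supplied, and the routine compatibility of the identifications with the fully faithful transition functors along $\mathsf{ED}_\kappa\hookrightarrow\mathsf{ED}_\lambda$, your proof is complete.
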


It follows that $\mathsf{Cond(Ani)}$, the category of condensed anima, embraces within a single setting both topological and homotopical domains, in the sense that the natural embeddings $e:\mathsf{Cond(Set)}\to\mathsf{Cond(Ani)}$ and $\mathsf{Ani(Set)}\to\mathsf{Ani(Cond(Set))}$ are each fully faithful.
For more on these matters, see \cite{Mair}; the argument therein that $\mathsf{ED}$ spaces define the compact projective objects of $\mathsf{Cond(Set)}$ also readily adapts to show that their $e$-images form a generating class of compact projective objects in $\mathsf{Cond(Ani)}$.

This, briefly, is the context for Theorem \ref{thm:productsanima}.
For its argument, we recall one further notion, that of a \emph{hypercover}.
Much as the notion of a site generalizes that of a topological space, hypercovers may be regarded as simultaneously generalizing the notions both of topological covers and
of projective resolutions: a hypercover of an object $X$ of $\mathcal{C}$ is an augmented simplicial object
$$\cdots\substack{\longrightarrow\\[-1em] \longrightarrow \\[-1em] \longrightarrow}R_1\substack{\longrightarrow\\[-1em] \longrightarrow} R_0\substack{\longrightarrow} X$$
in $\mathcal{C}$ with $X$ in the degree $-1$.
(Above, we've left degeneracy maps from lower to higher degrees undepicted simply for typographical reasons, and we will tend in what follows to denote such a hypercover simply by $R_\bullet$, i.e., to identify it with its non-negatively graded portion, so that a more scrupulous notation would be $R_\bullet\to X$.)
Like covers or resolutions, hypercovers facilitate cohomology computations by decomposing their target into more convenient objects, and the induced hypercovering system of higher-order intersections $\bigcap_{i\in I} U_i$ of elements of an open cover $\mathcal{U}$ of a topological space $X$ which underlies the \v{C}ech cochain complex of $(X,\mathcal{U})$ is among the most fundamental of examples.
The more general notion of a hypercover is often motivated as allowing for further refinements of the cover at each level $n$ of the complex, a prospect technically managed by an $n$-coskeleton condition for which we refer the reader to any standard reference (\cite[\S 8]{ArtinMazur}, \cite[\S 4]{DuggerIsak}, \cite[\S 4]{Conrad}).
The important points for our purposes are the following:
\begin{enumerate}[label=(\alph*)]
\item In the present context, the ``convenient objects'' are the $\mathsf{ED}$ spaces; in particular, any $X\in\mathsf{CHaus}$ admits a hypercovering $R_\bullet$ by $\mathsf{ED}$ spaces, and even a standard such ``free resolution'' \cite[2.2.5]{pyknotic}. For hypercoverings of topological spaces $R_\bullet\to X$, we have moreover that $X$ is weakly homotopy equivalent to the homotopy colimit of $R_\bullet$ \cite{DuggerIsakRealizations}.
\item Just as was the case for ordinary categories (Definition \ref{def:condset}), the sheaf condition defining condensed $\infty$-categories considerably simplifies when we restrict the underlying site to $\mathsf{ED}$ spaces, a simplicity on display in  Definition \ref{def:inftycatcondensed}.
In the $\infty$-categorical setting, though, that definition is equivalent not to the category of \emph{sheaves} on the profinite site, but of \emph{hypersheaves} (see \cite[\S 9]{Masterclass}, \cite[\S 2.2.2--2.2.7]{pyknotic} for this point in the pyknotic setting, and \cite[\S A.4]{Mann} for a concise review of the relevant notions).
In other words, writing $\mathsf{StrLim}$ for the class of strong limit cardinals, a condensed anima may equivalently be characterized as an element of $\mathrm{colim}_{\kappa\in\mathsf{StrLim}}\,\mathrm{Fun}(\mathsf{ProFin}_\kappa^{\mathrm{op}},\mathsf{Ani})$ satisfying the $\infty$-categorical analogues of conditions (1) through (3) of Definition \ref{def:condset}; the equalizer condition $$F(S)=\mathrm{lim}\,(F(T)\rightrightarrows F(T\times_S T))\textnormal{ for any surjection }T\twoheadrightarrow S$$ of item (3), in particular, translates precisely to 
\begin{align}
\label{eq:hypersheaf}
F(S)=\mathrm{lim}\,F(T_\bullet)
\end{align}
for any hypercover $T_\bullet$ of $S$ in the category of profinite spaces (cf.\ \cite[A.4.20]{Mann} and the references therein).
Note that this condition in turn implies that $S = \mathrm{colim}\,T_\bullet$ when each is interpreted in the category $\mathsf{Cond(Ani)}$.
This follows from the Yoneda lemma, together with the fact that
$$\mathrm{Hom}(S,F)=F(S)=\mathrm{lim}\,F(T_\bullet)=\mathrm{lim}\,\mathrm{Hom}(T_\bullet,F)=\mathrm{Hom}(\mathrm{colim}\,T_\bullet,F),$$
by (\ref{eq:hypersheaf}), for all condensed anima $F$.
More colloquially, the hypersheaf condition ensures that the embedding $\mathsf{ProFin}\to\mathsf{Cond(Ani)}$ conserves the (weak) equivalences noted in item (a) above.
\end{enumerate}

We turn now to the derivation of Theorem \ref{thm:productsanima} from Theorem \ref{thm:injdimconstantfield}.
The task essentially reduces to proving the following lemma.

\begin{lemma}
\label{lem:43to44}
For all finite fields $K$ and $\mathsf{ED}$ spaces $S$ and $T$, if $S\times T$, viewed as a product of condensed anima, is compact, then the sheaf $\mathcal{K}$ on the topological space $S\times T$ is of finite injective dimension.
\end{lemma}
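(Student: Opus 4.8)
The plan is to convert the hypothesis into a statement about projective dimension in $\CondAb$ and then feed that into Lemma~\ref{lemma:eq_conds}. Since $S$ and $T$ are $\mathsf{ED}$ spaces, their topological product $X := S \times T$ is profinite, and because the embedding $\mathsf{Cond(Set)} \hookrightarrow \mathsf{Cond(Ani)}$ preserves limits (being induced by the limit-preserving inclusion $\mathsf{Set} \hookrightarrow \mathsf{Ani}$ of $0$-truncated anima, which is right adjoint to $\pi_0$), the product of $S$ and $T$ in $\mathsf{Cond(Ani)}$ is exactly the condensed anima of $X$. Our hypothesis is thus that the object $X$ of $\mathsf{Cond(Ani)}$ is compact. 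The free functor $\bb{Z}[-] \colon \mathsf{Cond(Ani)} \to \mathsf{Ani}(\CondAb) \simeq \mathsf{D}^{\geq 0}(\CondAb)$ is a left adjoint whose right adjoint, the forgetful functor, preserves filtered colimits; hence $\bb{Z}[-]$ preserves compact objects, and so $\bb{Z}[X]$ is a compact object of $\mathsf{D}^{\geq 0}(\CondAb)$. As this category is generated under colimits by the compact \emph{projective} objects $\bb{Z}[P]$ for $P \in \mathsf{ED}$, each of its compact objects is a retract of a finite cell object built from these, hence of finite projective dimension in $\CondAb$. Consequently there is an $N$ with $\mathrm{Ext}^q_{\CondAb}(\bb{Z}[X], \mathcal{F}) = 0$ for all $\mathcal{F} \in \CondAb$ and all $q > N$.

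Next I would bound the sheaf-cohomology groups $\mathrm{H}^q(U; \mathcal{K})$ for open $U \subseteq X$, which by Lemma~\ref{lemma:eq_conds} (applied with $W = K$) is precisely what is needed. Fix such a $U$ and let $Z := X \setminus U$; as $X$ is profinite, $Z$ is profinite as well. The open/closed decomposition of $X$ yields the localization short exact sequence of condensed abelian groups
\[
  0 \ra \bb{Z}[U] \ra \bb{Z}[X] \ra \bb{Z}[Z] \ra 0,
\]
where $\bb{Z}[U]$ denotes the extension by zero associated to the open inclusion $U \hookrightarrow X$ (part of the six-functor formalism for locally compact Hausdorff spaces in the condensed setting). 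Applying $\mathrm{RHom}_{\CondAb}(-, \underline{K})$ gives a fiber sequence
\[
  \mathrm{RHom}_{\CondAb}(\bb{Z}[Z], \underline{K}) \ra \mathrm{RHom}_{\CondAb}(\bb{Z}[X], \underline{K}) \ra \mathrm{RHom}_{\CondAb}(\bb{Z}[U], \underline{K}).
\]
The first term computes the sheaf cohomology $\mathrm{H}^\bullet(Z; \mathcal{K})$ of the profinite space $Z$, which is concentrated in degree zero by \cite[Theorem 5.1]{Wiegand} (as already invoked in the proof of Lemma~\ref{lemma:fininjdim_pushforward}), while the middle term is concentrated in degrees $\leq N$ by the previous paragraph; the associated long exact sequence then forces $\mathrm{H}^q\mathrm{RHom}_{\CondAb}(\bb{Z}[U], \underline{K}) = 0$ for all $q > N$. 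Since this last $\mathrm{RHom}$ computes $\mathrm{H}^\bullet(U; \mathcal{K})$ — condensed and classical sheaf cohomology agreeing, with discrete coefficients, for the locally compact Hausdorff space $U$ — we obtain $\mathrm{H}^q(U; \mathcal{K}) = 0$ for every open $U \subseteq X$ and every $q > N$. By Lemma~\ref{lemma:eq_conds}, the constant sheaf $\mathcal{K}$ on $X = S \times T$ is then of injective dimension at most $N$, as claimed.

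I expect the main obstacle to be the implication closing the first paragraph — that a compact object of $\mathsf{D}^{\geq 0}(\CondAb)$ has finite projective dimension in $\CondAb$ — which rests on the description of compact objects of an animated category as retracts of finite cell objects and, crucially, on the fact that the generating compact objects $\bb{Z}[P]$ are themselves \emph{projective}; getting this bookkeeping and its references right (for the animation construction, see \cite[\S 11]{CS2} and \cite{LurieHTT}) is the heart of the matter. A secondary point needing care is the assembly of the second paragraph's inputs — the condensed localization sequence for an open/closed decomposition, and the identifications of the relevant $\mathrm{RHom}_{\CondAb}$ groups with classical sheaf cohomology — which are standard in condensed mathematics but should be cited precisely (cf.\ \cite{CS1}, \cite{Mann}). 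Granting these, the remainder is routine.
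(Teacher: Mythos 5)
There is a genuine gap in your second paragraph, and it is fatal to the argument as written. The localization sequence for the open/closed decomposition $U \sqcup Z = X$ is, in sheaves on $X$, $0 \to j_!\mathcal{K}_U \to \mathcal{K}_X \to i_*\mathcal{K}_Z \to 0$; applying $\mathrm{RHom}_X(-,\mathcal{K}_X)$ and using the adjunctions $(j_!, j^*)$ and $(i_! = i_*, i^!)$ identifies the resulting fiber sequence with
\[
\mathrm{R}\Gamma_Z(X;\mathcal{K}) \to \mathrm{R}\Gamma(X;\mathcal{K}) \to \mathrm{R}\Gamma(U;\mathcal{K}),
\]
whose first term is the \emph{local cohomology} of $X$ along $Z$, i.e., $\mathrm{R}\Gamma(Z; i^!\mathcal{K}_X)$, and not $\mathrm{R}\Gamma(Z;\mathcal{K}_Z)$. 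Your claim that this term ``computes the sheaf cohomology of the profinite space $Z$'' and is hence concentrated in degree zero is exactly where the argument breaks. A sanity check makes the failure vivid: the middle term is concentrated in degree zero for \emph{every} profinite $X$ (for discrete coefficients this needs no compactness-of-anima hypothesis at all), so if your identification of the first term were correct, the long exact sequence would yield $\mathrm{H}^q(U;\mathcal{K})=0$ for all $q>1$ and all open subsets $U$ of all profinite sets --- contradicting Theorem \ref{thm:nonzero_cohomology_on_opens}. (If you instead insist on honest free condensed abelian groups, the map goes the other way: $\mathbb{Z}[Z]\to\mathbb{Z}[X]$ is injective, and $\mathrm{RHom}$ of its cokernel against $\underline{K}$ computes the \emph{compactly supported} cohomology of $U$, again not $\mathrm{R}\Gamma(U;\mathcal{K})$.) On either reading, your hypothesis only ever enters through a bound on $\mathrm{Ext}^q_{\CondAb}(\mathbb{Z}[X],-)$, which with discrete coefficients already vanishes in positive degrees for any profinite $X$ and therefore cannot be the mechanism controlling $\mathrm{H}^q(U;\mathcal{K})$.

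Your first paragraph, by contrast, is essentially sound --- it amounts to the paper's final lemma, that compactness of $S\times T$ as a condensed anima forces $\mathbb{Z}[S\times T]$ to have finite projective dimension --- but the passage from that to the finite injective dimension of $\mathcal{K}$ on the topological space $S\times T$ is the real content here, and a two-term open/closed sequence does not supply it. The paper's proof instead fixes a hypercover $R_\bullet \to S\times T$ by $\mathsf{ED}$ spaces, uses compactness to realize $S\times T$ as a retract of the colimit $X_n$ of a finite truncation, and then, by descent, writes $g_*g^*\mathcal{K}$ as a \emph{finite} limit of the sheaves $f_{i*}f_i^*\mathcal{K}$; the injective-dimension-at-most-one bound for constant sheaves on each single $\mathsf{ED}$ space $R_i$ (clause (1) of Theorem D) then bounds every term of that finite limit uniformly, and $\mathrm{RHom}(\mathcal{A},\mathcal{K})$ is a retract of the resulting uniformly bounded complex. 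Some such device for propagating the bound on single $\mathsf{ED}$ spaces through a finite resolution of $S\times T$ seems unavoidable; your proposal never invokes that bound, which is a further sign that it cannot reach the conclusion.
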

The point, of course, is that for any large $S$ and $T$ as in Theorem \ref{thm:injdimconstantfield}, the sheaf $\mathcal{K}$ on the topological space $S\times T$ is \emph{not} of finite dimension, hence the product $S\times T$ of compact projective anima is not compact, and this proves Theorem \ref{thm:productsanima}.
\begin{proof}[Proof of Lemma \ref{lem:43to44}]
Fix $K,S$, and $T$ as in the statement of the lemma, so that the product $S\times T$ of anima is compact. Fix as in item (a) above a hypercover $R_\bullet$
of the topological space $S\times T$ by $\mathsf{ED}$ spaces.
For each $n\in\omega$ write $f_n$ for the induced map $R_n\to S\times T$ and $X_n$ for $\mathrm{colim}\,(R_\bullet)_{\leq n}$, that is, for the colimit in $\mathsf{Cond(Ani)}$ of the truncation of the simplicial hypercover $R_\bullet$ to its first $n+1$ terms.
By item (b) above, the sequential colimit $\mathrm{colim}_{n\in\omega}\,X_n$ over the system of natural maps $X_m\to X_n$ $(m\leq n)$ is equal in $\mathsf{Cond(Ani)}$ to $S\times T$; from this it follows by our compactness assumption that the identity map on $S\times T$ factors through some $X_n$.
Write $g$ for the associated retraction map $X_n\to S\times T$.
For any sheaves $\mathcal{A}$ and $\mathcal{B}$ on $S\times T$, then, $\mathcal{B}$ is a retract of $g_*g^*\mathcal{B}$ and hence $\mathrm{RHom}(\mathcal{A},\mathcal{B})$ is a retract of $\mathrm{RHom}(\mathcal{A},g_*g^*\mathcal{B})$ as well.
Our proof will conclude by justifying the claim that $g_*g^*\mathcal{B}$ is, in the derived $\infty$-category of sheaves on $S\times T$, the limit, over a finite diagram $I$, of the sheaves ${f_i}_*f_i^*\mathcal{B}$; let us first see, though, what the claim gives us: it supplies the first equality in the sequence
\begin{align*}\mathrm{RHom}(\mathcal{A},g_*g^*\mathcal{B}) & =\mathrm{RHom}(\mathcal{A},\mathrm{lim}_I\,{f_i}_*f_i^*\mathcal{B}) \\ & =\mathrm{lim}_I\,\mathrm{RHom}(\mathcal{A},{f_i}_*f_i^*\mathcal{B}) \\ & =\mathrm{lim}_I\,\mathrm{RHom}(f_i^*\mathcal{A},f_i^*\mathcal{B}).\end{align*}
For the second equality, recall that $\mathrm{RHom}$ preserves $\infty$-categorical limits in the second coordinate; the third is just the
pushforward-pullback adjunction.
When $\mathcal{B}=\mathcal{K}$, though, we know from Theorem \ref{thm:injdimconstantfield} that $f_i^*\mathcal{B}$ is injective.
Hence as $\mathcal{A}$ ranges over sheaves of $K$-modules on $S\times T$, the complex $\mathrm{RHom}(\mathcal{A},g_*g^*\mathcal{K})$ is a finite limit of uniformly bounded complexes and therefore itself uniformly bounded. So too, in consequence, are its retracts $\mathrm{RHom}(\mathcal{A},\mathcal{K})$, a conclusion amounting precisely to our assertion that the sheaf $\mathcal{K}$ on $S\times T$ is of finite injective dimension.

It remains only to justify our claim; in essence, it's the sum of two subclaims.
The first is that the functor $D$ which sends each profinite set $T$ to the $\infty$-category of sheaves on $T$, and morphism $f:S\to T$ to $f^*:D(T)\to D(S)$, satisfies descent, i.e., satisfies the condition (\ref{eq:hypersheaf}) for the functor $F=D$ \cite[Lem.\ 3.5.12]{Mann}.
The second is that coupling this first subclaim with Lemma D.4.7(ii) of \cite{Mann} implies the claim: within the lemma, let $I$ denote the diagram $\Delta_{\leq n}^{\mathrm{op}}$ defining $X_n$, let $\mathcal{D}_i=D(R_i)$, let $\mathcal{C}=D(S\times T)$, let $F_i=f_i^*$, and let $F=g^*$. Then by descent, $D(X_n)=\mathrm{lim}_I\,D(R_i)$, and the lemma defines the right adjoint $g^*$ of $g_*$ as a limit of the right adjoints $f_{i^*}$ of $f_i^*$ in the manner, precisely, of our claim.
\end{proof}
A similar argument shows the following:
\begin{lemma}
Let $S$ and $T$ be $\mathsf{ED}$ spaces.
If $S\times T$, viewed as a product of condensed anima, is compact, then $\mathbb{Z}[S\times T]$ is of finite projective dimension.
\end{lemma}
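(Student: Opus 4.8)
The plan is to run the proof of Lemma~\ref{lem:43to44} essentially verbatim, with two simplifications. First, the role played there by Theorem~\ref{thm:injdimconstantfield} is here played by the elementary fact that $\mathbb{Z}[R]$ is compact projective in $\mathsf{Cond(Ab)}$, in particular of projective dimension zero, for every $R\in\mathsf{ED}$. Second, because the free condensed abelian group functor $\mathbb{Z}[-]$ is a left adjoint, hence cocontinuous, there is no need for the descent machinery (the sheaf-of-categories functor $D$ and the cited Lemma~D.4.7(ii) of \cite{Mann}) invoked in the proof of Lemma~\ref{lem:43to44} to handle the fact that $f_i^{*}$ and $g^{*}$ point the ``wrong way''.

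Concretely, I would begin exactly as in item~(a) preceding Lemma~\ref{lem:43to44}: fix a hypercover $R_\bullet\to S\times T$ of the topological product by $\mathsf{ED}$ spaces, write $f_n\colon R_n\to S\times T$ for the induced maps and $X_n=\mathrm{colim}\,(R_\bullet)_{\leq n}$ for the colimit in $\mathsf{Cond(Ani)}$ of the $n$-truncated simplicial object. By item~(b) there, $\mathrm{colim}_{n\in\omega}\,X_n=S\times T$ in $\mathsf{Cond(Ani)}$ along the natural maps $X_m\to X_n$ for $m\leq n$; since this colimit is sequential, hence filtered, and $S\times T$ is compact by hypothesis, the identity of $S\times T$ factors through some $X_n$, so that $S\times T$ is a retract of $X_n$ in $\mathsf{Cond(Ani)}$. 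Next I would apply the free condensed abelian group functor $\mathbb{Z}[-]\colon\mathsf{Cond(Ani)}\to\mathsf{D}_{\geq 0}(\mathsf{Cond(Ab)})$ (the animation of $\mathbb{Z}[-]\colon\mathsf{Cond(Set)}\to\mathsf{Cond(Ab)}$, left adjoint to the forgetful functor and so cocontinuous). From the retraction above, $\mathbb{Z}[S\times T]$ is a retract of $\mathbb{Z}[X_n]=\mathrm{colim}_{\Delta^{\mathrm{op}}_{\leq n}}\mathbb{Z}[R_\bullet]$, a finite colimit of the objects $\mathbb{Z}[R_i]$ for $i\leq n$, each of projective dimension $0$ in $\mathsf{Cond(Ab)}$. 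Consequently, for every $M\in\mathsf{Cond(Ab)}$ the object $\mathrm{RHom}_{\mathsf{Cond(Ab)}}(\mathbb{Z}[X_n],M)$ is the limit over $\Delta_{\leq n}$ of the cosimplicial diagram $i\mapsto\mathrm{RHom}(\mathbb{Z}[R_i],M)=\mathrm{Hom}(\mathbb{Z}[R_i],M)$, each term concentrated in degree zero, hence is concentrated in degrees $0$ through $n$, uniformly in $M$; the same then holds for its retract $\mathrm{RHom}_{\mathsf{Cond(Ab)}}(\mathbb{Z}[S\times T],M)$, which is precisely the assertion that $\mathbb{Z}[S\times T]$ has projective dimension at most $n$.

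The only routine verifications are the cocontinuity of $\mathbb{Z}[-]$, which turns the hypercolimit presentation of $S\times T$ into the displayed colimit of the $\mathbb{Z}[R_\bullet]$, and the fact that a colimit over $\Delta^{\mathrm{op}}_{\leq n}$ of objects of projective dimension $0$ has projective dimension at most $n$ --- cleanest to see by passing to the normalized Moore complex, which here is a length-$n$ complex of projectives. I do not anticipate a genuine obstacle: the one mildly delicate step, already present and handled in the proof of Lemma~\ref{lem:43to44}, is extracting the retraction of $S\times T$ off $X_n$ from compactness together with the hypersheaf identification $S\times T=\mathrm{colim}_n X_n$.
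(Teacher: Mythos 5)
Your proposal is correct and follows the paper's own proof essentially verbatim: fix a hypercover $R_\bullet$ of $S\times T$ by $\mathsf{ED}$ spaces, use compactness and the identification $S\times T=\mathrm{colim}_n X_n$ to realize $S\times T$ as a retract of some $X_n$, apply $\mathbb{Z}[-]$, and observe that $\mathbb{Z}[X_n]$ is a finite complex of projectives. The paper's version is terser (it simply says "as the latter is a finite complex of projective condensed abelian groups, this concludes the proof"), so your explicit justification via the normalized Moore complex and the degreewise-bounded $\mathrm{RHom}$ computation is a faithful filling-in of the same argument rather than a different route.
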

\begin{proof}
Just as above, one fixes a hypercover $R_\bullet$ of $S\times T$ by $\mathsf{ED}$ spaces, lets $X_n=\mathrm{colim}\,(R_\bullet)_{\leq n}$ for each $n\in\omega$, and deduces that $S\times T$ is a retract of some $X_n$.
It follows that $\mathbb{Z}[S\times T]$ is a retract of $\mathbb{Z}[X_n]$, and as the latter is a finite complex of projective condensed abelian groups, this concludes the proof.
\end{proof}
\section{Conclusion}
\label{sect:conclusion}
As Corollary \ref{cor:413} and Remark \ref{rmk:betternontrivs} respectively underscore, nothing in our work so far rules out either of the following possibilities:
\begin{question}
Is it consistent with the $\mathsf{ZFC}$ axioms that for all $\mathsf{ED}$ spaces $S$ and $T$ of cardinality less than $\aleph_\omega$, $S\times T$, viewed as a product of condensed anima, is compact?
\end{question}

\begin{question}
Is it consistent with the $\mathsf{ZFC}$ axioms that a product $S\times T$ as above is compact whenever one of the factors is of cardinality less than $\aleph_\omega$?
\end{question}

Left even more conspicuously open is item (D) of Section \ref{subsect:products}:

\begin{question}
\label{ques:condabprods}
Is it consistent with the $\mathsf{ZFC}$ axioms that tensor products of compact projective objects in $\mathsf{Cond(Ab)}$ are of finite projective dimension?
\end{question}

This might be loosely thought of as the derived version of the question of whether such products are projective (recall that by Proposition \ref{prop:projnotprods} the answer is no).
Just as for that question, instances of this one admit formulations in functional analytic terms (cf.\ \cite[Appendix to III]{CS3}). For example:

\begin{question}
Is it consistent with the $\mathsf{ZFC}$ axioms that either of the Banach spaces $C(\beta\mathbb{N}\times\beta\mathbb{N})$  or $c_0$ is of finite injective dimension?
\end{question}
If not, then the answer to Question \ref{ques:condabprods} is negative as well.
Worth noting is one further question in this line, listed as Question 3.6 of \cite{CS3}: \emph{Are all compact projective condensed abelian groups isomorphic to $\mathbb{Z}[S]$ for some $\mathsf{ED}$ space $S$?}

We turn next to a more overtly combinatorial series of questions, beginning with the following:
\begin{question}
Fix $n>1$. Is it a \textsf{ZFC} theorem that there exist cardinals $\kappa$ and $\lambda$ and abelian group $H$ that $\mathrm{lim}^n\,\mathbf{A}_{\kappa,\lambda}[H]\neq 0$?
\end{question}
Note that a nonvanishing $\mathrm{lim}^s\,\mathbf{A}_{\kappa,\lambda}$ would imply a negative answer to the $n=s+1$ instance of the following question, by way of the exact sequence in the proof of Corollary \ref{cor:nonadditive}.
\begin{question}
\label{ques:add_of_lims}
For any fixed $n>2$, is it consistent with the \textsf{ZFC} axioms that the functor $\mathrm{lim}^n:\mathsf{Pro(Ab)}\to\mathsf{Ab}$ is additive?
\end{question}
Even if, as suspected, the answer is no, $\mathrm{lim}^n$ might consistently be additive for collections of \textit{towers}, as discussed in Section \ref{sec:additivity}. A good test for this prospect is the following question; variations on it appear in the works \cite{Be17,Bergfalk_simultaneously_23,Bannister_additivity_23} as well.
\begin{question}
Is it consistent with the \textsf{ZFC} axioms that $\mathrm{lim}^n\,\mathbf{A}_{\kappa,\omega}=0$ for all $n>0$ and cardinals $\kappa$?
\end{question}
See \cite{Bergfalk_Casarosa} and \cite{Bannister_All} for more on this question; as shown in the latter work, the question is in fact equivalent to the first part of the next one:
\begin{question}
Is it consistent with the \textsf{ZFC} axioms that strong homology is additive on the class of all locally compact metric spaces, or even on all metric spaces outright?
\end{question}

\section*{Acknowledgements}
A portion of this work was carried out during its authors' research visits to the Fields Institute's spring 2023 Thematic Program on Set Theoretic Methods in Algebra, Dynamics and Geometry; we thank both the institute and the program's organizers for their hospitality and support. 
We reiterate our thanks to Dustin Clausen and Peter Scholze for their shaping role in this paper's results, as well as to Lucas Mann for his generosity in clarifying both the details and significance of several of its arguments, and thank both Catrin Mair and Michael Hru\v{s}\'{a}k for discussions benefiting the paper as well.
We wish lastly to thank the paper's referee for so thoroughly alert and prompt a reading of it.
\bibliographystyle{plain}
\bibliography{condensedbib}

\begin{thebibliography}{10}

\bibitem{ArtinMazur}
M.~Artin and B.~Mazur.
\newblock {\em Etale homotopy}, volume No. 100 of {\em Lecture Notes in
  Mathematics}.
\newblock Springer-Verlag, Berlin-New York, 1969.

\bibitem{SGA4}
Michael Artin, Alexander Grothendieck, and Jean-Louis Verdier.
\newblock {\em Theorie de Topos et Cohomologie Etale des Schemas {I}}, volume
  269 of {\em Lecture Notes in Mathematics}.
\newblock Springer, 1971.

\bibitem{Asgeirsson}
Dagur \'{A}sgeirsson.
\newblock The foundations of condensed mathematics.
\newblock \url{https://dagur.sites.ku.dk/condensed-foundations/}.
\newblock Accessed: 15 August 2024.

\bibitem{Aviles}
Antonio Avil\'es, F\'elix~Cabello S\'anchez, Jes\'us M.~F. Castillo, Manuel
  Gonz\'alez, and Yolanda Moreno.
\newblock {\em Separably injective {B}anach spaces}, volume 2132 of {\em
  Lecture Notes in Mathematics}.
\newblock Springer, 2016.

\bibitem{Bannister_additivity_23}
Nathaniel {Bannister}.
\newblock {Additivity of derived limits in the Cohen model}.
\newblock {\em arXiv e-prints}, page arXiv:2302.07222, February 2023.

\bibitem{Bannister_All}
Nathaniel {Bannister}.
\newblock {All you need is $\mathbf{A}_\kappa$}.
\newblock {\em arXiv e-prints}, page arXiv:2506.14185, June 2025.

\bibitem{Bannister_on_22}
Nathaniel Bannister, Jeffrey Bergfalk, and Justin~Tatch Moore.
\newblock On the additivity of strong homology for locally compact separable
  metric spaces.
\newblock {\em Israel J. Math.}, 255(1):349--381, 2023.

\bibitem{BBMT}
Nathaniel Bannister, Jeffrey Bergfalk, Justin~Tatch Moore, and Stevo
  Todorcevic.
\newblock A descriptive approach to higher derived limits.
\newblock {\em Journal of the {E}uropean {M}athematical {S}ociety}, 2024.

\bibitem{exodromy}
Clark {Barwick}, Saul {Glasman}, and Peter {Haine}.
\newblock {Exodromy}.
\newblock {\em arXiv e-prints}, page arXiv:1807.03281, July 2018.

\bibitem{MSRI}
Clark Barwick and Peter Haine.
\newblock Pyknotic / condensed seminar, {MSRI}.
\newblock \url{https://www.slmath.org/workshops/24809#overview_workshop}.
\newblock Accessed: 15 August 2024.

\bibitem{pyknotic}
Clark {Barwick} and Peter {Haine}.
\newblock {Pyknotic objects, I. Basic notions}.
\newblock {\em arXiv e-prints}, page arXiv:1904.09966, April 2019.

\bibitem{Be17}
Jeffrey Bergfalk.
\newblock Strong homology, derived limits, and set theory.
\newblock {\em Fund. Math.}, 236(1):71--82, 2017.

\bibitem{TFOA}
Jeffrey Bergfalk.
\newblock The first omega alephs: from simplices to trees of trees to higher
  walks.
\newblock {\em Adv. Math.}, 393:Paper No. 108083, 74, 2021.

\bibitem{IHW}
Jeffrey {Bergfalk}.
\newblock {An introduction to higher walks}.
\newblock {\em arXiv e-prints}, page arXiv:2410.00607, October 2024.

\bibitem{Bergfalk_Casarosa}
Jeffrey {Bergfalk} and Matteo {Casarosa}.
\newblock {Higher limits of wider systems}.
\newblock {\em arXiv e-prints}, page arXiv:2507.05471, July 2025.

\bibitem{Bergfalk_simultaneously_23}
Jeffrey Bergfalk, Michael Hru\v{s}\'{a}k, and Chris Lambie-Hanson.
\newblock Simultaneously vanishing higher derived limits without large
  cardinals.
\newblock {\em J. Math. Log.}, 23(1):Paper No. 2250019, 40, 2023.

\bibitem{BLHCoOI}
Jeffrey Bergfalk and Chris Lambie-Hanson.
\newblock The cohomology of the ordinals {I}: Basic theory and consistency
  results, 2019.

\bibitem{Bergfalk_simultaneously_21}
Jeffrey Bergfalk and Chris Lambie-Hanson.
\newblock Simultaneously vanishing higher derived limits.
\newblock {\em Forum Math. Pi}, 9:Paper No. e4, 31, 2021.

\bibitem{BLHS}
Jeffrey Bergfalk, Chris Lambie-Hanson, and Jan \v{S}aroch.
\newblock Whitehead's problem and condensed mathematics.
\newblock {\em arXiv e-prints}, page arXiv:2312.09122, 2024.

\bibitem{BoardmanVogt}
J.~M. Boardman and R.~M. Vogt.
\newblock {\em Homotopy invariant algebraic structures on topological spaces},
  volume Vol. 347 of {\em Lecture Notes in Mathematics}.
\newblock Springer-Verlag, Berlin-New York, 1973.

\bibitem{BorelMoore}
A.~Borel and J.~C. Moore.
\newblock Homology theory for locally compact spaces.
\newblock {\em Michigan Math. J.}, 7:137--159, 1960.

\bibitem{BousfieldKan}
A.~K. Bousfield and D.~M. Kan.
\newblock {\em Homotopy limits, completions and localizations}, volume Vol. 304
  of {\em Lecture Notes in Mathematics}.
\newblock Springer-Verlag, Berlin-New York, 1972.

\bibitem{casarosa_lh}
Matteo {Casarosa} and Chris {Lambie-Hanson}.
\newblock {Simultaneously nonvanishing higher derived limits}.
\newblock {\em arXiv e-prints}, page arXiv:2411.15856, November 2024.

\bibitem{Cisinskietal}
Denis-Charles Cisinski, Bastiaan Cnossen, Kim Nguyen, and Tashi Walde.
\newblock Formalization of higher categories.
\newblock
  \url{https://drive.google.com/file/d/1lKaq7watGGl3xvjqw9qHjm6SDPFJ2-0o/view},
  2024.
\newblock Book project in progress. Accessed: 16 December 2024.

\bibitem{CS3}
Dustin Clausen and Peter Scholze.
\newblock Condensed mathematics and complex geometry.
\newblock \url{https://people.mpim-bonn.mpg.de/scholze/Complex.pdf}.
\newblock Accessed: 23 January 2023.

\bibitem{Masterclass}
Dustin Clausen and Peter Scholze.
\newblock Masterclass in condensed mathematics.
\newblock
  \url{https://www.youtube.com/playlist?list=PLAMniZX5MiiLXPrD4mpZ-O9oiwhev-5Uq},
  2020.
\newblock Posted by the University of Copenhagen.

\bibitem{analytic_stacks}
Dustin Clausen and Peter Scholze.
\newblock Analytic stacks.
\newblock \url{https://youtu.be/YxSZ1mTIpaA?si=8PvFsTN6GSKkWaWy}, 2023.
\newblock Posted by Institut des Hautes \'{E}tudes Scientifiques (IH\'{E}S).

\bibitem{Conrad}
Brian Conrad.
\newblock Cohomological descent.
\newblock \url{https://math.stanford.edu/~conrad/papers/hypercover.pdf}, 2003.

\bibitem{CordierPorter}
J.-M. Cordier and T.~Porter.
\newblock {\em Shape theory: {C}ategorical methods of approximation}.
\newblock Ellis Horwood Series: Mathematics and its Applications. Ellis Horwood
  Ltd., Chichester; Halsted Press [John Wiley \& Sons, Inc.], New York, 1989.

\bibitem{CordierCoherent}
Jean-Marc Cordier.
\newblock Sur la notion de diagramme homotopiquement coh\'erent.
\newblock {\em Cahiers Topologie G\'eom. Diff\'erentielle}, 23(1):93--112,
  1982.
\newblock Third Colloquium on Categories, Part VI (Amiens, 1980).

\bibitem{CordierStrong}
Jean-Marc Cordier.
\newblock Homologie de {S}teenrod-{S}itnikov et limite homotopique
  alg\'ebrique.
\newblock {\em Manuscripta Math.}, 59(1):35--52, 1987.

\bibitem{Curtis}
Philip~C. Curtis, Jr.
\newblock A note concerning certain product spaces.
\newblock {\em Arch. Math.}, 11:50--52, 1960.

\bibitem{DuggerIsak}
Daniel Dugger, Sharon Hollander, and Daniel~C. Isaksen.
\newblock Hypercovers and simplicial presheaves.
\newblock {\em Math. Proc. Cambridge Philos. Soc.}, 136(1):9--51, 2004.

\bibitem{DuggerIsakRealizations}
Daniel Dugger and Daniel~C. Isaksen.
\newblock Topological hypercovers and {$\Bbb A^1$}-realizations.
\newblock {\em Math. Z.}, 246(4):667--689, 2004.

\bibitem{EdwardsHastings}
David~A. Edwards and Harold~M. Hastings.
\newblock {\em \v Cech and {S}teenrod homotopy theories with applications to
  geometric topology}, volume Vol. 542 of {\em Lecture Notes in Mathematics}.
\newblock Springer-Verlag, Berlin-New York, 1976.

\bibitem{EilenbergSteenrod}
Samuel Eilenberg and Norman Steenrod.
\newblock {\em Foundations of algebraic topology}.
\newblock Princeton University Press, Princeton, NJ, 1952.

\bibitem{Eisworth}
Todd Eisworth.
\newblock Successors of singular cardinals.
\newblock In {\em Handbook of set theory. {V}ols. 1, 2, 3}, pages 1229--1350.
  Springer, Dordrecht, 2010.

\bibitem{Fargues_Scholze_Geometrization}
Laurent Fargues and Peter Scholze.
\newblock Geometrization of the local langlands correspondence.
\newblock \url{https://people.mpim-bonn.mpg.de/scholze/Geometrization.pdf}.
\newblock Accessed: 18 July 2025.

\bibitem{Fuchs_Infinite_73}
L\'{a}szl\'{o} Fuchs.
\newblock {\em Infinite abelian groups. {V}ol. {II}}.
\newblock Pure and Applied Mathematics. Vol. 36-II. Academic Press, New
  York-London, 1973.

\bibitem{Gleason}
Andrew~M. Gleason.
\newblock Projective topological spaces.
\newblock {\em Illinois J. Math.}, 2:482--489, 1958.

\bibitem{Goblot}
R\'{e}mi Goblot.
\newblock Sur les d\'{e}riv\'{e}s de certaines limites projectives.
  {A}pplications aux modules.
\newblock {\em Bull. Sci. Math. (2)}, 94:251--255, 1970.

\bibitem{JHop}
Rok Gregoric.
\newblock Condensed mathematics, a spring 2024 {J}ohns {H}opkins seminar.
\newblock \url{https://sites.google.com/view/rokgregoric/seminars}.
\newblock Accessed: 15 August 2024.

\bibitem{GuntherSemi}
Bernd G\"unther.
\newblock The use of semisimplicial complexes in strong shape theory.
\newblock {\em Glas. Mat. Ser. III}, 27(47)(1):101--144, 1992.

\bibitem{Gunther}
Bernd G\"unther.
\newblock The {V}ietoris system in strong shape and strong homology.
\newblock {\em Fund. Math.}, 141(2):147--168, 1992.

\bibitem{Halmos}
Paul~R. Halmos.
\newblock {\em Lectures on {B}oolean algebras}, volume No. 1 of {\em Van
  Nostrand Mathematical Studies}.
\newblock D. Van Nostrand Co., Inc., Princeton, NJ, 1963.

\bibitem{Mann}
Claudius {Heyer} and Lucas {Mann}.
\newblock {6-Functor Formalisms and Smooth Representations}.
\newblock {\em arXiv e-prints}, page arXiv:2410.13038, October 2024.

\bibitem{Hoyois}
Marc Hoyois.
\newblock Higher {G}alois theory.
\newblock {\em J. Pure Appl. Algebra}, 222(7):1859--1877, 2018.

\bibitem{Iversen}
Birger Iversen.
\newblock {\em Cohomology of sheaves}.
\newblock Universitext. Springer-Verlag, Berlin, 1986.

\bibitem{Jensen_les}
C.~U. Jensen.
\newblock {\em Les foncteurs d\'{e}riv\'{e}s de {$\varprojlim$} et leurs
  applications en th\'{e}orie des modules}.
\newblock Lecture Notes in Mathematics, Vol. 254. Springer-Verlag, Berlin-New
  York, 1972.

\bibitem{jensen_constructible}
R.~Bj\"orn Jensen.
\newblock The fine structure of the constructible hierarchy.
\newblock {\em Ann. Math. Logic}, 4:229--308; erratum, ibid. {4 (1972), 443},
  1972.
\newblock With a section by Jack Silver.

\bibitem{Johnstone}
Peter~T. Johnstone.
\newblock {\em Stone spaces}, volume~3 of {\em Cambridge Studies in Advanced
  Mathematics}.
\newblock Cambridge University Press, Cambridge, 1982.

\bibitem{Kashiwara_Categories_06}
Masaki Kashiwara and Pierre Schapira.
\newblock {\em Categories and sheaves}, volume 332 of {\em Grundlehren der
  mathematischen Wissenschaften [Fundamental Principles of Mathematical
  Sciences]}.
\newblock Springer-Verlag, Berlin, 2006.

\bibitem{LurieHTT}
Jacob Lurie.
\newblock {\em Higher topos theory}, volume 170 of {\em Annals of Mathematics
  Studies}.
\newblock Princeton University Press, Princeton, NJ, 2009.

\bibitem{HA}
Jacob Lurie.
\newblock Higher {A}lgebra, 2017.

\bibitem{kerodon}
Jacob Lurie.
\newblock Kerodon.
\newblock \url{https://kerodon.net}, 2018.

\bibitem{Mair}
Catrin {Mair}.
\newblock {Animated Condensed Sets and Their Homotopy Groups}.
\newblock {\em arXiv e-prints}, page arXiv:2105.07888, May 2021.

\bibitem{Mardesic_additive_88}
S.~Marde\v{s}i\'{c} and A.~V. Prasolov.
\newblock Strong homology is not additive.
\newblock {\em Trans. Amer. Math. Soc.}, 307(2):725--744, 1988.

\bibitem{Mardesic_nonvanishing_96}
Sibe Marde\v{s}i\'{c}.
\newblock Nonvanishing derived limits in shape theory.
\newblock {\em Topology}, 35(2):521--532, 1996.

\bibitem{Mardesic_strong_00}
Sibe Marde\v{s}i\'{c}.
\newblock {\em Strong shape and homology}.
\newblock Springer Monographs in Mathematics. Springer-Verlag, Berlin, 2000.

\bibitem{ShapeTheory}
Sibe Marde\v{s}i\'{c} and Jack Segal.
\newblock {\em Shape theory: {T}he inverse system approach}, volume~26 of {\em
  North-Holland Mathematical Library}.
\newblock North-Holland Publishing Co., Amsterdam-New York, 1982.

\bibitem{Aparicio_condensed_21}
Rodrigo {Marlasca Aparicio}.
\newblock {Condensed Mathematics: The internal Hom of condensed sets and
  condensed abelian groups and a prismatic construction of the real numbers}.
\newblock {\em arXiv e-prints}, page arXiv:2109.07816, September 2021.

\bibitem{Massey78}
William~S. Massey.
\newblock {\em Homology and cohomology theory}, volume~46 of {\em Monographs
  and Textbooks in Pure and Applied Mathematics}.
\newblock Marcel Dekker, Inc., New York-Basel, 1978.

\bibitem{UChic}
Akhil Mathew and Matthew Emerton.
\newblock Condensed mathematics, a fall 2022 {U}niversity of {C}hicago course.
\newblock \url{http://math.uchicago.edu/~amathew/condensed22.html}.
\newblock Accessed: 15 August 2024.

\bibitem{MayPonto}
J.~P. May and K.~Ponto.
\newblock {\em More concise algebraic topology}.
\newblock Chicago Lectures in Mathematics. University of Chicago Press,
  Chicago, IL, 2012.
\newblock Localization, completion, and model categories.

\bibitem{Melikhov}
Sergey~A. {Melikhov}.
\newblock {Fine shape I}.
\newblock {\em arXiv e-prints}, page arXiv:1808.10228, August 2018.

\bibitem{MilneLEC}
James~S. Milne.
\newblock Lectures on etale cohomology (v2.21), 2013.
\newblock Available at www.jmilne.org/math/.

\bibitem{Milnor1960}
John Milnor.
\newblock On the {S}teenrod homology theory.
\newblock In {\em Novikov conjectures, index theorems and rigidity, {V}ol. 1
  ({O}berwolfach, 1993)}, volume 226 of {\em London Math. Soc. Lecture Note
  Ser.}, pages 79--96. Cambridge Univ. Press, Cambridge, 1995.

\bibitem{Mitchell_approachability}
William~J. Mitchell.
\newblock {$I[\omega_2]$} can be the nonstationary ideal on {${\rm
  Cof}(\omega_1)$}.
\newblock {\em Trans. Amer. Math. Soc.}, 361(2):561--601, 2009.

\bibitem{PorterTwo}
Timothy Porter.
\newblock On the two definitions of {${\rm Ho}({\rm pro}\,C)$}.
\newblock {\em Topology Appl.}, 28(3):289--293, 1988.

\bibitem{Prasolov_non_05}
Andrei~V. Prasolov.
\newblock Non-additivity of strong homology.
\newblock {\em Topology Appl.}, 153(2-3):493--527, 2005.

\bibitem{Prosmans}
Fabienne Prosmans.
\newblock Derived limits in quasi-abelian categories.
\newblock {\em Bull. Soc. Roy. Sci. Li\`ege}, 68(5-6):335--401, 1999.

\bibitem{Rainwater}
John Rainwater.
\newblock A note on projective resolutions.
\newblock {\em Proc. Amer. Math. Soc.}, 10:734--735, 1959.

\bibitem{Rinot_diamond}
Assaf Rinot.
\newblock Jensen's diamond principle and its relatives.
\newblock In {\em Set theory and its applications}, volume 533 of {\em Contemp.
  Math.}, pages 125--156. Amer. Math. Soc., Providence, RI, 2011.

\bibitem{rj_rc}
Joaqu\'in Rodrigues~Jacinto and Juan~Esteban Rodr\'iguez~Camargo.
\newblock Solid locally analytic representations of {$p$}-adic {L}ie groups.
\newblock {\em Represent. Theory}, 26:962--1024, 2022.

\bibitem{Columb}
Juan Rodriguez-Camargo and John Morgan.
\newblock Condensed mathematics, a fall 2023 {C}olumbia seminar.
\newblock
  \url{https://www.math.columbia.edu/~jmorgan/condensed_mathematics.html}.
\newblock Accessed: 15 August 2024.

\bibitem{CS2}
Peter Scholze.
\newblock Lectures on analytic geometry (all results joint with {D}ustin
  {C}lausen).
\newblock \url{https://people.mpim-bonn.mpg.de/scholze/Analytic.pdf}.
\newblock Accessed: 23 January 2023.

\bibitem{CS1}
Peter Scholze.
\newblock Lectures on condensed mathematics (all results joint with {D}ustin
  {C}lausen).
\newblock \url{https://www.math.uni-bonn.de/people/scholze/Condensed.pdf}.
\newblock Accessed: 23 January 2023.

\bibitem{ScholzePersComm}
Peter Scholze.
\newblock Personal communication, 2019--2024.

\bibitem{Sklyarenko}
E.~G. Sklyarenko.
\newblock Hyper(co)homology for left-exact covariant functors, and homology
  theory of topological spaces.
\newblock {\em Uspekhi Mat. Nauk}, 50(3(303)):109--146, 1995.

\bibitem{smith}
Marianne~Freundlich Smith.
\newblock The {P}ontrjagin duality theorem in linear spaces.
\newblock {\em Ann. of Math. (2)}, 56:248--253, 1952.

\bibitem{SteenrodCycles}
N.~E. Steenrod.
\newblock Regular cycles of compact metric spaces.
\newblock {\em Ann. of Math. (2)}, 41:833--851, 1940.

\bibitem{TangProfinite}
Jiacheng Tang.
\newblock Profinite and solid cohomology.
\newblock {\em arXiv e-prints}, page arXiv:2410.08933, 2024.

\bibitem{Todorcevic_Walks_07}
Stevo Todorcevic.
\newblock {\em Walks on ordinals and their characteristics}, volume 263 of {\em
  Progress in Mathematics}.
\newblock Birkh\"{a}user Verlag, Basel, 2007.

\bibitem{CesnaviciusScholze}
Kestutis \v{C}esnavi\v{c}ius and Peter Scholze.
\newblock Purity for flat cohomology.
\newblock {\em Ann. of Math. (2)}, 199(1):51--180, 2024.

\bibitem{Velickovic_non_21}
Boban Veli\v{c}kovi\'{c} and Alessandro Vignati.
\newblock Non-vanishing higher derived limits.
\newblock {\em Commun. Contemp. Math.}, 26(7):Paper No. 2350031, 22, 2024.

\bibitem{Weibel}
Charles~A. Weibel.
\newblock {\em An introduction to homological algebra}, volume~38 of {\em
  Cambridge Studies in Advanced Mathematics}.
\newblock Cambridge University Press, Cambridge, 1994.

\bibitem{Wiegand}
Roger Wiegand.
\newblock Sheaf cohomology of locally compact totally disconnected spaces.
\newblock {\em Proc. Amer. Math. Soc.}, 20:533--538, 1969.

\end{thebibliography}
\end{document}